\documentclass[11pt,english]{article}
\usepackage{amsmath}
\usepackage{amsthm}
\usepackage{mathptmx}
\usepackage{helvet}
\usepackage{courier}
\usepackage{newtxmath}
\usepackage[T1]{fontenc}
\usepackage[utf8]{inputenc}
\usepackage{geometry}
\geometry{verbose,tmargin=1in,bmargin=1in,lmargin=1in,rmargin=1in}
\synctex=-1
\usepackage{color}
\usepackage{babel}
\usepackage{verbatim}
\usepackage{refstyle}
\usepackage{mathrsfs}
\usepackage{mathtools}
\usepackage{graphicx}
\usepackage[unicode=true,pdfusetitle,
 bookmarks=true,bookmarksnumbered=false,bookmarksopen=false,
 breaklinks=false,pdfborder={0 0 0},pdfborderstyle={},backref=false,colorlinks=true]
 {hyperref}

\makeatletter


\AtBeginDocument{\providecommand\secref[1]{\ref{sec:#1}}}
\AtBeginDocument{\providecommand\thmref[1]{\ref{thm:#1}}}
\AtBeginDocument{\providecommand\propref[1]{\ref{prop:#1}}}
\AtBeginDocument{\providecommand\figref[1]{\ref{fig:#1}}}
\AtBeginDocument{\providecommand\lemref[1]{\ref{lem:#1}}}
\AtBeginDocument{\providecommand\subsecref[1]{\ref{subsec:#1}}}
\AtBeginDocument{\providecommand\eqref[1]{\ref{eq:#1}}}
\AtBeginDocument{\providecommand\corref[1]{\ref{cor:#1}}}
\AtBeginDocument{\providecommand\defref[1]{\ref{def:#1}}}
\AtBeginDocument{\providecommand\remref[1]{\ref{rem:#1}}}
\AtBeginDocument{\providecommand\enuref[1]{\ref{enu:#1}}}
\RS@ifundefined{subsecref}
  {\newref{subsec}{name = \RSsectxt}}
  {}
\RS@ifundefined{thmref}
  {\def\RSthmtxt{theorem~}\newref{thm}{name = \RSthmtxt}}
  {}
\RS@ifundefined{lemref}
  {\def\RSlemtxt{lemma~}\newref{lem}{name = \RSlemtxt}}
  {}

\numberwithin{equation}{section}
\numberwithin{figure}{section}
\numberwithin{table}{section}
\theoremstyle{plain}
\newtheorem{thm}{\protect\theoremname}[section]
\theoremstyle{plain}
\newtheorem{prop}[thm]{\protect\propositionname}
\theoremstyle{plain}
\newtheorem{lem}[thm]{\protect\lemmaname}
\theoremstyle{plain}
\newtheorem{cor}[thm]{\protect\corollaryname}
\theoremstyle{definition}
\newtheorem{defn}[thm]{\protect\definitionname}
\theoremstyle{remark}
\newtheorem{rem}[thm]{\protect\remarkname}

\urlstyle{same}
\usepackage{refstyle}
\newref{rem}{name=Remark~,names=Remarks~}
\newref{lem}{name=Lemma~,names=Lemmas~}
\newref{def}{name=Definition~,names=Definitions~}
\newref{thm}{name=Theorem~,names=Theorems~}
\newref{prop}{name=Proposition~,names=Propositions~}
\newref{cond}{name=Condition~,names=Conditions~}
\newref{cor}{name=Corollary~,names=Corollaries~}
\newref{sec}{name=Section~,names=Sections~}
\newref{fig}{name=Figure~,names=Figures~}
\newref{subsec}{name=Subsection~,names=Subsections~}
\newref{eq}{name=,refcmd=(\ref{#1})}
\usepackage{tikz}
\usetikzlibrary{calc}
\usetikzlibrary{decorations.pathmorphing}
\usetikzlibrary{intersections}
\usetikzlibrary{through}
\hypersetup{final}

\@ifundefined{showcaptionsetup}{}{%
 \PassOptionsToPackage{caption=false}{subfig}}
\usepackage{subfig}
\makeatother

\providecommand{\corollaryname}{Corollary}
\providecommand{\definitionname}{Definition}
\providecommand{\lemmaname}{Lemma}
\providecommand{\propositionname}{Proposition}
\providecommand{\remarkname}{Remark}
\providecommand{\theoremname}{Theorem}

\begin{document}
\global\long\def\dist{\operatorname{dist}}%

\global\long\def\diam{\operatorname{diam}}%

\global\long\def\Hess{\operatorname{Hess}}%

\global\long\def\Law{\operatorname{Law}}%

\global\long\def\supp{\operatorname{supp}}%

\global\long\def\spn{\operatorname{span}}%

\global\long\def\tr{\operatorname{tr}}%

\global\long\def\Re{\operatorname{Re}}%

\global\long\def\Leb{\operatorname{Leb}}%

\global\long\def\interior{\operatorname{interior}}%

\global\long\def\Im{\operatorname{Im}}%

\global\long\def\dif{\mathrm{d}}%

\global\long\def\e{\mathrm{e}}%

\global\long\def\i{\mathrm{i}}%

\global\long\def\Dif{\mathrm{D}}%

\global\long\def\eps{\varepsilon}%

\global\long\def\Cov{\operatorname{Cov}}%

\global\long\def\Var{\operatorname{Var}}%

\global\long\def\sgn{\operatorname{sgn}}%

\global\long\def\width{\operatorname{width}}%

\global\long\def\height{\operatorname{height}}%

\global\long\def\AR{\operatorname{AR}}%

\global\long\def\Euc{\mathrm{E}}%

\global\long\def\Bernoulli{\operatorname{Bernoulli}}%

\title{Subsequential scaling limits for Liouville graph distance}
\author{Jian Ding\thanks{Partially supported by National Science Foundation     (NSF) grant DMS-1757479 and an Alfred Sloan fellowship.}\\
University of Pennsylvania\and Alexander Dunlap\thanks{Partially supported by a National Science Foundation Graduate Research Fellowship under grant DGE-1147470.}\\
Stanford University}
\maketitle
\begin{abstract}
For $0<\gamma<2$ and $\delta>0$, we consider the Liouville graph
distance, which is the minimal number of Euclidean balls of $\gamma$-Liouville
quantum gravity measure at most $\delta$ whose union contains a continuous
path between two endpoints. In this paper, we show that the renormalized
distance is tight and thus has subsequential scaling limits at $\delta\to0$.
In particular, we show that for all $\delta>0$ the diameter with
respect to the Liouville graph distance has the same order as the
typical distance between two endpoints.
\end{abstract}

\section{Introduction}

Let $\mathbb{R}$ be a rectangular subset of $\mathbf{R}^{2}$ (the
Euclidean plane), and let $\mathbb{R}^{*}$ be a box centered around
$\mathbb{R}$ whose sides are separated from those of $\mathbb{R}$
by twice the Euclidean diameter of $\mathbb{R}$. Let $h_{\mathbb{R}^{*}}$
be a Gaussian free field with Dirichlet boundary conditions on $\mathbb{R}^{*}$
and let $\mu_{\mathbb{R}}$ be the Liouville quantum gravity (LQG)
measure on $\mathbb{R}$, at inverse temperature $\gamma\in(0,2)$,
induced by $h_{\mathbb{R}^{*}}$. (We precisely define these objects
in \secref{Preliminaries}; see also the surveys \cite{She07,B15,RV14}.)
In this paper, we consider the \emph{Liouville graph distance} with
parameter $\delta$ on $\mathbb{R}$, which for any two points $x,y\in\mathbb{R}$
is given by $\underline{d}_{\mathbb{R},\delta}(x,y)$, the minimal
number of Euclidean balls of LQG measure at most $\delta$ that it
takes to cover a path between $x$ and $y$. (Again, see \secref{Preliminaries}
for a precise definition.) Let $Q_{\delta}$ be the median Liouville
graph distance from the left to the right side of $\mathbb{R}$. Our
goal in this paper is to prove the following:
\begin{thm}
\label{thm:maintheorem}For any sequence $\delta_{n}\downarrow0$,
there is a subsequence $(\delta_{n_{k}})$ and a limiting metric $\underline{d}_{\mathbb{R}}$
so that
\[
Q_{\delta_{n_{k}}}^{-1}\underline{d}_{\mathbb{R},\delta_{n_{k}}}\to\underline{d}_{\mathbb{R}}
\]
in distribution with respect to the uniform topology on functions
$\mathbb{R}\times\mathbb{R}\to\mathbf{R}$. Moreover, $\underline{d}_{\mathbb{R}}$
is Hölder-continuous with respect to the Euclidean metric on $\mathbb{R}$,
and the Euclidean metric on $\mathbb{R}$ is Hölder-continuous with
respect to $\underline{d}_{\mathbb{R}}$. 
\end{thm}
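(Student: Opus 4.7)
The plan is to prove \thmref{maintheorem} via a standard equicontinuity-and-compactness argument. Concretely I will establish, uniformly in $\delta>0$, two quantitative estimates on $Q_\delta^{-1}\underline{d}_{\mathbb{R},\delta}$: an upper bound on the modulus of continuity (controlling the diameter of Euclidean balls), and a lower bound on crossings of Euclidean annuli (controlling how small distances can be). Together with pointwise tightness, which should follow from the very definition of $Q_\delta$ together with a chaining/concatenation argument across $O(1)$ macroscopic scales, these estimates give tightness of the laws of the continuous functions $Q_\delta^{-1}\underline{d}_{\mathbb{R},\delta}\colon\mathbb{R}\times\mathbb{R}\to\mathbf{R}$ in the uniform topology via Arzelà--Ascoli plus Prokhorov, and passing to a subsequence produces a random limit $\underline{d}_{\mathbb{R}}$. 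Symmetry, the triangle inequality, positivity, and both directions of Hölder continuity will then transfer to $\underline{d}_{\mathbb{R}}$ from the same quantitative estimates via Skorokhod representation and the Portmanteau theorem.

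The upper bound is the softer of the two. I plan to combine the approximate translation and scale invariance of the GFF with its Markov decomposition (orthogonal splitting of $h_{\mathbb{R}^{*}}$ into a Dirichlet GFF on a sub-box and its independent harmonic extension) in order to compare the $\underline{d}_{\mathbb{R},\delta}$-diameter of a Euclidean sub-box of side $r$ to $Q_{\delta'}$ for an effective parameter $\delta'$ depending on $r$ and the local GFF circle average. Gaussian concentration for the circle average then yields, for every $r>0$, a bound of the form $\mathbf{P}(\diam_{\underline{d}_{\mathbb{R},\delta}} B_{r} \geq Q_\delta\cdot r^{\alpha}) \leq \rho(r)$ with $\alpha>0$ and $\rho(r)\to 0$ polynomially as $r\to 0$, uniformly in $\delta$. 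A union bound over a dyadic cover of $\mathbb{R}$ then upgrades this to the desired uniform modulus of continuity for $Q_\delta^{-1}\underline{d}_{\mathbb{R},\delta}$.

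The lower bound is where the principal obstacle lies. I need to show that the $\underline{d}_{\mathbb{R},\delta}$-distance across a Euclidean annulus of modulus $r$ is at least $Q_\delta\cdot r^{\beta}$ with probability at least $1-\rho(r)$, for some $\beta>0$. My plan is first to prove, via an FKG / Russo--Seymour--Welsh argument tailored to LQG, that at unit scale the crossing distance across a fixed annulus exceeds a constant multiple of $Q_\delta$ with constant probability; then to iterate across dyadic scales using near-independence of the GFF restricted to well-separated annuli, together with Cameron--Martin shifts to absorb the residual harmonic piece. The technical heart of the argument will be making this iteration rigorous uniformly in $\delta$: one must control how much is lost in passing from constant probability at one scale to polynomial probability across all scales, and must handle correlations between the annular distance estimates and the \emph{rare-event} contribution of GFF thick points. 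Once this lower bound is in hand, the non-degeneracy $\underline{d}_{\mathbb{R}}(x,y)>0$ for $x\ne y$ and the Hölder continuity of the Euclidean metric with respect to $\underline{d}_{\mathbb{R}}$ both follow immediately from it by passing to the limit.
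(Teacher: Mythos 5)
There is a genuine gap, and it sits exactly where you wave your hands: you assert that ``pointwise tightness \ldots{} should follow from the very definition of $Q_{\delta}$ together with a chaining/concatenation argument across $O(1)$ macroscopic scales,'' and that the upper bound is ``the softer of the two.'' The definition of $Q_{\delta}$ as a median gives you one point of the distribution function and nothing about either tail; to run your dyadic union bound you need the crossing distance of a box at scale $2^{-t}$ to exceed a fixed multiple of its median with probability much smaller than $4^{-t}$, \emph{uniformly in $\delta$ and in $t$ up to $t\approx\log(1/\delta)$} (not $O(1)$ scales), and you also need the ratio of medians between scale $2^{-t}$ and scale $1$ to decay geometrically in $t$. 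Neither of these is available from Gaussian concentration of the circle average: the circle average only controls the multiplicative ``coarse field'' prefactor, and after conditioning on it you are left needing concentration of the crossing distance of the \emph{fine} field around its own quantile. The paper's entire Section~5 exists to supply this: a white-noise decomposition of the GFF, an Efron--Stein bound on $\Var(\log d_{\mathbb{B}(U)}(\mathrm{hard}))$, and an induction on scales in which the RSW estimate (needed to compare easy and hard crossings, hence for the \emph{upper} bound too, not just the lower bound as you suggest) and the scaling covariance of LQG close the induction. The closing of that induction is precisely where $\gamma<2$ enters --- a sub-box, after accounting for the worst-case coarse field $\theta_{0}\log K$ with $\theta_{0}>2$, must still have ``effective size'' $K^{\eta-1}$ times the parent box with $\eta=2\gamma\theta_{0}/(4+\gamma^{2})<1$ --- and your proposal never engages with this mechanism, so it gives no route to the uniform-in-$\delta$ quantitative estimates you need in both halves of your plan.

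Your lower-bound sketch is closer in spirit to what the paper does (Proposition~\ref{prop:lowerbound-tail} iterates easy crossings of well-separated sub-annuli using independence of the GFF on disjoint domains), and the bi-H\"older conclusion from it is handled as you say (Section~\ref{sec:invholder}). But even there the iteration is only useful once you can convert ``constant probability at unit scale'' into a quantile comparison across scales, which again requires the concentration/variance bound and the quantile-ratio machinery ($\chi_{U}$, Propositions~\ref{prop:pllb-inductive} and~\ref{prop:hardquantile}). Also note the paper's RSW input is proved via approximate conformal invariance of the Liouville graph distance (Euclidean balls are not preserved by conformal maps, so one must show a deformed ball can be re-covered by boundedly many genuine balls), not via FKG; an FKG route for this model is not known to work. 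In short: the architecture you propose (modulus of continuity $+$ annulus lower bound $+$ Arzel\`a--Ascoli $+$ Prokhorov) matches the paper's endgame, but the load-bearing ingredient --- uniform concentration of $\log$ of the crossing distance across all scales --- is missing from your plan and cannot be replaced by circle-average concentration alone.
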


A consequence of \thmref{maintheorem}, but also a key step in its
proof (see \propref{chaining} below), is that for any $\delta$,
the LQG diameter of a box is comparable to its left--right crossing
distance:
\begin{thm}
\label{thm:diam}For any $\eps>0$, we have a $C=C(\eps)$ so that,
for all $\delta>0$,
\[
\mathbf{P}\left(\max_{x,y\in\mathbb{R}}\underline{d}_{\mathbb{R},\delta}(x,y)\ge C(\eps)Q_{\delta}\right)\le\eps.
\]
\end{thm}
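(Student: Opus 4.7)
My plan is to prove \thmref{diam} via a multiscale chaining argument. The key input is approximate scale invariance of the Gaussian free field: for a dyadic sub-box $B\subset\mathbb{R}$ of side $r=2^{-k}$ centered at a point $x$, the restriction of $h_{\mathbb{R}^{*}}$ to $B$ decomposes (modulo a bounded harmonic correction from the Dirichlet boundary) as an independent rescaled GFF plus the circle average $h_{r}(x)$. Consequently, the left--right crossing distance of $B$ at LQG parameter $\delta$ should be comparable in distribution to $Q_{\delta'}$ with effective parameter $\delta'=\delta\cdot r^{-\gamma Q}\e^{-\gamma h_{r}(x)}$, where $Q=2/\gamma+\gamma/2$.

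The first technical ingredient would be (a) a polynomial comparison $Q_{\delta'}/Q_{\delta}\le C(\delta/\delta')^{\alpha}$ for $\delta'\ge\delta$ and some $\alpha>0$, together with (b) an upper-tail estimate giving polynomial decay in $\lambda$ for the probability that the left--right crossing distance of a box exceeds $\lambda$ times its median. Item (a) should follow from a concatenation argument showing that any $\delta'$-ball covering can be refined to a $\delta$-ball covering at bounded multiplicative cost. Combining (a) and (b) with Gaussian concentration for the circle averages $h_{r}(x)$, and applying a union bound over all dyadic sub-boxes at all scales $r_{k}=2^{-k}$, I would obtain that on an event of high probability every such sub-box has left--right (and top--bottom) crossing distance at most $Cr_{k}^{c}Q_{\delta}$ for some $c>0$.

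The chaining step itself proceeds by constructing, for arbitrary $x,y\in\mathbb{R}$, a path that (i) traverses a macroscopic left--right crossing of $\mathbb{R}$, contributing $O(Q_{\delta})$ balls, and (ii) joins $x$ and $y$ to the endpoints of this macroscopic crossing by iteratively crossing a bounded number of dyadic sub-boxes at each finer scale $r_{k}$, contributing $O(r_{k}^{c}Q_{\delta})$ balls at scale $k$. Summing the resulting geometric series over $k$ yields a total of $O(Q_{\delta})$ balls uniformly over $x,y$, which is the claimed bound.

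The main obstacle I anticipate is the polynomial comparison $Q_{\delta'}/Q_{\delta}\le C(\delta/\delta')^{\alpha}$, since without quantitative decay in $\delta'/\delta$ the geometric series above fails to converge. This is a global regularity statement about the Liouville graph distance as a function of $\delta$ and will likely require its own argument based on Hölder regularity of the LQG measure and subadditivity of ball-covering counts. A secondary technical nuisance is the Dirichlet boundary effect, which breaks exact scale invariance and would need to be handled via Kahane-type convexity comparisons with bulk-GFF variants or by conditioning on the harmonic part of the field.
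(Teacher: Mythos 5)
Your overall strategy---multiscale chaining over dyadic sub-boxes, scaling covariance to convert a sub-box at scale $r$ into a unit box at effective parameter $\delta'=\delta r^{-(2+\gamma^{2}/2)}\e^{-\gamma h_{r}}$, a union bound over scales using tail estimates, and a geometric series powered by polynomial decay of crossing quantiles across scales---is the same as the paper's (see \propref{chaining}). However, two of your ``technical ingredients'' conceal genuine gaps.

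First, your proposed proof of the polynomial comparison (a) argues in the wrong direction. Refining a $\delta'$-ball covering into a $\delta$-ball covering (with $\delta<\delta'$) at bounded multiplicative cost would yield an \emph{upper} bound $Q_{\delta}\le CQ_{\delta'}$ on the finer-scale quantity, whereas what your geometric series needs is the \emph{lower} bound $Q_{\delta}\ge c(\delta'/\delta)^{\alpha}Q_{\delta'}$, i.e., that crossing distances strictly and polynomially \emph{increase} under refinement. No covering-refinement or subadditivity argument produces a lower bound. The paper obtains it (\propref{pllb-inductive}, via \propref{lowerbound-tail}) from a percolation argument: a crossing of a box of size $KS$ must make order $K$ easy crossings of well-separated sub-boxes of size $S$ (\propref{crossmanysmalls}), which are independent given the coarse field, and a counting/large-deviations estimate (\lemref{hugedeviations}) together with the RSW comparison of easy and hard crossings (\thmref{RSW}) forces the total to be at least $cK$ times the sub-box quantile.

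Second, the tail bound (b), and the tacit comparability of medians with other quantiles, are not routine: they constitute most of the paper. A fixed polynomial tail $\lambda^{-m}$ is quantitatively insufficient, since at scale $r_{k}=2^{-k}$ you union-bound over $4^{k}$ boxes whose medians are only smaller by a factor $\approx2^{-c'k}$ for some unknown and possibly tiny $c'>0$; a budget of $Cr_{k}^{c}Q_{\delta}$ then corresponds to a deviation $\lambda_{k}\approx2^{(c'-c)k}$ and requires $m(c'-c)>2$, i.e., $m$ large depending on $c'$. The paper instead proves a superpolynomial concentration bound (\propref{concentration-boost-hard}), and even that rests on the RSW theorem plus the uniform variance bound \thmref{variancebound} (the Efron--Stein multiscale induction) to relate the relevant quantiles to one another. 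Finally, your chaining must terminate: joining $x$ and $y$ to the finest-scale crossings is handled in the paper by covering the terminal boxes with single balls and invoking the positive moments of the LQG measure (\lemref{basecase}), which again uses $\gamma<2$; this step is absent from your outline.
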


The normalizing constant $Q_{\delta}$ is poorly-understood, and the
arguments in this paper do not rely on precise estimates of its value.
However, we will show the following in the course of the proof of
\thmref{maintheorem}:
\begin{thm}
\label{thm:plbound}We have a constant $0<C<\infty$ so that for all
$\delta$ we have
\[
C^{-1}\delta^{-C^{-1}}\le Q_{\delta}\le C\delta^{-C}.
\]
\end{thm}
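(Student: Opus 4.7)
The plan is to derive both inequalities from uniform multifractal-type bounds on the LQG mass of small Euclidean balls. Let $\alpha := 2 + \gamma^{2}/2$ denote the standard LQG scaling exponent, so that $\mathbf{E}\mu_{\mathbb{R}}(B_{r}(x)) \asymp r^{\alpha}$ for $x$ bounded away from $\partial\mathbb{R}$. The key input needed is that there exist constants $0 < c_{1} < c_{2} < \infty$ and $r_{0}>0$ such that, with probability at least $3/4$,
\[
r^{c_{2}} \;\le\; \mu_{\mathbb{R}}(B_{r}(x)) \;\le\; r^{c_{1}} \qquad \text{for all } x\in\mathbb{R}\text{ with }\dist(x,\partial\mathbb{R}) \ge 2r \text{ and all } 0 < r \le r_{0}.
\]
Both estimates are by now classical. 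The upper bound comes from a union bound over an $r$-grid of centers together with polynomial moment estimates for $\mu_{\mathbb{R}}(B_{r}(x))$ (Kahane's convexity inequalities applied to the standard GFF decomposition). The lower bound comes from writing $\mu_{\mathbb{R}}(B_{r}(x)) \gtrsim e^{\gamma h_{r}(x)}\,r^{\alpha}$, where $h_{r}(x)$ is a circle-averaged GFF, together with a Gaussian tail bound for $h_{r}(x)$ and a union bound over an $r$-net.

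Granted this, the upper bound $Q_{\delta} \le C\delta^{-C}$ is immediate. Fix $r := \delta^{1/c_{1}}$; then on the good event every Euclidean ball of radius $r$ lying in $\mathbb{R}$ and away from $\partial\mathbb{R}$ has LQG mass at most $\delta$. A straight horizontal segment from the left to the right side of $\mathbb{R}$ can be covered by $O(r^{-1}) = O(\delta^{-1/c_{1}})$ such balls, so with probability at least $3/4$ we have $\underline{d}_{\mathbb{R},\delta}(\text{left},\text{right}) \le C\delta^{-1/c_{1}}$, which forces the median $Q_{\delta}$ to satisfy $Q_{\delta} \le C\delta^{-C}$.

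For the lower bound, observe that on the good event, any Euclidean ball of radius $> \delta^{1/c_{2}}$ containing a point of (say) the middle vertical cross-section of $\mathbb{R}$ contains a sub-ball of comparable radius that lies inside $\mathbb{R}$ at distance $\ge 2r$ from $\partial\mathbb{R}$, and therefore has LQG mass strictly greater than $\delta$. Since every left--right crossing path in $\mathbb{R}$ must be covered along this middle cross-section, any collection of balls of LQG mass at most $\delta$ whose union contains such a path must contain at least $\gtrsim \delta^{-1/c_{2}}$ balls. Hence with probability $\ge 3/4$ we get $\underline{d}_{\mathbb{R},\delta}(\text{left},\text{right}) \ge c\delta^{-1/c_{2}}$, and therefore $Q_{\delta} \ge C^{-1}\delta^{-C^{-1}}$.

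The main technical ingredient is the uniform lower bound on $\mu_{\mathbb{R}}(B_{r}(x))$, which in principle requires ruling out very thin points of the field. For the crude polynomial bounds sought here, however, no sharp multifractal exponent is needed: the basic Gaussian tail bound $\mathbf{P}(h_{r}(x) < -t\log r^{-1}) \le r^{t^{2}/(2+o(1))}$ combined with a union bound over an $r$-net of $\mathbb{R}$ already gives a uniform lower bound of the shape $r^{c_{2}}$ for some (possibly large) finite constant $c_{2}$, which is enough.
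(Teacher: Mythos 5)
Your argument is correct in outline, but it takes a genuinely different route from the paper. The paper obtains \thmref{plbound} as a byproduct of its main machinery: the normalizing constant is identified with $\Theta^{*}(\delta^{-2/(\gamma^{2}+4)})$ via the scaling covariance (\propref{LQGscaling}), and \corref{Thetapower} then gives the two-sided power law by combining the percolation lower bound on easy-crossing quantiles (\propref{pllb-inductive}, which counts easy crossings of $\sim K$ disjoint small boxes along any macroscopic path), the percolation upper bound on hard-crossing quantiles (\propref{hardquantile}), the RSW comparison, and the variance bound of \thmref{variancebound} to pass between quantiles and the median. You instead argue directly from uniform two-sided polynomial bounds on the LQG mass of Euclidean balls: an a.s.\ uniform bound $\mu(B_{r}(x))\le r^{c_{1}}$ forces every radius-$(\delta/2)^{1/c_{1}}$ ball to be admissible, giving the upper bound by covering a horizontal segment, while a uniform bound $\mu(B_{r}(x))\ge r^{c_{2}}$ forces every admissible ball to have radius $\lesssim\delta^{1/c_{2}}$, giving the lower bound since a connected chain of sets of diameter $\lesssim\delta^{1/c_{2}}$ joining the left and right sides must have $\gtrsim\delta^{-1/c_{2}}$ members. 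This is more elementary and entirely decoupled from the multiscale induction, which is a real advantage; the cost is that it rests on the uniform \emph{lower} bound for ball masses (a thin-points estimate), which the paper never needs and does not prove. That estimate is true and standard for $\gamma\in(0,2)$, but your sketch glosses over its only delicate step: one cannot simply write $\mu(B_{r}(x))\gtrsim r^{2+\gamma^{2}/2}\e^{\gamma h_{r}(x)}$ pointwise; one must control the ratio $\mu(B_{r}(x))/(r^{2+\gamma^{2}/2}\e^{\gamma h_{r}(x)})$ uniformly in $x$ and $r$ using its negative moments (cf.\ \eqref{negmoments}) together with a chaining over dyadic scales, in addition to the Gaussian tail bound for the circle average. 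You should either carry out that multiscale argument or cite a reference for the uniform lower Hölder bound of the LQG measure against the Euclidean metric; with that input supplied, both directions of your proof go through.
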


\subsection{Background and motivation}

In the seminal paper \cite{Kahane85}, the Gaussian multiplicative
chaos was defined and constructed as a random measure obtained by
exponentiating log-correlated Gaussian fields. The last decade has
seen extensive study of Gaussian multiplicative chaos as well as Liouville
quantum gravity,\footnote{Our terminology for Liouville quantum gravity follows that in \cite{DS11}.
This is somewhat different from that adopted in Liouville field theory,
and one should be cautious about the underlying mathematical meaning
of LQG in Liouville field theory.} which is an important special case of Gaussian multiplicative chaos
where the underlying log-correlated field is a two-dimensional Gaussian
free field. See for example \cite{Kahane85,DS11,RV11,RV14,Berestycki17,RV17,APS18}.

Recently, a huge amount of effort has been devoted to understanding
the random metric associated with LQG. Building on \cite{MS16,DMS14},
in \cite{MS15b,MS16a,MS16b} the authors constructed in the continuum
a random metric which is \emph{presumably} the scaling limit of the
LQG distance for the specific choice $\gamma=\sqrt{8/3}$, and proved
deep connections with the Brownian map\cite{LeGall10,LeGall13,Miermont13}.
Note that in these works there is no mention of a discrete approximation
of the metric. In \cite{DZ16,DG16,GHS16,GHS17,DG18}, various bounds
on the distance and properties of the geodesics were obtained. In
\cite{DZ15,DZZ17}, some non-universality aspects (when considering
underlying log-correlated fields other than GFF) for LQG distances
were demonstrated. In \cite{MRVZ14,DZZ18}, a type of equivalence
between the Liouville graph distance and the heat kernel for Liouville
Brownian motion was proved. In \cite{DG18}, it was shown that there
is a single number which determines the distance exponents for a few
reasonable choices of distances associated with LQG, as well as the
distance exponents for random planar maps.

However, despite much dedicated effort, the two most outstanding problems
related to LQG distances remain open: (1) to compute the exact distance
exponents for LQG distances (although this is now known for $\gamma=\sqrt{8/3}$
by \cite{Angel03,DG18}), and (2) to derive a scaling limit of natural
discrete approximations of LQG distances. The present paper makes
some progress towards understanding the scaling limits of LQG distances.

\subsection{Two closely-related works}

The present article is closely related to \cite{DD16,DF18}. In \cite{DD16}
it was shown that discrete Liouville first-passage percolation (shortest-path
metric where the vertices are weighted by the exponential of the discrete
GFF) has subsequential scaling limits for sufficiently small $\gamma>0$.
On the other hand, in \cite{DF18}, the authors considered the case
when the underlying field is a type of log-correlated Gaussian field
(in the continuum) with short-range correlations (a so-called $\star$-scale
invariant field) and showed that there exists a parameter $\gamma^{*}>0$
such that the corresponding Liouville first-passage percolation has
a subsequential scaling limit for all $\gamma<\min(\gamma^{*},0.4)$.
The main contribution of the present article is that the result for
Liouville graph distance is valid throughout the subcritical regime;
i.e. $\gamma\in(0,2)$. A few further remarks are in order:
\begin{itemize}
\item In both \cite{DD16} and \cite{DF18}, the authors worked with Liouville
first-passage percolation, while in the present article we work with
Liouville graph distance. They are both natural approximations of
LQG distances and at the moment we are equally satisfied with proving
results for either of these choices (or any other reasonable choices).
As noted earlier, in \cite{DG18}, universality of the dimension exponents
was proved for all reasonable choices of metric we know when we stick
to the GFF as the underlying field (but we should also note that the
dimension exponent is not expected to be universal between GFF and
$\star$-scale invariant field; see \cite{DZ16}. Here we choose to
work with Liouville graph distance because it resembles random planar
maps in a more obvious fashion, and because it is (slightly) more
directly connected to the heat kernel for Liouville Brownian motion
\cite{DZZ18}.
\item In both \cite{DD16,DF18}, in the regime where subsequential scaling
limits were proved, a crucial step of the proofs was that the box
diameter has the same order as the typical distance between two points.
We also note that in \cite{DG18}, it was shown for all $\gamma\in(0,2)$
that the exponents for the diameter and the typical distance are the
same. The present article similarly shows that in our setting, the
diameter of a box has the same order as the typical distance between
two points.
\item In \cite{DF18}, results regarding the Weyl scaling were also obtained.
Weyl scaling is considered neither in \cite{DD16} nor in the present
article.
\item The $\star$-scale invariant fields form an important class of log-correlated
Gaussian fields, and they have been studied previously in \cite{ARV13,Madaule15}
for instance. We note that the Gaussian free field is not a $\star$-scale
invariant field.
\item At the moment, we are equally satisfied with proving results when
the underlying fields are discrete or continuous. But the continuous
case appears to be simpler thanks to conformal invariance-based arguments
introduced in \cite{DF18}.
\end{itemize}
We now discuss similarities and differences in proof methods. In \cite{DD16},
we presented a framework combining multiscale analysis and Efron--Stein-type
arguments with Russo--Seymour--Welsh (RSW)-type estimates (originally
introduced in \cite{Russo78,SW,Russo81}), which relate distances
between boundaries of rectangles in easy and hard directions. (Here
and throughout the paper, the ``easy'' direction across a rectangle
refers to crossings between the two longer sides, while the ``hard''
direction refers to crossings between the two shorter sides; see \figref{easyhard}.)
\begin{figure}
\begin{centering}
\hfill{}\subfloat[Easy crossing.]{\begin{centering}
\begin{tikzpicture}[x=0.5in,y=0.5in]
\draw[step=1,thin] (0,0) rectangle (2,1);
\draw[ultra thick, red,style={decorate,decoration={snake,amplitude=2,segment length=50}}] (0,0.3) -- (2,0.8);
\end{tikzpicture}
\par\end{centering}
}\hfill{}\subfloat[Hard crossing.]{\begin{centering}
\begin{tikzpicture}[x=0.5in,y=0.5in]
\draw[step=1,thin] (0,0) rectangle (2,1);
\draw[ultra thick, red,style={decorate,decoration={snake,amplitude=2,segment length=50}}] (1.6,0) -- (0.4,1);
\end{tikzpicture}
\par\end{centering}
}\hfill{}
\par\end{centering}
\caption{\label{fig:easyhard}Easy and hard crossings.}
\end{figure}
 This is more or less the framework used both in \cite{DF18} and
in the present article.

The key difference between \cite{DD16} and \cite{DF18} lies in the
implementation of the RSW estimate. In \cite{DD16} the RSW estimate
was inspired by \cite{tassion}, while in \cite{DF18} it was proved
using approximate conformal invariance. The proof in \cite{DD16}
draws a natural connection between random distances and percolation
theory. It also presents a more widely-applicable framework---based
on the method of \cite{tassion}---for proving RSW estimates, as
it does not rely on conformal invariance (and thus works for instance
in the lattice case, or potentially for more general fields). However,
while the proof in \cite{tassion} (for Voronoi percolation) is simply
beautiful, much of the beauty was obscured when it was implemented
in \cite{DD16}, due to the severe complications involved in considering
lengths of paths rather than simply connectivity. By contrast, the
proof method of \cite{DF18}, taking advantage of conformal invariance,
is insightful and beautiful. While the method of \cite{DF18} does
suffer the drawback of relying on conformal invariance and thus may
not be as widely applicable as the method in \cite{DD16}, this drawback
is irrelevant here since at the moment we are satisfied with the case
of a conformally-invariant field. For the present article, while it
seems likely that an RSW proof following \cite{DD16} is possible
(although it would require improving the analysis in \cite{DD16}
in a number of places), we switch to a conformal invariance-based
proof as in \cite{DF18} for the sake of simplicity.

In \cite{DD16}, since $\gamma$ is very small, in the multiscale
analysis the influence from the coarse field (which one can roughly
understand as circle averages with respect to macroscopic circles)
is negligible, which simplifies the analysis in a number of ways.
(The situation is in some sense similar in \cite{DF18} as there it
is only shown that $\gamma^{*}>0$.) Thus, the main new technical
challenge in the present article is controlling the influence from
coarse field for $0<\gamma<2$. (Naturally, it is most difficult when
$\gamma$ is close to $2$.) One manifestation of this challenge is
the fact that LQG measure only has a finite $p$th moment for $p$
approaching $1$ as $\gamma$ approaches $2$. While this creates
challenges in the multiscale analysis, from a conceptual point of
view our proof crucially relies on the fact that the LQG measures
for all Euclidean balls with radius at most $\eps$ uniformly converge
to $0$ for $\gamma\in(0,2)$---this fact is essential crucial for
the diameter to have the same order as the typical distance.

\subsection{Ingredients of the proof}

Here we describe the essential structure of our proof.

By a chaining argument similar to the ones used in \cite{DD16,DF18},
we can bound the diameter of a box by a sum of hard crossing distances
of dyadic subboxes at successively smaller scales. (See \propref{chaining}.)
Controlling the fluctuation of box diameters uniformly in the scale
is essentially enough to uniformly control the fluctuations of a Hölder
norm of the metric with respect to the Euclidean metric, which yields
tightness in the uniform norm. Thus, our essential goal is to bound
the fluctuations of hard crossing distances, again uniformly in the
scale. The form of our bound on these fluctuations will be a variance
bound on the logarithm of these fluctuations (\thmref{variancebound}),
which allows us to relate different quantiles of the scale as described
in \lemref{relatequantiles}.

Our bound on the variance of the logarithm of the hard crossing distance
takes place in the framework of the Efron--Stein inequality. We write
the underlying Gaussian free field as a function of the space-time
white noise, partition space-time into chunks, and then express the
variance of the logarithm of the Liouville graph distance as the sum
of the expected squared \emph{multiplicative} (since we are considering
the logarithm) changes in the distance when each box is resampled.
The white noise decomposition is by now a widely-used way to decompose
the GFF into a ``fine field,'' with no regularity but only short-range
correlations, and a ``coarse field,'' which is smooth but has long-range
correlations. See for example \cite{RV14,Shamov16} for general descriptions
of the white noise decomposition and \cite{DG16,DZZ18} for previous
uses of this decomposition in the setting of the LQG metric.

We control the fluctuations due to the coarse field using a standard
Gaussian concentration argument in \subsecref{coarsefieldeffect};
this is sufficient because we choose the decomposition so that the
coarse field is sufficiently smooth. The more serious task is to control
the fluctuations due to the fine field. Of course, the most pronounced
effect of the fine field in a certain box is on the part of the geodesic
close to the same box. Controlling the fluctuations on this part of
the geodesic (done in \subsecref{closefinefield}) essentially has
two key components:
\begin{itemize}
\item First, we observe that since the coarse field is smooth, and thus
can be treated as a constant on the small scale box, it suffices to
control the change in the distance for the LQG metric in boxes at
a smaller scale. This smaller scale, though, is \emph{not} the size
of the smaller boxes. From the perspective of the Liouville graph
distance, when the coarse field is positive, subboxes look larger
than they are, while when the coarse field is negative, subboxes look
smaller than they are. This is to say that the small boxes have an
``effective size'' depending on the value of the coarse field. (This
is quantified by the scaling covariance of the Liouville quantum gravity;
see \propref{conformalcovariance} and \propref{fieldonsmallerbox}.)
The requirement that $\gamma<2$ is precisely what is needed to ensure
that, even considering the maximum effect of the coarse field, the
effective size of the small boxes is strictly smaller than that of
the overall large box with high probability.
\item Second, the fact that the subboxes have a smaller effective size with
high probability means that we can apply an inductive procedure. We
assume that we have already proved our variance bound at smaller scales,
and thus can use this variance bound to bound quantiles of crossing
distances in our small boxes. This works  because the variance bound,
together with the RSW result (\thmref{RSW}) and percolation-type
arguments (given in \secref{Percolation}), allows us to show that
crossing distances are concentrated, and that ``easy'' and ``hard''
crossing distances are multiplicatively comparable. This means that,
using the inductive hypothesis, we can replace the part of the geodesic
close to the resampled box with a geodesic around a surrounding annulus
while only increasing the distance by, typically, a constant factor.
Moreover, since again the effective size of the smaller boxes is smaller
than the size of the large box, the increase in the distance incurred
in this way is also small compared to the total crossing distance
of the large box. (Here, we need to know that typical distances increase
at least polynomially in the scale, which is established in \propref{pllb-inductive}.)
This is what is needed to bound the variance of the logarithm.
\end{itemize}
As we have noted, the improvement of this paper compared to \cite{DD16,DF18}
is that we are able to obtain the tightness result for all $\gamma\in(0,2)$.
The main ingredient for this, not used in \cite{DD16,DF18}, is the
scaling covariance of the Liouville quantum gravity described above.
This allows us to perform a more precise multiscale analysis than
is done in either of these preceding works. In particular, this works
because we are able to show that precisely when $\gamma<2$, subboxes
of a larger box have a a strictly smaller ``effective size'' than
the larger box with high probability.

\subsection{Organization of the paper}

In \secref{Preliminaries} we introduce our cast of characters, collecting
a number of (semi-)standard facts about Gaussian free fields, Liouville
quantum gravity, and Liouville graph distance. In \secref{RSW} we
prove a conformal covariance-based RSW estimate for Liouville quantum
gravity, in the spirit of \cite{DF18}. In \secref{Percolation} we
prove concentration bounds on crossing distances using percolation
arguments in a multi-scale analysis framework, and set up some quantities
regarding the relationship between different quantiles of crossing
distances, which will form the objects of our inductive procedure.
(We defer the introduction of these quantities until that point because
they depend on certain constants which are introduced in the lemmas
of \secref{RSW} and \secref{Percolation}.) In \secref{inductiveargument},
we carry out the Efron--Stein argument, and in \secref{chaining}
we show that the diameter is within a constant of the typical distance
between two points, via a chaining argument, on our way to proving
the tightness of the metrics which is the main ingredient in \thmref[s]{maintheorem},~\ref{thm:diam},~and~\ref{thm:plbound}.
In \secref{invholder}, we prove that the the Euclidean metric is
Hölder-continuous with respect to any limiting metric, completing
the proof of \thmref{maintheorem}. Finally, we have relegated several
technical lemmas in analysis and probability, that do not relate in
particular to the subjects of this paper, to an appendix (beginning
on page~\pageref{sec:appendix-techlemmas}).

\subsection{Acknowledgments}

We thank Ewain Gwynne, Subhajit Goswami, Nina Holden, Jason Miller,
Dat Nguyen, Josh Pfeffer, and Xin Sun for interesting conversations.
We especially thank Hugo Falconet for pointing out an error in an
earlier version of the manuscript. We are also grateful to an anonymous
referee for an extremely careful reading and many insightful comments,
which greatly helped to improve the exposition.

\section{Preliminaries\label{sec:Preliminaries}}

\subsection{Notation}

We denote by $\mathbf{R}$ the set of real numbers and by $\mathbf{Q}$
the set of rational numbers. If $x,y\in\mathbf{R}$, we will use the
notation $x\wedge y=\min\{x,y\}$, $x\vee y=\max\{x,y\}$, $x^{+}=x\vee0$,
and $x^{-}=x\wedge0$. If $x\in\mathbf{R}^{2}$, we will denote by
$|x|$ the Euclidean norm of $x$. We denote by $B(x,r)$ the Euclidean
open ball with center $x$ and radius $r$. We will often use blackboard-bold
letters to refer to subsets of $\mathbf{R}^{2}$. If $\mathbb{X}\subset\mathbf{R}^{2}$,
then let $\diam_{\Euc}(\mathbb{X})$ denote the Euclidean diameter
of $\mathbb{X}$, and let $\overline{\mathbb{X}}$ denote the closure
of $\mathbb{X}$ in the Euclidean topology.

By a \emph{domain }we mean an open connected subset of $\mathbf{R}^{2}$.
By a \emph{box} we will mean a closed rectangular subset of $\mathbf{R}^{2}$
whose axes are aligned with the coordinate axes. If $\mathbb{B}$
is a box, let $\width(\mathbb{B})$ and $\height(\mathbb{B})$ denote
the width and height of $\mathbb{B}$, respectively. We define the
\emph{aspect ratio} of $\mathbb{B}$ as
\[
\AR(\mathbb{B})=\frac{\width(\mathbb{B})}{\height(\mathbb{B})}.
\]
For a box $\mathbb{B}$ and a scalar $\lambda>0$, define $\lambda\mathbb{B}$
to be the box with the same center as $\mathbb{B}$ but $\lambda$
times the side length.

We will frequently use the notation
\[
\mathbb{B}(S_{1},S_{2})=[-S_{1}/2,S_{1}/2]\times[-S_{2}/2,S_{2}/2]
\]
and
\[
\mathbb{B}(S)=\mathbb{B}(S,2S).
\]
Given a box $\mathbb{B}$, let $\mathrm{L}_{\mathbb{B}}$, $\mathrm{R}_{\mathbb{B}}$,
$\mathrm{T}_{\mathbb{B}}$, and $\mathrm{B}_{\mathbb{B}}$ denote
the left, right, top, and bottom sides of $\mathbb{B}$, respectively.
We will often omit the subscript $\mathbb{B}$ if it is clear from
context.

If $\mathbb{B}\subset\mathbf{R}^{2}$, we will use the notation
\[
\mathbb{B}^{(R)}=\{x\in\mathbf{R}^{2}\mid\dist_{\Euc}(x,\mathbb{B})<R\}=\bigcup_{x\in\mathbb{B}}B(x,R).
\]
If $\mathbb{B}\subset\mathbf{R}^{2}$ is a box, define $\mathbb{B}^{\circ}$
to be the smallest box containing $\mathbb{B}^{(\diam_{\Euc}\mathbb{B})}$,
and $\mathbb{B}^{*}$ to be the smallest box containing $\mathbb{B}^{(2\diam_{\Euc}\mathbb{B})}$.

Throughout this paper, we will often work with constants, usually
denoted by $C$ or similar notation if they are to be thought of as
large, or by $c$ or similar notation if they are to be thought of
as small. We will always allow constants to change from line to line
in a computation.

\subsection{The Gaussian free field\label{subsec:GFF}}

If $\mathbb{A}\subset\mathbf{R}^{2}$ is a domain, we will denote
by $h_{\mathbb{A}}$ a Gaussian free field with Dirichlet boundary
conditions on $\mathbb{A}$. (See \cite{She07,B15} for more systematic
introductions to the GFF.) We recall that the GFF is a distribution-valued
stochastic process with covariance function
\begin{equation}
G^{\mathbb{A}}(x,y)=\pi\int_{0}^{\infty}p_{t}^{\mathbb{A}}(x,y)\,\dif t,\label{eq:covariancekernel}
\end{equation}
where $G^{\mathbb{A}}$ is the Green's function for the Dirichlet
problem on $\mathbb{A}$, and $p_{t}^{\mathbb{A}}$ is the heat kernel
for a standard Brownian motion killed on $\partial\mathbb{A}$. If
$\mathbb{B}\supset\mathbb{A}$, then we define $h_{\mathbb{B}:\mathbb{A}}$
to be the harmonic interpolation of $h_{\mathbb{B}}$ on $\mathbb{A}$,
and simply $h_{\mathbb{B}}$ outside of $\mathbb{A}$. (It is not
quite obvious that this harmonic interpolation makes sense, because
$h_{\mathbb{B}}$ is a distribution and not a function. See \cite{She07}
or \cite{B15} for the precise construction.) We then have the following
standard property. (Again see \cite{She07,B15}.)
\begin{prop}[Gibbs--Markov property]
\label{prop:markovfieldproperty}The field
\begin{equation}
h_{\mathbb{B}}=h_{\mathbb{A}}-h_{\mathbb{A}:\mathbb{B}}\label{eq:markovfieldproperty}
\end{equation}
is a standard Gaussian free field on $\mathbb{B}$. Moreover, the
fields $h_{\mathbb{B}}$ and $h_{\mathbb{A}:\mathbb{B}}$ are independent.
\end{prop}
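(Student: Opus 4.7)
The plan is to prove both the distributional identity and the independence by Gaussian computations, since $h_{\mathbb{B}:\mathbb{A}}$ is a linear functional of $h_{\mathbb{B}}$ and hence $h_{\mathbb{A}}:=h_{\mathbb{B}}-h_{\mathbb{B}:\mathbb{A}}$ is jointly Gaussian with $h_{\mathbb{B}:\mathbb{A}}$. The key ingredient is the well-known decomposition of the Dirichlet Green's function: for $x,y\in\mathbb{A}$,
\[
G^{\mathbb{B}}(x,y)=G^{\mathbb{A}}(x,y)+H^{\mathbb{A},\mathbb{B}}(x,y),
\]
where $H^{\mathbb{A},\mathbb{B}}(x,y)$ is harmonic in each variable on $\mathbb{A}$. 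This is established from the Brownian-motion representation \eqref{eq:covariancekernel}: split the trajectory from $x$ at the first exit time $\tau$ from $\mathbb{A}$ and use the strong Markov property, so that $H^{\mathbb{A},\mathbb{B}}(x,y)=\mathbf{E}^{x}[G^{\mathbb{B}}(B_{\tau},y)]$; by symmetry this is also the double harmonic extension $\mathbf{E}^{x,y}[G^{\mathbb{B}}(B_{\tau},B_{\tau'}')]$ of two independent Brownian motions stopped on $\partial\mathbb{A}$.

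Next, I would identify the covariance of $h_{\mathbb{B}:\mathbb{A}}$ on $\mathbb{A}$ with $H^{\mathbb{A},\mathbb{B}}$. The construction of $h_{\mathbb{B}:\mathbb{A}}$ in \cite{She07} realizes it as the orthogonal projection of $h_{\mathbb{B}}$ onto the closed subspace of distributions harmonic on $\mathbb{A}$ (with respect to the Dirichlet inner product), which in the Brownian-motion picture corresponds to averaging the boundary trace: $h_{\mathbb{B}:\mathbb{A}}(x)=\mathbf{E}[h_{\mathbb{B}}(B_{\tau})\mid\mathcal{F}_{\partial\mathbb{A}}]$ for $x\in\mathbb{A}$, where the conditioning is on the boundary trace (interpreted in the distributional sense). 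Taking covariances and using the strong Markov property gives exactly $H^{\mathbb{A},\mathbb{B}}$. Outside $\mathbb{A}$, $h_{\mathbb{B}:\mathbb{A}}=h_{\mathbb{B}}$, and the extension to cross-covariances between inside and outside of $\mathbb{A}$ is obtained analogously.

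With these ingredients I can compute the two required covariances directly. For $x,y\in\mathbb{A}$,
\[
\mathbf{E}[h_{\mathbb{A}}(x)h_{\mathbb{A}}(y)]=G^{\mathbb{B}}(x,y)-2\,\mathbf{E}[h_{\mathbb{B}}(x)h_{\mathbb{B}:\mathbb{A}}(y)]+H^{\mathbb{A},\mathbb{B}}(x,y);
\]
the cross term equals $\mathbf{E}^{y}[G^{\mathbb{B}}(x,B_{\tau})]=H^{\mathbb{A},\mathbb{B}}(x,y)$ by the same Markov computation, so the whole expression collapses to $G^{\mathbb{B}}-H^{\mathbb{A},\mathbb{B}}=G^{\mathbb{A}}$. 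Similarly,
\[
\mathbf{E}[h_{\mathbb{A}}(x)h_{\mathbb{B}:\mathbb{A}}(y)]=\mathbf{E}[h_{\mathbb{B}}(x)h_{\mathbb{B}:\mathbb{A}}(y)]-H^{\mathbb{A},\mathbb{B}}(x,y)=0.
\]
The first identity says $h_{\mathbb{A}}$ has the GFF covariance $G^{\mathbb{A}}$ on $\mathbb{A}$ (and vanishes outside by construction), while the second, combined with joint Gaussianity, yields independence of $h_{\mathbb{A}}$ and $h_{\mathbb{B}:\mathbb{A}}$.

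The only real subtlety is that all three fields are random Schwartz distributions rather than functions, so every ``pointwise'' covariance computation above must formally be read as an identity of bilinear forms obtained by pairing with test functions $\varphi,\psi$. The essential obstacle is therefore not analytic but notational: one must verify that the orthogonal-projection definition of $h_{\mathbb{B}:\mathbb{A}}$ is well-defined as a distribution (so that the boundary-trace heuristic can be rigorously interpreted), after which the covariance arithmetic above goes through verbatim. This is where I would simply invoke the constructions in \cite{She07,B15} rather than redo them; the contribution of this proof is the covariance bookkeeping.
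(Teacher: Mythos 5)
Your argument is correct and is exactly the standard proof of the Gibbs--Markov property: the paper does not prove \propref{markovfieldproperty} but defers to \cite{She07,B15}, and your Green's-function decomposition $G^{\mathbb{B}}=G^{\mathbb{A}}+H^{\mathbb{A},\mathbb{B}}$ together with the covariance bookkeeping (including the vanishing cross-covariance and joint Gaussianity for independence) is precisely the argument given there. The only caveats are cosmetic: you have swapped the roles of $\mathbb{A}$ and $\mathbb{B}$ relative to the proposition's statement (where $\mathbb{A}\supset\mathbb{B}$), and, as you note, the pointwise computations must be read as identities of bilinear forms against test functions, with the well-definedness of the harmonic projection taken from the cited constructions.
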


This paper will rely in crucial ways on multiscale analysis, which
means that we will want to consider Gaussian free fields defined on
all boxes simultaneously---in other words, we want to couple the
Gaussian free fields on all boxes. We can do this by considering a
whole-plane GFF $h_{\mathbf{R}^{2}}$ (see \cite[Section 4.1]{B15}),
pinned in some arbitrary way. Then we can define $h_{\mathbb{B}}=h_{\mathbf{R}^{2}}-h_{\mathbf{R}^{2}:\mathbb{B}}$
for each domain $\mathbb{B}$. Alternatively, we can use the construction
in \cite[Section 2.2.1]{DD16}: the coupling between GFFs on different
domains is easy to define if all domains $\mathbb{B}$ are subsets
of some universal box $\mathbb{A}$, because we can define $h_{\mathbb{B}}$
for all $\mathbb{B}\subset\mathbb{A}$ by \eqref{markovfieldproperty}.
But then the universal box $\mathbb{A}$ can be taken to be arbitrarily
large, and it is easy to check that the joint laws of the Gaussian
free field on finitely many domains remain the same no matter how
large we take $\mathbb{A}$. Thus, by Kolmogorov's extension theorem,
we can take $h_{\mathbb{B}}$ to be defined simultaneously on all
domains $\mathbb{B}\subset\mathbf{R}^{2}$ so that \eqref{markovfieldproperty}
holds for any nested pair of domains.

The following locality property of the Gaussian free field (which
can actually be seen as a consequence of the Gibbs--Markov property)
will also be essential.
\begin{prop}
\label{prop:independent-disjoint}If $\mathbb{A}$ and $\mathbb{A}'$
are disjoint domains, then $h_{\mathbb{A}}$ and $h_{\mathbb{A}'}$
are independent.
\end{prop}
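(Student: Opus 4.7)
The plan is to reduce the independence to the Gibbs--Markov property (\propref{markovfieldproperty}) applied in a large common box. First I would fix a box $\mathbb{D}$ with $\overline{\mathbb{A}}, \overline{\mathbb{A}'} \subset \mathbb{D}$; by the coupling construction described in the paragraphs preceding the proposition, both $h_\mathbb{A}$ and $h_{\mathbb{A}'}$ are deterministic measurable functionals of $h_\mathbb{D}$, realized by the formulas $h_\mathbb{A} = h_\mathbb{D} - h_{\mathbb{D}:\mathbb{A}}$ and $h_{\mathbb{A}'} = h_\mathbb{D} - h_{\mathbb{D}:\mathbb{A}'}$. Applying \propref{markovfieldproperty} to $\mathbb{A} \subset \mathbb{D}$ yields the decomposition $h_\mathbb{D} = h_\mathbb{A} + h_{\mathbb{D}:\mathbb{A}}$ with $h_\mathbb{A}$ and $h_{\mathbb{D}:\mathbb{A}}$ independent, so it suffices to show that $h_{\mathbb{A}'}$ is $\sigma(h_{\mathbb{D}:\mathbb{A}})$-measurable.

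The essential topological observation I would record is that, since $\mathbb{A}$ and $\mathbb{A}'$ are disjoint open sets, one actually has $\overline{\mathbb{A}'} \cap \mathbb{A} = \emptyset$: any point of $\mathbb{A}$ has an open neighborhood contained in $\mathbb{A}$ and therefore disjoint from $\mathbb{A}'$, so such a point cannot lie in $\overline{\mathbb{A}'}$. Consequently, as distributions on the open set $\mathbb{D} \setminus \overline{\mathbb{A}}$ the fields $h_\mathbb{D}$ and $h_{\mathbb{D}:\mathbb{A}}$ coincide (by the definition of the harmonic interpolation recalled just before \propref{markovfieldproperty}), and this open set contains a neighborhood of $\mathbb{A}'$ in the generic case $\overline{\mathbb{A}} \cap \overline{\mathbb{A}'} = \emptyset$.

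Given this, the construction $h_{\mathbb{A}'} = h_\mathbb{D} - h_{\mathbb{D}:\mathbb{A}'}$ uses only $h_\mathbb{D}$ restricted to an open neighborhood of $\overline{\mathbb{A}'}$ (for the harmonic extension from $\partial \mathbb{A}'$), and that restriction equals $h_{\mathbb{D}:\mathbb{A}}$ restricted to the same neighborhood. Thus $h_{\mathbb{A}'}$ is measurable with respect to $\sigma(h_{\mathbb{D}:\mathbb{A}})$, and the desired independence of $h_\mathbb{A}$ and $h_{\mathbb{A}'}$ follows by combining this with the independence coming from Gibbs--Markov.

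The main obstacle I anticipate is the standard but slightly delicate distributional bookkeeping required to justify that the harmonic-extension part of the definition of $h_{\mathbb{A}'}$ depends only on $h_\mathbb{D}$ on an open neighborhood of $\overline{\mathbb{A}'}$, and to handle the boundary case $\overline{\mathbb{A}} \cap \overline{\mathbb{A}'} \neq \emptyset$. This can be addressed either by approximating $\mathbb{A}'$ from within by slightly shrunken subdomains $\mathbb{A}'_n \Subset \mathbb{A}'$ (so that $\overline{\mathbb{A}'_n} \cap \overline{\mathbb{A}} = \emptyset$) and passing to the limit using continuity of the Dirichlet Green's function in the domain, or, equivalently, by testing the claimed independence against smooth functions compactly supported in $\mathbb{A}$ and in $\mathbb{A}'$ respectively, in which case the covariance between the corresponding Gaussian random variables vanishes by \eqref{covariancekernel} since the Green's functions have disjoint support.
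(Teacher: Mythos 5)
The paper gives no proof of this proposition at all --- it only remarks parenthetically that the statement ``can actually be seen as a consequence of the Gibbs--Markov property,'' which is precisely the derivation you carry out, so your main route is the intended one and it is sound. Two refinements are worth recording. First, the boundary case $\overline{\mathbb{A}}\cap\overline{\mathbb{A}'}\ne\emptyset$ that you flag as the main obstacle dissolves if you phrase the measurability claim correctly: $h_{\mathbb{A}'}$ is measurable with respect to the restriction of $h_{\mathbb{D}}$ to the open set $\mathbb{A}'$ \emph{itself}, not merely to a neighborhood of $\overline{\mathbb{A}'}$ (in the Hilbert-space picture, $\langle h_{\mathbb{A}'},v\rangle_{\nabla}=\langle h_{\mathbb{D}},v\rangle_{\nabla}$ for $v\in H_0^1(\mathbb{A}')$, and $H_0^1(\mathbb{A}')$ lies in the closed span of $\{G^{\mathbb{D}}\phi:\phi\in C_c^{\infty}(\mathbb{A}')\}$). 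Since your topological observation gives $\mathbb{A}'\subset\mathbb{D}\setminus\overline{\mathbb{A}}$, where $h_{\mathbb{D}}$ and $h_{\mathbb{D}:\mathbb{A}}$ agree as distributions, no approximation by $\mathbb{A}'_n\Subset\mathbb{A}'$ is needed. Second, your alternative covariance argument reaches the right conclusion but for the wrong stated reason: \eqref{eq:covariancekernel} gives the covariance of a \emph{single} field with itself, not the cross-covariance of $h_{\mathbb{A}}$ and $h_{\mathbb{A}'}$ under the coupling, so ``the Green's functions have disjoint support'' is not the relevant fact. The correct statement is that $h_{\mathbb{A}}$ and $h_{\mathbb{A}'}$ are projections of the same Gaussian field onto $H_0^1(\mathbb{A})$ and $H_0^1(\mathbb{A}')$, which are orthogonal subspaces of $H_0^1(\mathbb{D})$ because gradients of test functions with disjoint supports are orthogonal; vanishing cross-covariance plus joint Gaussianity of the coupled family then gives independence. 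Either route, properly finished, proves the proposition.
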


A final crucial property of the Gaussian free field is its conformal
invariance. (See \cite[Theorem 1.19]{B15}.)
\begin{prop}
Suppose that $F:\mathbb{V}\to\mathbb{V}'$ is a conformal map between
two domains $\mathbb{V}$ and $\mathbb{V}'$. Then $h_{\mathbb{V}^{'}}$
has the same law as $h_{\mathbb{V}^{'}}\circ F^{-1}$.
\end{prop}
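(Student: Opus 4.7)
The plan is to reduce the claim to the conformal invariance of the Dirichlet Green's function. Since the GFF $h_{\mathbb{V}}$ is, by definition, the centered Gaussian distribution-valued process whose covariance is the Dirichlet Green's function $G^{\mathbb{V}}$ as in (\ref{eq:covariancekernel}), its law is uniquely determined by $G^{\mathbb{V}}$. Thus it suffices to show that the appropriate pull-back of $h_{\mathbb{V}'}$ by $F$ (interpreted in the distributional sense, via duality with test functions weighted by the conformal Jacobian) yields a centered Gaussian whose covariance kernel agrees with $G^{\mathbb{V}}$.

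The core analytic step is to establish the identity
\[
G^{\mathbb{V}}(x,y) = G^{\mathbb{V}'}(F(x), F(y))
\]
for all $x, y \in \mathbb{V}$. I would prove this probabilistically using the representation (\ref{eq:covariancekernel}). The key input is conformal invariance of two-dimensional Brownian motion up to a time change: if $B^{\mathbb{V}}$ is Brownian motion started at $x$ and killed on $\partial\mathbb{V}$, then the image $F(B^{\mathbb{V}})$, reparametrized by the inverse of $\tau_t = \int_0^t |F'(B^{\mathbb{V}}_s)|^2 \dif s$, is a Brownian motion started at $F(x)$ and killed on $\partial\mathbb{V}'$. Substituting this into (\ref{eq:covariancekernel}), the Jacobian from the time change combines with the Jacobian from the change of spatial variables in a way that cancels exactly, yielding the claimed identity. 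As an alternative, one can characterize $G^{\mathbb{V}}(\cdot, y)$ analytically as the unique function harmonic on $\mathbb{V}\setminus\{y\}$, vanishing on $\partial\mathbb{V}$, with a logarithmic singularity at $y$; conformal maps preserve harmonicity and send boundary to boundary, and only alter the log singularity by a smooth additive term that cancels on both sides of the identity, so the same conclusion follows.

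Combining the two ingredients, the pull-back of $h_{\mathbb{V}'}$ under $F$ is centered Gaussian with covariance $G^{\mathbb{V}'}(F(x), F(y)) = G^{\mathbb{V}}(x,y)$, hence equal in law to $h_{\mathbb{V}}$. The main conceptual obstacle is giving a rigorous meaning to the composition of a random distribution with $F$ (or $F^{-1}$); this is resolved by defining the pull-back through its action on test functions $\varphi$ on $\mathbb{V}$ by $\langle h_{\mathbb{V}'} \circ F, \varphi \rangle = \langle h_{\mathbb{V}'}, |(F^{-1})'|^2 (\varphi \circ F^{-1}) \rangle$, after which the verification reduces to the covariance identity between pairs of test functions, which in turn follows from the pointwise identity for $G$ by changing variables twice. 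Once this framework is set up the rest is a routine computation, so the only genuine content of the proof is the conformal invariance of $G$.
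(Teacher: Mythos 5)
The paper does not prove this proposition; it simply cites it as a standard property of the GFF (\cite[Theorem 1.19]{B15}). Your argument is the standard proof of that cited result and is correct: the reduction to the identity $G^{\mathbb{V}}(x,y)=G^{\mathbb{V}'}(F(x),F(y))$, the verification of that identity via conformal invariance of planar Brownian motion modulo time change (or via the characterization of $G^{\mathbb{V}}(\cdot,y)$ by its boundary values and logarithmic singularity), and the definition of the distributional pull-back with the Jacobian factor $|(F^{-1})'|^2$ all fit together exactly as you describe, with the Jacobians cancelling in the covariance computation.
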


\subsection{Gaussian free field estimates}

In this section we introduce certain estimates on the Gaussian free
field that will be important throughout the paper.
\begin{lem}
\label{lem:greensfunctionbound}There is a constant $C<\infty$ so
that, if $\mathbb{B}$ is a box with $\width(\mathbb{B}),\height(\mathbb{B})\in[1/3,3]$,
and $x,y\in\mathbb{B}$, then
\begin{equation}
\left|G^{\mathbb{B}^{*}}(x,y)+\log|x-y|\right|\le C.\label{eq:greensfunctionest}
\end{equation}
\end{lem}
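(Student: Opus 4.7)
The plan is to use the standard decomposition of the Dirichlet Green's function into a logarithmic singularity plus a harmonic remainder. Specifically, I would write
\[
G^{\mathbb{B}^*}(x,y) = -\log|x-y| + H^{\mathbb{B}^*}(x,y),
\]
where $H^{\mathbb{B}^*}(\cdot,y)$ is the bounded harmonic function on $\mathbb{B}^*$ with boundary values $\log|\cdot - y|$ on $\partial\mathbb{B}^*$. A preliminary sanity check is that this decomposition is consistent with the normalization in~\eqref{eq:covariancekernel}: the function $-\log|\cdot - y|$ satisfies $-\tfrac{1}{2}\Delta_x(-\log|x-y|) = \pi\delta_y(x)$, which matches the PDE $-\tfrac{1}{2}\Delta_x G^{\mathbb{B}^*}(\cdot,y) = \pi\delta_y$ obeyed by the Green's function defined in~\eqref{eq:covariancekernel} (where $p_t^{\mathbb{B}^*}$ is the heat kernel of standard Brownian motion, generator $\tfrac{1}{2}\Delta$, killed on $\partial\mathbb{B}^*$). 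The lemma is then reduced to the boundedness of $H^{\mathbb{B}^*}$ on $\mathbb{B}\times\mathbb{B}$.

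Next I would extract the relevant geometry. Under the hypothesis $\width(\mathbb{B}),\height(\mathbb{B})\in[1/3,3]$ one has $\diam_{\Euc}(\mathbb{B})\in[\sqrt{2}/3,\,3\sqrt{2}]$, and then from the definition of $\mathbb{B}^*$ as the smallest box containing $\mathbb{B}^{(2\diam_{\Euc}\mathbb{B})}$ one reads off
\[
\dist_{\Euc}(\mathbb{B},\partial\mathbb{B}^*)\ge 2\diam_{\Euc}(\mathbb{B})\ge 2\sqrt{2}/3,\qquad \diam_{\Euc}(\mathbb{B}^*)\le C_0,
\]
for some absolute constant $C_0<\infty$. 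Hence for every $y\in\mathbb{B}$ and $z\in\partial\mathbb{B}^*$ one has $|z-y|\in[2\sqrt{2}/3,\,C_0]$, so $\bigl|\log|z-y|\bigr|\le C_1$ uniformly over the allowed boxes and points.

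Finally, the maximum principle applied to the harmonic function $H^{\mathbb{B}^*}(\cdot,y)$ on $\mathbb{B}^*$ gives $|H^{\mathbb{B}^*}(x,y)|\le C_1$ for every $x\in\mathbb{B}^*$, and in particular for $x\in\mathbb{B}$. Substituting into the decomposition then yields the claim with $C=C_1$. The only non-routine point is the calibration in the first paragraph: one must verify that the factor $\pi$ in~\eqref{eq:covariancekernel}, together with the convention that $p_t$ is the heat kernel of Brownian motion with generator $\tfrac{1}{2}\Delta$, produces exactly the coefficient $1$ in front of $-\log|x-y|$. This is a short computation via the substitution $u=|x-y|^2/(2t)$ in $\pi\int_0^\infty p_t(x,y)\,dt$ and poses no genuine obstacle; everything else is routine harmonic function theory combined with absolute-constant bookkeeping.
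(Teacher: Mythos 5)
Your proof is correct, and it takes a genuinely different route from the paper. The paper's (very terse) argument goes through conformal invariance: map $\mathbb{B}^{*}$ to the unit disk by a Riemann map $\phi$, use the explicit formula $G^{\mathbb{D}}(u,v)=\log\frac{|1-u\bar{v}|}{|u-v|}$, and then compare $\log|\phi(x)-\phi(y)|$ with $\log|x-y|$ via uniform bounds on $\phi'$ restricted to $\mathbb{B}$. You instead use the classical decomposition $G^{\mathbb{B}^{*}}(x,y)=-\log|x-y|+H^{\mathbb{B}^{*}}(x,y)$ with $H(\cdot,y)$ harmonic, reduce to bounding $\log|z-y|$ for $z\in\partial\mathbb{B}^{*}$, $y\in\mathbb{B}$, and finish with the maximum principle. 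Your calibration of the constant is right: $\Delta\log|x|=2\pi\delta_{0}$ gives $-\tfrac{1}{2}\Delta(-\log|x-y|)=\pi\delta_{y}$, matching the normalization $G^{\mathbb{A}}=\pi\int_{0}^{\infty}p_{t}^{\mathbb{A}}\,\dif t$ with $p_{t}$ the kernel of the generator $\tfrac{1}{2}\Delta$; and the geometry is right, since $\dist_{\Euc}(\mathbb{B},\partial\mathbb{B}^{*})=2\diam_{\Euc}(\mathbb{B})\ge2\sqrt{2}/3$ while $\diam_{\Euc}(\mathbb{B}^{*})$ is bounded by an absolute constant, so $\bigl|\log|z-y|\bigr|$ is uniformly bounded on $\partial\mathbb{B}^{*}\times\mathbb{B}$. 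What your approach buys is the elimination of conformal mapping altogether: uniformity over the family of admissible boxes is immediate from the two absolute geometric constants, whereas the paper's route requires uniform control of the Riemann map derivative over all boxes with $\width,\height\in[1/3,3]$ (a compactness or Schwarz--Christoffel argument that the paper leaves implicit). The conformal approach, for its part, yields sharp two-sided information on $G^{\mathbb{B}^{*}}$ everywhere in the domain, including near $\partial\mathbb{B}^{*}$ where your harmonic correction $H$ would no longer be comparable to a constant relative to the vanishing Green's function; but the lemma only needs $x,y\in\mathbb{B}$, so your argument suffices.
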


\begin{proof}
This can be computed from the explicit formula for the Green's function
of the Laplacian on the unit disk, using a uniform bound on the restriction
of the derivative to $\mathbb{B}$ of a Riemann map from $\mathbb{B}^{*}$
to the unit disk.
\end{proof}
\begin{lem}
\label{lem:varislog}There is a constant $C<\infty$ so that the following
holds. Let $\mathbb{B}\subset\mathbb{A}$ be two nested boxes such
that $\AR(\mathbb{A}),\AR(\mathbb{B})\in[1/3,3]$ and let $x$ be
the center point of $\mathbb{B}$. Then we have
\begin{equation}
\Var(h_{\mathbb{A}^{*}:\mathbb{B}^{*}}(x))\le-\log\frac{\diam_{\Euc}(\mathbb{B})}{\diam_{\Euc}(\mathbb{A})}+C.\label{eq:varislog}
\end{equation}
\end{lem}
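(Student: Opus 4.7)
The plan is to use the Gibbs--Markov property together with the mean-value property for harmonic functions to express $h_{\mathbb{A}^*:\mathbb{B}^*}(x)$ as a difference of two GFF circle averages, and then evaluate the variance of each circle average using \lemref{greensfunctionbound} together with the scale invariance of the GFF.

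Fix a radius $r$ with $B(x,r) \subset \mathbb{B}$ and $r \asymp \diam_{\Euc}(\mathbb{B})$; one can take $r$ equal to half the distance from $x$ to $\partial\mathbb{B}$, which is comparable to $\diam_{\Euc}(\mathbb{B})$ because $x$ is the center of $\mathbb{B}$ and $\AR(\mathbb{B}) \in [1/3,3]$. The Gibbs--Markov decomposition \propref{markovfieldproperty} gives $h_{\mathbb{A}^*} = h_{\mathbb{B}^*} + h_{\mathbb{A}^*:\mathbb{B}^*}$ on $\mathbb{B}^*$, with the two summands independent. Taking the circle average of radius $r$ around $x$ on both sides, and using that $h_{\mathbb{A}^*:\mathbb{B}^*}$ is harmonic on $\mathbb{B}^*$ (so its circle average at $x$ equals its value at the center by the mean-value property), one obtains
\[
h_{\mathbb{A}^*,r}(x) = h_{\mathbb{B}^*,r}(x) + h_{\mathbb{A}^*:\mathbb{B}^*}(x),
\]
and therefore, by independence,
\[
\Var(h_{\mathbb{A}^*:\mathbb{B}^*}(x)) = \Var(h_{\mathbb{A}^*,r}(x)) - \Var(h_{\mathbb{B}^*,r}(x)).
\]

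To compute each of the two circle-average variances, write it as a double integral of the corresponding Dirichlet Green's function against the uniform probability measure $\nu$ on $\partial B(x,r)$, and then rescale in order to bring \lemref{greensfunctionbound} to bear. Setting $D_{\mathbb{B}} = \diam_{\Euc}(\mathbb{B})$ and $\tilde{\mathbb{B}} = \mathbb{B}/D_{\mathbb{B}}$, the box $\tilde{\mathbb{B}}$ has width and height in $[1/\sqrt{10},1] \subset [1/3,3]$ (because $\AR(\mathbb{B}) \in [1/3,3]$), and the scale invariance of the GFF (a consequence of its conformal invariance) yields
\[
\Var(h_{\mathbb{B}^*,r}(x)) = \Var(h_{\tilde{\mathbb{B}}^*, r/D_{\mathbb{B}}}(x/D_{\mathbb{B}})).
\]
The rescaled circle of radius $r/D_{\mathbb{B}}$ around $x/D_{\mathbb{B}}$ is contained in $\tilde{\mathbb{B}}$ by construction, so \lemref{greensfunctionbound} gives $G^{\tilde{\mathbb{B}}^*}(y,z) = -\log|y-z| + O(1)$ for $y,z$ on this circle. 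Combined with the classical identity $\iint \log|y-z|\,\nu_\rho(dy)\,\nu_\rho(dz) = \log\rho$ for the uniform measure $\nu_\rho$ on a circle of radius $\rho$, this produces
\[
\Var(h_{\mathbb{B}^*,r}(x)) = -\log(r/D_{\mathbb{B}}) + O(1).
\]
The identical computation with $\mathbb{A}$ in place of $\mathbb{B}$ (whose hypotheses hold because $B(x,r) \subset \mathbb{B} \subset \mathbb{A}$ and $\AR(\mathbb{A}) \in [1/3,3]$) gives $\Var(h_{\mathbb{A}^*,r}(x)) = -\log(r/\diam_{\Euc}(\mathbb{A})) + O(1)$, and subtracting yields $\Var(h_{\mathbb{A}^*:\mathbb{B}^*}(x)) = \log(\diam_{\Euc}(\mathbb{A})/\diam_{\Euc}(\mathbb{B})) + O(1)$, which is the desired upper bound (in fact with a matching lower bound of the same form emerging for free). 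The only real bookkeeping needed is to verify that after each rescaling the circle of integration stays inside the rescaled smaller box, so that \lemref{greensfunctionbound} applies; this is immediate from $B(x,r) \subset \mathbb{B} \subset \mathbb{A}$.
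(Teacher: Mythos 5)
Your argument is correct, and it reaches the estimate by a slightly different route than the paper. The paper's proof is the one-line ``direct'' version: by the Gibbs--Markov property the covariance kernel of $h_{\mathbb{A}^{*}:\mathbb{B}^{*}}$ is $G^{\mathbb{A}^{*}}-G^{\mathbb{B}^{*}}$ (equivalently, one writes $h_{\mathbb{A}^{*}:\mathbb{B}^{*}}(x)$ as the integral of $h_{\mathbb{A}^{*}}$ against harmonic measure on $\partial\mathbb{B}^{*}$), and then \lemref{greensfunctionbound}, applied after rescaling each box to unit size, shows the logarithmic singularities cancel on the diagonal and leave exactly $\log(\diam_{\Euc}(\mathbb{A})/\diam_{\Euc}(\mathbb{B}))+O(1)$. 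You instead interpose a circle average: the mean-value property turns $h_{\mathbb{A}^{*}:\mathbb{B}^{*}}(x)$ into $h_{\mathbb{A}^{*},r}(x)-h_{\mathbb{B}^{*},r}(x)$, and independence converts the variance into a difference of two circle-average variances. What this buys you is that the only measure you ever integrate the Green's function against is the explicit uniform measure on a circle, for which $\iint\log|y-z|\,\nu_{\rho}(\dif y)\,\nu_{\rho}(\dif z)=\log\rho$ is classical; you never need to say anything about the harmonic measure of a box or about the diagonal limit of a difference of Green's functions. The cost is the extra (routine) bookkeeping that $B(x,r)$ fits inside both domains. One cosmetic slip: after dividing $\mathbb{B}$ by its diameter the shorter side can be as small as $1/\sqrt{10}$, which is slightly less than $1/3$, so $[1/\sqrt{10},1]\not\subset[1/3,3]$ and \lemref{greensfunctionbound} does not literally apply to $\tilde{\mathbb{B}}$ as you normalized it. This is harmless---normalize by the shorter side (or the width) instead, so the rescaled box has both dimensions in $[1/3,3]$ while its diameter stays bounded above and below by absolute constants, contributing only to the $O(1)$ term---but as written the containment claim is false and should be fixed.
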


\begin{proof}
This follows simply from writing down the integral expression for
the variance of $h_{\mathbb{A}^{*}:\mathbb{B}^{*}}(x)$ and applying
\eqref{greensfunctionest}.
\end{proof}
\begin{lem}
\label{lem:maxvsmin}There is a constant $C<\infty$ so that if $\mathbb{B}\subset\mathbb{B}^{*}\subset\mathbb{A}$
are boxes with aspect ratios between $1/3$ and $3$, then for all
$x,y\in\mathbb{B}^{\circ}$, we have
\[
\Var(h_{\mathbb{A}:\mathbb{B}^{*}}(x)-h_{\mathbb{A}:\mathbb{B}^{*}}(y))\le C\frac{|x-y|}{\diam_{\Euc}(\mathbb{B})}.
\]
\end{lem}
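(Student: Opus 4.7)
The plan is to identify the variance as a symmetric second difference of Green's functions, reduce to the whole-plane case via a monotonicity argument, and then bound the resulting expression by a Taylor expansion of a smooth, uniformly bounded function.

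I would begin by applying \propref{markovfieldproperty} to the pair $\mathbb{B}^{*}\subset\mathbb{A}$, which gives $\Cov(h_{\mathbb{A}:\mathbb{B}^{*}}(x),h_{\mathbb{A}:\mathbb{B}^{*}}(y))=F(x,y):=G^{\mathbb{A}}(x,y)-G^{\mathbb{B}^{*}}(x,y)$. Since the $-\log|x-y|$ singularities of the two Green's functions cancel, $F$ extends smoothly across the diagonal of $\mathbb{B}^{*}\times\mathbb{B}^{*}$, so the variance we want to bound equals $F(x,x)+F(y,y)-2F(x,y)$.

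The main obstacle is that $F$ itself can be as large as $\log\diam_{\Euc}(\mathbb{A})$, so any direct interior-gradient estimate on $F$ would produce an unwanted logarithmic factor depending on $\mathbb{A}$. To circumvent this, I would apply \propref{markovfieldproperty} once more to the triple $\mathbb{B}^{*}\subset\mathbb{A}\subset\mathbf{R}^{2}$, using the whole-plane GFF constructed after that proposition, obtaining $h_{\mathbf{R}^{2}:\mathbb{B}^{*}}=h_{\mathbb{A}:\mathbb{B}^{*}}+h_{\mathbf{R}^{2}:\mathbb{A}}|_{\mathbb{B}^{*}}$ with the two summands independent. Variances of independent sums add, so
\[
\Var(h_{\mathbb{A}:\mathbb{B}^{*}}(x)-h_{\mathbb{A}:\mathbb{B}^{*}}(y))\le\Var(h_{\mathbf{R}^{2}:\mathbb{B}^{*}}(x)-h_{\mathbf{R}^{2}:\mathbb{B}^{*}}(y)),
\]
and the right-hand side depends only on $\mathbb{B}^{*}$. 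Because $G^{\mathbf{R}^{2}}(x,y)=-\log|x-y|$ up to an additive pinning constant that cancels in any variance of a difference, the same computation as above yields
\[
\Var(h_{\mathbf{R}^{2}:\mathbb{B}^{*}}(x)-h_{\mathbf{R}^{2}:\mathbb{B}^{*}}(y))=2g_{\mathbb{B}^{*}}(x,y)-g_{\mathbb{B}^{*}}(x,x)-g_{\mathbb{B}^{*}}(y,y),
\]
where $g_{\mathbb{B}^{*}}(x,y):=G^{\mathbb{B}^{*}}(x,y)+\log|x-y|$ is smooth, symmetric, and harmonic in each variable on $\mathbb{B}^{*}$.

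Finally, I would bound this explicit expression by Taylor expansion. By \lemref{greensfunctionbound} combined with the maximum principle applied in each variable separately (to propagate the bound from $\mathbb{B}$ to all pairs in $\mathbb{B}^{\circ}$, which lie at distance at least $\diam_{\Euc}(\mathbb{B})$ from $\partial\mathbb{B}^{*}$), $g_{\mathbb{B}^{*}}$ is uniformly bounded on a neighborhood of $\mathbb{B}^{\circ}\times\mathbb{B}^{\circ}$. Iterating the interior gradient estimate for harmonic functions in each variable then yields $|\nabla_{1}\nabla_{2}g_{\mathbb{B}^{*}}|\le C/\diam_{\Euc}(\mathbb{B})^{2}$ on $\mathbb{B}^{\circ}\times\mathbb{B}^{\circ}$. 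The symmetry of $g_{\mathbb{B}^{*}}$ kills the linear term in the Taylor expansion of the second difference around $x=y$, giving
\[
|2g_{\mathbb{B}^{*}}(x,y)-g_{\mathbb{B}^{*}}(x,x)-g_{\mathbb{B}^{*}}(y,y)|\le C|x-y|^{2}/\diam_{\Euc}(\mathbb{B})^{2},
\]
which is at most $C|x-y|/\diam_{\Euc}(\mathbb{B})$ since $|x-y|\le\diam_{\Euc}(\mathbb{B}^{\circ})\le C\diam_{\Euc}(\mathbb{B})$.
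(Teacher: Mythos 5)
Your argument is correct in substance, and it is genuinely different from what the paper does: the paper gives no proof of this lemma at all, deferring to \cite[Lemma 6.1]{DG16} with the remark that "the proof is essentially the same in our setting." The argument in that style runs through the Poisson-kernel representation of $h_{\mathbb{A}:\mathbb{B}^{*}}(x)$ as an integral of the field against harmonic measure on $\partial\mathbb{B}^{*}$, together with a total-variation comparison of the harmonic measures seen from $x$ and from $y$, which naturally produces the first power $|x-y|/\diam_{\Euc}(\mathbb{B})$. Your route --- writing the variance as the second difference $F(x,x)+F(y,y)-2F(x,y)$ of $F=G^{\mathbb{A}}-G^{\mathbb{B}^{*}}$, eliminating $\mathbb{A}$ entirely via the monotonicity $\Var(h_{\mathbb{A}:\mathbb{B}^{*}}(x)-h_{\mathbb{A}:\mathbb{B}^{*}}(y))\le\Var(h_{\mathbf{R}^{2}:\mathbb{B}^{*}}(x)-h_{\mathbf{R}^{2}:\mathbb{B}^{*}}(y))$ coming from the independent decomposition $h_{\mathbf{R}^{2}:\mathbb{B}^{*}}=h_{\mathbb{A}:\mathbb{B}^{*}}+h_{\mathbf{R}^{2}:\mathbb{A}}$ on $\mathbb{B}^{*}$, and then Taylor-expanding the regularized kernel $g_{\mathbb{B}^{*}}$ --- is self-contained and in fact yields the stronger bound $C|x-y|^{2}/\diam_{\Euc}(\mathbb{B})^{2}$, which is at least as useful everywhere the lemma is invoked (it only enters through Fernique and Borell--TIS in \propref{maxcoarse} and \lemref{fluctstailbound}). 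Your diagnosis of why a direct gradient estimate on $F$ fails (its supremum carries a factor $\log(\diam_{\Euc}(\mathbb{A})/\diam_{\Euc}(\mathbb{B}))$) and your use of the symmetry of $g_{\mathbb{B}^{*}}$ to kill the linear term are both exactly the right observations.

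Two small repairs are needed in the final step. First, $g_{\mathbb{B}^{*}}$ is not uniformly bounded independently of scale: under $x\mapsto\lambda x$ one has $g_{\lambda\mathbb{B}^{*}}(\lambda x,\lambda y)=g_{\mathbb{B}^{*}}(x,y)+\log\lambda$, so only its oscillation is scale-invariant. This is harmless, since the additive constant is annihilated by $\nabla_{1}\nabla_{2}$ and cancels in $2g(x,y)-g(x,x)-g(y,y)$, but you should either rescale so that $\diam_{\Euc}(\mathbb{B})=1$ at the outset or phrase the interior derivative estimates in terms of oscillation rather than supremum. Second, the maximum principle cannot "propagate the bound from $\mathbb{B}$ to $\mathbb{B}^{\circ}$": it transfers control from the boundary of a region inward, never from a subset outward (a harmonic function that is small on $\mathbb{B}$ can be arbitrarily large on $\mathbb{B}^{\circ}$). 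What you actually need is that the estimate of \lemref{greensfunctionbound} holds with $x,y$ ranging over any compact set whose distance to $\partial\mathbb{B}^{*}$ is at least a fixed multiple of $\diam_{\Euc}(\mathbb{B})$; this follows from the same explicit computation (Green's function of the disk plus bounds on the Riemann map) used to prove that lemma, not from the statement as given for $x,y\in\mathbb{B}$. With those adjustments the proof is complete.
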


The proof of \lemref{maxvsmin} is given in the case when $\mathbb{B}$
is a ball in \cite[Lemma 6.1]{DG16}; the proof is essentially the
same in our setting.

\begin{prop}
\label{prop:maxcoarse}For each $Q,\iota>0$, there is a constant
$C=C(Q,\iota)<\infty$ so that the following holds. Let $\mathbb{B}$
be a box so that $\AR(\mathbb{B})\in[1/3,3]$. Let $K\in(1,\infty)$
and let $\mathbb{C}_{1},\ldots,\mathbb{C}_{J}$, with $J\le QK^{2}$,
be a set of subboxes of $\mathbb{B}$ such that, for each $1\le j\le J$,
we have $\AR(\mathbb{C}_{j})\in[1/3,3]$ and
\[
\frac{\diam_{\Euc}(\mathbb{B})}{\diam_{\Euc}(\mathbb{C}_{j})}\in[K/3,3K].
\]
Then for all $\theta>0$, for each $j\in\{1,\ldots,J\}$ we have,
\begin{equation}
\mathbf{P}\left(\max_{x\in\mathbb{C}_{j}^{\circ}}h_{\mathbb{B}^{*}:\mathbb{C}_{j}^{*}}(x)\ge\theta\log K\right)\le CK^{-\theta^{2}/2}\label{eq:maxcoarse-single}
\end{equation}
and moreover%
\begin{equation}
\mathbf{P}\left(\max_{j=1}^{J}\max_{x\in\mathbb{C}_{j}^{\circ}}h_{\mathbb{B}^{*}:\mathbb{C}_{j}^{*}}(x)\ge\theta\log K\right)\le CK^{2-\theta^{2}/2}.\label{eq:maxcoarse-easiertouse}
\end{equation}
\end{prop}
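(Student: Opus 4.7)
The plan is to view $\{h_{\mathbb{B}^{*}:\mathbb{C}_{j}^{*}}(x)\}_{x\in\mathbb{C}_{j}^{\circ}}$ as a centered Gaussian process with pointwise variance at most $\log K+O(1)$ and expected supremum $O(1)$, then to derive \eqref{maxcoarse-single} from the Borell--TIS inequality and \eqref{maxcoarse-easiertouse} by union-bounding \eqref{maxcoarse-single} over the $J\le QK^{2}$ subboxes (which contributes the factor of $K^{2}$ in the prefactor). The crucial structural input is that inside $\mathbb{C}_{j}^{*}$ the harmonic interpolation $h_{\mathbb{B}^{*}:\mathbb{C}_{j}^{*}}$ is a smooth Gaussian function, so its supremum on $\mathbb{C}_{j}^{\circ}$ concentrates around its value at any fixed interior point.

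For the pointwise variance, I would fix the center $x_{j}$ of $\mathbb{C}_{j}$ and apply \lemref{varislog} with outer box $\mathbb{B}$ and inner box $\mathbb{C}_{j}$; combined with the hypothesis $\diam_{\Euc}(\mathbb{B})/\diam_{\Euc}(\mathbb{C}_{j})\in[K/3,3K]$ this yields $\Var(h_{\mathbb{B}^{*}:\mathbb{C}_{j}^{*}}(x_{j}))\le\log K+C$. For a general $x\in\mathbb{C}_{j}^{\circ}$, \lemref{maxvsmin} with $\mathbb{A}=\mathbb{B}^{*}$ and inner box $\mathbb{C}_{j}$ gives $\Var(h_{\mathbb{B}^{*}:\mathbb{C}_{j}^{*}}(x)-h_{\mathbb{B}^{*}:\mathbb{C}_{j}^{*}}(x_{j}))\le C$, so by the $L^{2}$ triangle inequality $\Var(h_{\mathbb{B}^{*}:\mathbb{C}_{j}^{*}}(x))\le\log K+C'$ uniformly on $\mathbb{C}_{j}^{\circ}$. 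For the expected supremum, the same \lemref{maxvsmin} supplies the intrinsic metric bound $d(x,y)\le C(|x-y|/\diam_{\Euc}(\mathbb{C}_{j}))^{1/2}$, under which $\mathbb{C}_{j}^{\circ}$ has polynomial covering numbers; Dudley's entropy bound then gives $\mathbf{E}\sup_{x\in\mathbb{C}_{j}^{\circ}}[h_{\mathbb{B}^{*}:\mathbb{C}_{j}^{*}}(x)-h_{\mathbb{B}^{*}:\mathbb{C}_{j}^{*}}(x_{j})]\le C$, and since $h_{\mathbb{B}^{*}:\mathbb{C}_{j}^{*}}(x_{j})$ has zero mean, $\mathbf{E}\sup_{\mathbb{C}_{j}^{\circ}}h_{\mathbb{B}^{*}:\mathbb{C}_{j}^{*}}\le C$.

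Plugging these two inputs into Borell--TIS yields, for $\theta\log K$ above the mean supremum,
\[
\mathbf{P}\Bigl(\sup_{x\in\mathbb{C}_{j}^{\circ}}h_{\mathbb{B}^{*}:\mathbb{C}_{j}^{*}}(x)\ge\theta\log K\Bigr)\le\exp\left(-\frac{(\theta\log K-C)^{2}}{2(\log K+C')}\right).
\]
Expanding the square and absorbing the lower-order $O(\theta)$ and $O(\theta^{2})$ corrections into the prefactor---where the free parameter $\iota$ provides the slack needed to bury these corrections at scales where $K$ is close to $1$---recovers the target bound $CK^{-\theta^{2}/2}$; for $\theta\log K$ below the mean supremum the bound is trivial after choosing $C$ large. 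The main obstacle is precisely this last bookkeeping step: producing the sharp exponent $-\theta^{2}/2$ with a constant that does not grow with $\theta$ forces us to use the exact logarithm-of-diameter form of the variance provided by \lemref{varislog} (with coefficient $1$ in front of $\log K$) and to treat the additive constants carefully, since any slackness in the coefficient of $\log K$ would contaminate the exponent of $K$.
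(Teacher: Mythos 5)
Your argument is essentially the paper's own proof: the paper likewise bounds $\mathbf{E}\max_{x\in\mathbb{C}_{j}^{\circ}}h_{\mathbb{B}^{*}:\mathbb{C}_{j}^{*}}(x)$ by a constant using Fernique's inequality (the same chaining estimate you invoke as Dudley's bound) together with \lemref{maxvsmin}, controls the variance via \lemref{varislog}, applies the Borell--TIS inequality, and then union-bounds over the $J\le QK^{2}$ boxes to pick up the factor $K^{2}$. The only cosmetic differences are that you spell out the upgrade from the center-point variance of \lemref{varislog} to a uniform variance bound on $\mathbb{C}_{j}^{\circ}$ and the absorption of the additive constants into the prefactor---steps the paper performs implicitly---while the parameter $\iota$ is vestigial and plays no role in either argument.
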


\begin{proof}
First we note that by Fernique's inequality (\cite{Fer75}, see e.g.
\cite[Theorem 4.1]{A90} and \cite[equation (7.4), Theorem 7.1]{L01})
and \lemref{maxvsmin}, we have a constant $C$ independent of $j$
so that
\begin{equation}
\mathbf{E}\left(\max_{x\in\mathbb{C}_{j}^{\circ}}h_{\mathbb{B}^{*}:\mathbb{C}_{j}^{*}}(x)\right)\le C.\label{eq:expbd}
\end{equation}
Also, by the Borell--TIS inequality (see e.g. \cite[Theorem 7.1]{L01},
\cite[Theorem 6.1]{biskup}, or \cite[Theorem 2.1]{A90}) and \lemref{varislog},
we have constant $C$, again independent of $j$, so that
\begin{equation}
\mathbf{P}\left(\left|\max_{x\in\mathbb{C}_{j}^{\circ}}h_{\mathbb{B}^{*}:\mathbb{C}_{j}^{*}}(x)-\mathbf{E}\max_{x\in\mathbb{C}_{j}^{\circ}}h_{\mathbb{B}^{*}:\mathbb{C}_{j}^{*}}(x)\right|\ge\theta\log K\right)\le\exp\left\{ -\frac{1}{2}\frac{\theta^{2}(\log K)^{2}}{\log K+C}\right\} \le CK^{-\theta^{2}/2},\label{eq:gfffluctsbd}
\end{equation}
where the constant $C$ has changed from the second to the third expression.
Combining \eqref{expbd} and \eqref{gfffluctsbd} yields \eqref{maxcoarse-single}.
Then taking a union bound over $j\in\{1,\ldots,J\}$ yields \eqref{maxcoarse-easiertouse}.%
\end{proof}
The following corollary will be used over and over again throughout
the paper. It tells us that, for any $\theta_{0}>2$, if we divide
a box into order $K^{2}$-many smaller boxes of $1/K$ times the side
length, the maximum ``coarse field'' imposed on the small boxes
will be of size at most $\theta_{0}\log K$ with high probability.
\begin{cor}
\label{cor:maxcoarse}If $\theta_{0}>2$ and $Q,K>1$ are such that
$J\le QK^{2}$, then there is a constant $C$ depending on $Q$ and
$\theta_{0}$ so that the following holds. Suppose that $\mathbb{B}$
is a box so that $\AR(\mathbb{B})\in[1/3,3]$. Let $\mathbb{C}_{1},\ldots,\mathbb{C}_{J}$
be a set of subboxes of $\mathbb{B}$ so that $\AR(\mathbb{C}_{j})\in[1/3,3]$
for each $1\le j\le J$ and that
\[
\frac{\diam_{\Euc}(\mathbb{B})}{\diam_{\Euc}(\mathbb{C}_{j})}\in[K/3,3K].
\]
Let
\[
F=\max_{j=1}^{J}\max_{x\in\mathbb{C}_{j}}h_{\mathbb{B}^{*}:\mathbb{C}_{j}^{*}}(x).
\]
Then, for all $\nu>1$, we have
\begin{equation}
\mathbf{P}\left[\e^{\gamma F-\gamma\theta_{0}\log K}\ge\nu\right]\le C\nu^{-2/\gamma}\e^{-\frac{(\log\nu)^{2}}{2\gamma^{2}\log K}}.\label{eq:coarsebound-universal}
\end{equation}
\end{cor}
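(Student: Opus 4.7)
The plan is to reduce this directly to \propref{maxcoarse}, specifically the bound \eqref{maxcoarse-easiertouse}, by translating the event on $\e^{\gamma F - \gamma\theta_0 \log K}$ back into an event on $F$ and then choosing the threshold $\theta$ in that proposition appropriately.

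First I would take logarithms: the event $\{\e^{\gamma F - \gamma\theta_0 \log K} \ge \nu\}$ is the same as $\{F \ge \theta_0 \log K + \gamma^{-1}\log\nu\}$, which I rewrite as $\{F \ge \theta \log K\}$ with
\[
\theta = \theta_0 + \frac{\log\nu}{\gamma \log K}.
\]
Since $\nu > 1$ and $\theta_0 > 0$, we have $\theta > 0$, so \eqref{maxcoarse-easiertouse} applies and gives
\[
\mathbf{P}\bigl(\e^{\gamma F - \gamma\theta_0 \log K} \ge \nu\bigr) \le C K^{2-\theta^2/2}.
\]

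Next I would expand $\theta^2$ and distribute the exponent of $K$:
\[
K^{2-\theta^2/2} = K^{2-\theta_0^2/2}\cdot K^{-\theta_0(\log\nu)/(\gamma\log K)}\cdot K^{-(\log\nu)^2/(2\gamma^2(\log K)^2)}.
\]
The middle factor simplifies to $\e^{-\theta_0(\log\nu)/\gamma}=\nu^{-\theta_0/\gamma}$, and the last factor simplifies to $\e^{-(\log\nu)^2/(2\gamma^2\log K)}$, which is precisely the Gaussian tail appearing in the target bound.

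Finally I would absorb the remaining factors using the hypothesis $\theta_0 > 2$. Since $K > 1$ and $2-\theta_0^2/2 < 0$, the factor $K^{2-\theta_0^2/2}$ is bounded by $1$ (and is in any case absorbed into $C$). Since $\nu > 1$ and $\theta_0/\gamma > 2/\gamma$, we have $\nu^{-\theta_0/\gamma} \le \nu^{-2/\gamma}$. Combining these gives exactly \eqref{coarsebound-universal}. There is no substantive obstacle here; the only thing to be careful about is that the strict inequality $\theta_0 > 2$ is what makes the $K$-dependent prefactor harmless, which is the reason that hypothesis appears in the statement of the corollary.
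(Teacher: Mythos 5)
Your proposal is correct and follows essentially the same route as the paper: apply \eqref{maxcoarse-easiertouse} at the shifted threshold, expand the square in the exponent, convert the cross term into $\nu^{-\theta_0/\gamma}$ and the quadratic term into the Gaussian factor $\e^{-(\log\nu)^2/(2\gamma^2\log K)}$, and use $\theta_0>2$ (together with $K,\nu>1$) to discard $K^{2-\theta_0^2/2}$ and relax $\nu^{-\theta_0/\gamma}$ to $\nu^{-2/\gamma}$. The only difference is cosmetic: the paper parametrizes the threshold as $(\theta_0+\theta)\log K$ and sets $\theta=\frac{\log\nu}{\gamma\log K}$ at the end, whereas you fold the shift into a single $\theta$ from the start.
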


\begin{proof}
By \eqref{maxcoarse-easiertouse} of \propref{maxcoarse}, we have,
for $\theta>0$,
\[
\mathbf{P}[F\ge(\theta_{0}+\theta)\log K]\le CK^{2-(\theta_{0}+\theta)^{2}/2}\le CK^{-2\theta-\theta^{2}/2},
\]
with the last inequality because $\theta_{0}>2$. This is equivalent
to
\[
\mathbf{P}[\e^{\gamma F-\gamma\theta_{0}\log K}\ge K^{\gamma\theta}]\le CK^{-2\theta-\theta^{2}/2},
\]
so, putting $\theta=\frac{\log\nu}{\gamma\log K}$, we have
\begin{align*}
\mathbf{P}[\e^{\gamma F-\gamma\theta_{0}\log K}\ge\nu] & \le C\exp\left\{ -(2\theta+\theta^{2}/2)\log K\right\} =C\exp\left\{ -\frac{2\log\nu}{\gamma}-\frac{(\log\nu)^{2}}{2\gamma^{2}\log K}\right\} =C\nu^{-2/\gamma}\e^{-\frac{(\log\nu)^{2}}{2\gamma^{2}\log K}},
\end{align*}
which is \eqref{coarsebound-universal}.
\end{proof}
Finally, we will need a bound on the smoothness of the coarse field.
\begin{lem}
\label{lem:fluctstailbound}There is a constant $C<\infty$ so that
if $\mathbb{B}\subset\mathbb{B}^{*}\subset\mathbb{A}$ are boxes with
aspect ratios between $1/3$ and $3$, then for all $\theta>0$ we
have
\[
\mathbf{P}\left[\max_{x,y\in\mathbb{B}^{\circ}}|h_{\mathbb{A}:\mathbb{B}^{*}}(x)-h_{\mathbb{A}:\mathbb{B}^{*}}(y)|\ge\theta\right]\le C\e^{-\theta^{2}/C}.
\]
\end{lem}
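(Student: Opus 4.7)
The plan is to view $Y(x) := h_{\mathbb{A}:\mathbb{B}^{*}}(x) - h_{\mathbb{A}:\mathbb{B}^{*}}(x_0)$, for an arbitrarily-chosen reference point $x_0 \in \mathbb{B}^{\circ}$, as a centered Gaussian process indexed by the compact set $\mathbb{B}^{\circ}$, and to bound the tail of its supremum using Fernique's inequality (for the expectation) together with the Borell--TIS inequality (for concentration). Observe that
\[
\max_{x,y\in\mathbb{B}^{\circ}}\bigl|h_{\mathbb{A}:\mathbb{B}^{*}}(x)-h_{\mathbb{A}:\mathbb{B}^{*}}(y)\bigr| \;\le\; 2\max_{x\in\mathbb{B}^{\circ}}|Y(x)|,
\]
so it suffices to obtain the claimed bound for the right-hand side.

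The intrinsic Gaussian pseudometric for $Y$ is $d_Y(x,y) = \sqrt{\Var(h_{\mathbb{A}:\mathbb{B}^{*}}(x) - h_{\mathbb{A}:\mathbb{B}^{*}}(y))}$, which by \lemref{maxvsmin} satisfies $d_Y(x,y) \le C\sqrt{|x-y|/\diam_{\Euc}(\mathbb{B})}$ for $x,y \in \mathbb{B}^{\circ}$. Since the aspect-ratio restriction on $\mathbb{B}$ forces $\diam_{\Euc}(\mathbb{B}^{\circ}) \le C\diam_{\Euc}(\mathbb{B})$, the $d_Y$-diameter of $\mathbb{B}^{\circ}$ is bounded by a universal constant, and the $d_Y$-covering number of $\mathbb{B}^{\circ}$ at scale $\eps$ is at most $C\eps^{-4}$ (covering in Euclidean balls of radius $\eps^2\diam_{\Euc}(\mathbb{B})/C$). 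Consequently Dudley's entropy integral $\int_0^{\infty}\sqrt{\log N(\eps)}\,\dif\eps$ is finite and bounded by a universal constant, and Fernique's inequality yields
\[
\mathbf{E}\max_{x\in\mathbb{B}^{\circ}}|Y(x)| \;\le\; C
\]
for a universal $C$, where I bound $\max|Y|$ by $\max Y + \max(-Y)$ and apply the same entropy bound to $-Y$.

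Next, the same application of \lemref{maxvsmin} gives $\sup_{x\in\mathbb{B}^{\circ}}\Var Y(x) \le C$ uniformly. The Borell--TIS inequality (see e.g.\ \cite[Theorem 7.1]{L01}) applied to the centered Gaussian process $Y$ on $\mathbb{B}^{\circ}$ then gives
\[
\mathbf{P}\Bigl[\max_{x\in\mathbb{B}^{\circ}}|Y(x)| \ge \mathbf{E}\max_{x\in\mathbb{B}^{\circ}}|Y(x)| + t\Bigr] \;\le\; \exp\bigl(-t^2/(2C)\bigr)
\]
for all $t > 0$. Combining with the expectation bound and enlarging the constant $C$ to absorb the deterministic shift $\mathbf{E}\max|Y|$ yields the claimed $C\e^{-\theta^{2}/C}$ tail bound, completing the proof.

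The only real step is the entropy computation; since $d_Y$ is a Hölder-$1/2$ modification of the Euclidean metric, the entropy integral is trivially finite, and nothing delicate is required beyond checking the aspect-ratio-uniformity of all constants. In particular, no obstacle is expected: this lemma is essentially a packaged form of the standard sub-Gaussian tail bound for the oscillation of a smooth Gaussian field whose incremental variance is controlled by \lemref{maxvsmin}.
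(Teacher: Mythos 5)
Your proof is correct and follows essentially the same route as the paper's: control the incremental variance via \lemref{maxvsmin}, bound the expected supremum by Fernique/Dudley, and get the Gaussian tail from Borell--TIS. The only cosmetic difference is that you anchor at a reference point $x_0$ and work with the one-parameter process $Y(x)$, whereas the paper applies Cauchy--Schwarz to control the increments of the two-parameter process $(x,y)\mapsto h_{\mathbb{A}:\mathbb{B}^{*}}(x)-h_{\mathbb{A}:\mathbb{B}^{*}}(y)$ directly; both are equally valid.
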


\begin{proof}
By the Cauchy--Schwarz inequality and \lemref{maxvsmin}, we have
\[
\Var\left(\left(h_{\mathbb{A}:\mathbb{B}^{*}}(x)-h_{\mathbb{A}:\mathbb{B}^{*}}(y)\right)-\left(h_{\mathbb{A}:\mathbb{B}^{*}}(x')-h_{\mathbb{A}:\mathbb{B}^{*}}(y')\right)\right)\le C\frac{|x-y|+|x'-y'|}{\diam_{\Euc}\mathbb{B}}
\]
for all $x,x',y,y'\in\mathbb{B}^{\circ}$. The result follows from
this, \lemref{maxvsmin}, Fernique's inequality, and the Borell--TIS
inequality, similarly to the proof of \propref{maxcoarse}.%
\end{proof}

\subsection{Liouville quantum gravity}

In this section we briefly review the properties of the Liouville
quantum gravity measure. We first define the \emph{circle average
process} of a Gaussian free field. If $h_{\mathbb{A}}$ is a Gaussian
free field on a box $\mathbb{A}$, then, for $x\in\mathbb{A}$ so
that $\dist_{\Euc}(x,\partial\mathbb{A})>\eps$, we define $h_{\mathbb{A}}^{\eps}(x)$
to be the integral of $h_{\mathbb{A}}$ against the uniform measure
on $\partial B(x,\eps)$. The Liouville quantum gravity at inverse
temperature $\gamma$ is then supposed to be the limit as $\eps\downarrow0$
of the random measure
\begin{equation}
\mu_{h_{\mathbb{A}},\eps}(\dif x)=\eps^{\gamma^{2}/2}\e^{\gamma h_{\eps}(x)}\,\dif x.\label{eq:LQGapprox}
\end{equation}
Indeed, we have the following result of \cite{DS11}:
\begin{thm}
If $\gamma\in(0,2)$, then there is a random measure $\mu_{h_{\mathbb{A}}}$
such that, with probability $1$, we have
\begin{equation}
\lim_{k\to\infty}\mu_{h_{\mathbb{A}},2^{-k}}=\mu_{h_{\mathbb{A}}}\label{eq:LQGlimit}
\end{equation}
weakly.
\end{thm}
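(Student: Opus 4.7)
The plan is to follow the martingale approach of Duplantier--Sheffield. For a fixed Borel set $A\Subset\mathbb{A}$, define $M_k=\mu_{h_{\mathbb{A}},2^{-k}}(A)$ and consider the filtration $\mathcal{F}_k=\sigma(h_{\mathbb{A}}^{2^{-j}}(x):x\in\mathbb{A},\,j\le k)$. The first step is to establish that $(M_k)$ is a nonnegative $\mathcal{F}_k$-martingale. The key input is that, for each fixed $x\in\mathbb{A}$ with $B(x,\eps)\subset\mathbb{A}$, the process $t\mapsto h_{\mathbb{A}}^{\e^{-t}}(x)$ behaves (modulo bounded boundary corrections) as a standard Brownian motion in $t=-\log\eps$, a fact which follows by decomposing $h_{\mathbb{A}}$ via the Gibbs--Markov property (\propref{markovfieldproperty}) into its restriction inside $B(x,\eps)$ and its harmonic interpolation outside, and then computing the conditional covariance. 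Granting this, the Gaussian identity $\mathbf{E}[\e^{\gamma Z-\gamma^{2}\Var(Z)/2}]=1$ for the Gaussian increment $Z=h_{\mathbb{A}}^{2^{-k-1}}(x)-h_{\mathbb{A}}^{2^{-k}}(x)$, integrated against $\dif x$ over $A$ and combined with Fubini, yields $\mathbf{E}[M_{k+1}\mid\mathcal{F}_k]=M_k$.

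Next, I would apply Doob's martingale convergence theorem to produce, for each fixed $A$, an almost-sure limit $\mu_{h_{\mathbb{A}}}(A)$. To promote this pointwise (in $A$) convergence to weak convergence of measures, I would apply the preceding argument to a countable collection $\mathcal{D}$ of dyadic subrectangles of $\mathbb{A}$, take the intersection of the corresponding full-measure events, and then extend by additivity and standard approximation to obtain $\int f\,\dif\mu_{h_{\mathbb{A}},2^{-k}}\to\int f\,\dif\mu_{h_{\mathbb{A}}}$ a.s.\ for all continuous compactly supported $f$.

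The main obstacle is proving that the limit is not identically zero, which is precisely the place where the subcriticality assumption $\gamma\in(0,2)$ enters. The strategy is to show that for some $p>1$ (necessarily with $p<4/\gamma^{2}$, which permits such $p$ exactly when $\gamma^{2}<4$) one has
\[
\sup_{k}\mathbf{E}[M_{k}^{p}]<\infty,
\]
whence $(M_{k})$ is uniformly integrable and $\mathbf{E}\mu_{h_{\mathbb{A}}}(A)=\Leb(A)$ (after multiplication by the appropriate conformal radius factor). The cleanest route is Kahane's convexity inequality, comparing the logarithmically correlated circle-average field with an exactly scale-invariant cascade reference field whose $p$th moments are explicitly computable by a fixed-point equation; the bound $p<4/\gamma^{2}$ emerges from requiring the multiplicative cascade functional equation to have a finite fixed point. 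Alternatively, one can perform a direct multiscale chaining: partition $A$ into $\sim 2^{2k}$ dyadic squares of side $2^{-k}$, expand $\mathbf{E}[M_{k}^{p}]$ as a $p$-fold integral, and control the short-range singularities using \lemref{greensfunctionbound} together with the fact that the integral $\int|x-y|^{-p(p-1)\gamma^{2}/2}\,\dif x\,\dif y$ is finite for $p(p-1)\gamma^{2}/2<2$, which again holds for suitable $p>1$ precisely when $\gamma<2$.

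Finally, the weak convergence along the dyadic subsequence $\eps=2^{-k}$ is enough for the statement; uniqueness of the limit across different approximations (mollifiers, different $\eps\downarrow0$) is a separate standard argument that is not required here. Tightness of $\{\mu_{h_{\mathbb{A}},2^{-k}}\}$ as random Radon measures follows from the $L^{p}$ bound on the total mass and Prokhorov's theorem, which together with the a.s.\ convergence on the countable collection $\mathcal{D}$ upgrades the limit to a bona fide weak limit of random measures.
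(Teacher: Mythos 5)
This theorem is not proved in the paper at all: it is quoted verbatim as a result of Duplantier and Sheffield \cite{DS11}, so there is no internal proof to compare against. Your outline is essentially the standard Duplantier--Sheffield/Kahane argument for that cited result, and its overall architecture (pointwise martingale structure of $\eps^{\gamma^2/2}\e^{\gamma h_\eps(x)}$, Doob convergence for a countable class of dyadic sets, uniform integrability via a $p$-th moment bound with $1<p<4/\gamma^2$, and the observation that convergence along $\eps=2^{-k}$ suffices) is the right one.

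Two points need repair. First, the martingale property with respect to $\mathcal{F}_k=\sigma(h_{\mathbb{A}}^{2^{-j}}(x):x\in\mathbb{A},\,j\le k)$ is not justified by the per-point Brownian-motion fact. The Gibbs--Markov decomposition shows that $h^{2^{-k-1}}(x)-h^{2^{-k}}(x)$ is independent of the field \emph{outside} $B(x,2^{-k})$, but it is not independent of $\mathcal{F}_k$, since circle averages at radii $\ge 2^{-k}$ centered at nearby points $y$ involve the field inside $B(x,2^{-k})$. The per-point identity $\mathbf{E}[\e^{\gamma Z-\gamma^2\Var(Z)/2}\mid h|_{\mathbb{A}\setminus B(x,2^{-k})}]=1$ plus Fubini gives $\mathbf{E}[M_{k+1}]=\mathbf{E}[M_k]$, not the conditional statement; to get a genuine martingale one should either condition on a decomposition of $h$ into independent scale-indexed pieces (e.g., the white-noise decomposition used elsewhere in this paper, or a projection onto nested subspaces), or bypass the martingale entirely with an $L^1$-Cauchy argument. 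Second, in your ``direct multiscale chaining'' alternative the exponent condition is wrong: the relevant requirement is not $p(p-1)\gamma^2/2<2$ but $\zeta(p)>2$ with $\zeta(p)=2p-\gamma^2p(p-1)/2$, i.e.\ $p(p-1)\gamma^2/2<2(p-1)$, which is exactly $p<4/\gamma^2$. As written, your condition is satisfiable for $p$ close to $1$ for \emph{every} $\gamma$, and would therefore ``prove'' uniform integrability even for $\gamma\ge2$, where the limit is in fact zero and no moment of order $p>1$ is uniformly bounded. Your first route (Kahane convexity against a cascade, with the correct threshold $p<4/\gamma^2$) is the one to keep.
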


The fact that the convergence in \eqref{LQGlimit} is almost-sure
will be important for us, because in our multiscale analysis we will
often consider the Liouville quantum gravity on \emph{different} boxes,
with coupling induced by the coupling we have induced for the Gaussian
free field. The almost-sure convergence means that this coupling induces
a coupling on the corresponding LQG measures as well.

Throughout the paper, we will treat $\gamma$ as a \emph{fixed} constant
in $(0,2)$. To economize on indices, we will suppress $\gamma$ in
the notation for LQG and subsequently defined objects. All constants
throughout the paper will implicitly depend on $\gamma$. We will
also \emph{fix throughout the paper} a constant $\theta_{0}>2$ so
that
\begin{equation}
\eta\coloneqq\frac{2\gamma\theta_{0}}{4+\gamma^{2}}<1;\label{eq:etadef}
\end{equation}
$\eta$ will also be treated as a fixed constant throughout the paper.
The reason for insisting that $\theta_{0}>2$ is to match with the
condition in \corref{maxcoarse}: we will treat $\theta_{0}\log K$
as the cutoff below which the ``coarse field'' at scale $K$ must
be with high probability. The reason for insisting that $\eta<1$
is so that, according to \propref{conformalcovariance} below, a sub-box
of a box, once the maximum coarse field is considered, will ``look
like'' a strictly smaller box than the larger box from the perspective
of LQG measure.

We will also need the existence of positive and negative moments of
the Liouville quantum gravity measure, which was proved in \cite{Kahane85,Mol96};
see also \cite[Theorems 2.11 and 2.12]{RV14}.
\begin{prop}
\label{prop:momentsbounded}There is a $\nu_{0}>1$ so that if $0\le\nu<\nu_{0}$,
then for all domains $\mathbb{A}\subset\mathbf{R}^{2}$ we have
\begin{equation}
\mathbf{E}\left[\mu_{h_{\mathbb{A}}}(\mathbb{A})^{\nu}\right]<\infty.\label{eq:posmoments}
\end{equation}
Moreover, for any $0\le\nu<\infty$, we have for all domains $\mathbb{A}$
that
\begin{equation}
\mathbf{E}\left[\mu_{h_{\mathbb{A}}}(\mathbb{A})^{-\nu}\right]<\infty.\label{eq:negmoments}
\end{equation}
\end{prop}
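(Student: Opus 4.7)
The plan is to follow the classical multifractal framework of Kahane and Molchan, combining the cascade structure of the GFF with Gaussian concentration. Since only the existence (and not the optimal value) of $\nu_0$ matters for the remainder of the paper, one may in fact invoke \cite[Theorems 2.11 and 2.12]{RV14} directly, but I sketch below the argument I would give.

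For the positive moments \eqref{posmoments}, I would exploit the multiplicative cascade structure of the GFF via the Gibbs--Markov decomposition (\propref{markovfieldproperty}). Fix a large integer $K \ge 2$ and tile $\mathbb{A}$ by $J \sim K^{2}$ subboxes $\mathbb{A}_{1},\ldots,\mathbb{A}_{J}$ of diameter $\diam_{\Euc}(\mathbb{A})/K$. Using the scaling covariance of the LQG measure, one obtains an identity in law of the form
\[
\mu_{h_{\mathbb{A}}}(\mathbb{A}) \stackrel{d}{=} K^{-(2+\gamma^{2}/2)}\sum_{j=1}^{J} e^{\gamma N_{j}}\,\widetilde{\mu}_{j},
\]
where the $\widetilde{\mu}_{j}$ are iid copies of $\mu_{h_{\mathbb{A}}}(\mathbb{A})$ and the $N_{j}$ are centered Gaussians (coming from the harmonic interpolation pieces $h_{\mathbb{A}^{*}:\mathbb{A}_{j}^{*}}$) with variance of order $\log K$. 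The Kahane convexity inequality then allows a comparison with an exactly scale-invariant multifractal cascade, and a Lyapunov-type functional equation governed by the multifractal exponent $\zeta(\nu)=(2+\gamma^{2}/2)\nu-\gamma^{2}\nu^{2}/2$ yields $\mathbf{E}[\mu_{h_{\mathbb{A}}}(\mathbb{A})^{\nu}]<\infty$ for all $\nu<4/\gamma^{2}$. Since $\gamma<2$, this interval contains an open neighborhood of $1$, yielding the claim with $\nu_{0}=4/\gamma^{2}$.

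For the negative moments \eqref{negmoments}, I would use a localization-plus-concentration argument. Fix $x_{0}\in\mathbb{A}$ and $r_{0}>0$ with $B(x_{0},r_{0})\subset\mathbb{A}$; by monotonicity, $\mu_{h_{\mathbb{A}}}(\mathbb{A})\ge\mu_{h_{\mathbb{A}}}(B(x_{0},r_{0}))$. The Gibbs--Markov decomposition then writes $h_{\mathbb{A}}|_{B(x_{0},r_{0})}=h_{B(x_{0},r_{0})}+\varphi$, where $h_{B(x_{0},r_{0})}$ is an independent GFF on the ball and $\varphi$ is a harmonic function whose oscillation on $B(x_{0},r_{0}/2)$ is Gaussian of order $1$ by \lemref{varislog} and \lemref{fluctstailbound}. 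Hence
\[
\mu_{h_{\mathbb{A}}}(B(x_{0},r_{0}))\ge e^{\gamma\min_{B(x_{0},r_{0}/2)}\varphi}\,\mu_{h_{B(x_{0},r_{0})}}(B(x_{0},r_{0}/2)),
\]
and the first factor has finite negative exponential moments of every order. The negative moments of the remaining GMC factor can be extracted by a standard Gaussian concentration bound on $\log\mu$, combined with a Paley--Zygmund inequality fed by the positive-moment estimate established in the previous paragraph.

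The only genuine obstacle is the sharp threshold $4/\gamma^{2}$ for positive moments; its proof is the classical multifractal computation and is precisely where the restriction $\gamma<2$ enters. Given that only existence of some $\nu_{0}>1$ is needed here, we may simply invoke \cite[Theorems 2.11 and 2.12]{RV14}, as the proposition does.
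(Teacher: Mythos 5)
The paper does not prove this proposition at all: it is quoted verbatim from the literature, with the attribution ``proved in \cite{Kahane85,Mol96}; see also \cite[Theorems 2.11 and 2.12]{RV14}.'' Your closing remark that one may simply invoke those references is therefore exactly what the paper does, and for the purposes of this paper that is the whole proof.

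Your sketch of the underlying argument is the standard one and is broadly sound, with two caveats worth flagging. First, the displayed ``identity in law'' is only heuristic: the harmonic pieces $h_{\mathbb{A}^{*}:\mathbb{A}_{j}^{*}}$ are not constant on the subboxes and are correlated across $j$, so the reduction to an exact cascade genuinely requires Kahane's convexity inequality (which you do invoke); with that, the threshold $\nu_{0}=4/\gamma^{2}>1$ for $\gamma<2$ is correct, since $\zeta(\nu)=(2+\gamma^{2}/2)\nu-\gamma^{2}\nu^{2}/2$ equals $2$ precisely at $\nu=1$ and $\nu=4/\gamma^{2}$. Second, your route to the negative moments via ``Gaussian concentration on $\log\mu$'' for the fine field is the soft spot: the Lipschitz bound $|\log\mu_{h+f}-\log\mu_{h}|\le\gamma\|f\|_{\infty}$ is only useful for perturbations in $L^{\infty}$, and the Cameron--Martin space $H_{0}^{1}$ of the GFF does not embed into $L^{\infty}$ in two dimensions, so concentration does not apply directly to the full field (the paper itself only uses this device for the smooth coarse field in \lemref{coarsefieldeffect}). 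The classical proof of \eqref{negmoments} in \cite{Mol96,RV14} instead runs a recursive estimate on the Laplace transform $\mathbf{E}[\e^{-\lambda\mu}]$ using the cascade lower bound $\mu\ge\e^{\gamma\min_{j}N_{j}}\min_{j}\widetilde{\mu}_{j}$, showing superpolynomial decay in $\lambda$ and hence all negative moments. Since only the cited existence statement is used downstream, none of this affects the paper, but if you intend your sketch as a self-contained proof the negative-moment step needs to be replaced by that recursive argument.
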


\begin{prop}
\label{prop:ballsnonzero}Let $\mathbb{A}$ be a domain. It almost
surely holds that for every $x\in\mathbb{A}$ and $r>0$ so that $B(x,r)\subset\mathbb{A}$,
we have $\mu_{h_{\mathbb{A}}}(B(x,r))>0$.
\end{prop}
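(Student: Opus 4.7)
The plan is to prove \propref{ballsnonzero} by reducing to a countable family of balls with rational data and then applying the Gibbs--Markov decomposition together with the negative-moment bound \eqref{negmoments} of \propref{momentsbounded}. Specifically, the countable collection
\[
\mathcal{F} = \{B(x,r) : x \in \mathbf{Q}^2,\ r \in \mathbf{Q}\cap(0,\infty),\ \overline{B(x,r)} \subset \mathbb{A}\}
\]
has the property that every open ball contained in $\mathbb{A}$ contains at least one member of $\mathcal{F}$. By monotonicity of $\mu_{h_\mathbb{A}}$ and a countable union bound, it therefore suffices to show that $\mu_{h_\mathbb{A}}(B) > 0$ almost surely for each fixed $B \in \mathcal{F}$.

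Fix such a $B$. The Gibbs--Markov property (\propref{markovfieldproperty}) yields the decomposition $h_\mathbb{A} = h_B + h_{\mathbb{A}:B}$ on $B$, with $h_B$ a Dirichlet GFF on $B$ independent of the almost surely harmonic (hence $C^\infty$) function $h_{\mathbb{A}:B}$ on $B$. Rewriting the circle-average approximation \eqref{LQGapprox} of $\mu_{h_\mathbb{A}}$ as
\[
\eps^{\gamma^2/2}\e^{\gamma h_{\mathbb{A},\eps}(y)}\,\dif y = \e^{\gamma h_{\mathbb{A}:B,\eps}(y)}\cdot \eps^{\gamma^2/2}\e^{\gamma h_{B,\eps}(y)}\,\dif y
\]
for $y$ at distance at least $\eps$ from $\partial B$, using that the circle averages of the harmonic piece converge uniformly on compact subsets of $B$, and passing to the almost-sure weak limit \eqref{LQGlimit}, I would derive the Girsanov-type identity
\[
\mu_{h_\mathbb{A}}|_B(\dif y) = \e^{\gamma h_{\mathbb{A}:B}(y)}\,\mu_{h_B}(\dif y).
\]

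To finish, \propref{momentsbounded} applied to the domain $B$ gives $\mathbf{E}[\mu_{h_B}(B)^{-1}] < \infty$, and therefore $\mu_{h_B}(B) > 0$ almost surely. Exhausting $B$ by an increasing sequence of compact concentric sub-balls $K_n \uparrow B$, on each of which $h_{\mathbb{A}:B}$ is almost surely bounded, the displayed identity gives
\[
\mu_{h_\mathbb{A}}(B) \ge \mu_{h_\mathbb{A}}(K_n) \ge \e^{-\gamma\,\|h_{\mathbb{A}:B}\|_{L^\infty(K_n)}}\mu_{h_B}(K_n),
\]
and choosing $n$ large enough that $\mu_{h_B}(K_n) > 0$ (possible almost surely on $\{\mu_{h_B}(B)>0\}$ by monotone convergence) yields $\mu_{h_\mathbb{A}}(B) > 0$ almost surely. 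The only delicate step in the plan is the Girsanov-type measure decomposition: it is not recorded among the excerpt's facts and must be verified by working with the $\eps$-approximation rather than treating the LQG measure as a black box; once that identity is in hand, the rest is a routine reduction.
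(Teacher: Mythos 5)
Your proof is correct and follows essentially the same route as the paper, whose one-line proof is exactly ``a union bound (over a countable family of balls) plus the negative-moment bound \eqref{negmoments}''; you have simply filled in the transfer of the negative-moment estimate from the full domain to a sub-ball. The ``Girsanov-type identity'' you flag as the delicate step is in fact recorded in the paper as \eqref{equatemeasures} (stated just after the proposition), so nothing new needs to be verified there.
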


\begin{proof}
This follows from a simple union bound and \eqref{negmoments}.
\end{proof}
If $\mu$ and $\nu$ are Radon measures on the same set $\mathbb{X}$,
we say that $\mu\le\nu$ if $\nu-\mu$ is also a measure (not just
a signed measure). If $\mathbb{A}\subset\mathbb{B}$, then we have
by \eqref{LQGapprox} and the smoothness of $h_{\mathbb{B}:\mathbb{A}}$
that
\begin{equation}
\mu_{h_{\mathbb{B}}}|_{\mathbb{A}}=\exp\{\gamma h_{\mathbb{B}:\mathbb{A}}\}\mu_{h_{\mathbb{B}}}\label{eq:equatemeasures}
\end{equation}
almost surely. (Here, $\mu_{h_{\mathbb{B}}}|_{\mathbb{A}}$ denotes
the measure $\mu_{h_{\mathbb{B}}}$ restricted to $\mathbb{A}$.)
This implies that
\begin{equation}
\mu_{h_{\mathbb{B}}}|_{\mathbb{A}}\le\exp\left\{ \gamma\max_{x\in\mathbb{A}}h_{\mathbb{B}:\mathbb{A}}(x)\right\} \mu_{h_{\mathbb{B}}}.\label{eq:muineq}
\end{equation}

An important property of Liouville quantum gravity is the \emph{conformal
covariance} of the measure; see \cite[Theorem 2.8]{B15}:
\begin{prop}
\label{prop:conformalcovariance}Suppose that $\mathbb{V}$ and $\mathbb{V}'$
are domains and $F:\mathbb{V}\to\mathbb{V}'$ is a conformal homeomorphism.
Then we have that
\[
\mu_{h_{\mathbb{V}}}\circ F^{-1}=\e^{(2+\gamma^{2}/2)\log|(F^{-1})'|}\mu_{h_{\mathbb{V}}\circ F^{-1}}\overset{\mathrm{law}}{=}\e^{(2+\gamma^{2}/2)\log|(F^{-1})'|}\mu_{h_{\mathbb{V}'}}.
\]
\end{prop}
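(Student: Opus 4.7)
The plan is to work directly from the circle-average definition \eqref{LQGapprox}--\eqref{LQGlimit}. The second equality (the equality in law) is immediate from the conformal invariance of the Gaussian free field stated in the preceding proposition, so the real content is the deterministic (almost-sure) identity
\[
\mu_{h_{\mathbb{V}}} \circ F^{-1} \;=\; \e^{(2+\gamma^{2}/2)\log|(F^{-1})'|}\,\mu_{h_{\mathbb{V}}\circ F^{-1}}.
\]
I would prove this by testing against a compactly supported continuous $\phi$ on $\mathbb{V}'$, writing
\[
\int_{\mathbb{V}'} \phi\, \dif(\mu_{h_{\mathbb{V}}}\circ F^{-1}) \;=\; \lim_{\eps\downarrow 0}\int_{\mathbb{V}} \phi(F(y))\,\eps^{\gamma^{2}/2}\e^{\gamma h_{\mathbb{V}}^{\eps}(y)}\,\dif y
\]
and then changing variables to $x=F(y)$, for which the Jacobian is $|(F^{-1})'(x)|^{2}$.

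The key local step is to relate the Euclidean circle average $h_{\mathbb{V}}^{\eps}(y)$ to a circle average of the pulled-back field $h_{\mathbb{V}}\circ F^{-1}$ at the image point $x=F(y)$. Because $F$ is conformal, for every compact $K\subset \mathbb{V}$ and every $\eta>0$, there is an $\eps_0$ such that for $\eps<\eps_0$ and $y\in K$, the $\eps$-circle around $y$ is mapped by $F$ to a curve that is $\eta\eps$-close (in Hausdorff distance, after rescaling) to the circle of radius $\eps|F'(y)|$ around $F(y)$. Using the regularity of the circle-average process in the radius (as in \cite{DS11}), one obtains
\[
h_{\mathbb{V}}^{\eps}(y) \;=\; (h_{\mathbb{V}}\circ F^{-1})^{\eps|F'(y)|}(x) + o(1)
\]
uniformly in $y\in K$, with the $o(1)$ term tending to $0$ almost surely as $\eps\downarrow 0$. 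Substituting $\delta=\eps|F'(y)|=\eps/|(F^{-1})'(x)|$ gives $\eps^{\gamma^{2}/2}=\delta^{\gamma^{2}/2}|(F^{-1})'(x)|^{\gamma^{2}/2}$, and together with the Jacobian $|(F^{-1})'(x)|^{2}$ this produces the factor $|(F^{-1})'(x)|^{2+\gamma^{2}/2}$ in the integrand.

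Then I would pass to the limit: after the change of variables,
\[
\int_{\mathbb{V}} \phi(F(y))\,\eps^{\gamma^{2}/2}\e^{\gamma h_{\mathbb{V}}^{\eps}(y)}\,\dif y \;=\; \int_{\mathbb{V}'} \phi(x)\,|(F^{-1})'(x)|^{2+\gamma^{2}/2}\,\delta(x)^{\gamma^{2}/2}\e^{\gamma (h_{\mathbb{V}}\circ F^{-1})^{\delta(x)}(x)}(1+o(1))\,\dif x,
\]
where $\delta(x)=\eps/|(F^{-1})'(x)|$. Since $|(F^{-1})'|$ is bounded above and below on $F(K)$, taking $\eps\downarrow 0$ along the dyadic sequence $\eps=2^{-k}$ on which \eqref{LQGlimit} holds gives a dyadic sequence of scales $\delta(x)$ that compress uniformly; a standard Cauchy-type argument using the convergence \eqref{LQGlimit} (see \cite[Theorem 2.8]{B15}) shows that the resulting integral converges to $\int \phi(x)|(F^{-1})'(x)|^{2+\gamma^{2}/2}\dif\mu_{h_{\mathbb{V}}\circ F^{-1}}(x)$.

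The main technical obstacle is precisely this last step: the rescaled circle averages are not exactly those appearing in the definition of $\mu_{h_{\mathbb{V}}\circ F^{-1}}$, because the radius $\delta(x)$ varies with $x$ and is not a dyadic multiple of a fixed scale. This is handled by localising $\phi$ on small sub-domains where $|F'|$ is nearly constant, invoking \lemref{maxvsmin}-type Hölder regularity of the circle-average process in the radius parameter to absorb the mismatch into a vanishing error, and then summing over a cover. Once this is done on each compact $K\Subset \mathbb{V}$, an exhaustion argument together with \propref{ballsnonzero} gives the identity on all of $\mathbb{V}'$.
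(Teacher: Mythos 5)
The paper does not actually prove this proposition: it is quoted from the literature, with the pointer ``see \cite[Theorem 2.8]{B15}.'' Your proposal reconstructs what is essentially the standard Duplantier--Sheffield/Berestycki argument behind that citation, and the architecture is sound: the equality in law follows from conformal invariance of the GFF, and the deterministic identity follows from testing against $\phi$, changing variables with Jacobian $|(F^{-1})'|^{2}$, and converting $\eps^{\gamma^{2}/2}$ into $\delta^{\gamma^{2}/2}|(F^{-1})'|^{\gamma^{2}/2}$ via $\delta=\eps|F'(y)|$, which assembles the exponent $2+\gamma^{2}/2$.

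Two points deserve to be flagged as the real content, since they are where the work in \cite{DS11,B15} actually lives. First, your identity $h_{\mathbb{V}}^{\eps}(y)=(h_{\mathbb{V}}\circ F^{-1})^{\eps|F'(y)|}(F(y))+o(1)$ compares a genuine circle average with the integral of $h_{\mathbb{V}}$ against the pullback of uniform measure on a \emph{distorted} curve; one needs both that the distortion is $O(\eps^{2})$ (Taylor expansion of $F$, uniform on compacts) and a joint H\"older modulus of the circle-average process in the spatial and radial variables of the form $|h_{\eps}(y)-h_{\eps'}(y')|\lesssim\left(\frac{|y-y'|+|\eps-\eps'|}{\eps}\right)^{\zeta}\eps^{-\zeta'}$ with $\zeta>\zeta'$, so that the error is $o(1)$ uniformly and survives exponentiation. \lemref{maxvsmin} in this paper controls only the harmonic (coarse) part and is not the estimate you need here; you must import the DS11-type modulus for the full circle-average process. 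Second, the convergence \eqref{LQGlimit} as stated holds only along $\eps=2^{-k}$, while your limiting radii $\delta(x)=\eps/|(F^{-1})'(x)|$ vary with $x$ and are non-dyadic; your localisation-plus-H\"older fix is the right idea, but it is exactly this step (identifying the variable-radius limit with $\mu_{h_{\mathbb{V}}\circ F^{-1}}$) that makes the published proofs nontrivial. With those two ingredients supplied from the literature, your argument closes; as written it is a correct skeleton of the standard proof rather than a self-contained one.
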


\subsection{Metrics defined in terms of measures\label{subsec:metrics-from-measures}}

In this section we describe the process we use to construct a metric
from a measure. The definitions and results in this section are purely
deterministic. Of course, we plan to apply this construction to the
case when the measure is a Liouville quantum gravity measure, which
we will do in the next section, yielding the Liouville graph distance.

If $\mathbb{B}$ is a box, then define the space of \emph{paths} $\mathcal{P}_{\mathbb{B}}$
to be the set of continuous images of $[0,1]$ in $\mathbb{B}$. If
$x,y\in\mathbb{B}$, then define $\mathcal{P}_{\mathbb{B}}(x,y)=\{\pi\in\mathcal{P}_{\mathbb{B}}\mid x,y\in\pi\}$.
Given a box $\mathbb{B}$ and a finite measure $\mu$ on $\mathbb{B}^{(R)}$,
define
\[
\underline{\mathscr{B}}(\mu,\mathbb{B},\delta)=\{\overline{B(x,r)}\mid r\in(0,\diam_{\Euc}(\mathbb{B})),x\in\mathbb{B},\mu(B(x,r))<\delta\}.
\]
Here and throughout the paper, $B(x,r)$ denotes the open Euclidean
ball with center $x$ and radius $r$.
\begin{defn}
Let $\mathbb{B}$ be a domain, $\mu$ a measure on $\mathbb{B}$,
$\delta>0$, and $R>0$. If $\mu$ is a measure on $\mathbb{B}^{(R)}$,
then define the \emph{$\mu$-graph length} of a path $\pi\in\mathcal{P}_{\mathbb{B}}$
as
\[
\underline{d}_{\mu,\mathbb{B},\delta}(\pi)=\min\{n\in\mathbf{N}\::\;\text{there are }B_{1},\ldots,B_{n}\in\underline{\mathscr{B}}(\mu,\mathbb{B},\delta)\text{ such that }\pi\subset B_{1}\cup\cdots\cup B_{n}\}.
\]
We further define
\[
\underline{d}_{\mu,\mathbb{B},\delta}(x,y)=\begin{cases}
0, & x=y;\\
\min\limits _{\pi\in\mathcal{P}_{\mathbb{B}}(x,y)}\underline{d}_{\mu,\mathbb{B},\delta}(\pi), & x\ne y.
\end{cases}
\]
For the purposes of establishing measurability of the distances when
the measure is random, we now show that the distance can be expressed
as the minimum of \emph{countably }many functions. This is a purely
technical point.
\end{defn}

\begin{lem}
\label{lem:GLrational}Let $\mathbb{B}$ be a domain, $\mu$ a measure
on $\mathbb{B}$, and $\delta>0$. Define
\[
\underline{\mathscr{B}}_{\mathbf{Q}}(\mu,\mathbb{B},\delta)=\{\overline{B(x,r)}\mid r\in(0,\diam_{\Euc}(\mathbb{B}))\cap\mathbf{Q},\mu(B(x,r))<\delta\}.
\]
Then we have that
\[
\underline{d}_{\mu,\mathbb{B},\delta}(\pi)=\min\left\{ n\in\mathbf{N}\;:\;\text{there are }B_{1},\ldots,B_{n}\in\underline{\mathscr{B}}_{\mathbf{Q}}(\mu,\mathbb{B},\delta)\text{ such that }\pi\subset B_{1}\cup\cdots\cup B_{n}\right\} .
\]
\end{lem}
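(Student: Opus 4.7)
My plan is to prove both inequalities, one immediate from set inclusion and the other via a radius-enlargement argument.

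The inequality $\underline{d}_{\mu,\mathbb{B},\delta}(\pi) \le \min\{n \in \mathbf{N} : \ldots\}$ (with rational radii) follows immediately from the set-theoretic inclusion $\underline{\mathscr{B}}_{\mathbf{Q}}(\mu,\mathbb{B},\delta) \subset \underline{\mathscr{B}}(\mu,\mathbb{B},\delta)$: every cover of $\pi$ by rational-radius balls is in particular a cover by balls in $\underline{\mathscr{B}}$, hence the minimum over rational covers is at least as large as the minimum over all covers.

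For the reverse inequality, I would start from any cover $\overline{B(x_1, r_1)}, \ldots, \overline{B(x_n, r_n)}$ of $\pi$ by balls in $\underline{\mathscr{B}}(\mu,\mathbb{B},\delta)$ and construct a cover of the same size with rational radii. The strategy is to enlarge each $r_i$ slightly to a rational $r_i' > r_i$; since $\overline{B(x_i, r_i)} \subset \overline{B(x_i, r_i')}$, enlargement automatically preserves the covering of $\pi$, so only the measure condition $\mu(B(x_i, r_i')) < \delta$ remains to verify. The key observation is that the open balls $B(x_i, r)$ decrease to the closed ball $\overline{B(x_i, r_i)}$ as $r \downarrow r_i$, and so by continuity of the finite measure $\mu$ from above,
\[
\lim_{r \downarrow r_i} \mu(B(x_i, r)) = \mu(\overline{B(x_i, r_i)}).
\]
Consequently, whenever $\mu(\overline{B(x_i, r_i)}) < \delta$, a sufficiently close rational $r_i' > r_i$ satisfies $\mu(B(x_i, r_i')) < \delta$, completing the construction for that ball.

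The main obstacle is the exceptional case $\mu(\overline{B(x_i, r_i)}) \ge \delta$, which forces $\mu(\partial B(x_i, r_i)) \ge \delta - \mu(B(x_i, r_i)) > 0$, so we cannot enlarge $r_i$ and stay below $\delta$. To handle this, I would first replace such an $r_i$ with a slightly smaller value $\tilde r_i < r_i$ satisfying $\mu(\overline{B(x_i, \tilde r_i)}) < \delta$; this is possible because $r \mapsto \mu(\overline{B(x_i, r)})$ is nondecreasing with left-limit $\mu(B(x_i, r_i)) < \delta$ at $r_i$. Then the enlargement step applies to the modified cover. Preserving coverage of $\pi$ through this shrinkage is the core technical difficulty: one argues via compactness of $\pi$ together with the strictness of the original measure condition to obtain the required slack, using also that the set of bad radii $\{r > 0 : \mu(\partial B(x_i, r)) > 0\}$ is countable for each fixed $x_i$ (as the concentric spheres are disjoint and $\mu$ is finite on bounded sets), so that the perturbations $\tilde r_i$ can be chosen generically and absorbed into the overall construction without increasing the cardinality of the cover.
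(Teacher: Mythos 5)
Your first inequality and your main enlargement step coincide with the paper's argument: the paper's proof is precisely the observation that, by continuity of the finite measure $\mu$ from above along $B(x,r')\downarrow\overline{B(x,r)}$ as $r'\downarrow r$, one can replace a ball of the cover by a concentric ball of slightly larger rational radius (still below $\diam_{\Euc}(\mathbb{B})$) without violating the measure constraint. So the case $\mu(\overline{B(x_i,r_i)})<\delta$ is handled correctly and identically to the paper.

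The gap is your exceptional case, and it cannot be repaired as proposed. Shrinking $r_i$ to $\tilde r_i<r_i$ deletes the closed annulus $\overline{B(x_i,r_i)}\setminus \overline{B(x_i,\tilde r_i)}$ from the cover, and there is in general no ``slack'' to absorb this: take $n=1$ with $\pi$ meeting $\partial B(x_1,r_1)$, and no shrinkage whatsoever preserves coverage. The strictness of $\mu(B(x_i,r_i))<\delta$ gives room in measure, not in geometry, and the countability of the radii $r$ with $\mu(\partial B(x_i,r))>0$ is irrelevant to coverage. In fact the stated identity is false for general $\mu$: let $\mu$ be $\delta$ times the uniform probability measure on the circle $\partial B(0,\sqrt{2})$ and let $\pi=\partial B(0,\sqrt{2})$. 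Then $\overline{B(0,\sqrt{2})}\in\underline{\mathscr{B}}(\mu,\mathbb{B},\delta)$ (its open ball has measure $0$) and covers $\pi$, so $\underline{d}_{\mu,\mathbb{B},\delta}(\pi)=1$; but any closed ball of rational radius containing $\pi$ contains $\overline{B(0,\sqrt{2})}$ and is distinct from it, so its open ball contains $\partial B(0,\sqrt{2})$ minus at most two points and has measure exactly $\delta$, whence no single element of $\underline{\mathscr{B}}_{\mathbf{Q}}(\mu,\mathbb{B},\delta)$ covers $\pi$ and the right-hand side is at least $2$. To be fair, the paper's one-line proof has the same blind spot: it tacitly assumes that $\mu$ of the \emph{closed} ball is below $\delta$, whereas the definition of $\underline{\mathscr{B}}$ only constrains the open ball. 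The honest fix is either to put the constraint on closed balls in the definition, or to restrict to measures giving zero mass to the spheres involved --- which is the situation in the paper's application to the LQG measure --- in which case your main argument already completes the proof and no exceptional case arises.
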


\begin{proof}
This follows from the fact that if $B$ is a closed ball such that
$\mu(B)<\delta$, then there is a ball $B'$ containing $B$ so that
$\mu(B')<\delta$ and $B'$ has rational center and rational radius
arbitrarily close to that of $B$.
\end{proof}
It will be more convenient for us to work with a modified graph length,
which has a somewhat better ``continuity'' property with respect
to small perturbations of the field. (See equations \eqref{scaledists}
and \eqref{mostusefulscaling} below.) We will see shortly (\propref{ddunderline})
that the modified graph length differs from the original one only
by at most a factor of $2$. Define
\begin{equation}
\mathscr{B}(\mathbb{B},R)=\{\overline{B(x,r)}\mid r\in(0,R),x\in\mathbb{B}\}.\label{eq:Bsetdef-1}
\end{equation}

\begin{defn}
\label{def:modifiedgraphlength}For $\delta>0$, define
\begin{equation}
\kappa_{\delta}(t)=\max\{1,\delta^{-1}t\}.\label{eq:kappadelta}
\end{equation}
If $\mathbb{B}$ is a domain and $\mu$ is a measure on $\mathbb{B}^{(R)}$,
then define the \emph{$(\mu,\mathbb{B},\delta,R)$-graph length} of
a path $\pi\in\mathcal{P}_{\mathbb{B}}$ as
\[
d_{\mu,\mathbb{B},\delta,R}(\pi)=\inf\left\{ \sum_{k=1}^{n}\kappa_{\delta}(\mu(B_{k}))\;:\;\text{there are }B_{1},\ldots,B_{n}\in\mathscr{B}(\mathbb{B},R)\text{ such that }\pi\subset B_{1}\cup\cdots\cup B_{n}\right\} .
\]
We further define
\[
d_{\mu,\mathbb{B},\delta,R}(x,y)=\begin{cases}
0, & x=y;\\
\min\limits _{\pi\in\mathcal{P}_{\mathbb{B}}(x,y)}d_{\mu,\mathbb{B},\delta,R}(\pi), & x\ne y.
\end{cases}
\]
If $R>\diam_{\Euc}(\mathbb{B})$, then we define $d_{\mu,\mathbb{B},\delta,R}=d_{\mu,\mathbb{B},\delta,\diam_{\Euc}(\mathbb{B})}$,
since any ball of radius greater than $\diam_{\Euc}(\mathbb{B})$
can be replaced by one of radius less than $\diam_{\Euc}(\mathbb{B})$
without changing the minimum.
\end{defn}

\begin{rem}
\label{rem:exchangeratesucks}The difference between the definitions
of $d$ and $\underline{d}$ is that the definition of $d$ allows
``too large'' balls to be used, if one pays a ``price.'' The price,
however, is heuristically very steep, because one has to pay for the
measure of the entire large ball, rather than just the smaller region
around the path that would be required if one used smaller balls.
Thus, we do not expect $d$ and $\underline{d}$ to differ by very
much. The reason to use $d$ instead of $\underline{d}$ is that $d$
changes very little under \emph{small} multiplicative perturbations
of the measure (see \eqref{scaledists} and \eqref{mostusefulscaling}
below), while $\underline{d}$ may change by up to a factor of $2$
even if the measure is multiplied by $1+\eps$ for $\eps\ll1$.
\end{rem}

\begin{rem}
The parameter $R$ of $d$ restricts the maximum Euclidean size of
a ball that can be used to cover the path. If $\mathbb{B}$ and $R$
are of order $1$, and $\delta\ll1$, then we would not expect $R$
to play any significant role, because balls of Euclidean radius of
order $1$ are extremely unlikely to have LQG mass $\delta$. However,
it will often simplify our analysis to work with a metric in which
we know that no balls of radius greater than $R$ are used---more
importantly in order to use \propref{crossmanysmalls} below. We can
then deal with the error incurred by this modification separately.

We prove a measurability lemma for $d$, analogous to \lemref{GLrational}.
\end{rem}

\begin{lem}
\label{lem:MGLQ}Let $\mathbb{B}$ be a domain and $R>0$. Define
\[
\mathscr{B}_{\mathbf{Q}}(\mathbb{B},R)=\{\overline{B(x,r)}\mid r\in(0,R)\cap\mathbf{Q},x\in\mathbb{B}\cap\mathbf{Q}^{2}\}.
\]
Then we have that
\[
\underline{d}_{\mu,\mathbb{B},\delta,R}(\pi)=\inf\left\{ \sum_{k=1}^{n}\kappa_{\delta}(\mu(B_{k}))\;:\;\text{ there are }B_{1},\ldots,B_{n}\in\mathscr{B}(\mathbb{B},R)\text{ such that }\pi\subset B_{1}\cup\cdots\cup B_{n}\right\} .
\]
\end{lem}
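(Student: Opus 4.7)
The plan is to prove both inequalities. Reading the statement as the natural analogue of \lemref{GLrational} for the modified graph length (so that the $\mathscr{B}(\mathbb{B},R)$ on the right-hand side should be $\mathscr{B}_{\mathbf{Q}}(\mathbb{B},R)$, making the lemma nontrivial), one direction is immediate: since $\mathscr{B}_{\mathbf{Q}}(\mathbb{B},R)\subset\mathscr{B}(\mathbb{B},R)$, every cover of $\pi$ by rational balls is a cover by general balls with the same cost, so the infimum over rational covers dominates the infimum over all covers. The substantive step is the opposite inequality: given any cover $B_{1},\ldots,B_{n}\in\mathscr{B}(\mathbb{B},R)$ of $\pi$, I need to produce a cover by balls in $\mathscr{B}_{\mathbf{Q}}(\mathbb{B},R)$ whose total $\kappa_{\delta}$-cost exceeds that of the original by at most a prescribed $\eps>0$.

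Fix such a cover with $B_{k}=\overline{B(x_{k},r_{k})}$, $x_{k}\in\mathbb{B}$, $r_{k}\in(0,R)$. For each $k$, I will choose $x_{k}'\in\mathbf{Q}^{2}\cap\mathbb{B}$ very close to $x_{k}$ (possible because $\mathbb{B}$ is open, so $\mathbf{Q}^{2}\cap\mathbb{B}$ is dense in $\mathbb{B}$) and $r_{k}'\in\mathbf{Q}\cap(0,R)$ slightly larger than $r_{k}+|x_{k}-x_{k}'|$ (possible because $r_{k}<R$). The inequality $r_{k}'>r_{k}+|x_{k}-x_{k}'|$ forces $\overline{B(x_{k},r_{k})}\subset\overline{B(x_{k}',r_{k}')}$, so the enlarged family is still a valid cover of $\pi$, now by elements of $\mathscr{B}_{\mathbf{Q}}(\mathbb{B},R)$.

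To control the cost I appeal to continuity from above of $\mu$. Along any sequence of approximations with $x_{k}'\to x_{k}$ and $r_{k}'\downarrow r_{k}$, the closed balls $\overline{B(x_{k}',r_{k}')}$ decrease to $\overline{B(x_{k},r_{k})}=B_{k}$. Since $\overline{B(x_{k}',r_{k}')}\subset\mathbb{B}^{(R)}$ whenever $x_{k}'\in\mathbb{B}$ and $r_{k}'<R$, and $\mu$ is a finite measure on $\mathbb{B}^{(R)}$, continuity from above gives $\mu(\overline{B(x_{k}',r_{k}')})\to\mu(B_{k})$; continuity of $\kappa_{\delta}$ then yields $\kappa_{\delta}(\mu(\overline{B(x_{k}',r_{k}')}))\to\kappa_{\delta}(\mu(B_{k}))$. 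I can therefore tune the approximations so that each $k$ contributes at most an additional $\eps/n$ to the total cost. Summing over $k$ and sending $\eps\downarrow 0$ finishes the proof.

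The only point to monitor is the finiteness of $\mu$ on each enlarged closed ball, which is needed for the continuity-from-above step; this is automatic from the standing assumption that $\mu$ is a finite measure on $\mathbb{B}^{(R)}$, so no serious obstacle arises. The whole argument is a routine outer-regularity approximation, parallel to \lemref{GLrational} but with one extra continuity step to handle the $\kappa_{\delta}$-weighted sum in place of a simple cardinality count.
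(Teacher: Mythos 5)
Your proof is correct and takes essentially the same route as the paper's (much terser) argument: enlarge each closed ball to one with rational center and slightly larger rational radius, and use continuity from above of the finite measure $\mu$ to ensure the total $\kappa_{\delta}$-cost increases by at most $\eps$. You also correctly identified that the $\mathscr{B}(\mathbb{B},R)$ on the right-hand side of the statement should read $\mathscr{B}_{\mathbf{Q}}(\mathbb{B},R)$ for the lemma to be nontrivial.
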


\begin{proof}
This follows from the fact that if $B$ is a closed ball, then for
any $\eps>0$ there is a ball $B'$ containing $B$ so that $\mu(B')<\mu(B)+\eps$
and $B'$ has rational center and rational radius arbitrarily close
to that of $B$.
\end{proof}

\subsubsection{Notation for distances}

We will use a flexible notation for the geometrical notions of distance
that we will use. For a box $\mathbb{B}$, we recall that $\mathrm{L}_{\mathbb{B}}$,
$\mathrm{R}_{\mathbb{B}}$, $\mathrm{T}_{\mathbb{B}}$, and $\mathrm{B}_{\mathbb{B}}$
denote the left, right, top, and bottom edges of $\mathbb{B}$, respectively.
Given a box $\mathbb{B}$, we will call the ``easy direction'' across
$\mathbb{B}$ the direction between the longer sides of $\mathbb{B}$,
and the ``hard direction'' across $\mathbb{B}$ the direction between
the shorter sides of $\mathbb{B}$, as illustrated in \figref{easyhard}.
Formally, let
\[
(\mathrm{easy}_{\mathbb{B}},\mathrm{hard}_{\mathbb{B}})=\begin{cases}
((\mathrm{L}_{\mathbb{B}},\mathrm{R}_{\mathbb{B}}),(\mathrm{T}_{\mathbb{B}},\mathrm{B}_{\mathbb{B}})), & \width(\mathbb{B})<\height(\mathbb{B});\\
((\mathrm{T}_{\mathbb{B}},\mathrm{B}_{\mathbb{B}}),(\mathrm{L}_{\mathbb{B}},\mathrm{R}_{\mathbb{B}})), & \height(\mathbb{B})<\width(\mathbb{B}).
\end{cases}
\]
In all of these notations, we will drop the subscript $\mathbb{B}$
when it is clear from context. We will often use this in notation
like $d_{\mu,\mathbb{B},\delta,R}(\mathrm{L},\mathrm{R})$, denoting
the minimum $(\mu,\mathbb{B},\delta,R)$-graph length of paths crossing
from left to right in $\mathbb{B}$, or $d_{\mu,\mathbb{B},\delta,R}(\mathrm{easy})$,
denoting the minimum $(\mu,\mathbb{B},\delta,\mathbb{R})$-graph length
of an easy crossing of $\mathbb{B}$.

Also, define
\begin{equation}
d_{\mu,\mathbb{B},\delta,R}(\min;a)=\min_{\substack{x,y\in\mathbb{B}\\
|x-y|\ge a\diam_{\Euc}\mathbb{B}
}
}d_{\mu,\mathbb{B},\delta,R}(x,y).\label{eq:dminadef}
\end{equation}
Suppose that $\mathbb{A}$ is the intersection of a rectangular annulus
with a rectangle $\mathbb{R}$, both of whose side lengths are greater
than the diameter of the annulus, so that either $\mathbb{A}\subset\mathbb{R}$
or $\mathbb{A}\cap\partial R$ has exactly two connected components.
If $\mu$ is a measure on $\mathbb{A}^{(R)}$, define
\begin{equation}
d_{\mu,\mathbb{A},\delta,R}(\mathrm{around})=\min_{\pi}d_{\mu,\mathbb{A},\delta,R}(\pi),\label{eq:darounddef}
\end{equation}
where $\pi$ ranges over all circuits around $\mathbb{A}$ if $\mathbb{A}$
is an annulus, or simply as
\begin{equation}
d_{\mu,\mathbb{A},\delta,R}(\mathrm{around})=d_{\mu,\mathbb{A},\delta,R}(X,Y),\label{eq:darounddef2}
\end{equation}
where $X$ and $Y$ are the two connected components of $\mathbb{A}\cap\partial\mathbb{R}$,
if $\mathbb{A}$ is simply-connected.

\subsubsection{Comparison inequalities}

The following comparison inequalities are immediate consequences of
\defref{modifiedgraphlength}:
\begin{enumerate}
\item If $\delta\le\delta'$, then 
\begin{equation}
d_{\mu,\mathbb{B},\delta,R}\ge d_{\mu,\mathbb{B},\delta',R}.\label{eq:delta-comparison}
\end{equation}
\item If $R'\le R$, then 
\begin{equation}
d_{\mu,\mathbb{B},\delta,R}\le d_{\mu,\mathbb{B},\delta,R'}.\label{eq:R-comparison}
\end{equation}
\item For all $R>0$ we have
\begin{equation}
d_{\mu,\mathbb{B},\delta,R}=d_{\mu,\mathbb{B},\delta,R\wedge\diam_{\Euc}(\mathbb{B})}.\label{eq:nopointbeinghuge}
\end{equation}
\item If $\mu|_{\mathbb{B}^{(R)}}\le\nu|_{\mathbb{B}^{(R)}}$, then
\begin{equation}
d_{\mu,\mathbb{B},\delta,R}\le d_{\nu,\mathbb{B},\delta,R}.\label{eq:measure-comparison}
\end{equation}
\item If $\mathbb{B}\subset\mathbb{B}'$, then
\begin{equation}
d_{\mu,\mathbb{B}',\delta,R}(x,y)\le d_{\mu,\mathbb{B},\delta,R}(x,y)\label{eq:box-comparison}
\end{equation}
for all $x,y\in\mathbb{B}$.
\item For any $\alpha>0$, we have
\begin{equation}
d_{\alpha\mu,\mathbb{B},\delta,R}=d_{\mu,\mathbb{B},\alpha^{-1}\delta,R}.\label{eq:changemeasurechangedelta}
\end{equation}
If $\alpha\le1$, then
\begin{equation}
d_{\mu,\mathbb{B},\alpha\delta,R}(x,y)\le\alpha^{-1}d_{\mu,\mathbb{B},\delta,R}(x,y).\label{eq:scaledists}
\end{equation}
If $\alpha\ge1$, then
\begin{equation}
d_{\alpha\mu,\mathbb{B},\delta,R}\le\alpha d_{\mu,\mathbb{B},\delta,R}(x,y).\label{eq:mostusefulscaling}
\end{equation}
\item The triangle inequality holds: if $\pi=\pi_{1}\cup\pi_{2}$, then
\begin{equation}
d_{\mu,\mathbb{B},\delta,R}(\pi)\le d_{\mu,\mathbb{B},\delta,R}(\pi_{1})+d_{\mu,\mathbb{B},\delta,R}(\pi_{2}).\label{eq:triangleineq}
\end{equation}
\end{enumerate}
\begin{rem}
All of the above properties with the exception of \eqref{scaledists}
and \eqref{mostusefulscaling}, as well as those involving $R$, also
apply for $\underline{d}$. Note that $\underline{d}$ could have
been defined using the $R$ parameter in the same way as $d$, and
then \eqref{R-comparison} and \eqref{nopointbeinghuge} would hold
for $\underline{d}$ as well, but we will not need this in the paper.
\end{rem}

We will need a bound in the opposite direction for \eqref{R-comparison}:
a bound which controls how much the Liouville graph distance can increase
when we decrease $R$, requiring the use of smaller balls. We will
prove this as \lemref{RprimeR}. This first requires the following
lemma and definition.
\begin{lem}
\label{lem:kappascale}Suppose that $a_{1}+\cdots+a_{n}\le b$. Then
we have (recalling that $\kappa_{\delta}$ was defined in \eqref{kappadelta})
that
\[
\sum_{k=1}^{n}\kappa_{\delta}(a_{k})\le\kappa_{\delta}(b)+n.
\]
\end{lem}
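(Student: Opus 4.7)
The plan is very short because the inequality dissolves once we bound each summand by an affine function of $a_k$. Specifically, for any $x\ge 0$ we have $\max\{1,x\}\le 1+x$, since both $1\le 1+x$ and $x\le 1+x$. Applying this with $x = \delta^{-1} a_k$ gives $\kappa_\delta(a_k)\le 1 + \delta^{-1} a_k$ term by term.

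Summing over $k$ and using the hypothesis $a_1+\cdots+a_n\le b$, I get
\[
\sum_{k=1}^{n}\kappa_\delta(a_k)\;\le\;n+\delta^{-1}\sum_{k=1}^{n}a_k\;\le\;n+\delta^{-1}b.
\]
Finally, $\delta^{-1}b\le\max\{1,\delta^{-1}b\}=\kappa_\delta(b)$, so the right-hand side is at most $n+\kappa_\delta(b)$, which is the claim.

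There is really no obstacle here; the lemma is a bookkeeping device whose only content is that the ``$\max$ with $1$'' in the definition of $\kappa_\delta$ can be absorbed into an additive penalty of $n$, with the remaining linear part of $\kappa_\delta$ behaving additively under concatenation. This is exactly the estimate that will later be invoked to compare $d_{\mu,\mathbb{B},\delta,R}$ with $d_{\mu,\mathbb{B},\delta,R'}$ in Lemma~\ref{lem:RprimeR}: when a single ball of radius $R$ is replaced by many balls of smaller radius $R'$ covering the same region, the total $\mu$-mass of the replacement balls is comparable to that of the original ball, so the sum of $\kappa_\delta$ over the replacements is controlled by $\kappa_\delta$ of the original plus a combinatorial count of how many small balls were needed.
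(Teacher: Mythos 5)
Your proof is correct and is essentially identical to the paper's: both rest on the two-sided bound $\delta^{-1}t\le\kappa_{\delta}(t)\le\delta^{-1}t+1$ (for $t\ge 0$), summing the upper bound termwise and applying the lower bound to $b$. No further comment is needed.
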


\begin{proof}
We have that $\delta^{-1}t\le\kappa_{\delta}(t)\le\delta^{-1}t+1$
for all $t$, so
\[
\sum_{k=1}^{n}\kappa_{\delta}(a_{k})\le\sum_{k=1}^{n}(\delta^{-1}a_{k}+1)\le\delta^{-1}b+n\le\kappa_{\delta}(b)+n.\qedhere
\]
\end{proof}
\begin{defn}
\label{def:geodesics}Suppose that $\mathbb{B}$ is a domain, $\mu$
is a measure on $\mathbb{B}$, and $\delta,R>0$. Let $x,y\in\mathbb{B}$
and suppose that $\pi$ is a path in $\mathbb{B}$ connecting $x$
and $y$. We will say that $\pi$ is a \emph{$(\mu,\mathbb{B},\delta,R)$-geodesic
}if $\pi$ consists of a sequence of straight line segments between
successive points $x,z_{1},\ldots,z_{N},y\in\mathbb{B}$ and there
is a sequence of radii $r_{1},\ldots,r_{N}\in(0,R)$ so that $\sum_{j=1}^{N}\mu(\overline{B(z_{i},r_{i})})=d_{\mu,\mathbb{B},\delta,R}(x,y)$
and $\pi\subset\bigcup_{j=1}^{N}\overline{B(z_{j},r_{j})}$. We will
call $z_{1},\ldots,z_{N}$ the corresponding \emph{geodesic points,}
$r_{1},\ldots,r_{N}$ the corresponding \emph{geodesic radii}, and
$\overline{B(z_{1},r_{1})},\ldots,\overline{B(z_{N},r_{N})}$ the
corresponding \emph{geodesic balls}.
\end{defn}

\begin{lem}
\label{lem:RprimeR}There is a constant $C$ so that if $B$ is a
domain, $\mu$ is a measure on $\mathbb{B}$, $\delta>0$, and $0<R'\le R$,
then 
\begin{equation}
d_{\mu,\mathbb{B},\delta,R'}(x,y)\le d_{\mu,\mathbb{B},\delta,R}(x,y)+C\frac{R\Leb(\mathbb{B}^{(R)})}{(R')^{3}},\label{eq:RprimeR}
\end{equation}
where $\Leb$ denotes the Lebesgue measure, and also
\begin{equation}
d_{\mu,\mathbb{B},\delta,R'}(x,y)\le(1+CR/R')d_{\mu,\mathbb{B},\delta,R}(x,y).\label{eq:RprimeR-multiplicative}
\end{equation}
\end{lem}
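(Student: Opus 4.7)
The plan is to modify a near-optimal covering for $d_{\mu,\mathbb{B},\delta,R}(x,y)$ into one that uses only balls of radius at most $R'$. Fix $\epsilon>0$ and choose a path $\pi$ from $x$ to $y$ together with balls $B_j=B(z_j,r_j)\in\mathscr{B}(\mathbb{B},R)$, $j=1,\ldots,N$, covering $\pi$ and satisfying $\sum_j\kappa_\delta(\mu(B_j))\le d_{\mu,\mathbb{B},\delta,R}(x,y)+\epsilon$. Retain unchanged those $B_j$ with $r_j\le R'$. For each $B_j$ with $r_j>R'$, replace $B_j$ by a family $D_{j,1},\ldots,D_{j,n_j}$ of balls of radius $R'$ obtained from a standard square-lattice covering of $B_j$ at spacing proportional to $R'$, giving $n_j\le C(r_j/R')^2\le C(R/R')^2$ new balls whose union contains $B_j$, with each point of $B(z_j,r_j+R')$ lying in $O(1)$ of the $D_{j,k}$.

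By this bounded multiplicity $\sum_k\mu(D_{j,k})\le C\mu(B(z_j,r_j+R'))$, which, after arranging the small balls to lie inside $B_j$ itself and patching the thin annular shell near $\partial B_j$ with $O(r_j/R')$ additional boundary balls (a one-dimensional cost), becomes $\sum_k\mu(D_{j,k})\le C\mu(B_j)$. Applying Lemma~\ref{lem:kappascale} together with $\kappa_\delta(Ct)\le C\kappa_\delta(t)$ for $C\ge 1$ yields
\[
\sum_k \kappa_\delta(\mu(D_{j,k})) \le \kappa_\delta(C\mu(B_j))+n_j \le C\kappa_\delta(\mu(B_j))+Cn_j.
\]
Summing over the replaced $j$ and adding the retained balls produces a $(\mu,\mathbb{B},\delta,R')$-cover of $\pi$ of total cost at most $d_{\mu,\mathbb{B},\delta,R}(x,y)+C\sum_j n_j$, after the constant $C$ multiplying $\kappa_\delta(\mu(B_j))$ is absorbed into a $1+O(R/R')$ factor by choosing the cover with near-unit overlap. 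For the additive bound I will estimate $\sum_j n_j$ by the total number of radius-$R'$ balls needed to cover $\bigcup_j B_j\subset\mathbb{B}^{(R)}$, namely $C\Leb(\mathbb{B}^{(R)})/(R')^2$, with an extra factor of $R/R'$ accounting for the worst-case overlap at scale $R$. For the multiplicative bound, I will use $N\le d_{\mu,\mathbb{B},\delta,R}(x,y)$ (since each $\kappa_\delta\ge 1$) and control $\sum_j n_j\le C(R/R')\cdot d_{\mu,\mathbb{B},\delta,R}(x,y)$ via the 1-dimensional boundary patching term $O(r_j/R')$ per replaced ball.

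The main technical obstacle I expect is obtaining the \emph{linear} dependence $1+CR/R'$ rather than the quadratic $1+C(R/R')^2$ that a naive area-based count of the lattice covering would produce. The resolution I propose is to separate the replacement of each $B_j$ into two pieces: a bulk piece, where the covering of the interior of $B_j$ contributes only to the \emph{additive} error (absorbed into $R\Leb(\mathbb{B}^{(R)})/(R')^3$ since its total over $j$ is controlled by the area of $\mathbb{B}^{(R)}$), and a boundary piece of size $O(r_j/R')$, which is the only contribution scaling with the original $\kappa_\delta(\mu(B_j))$ and thus gives the multiplicative factor $1+CR/R'$. Establishing that the bulk contribution can indeed be attributed entirely to the additive error will require careful bookkeeping, especially to ensure that $\mu(B_j)$ rather than $\mu(B(z_j,r_j+R'))$ appears in the multiplicative estimate for a general (possibly singular) measure $\mu$.
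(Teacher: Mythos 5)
Your overall strategy (replace each large ball by radius-$R'$ balls and account for the surcharge via Lemma~\ref{lem:kappascale}) has the same shape as the paper's, and you correctly identify the central difficulty of getting a linear rather than quadratic dependence on $R/R'$ --- but your proposed resolution does not close the gap. The idea you are missing is that one should not cover the disk $B_j$ at all: one only needs to cover the portion of the \emph{path} lying in $B_j$, and for a $(\mu,\mathbb{B},\delta,R)$-geodesic (Definition~\ref{def:geodesics}) that portion consists of straight line segments of total length $O(R)$, so $O(R/R')$ balls of radius $R'$ suffice per replaced ball and there is no ``bulk'' term at all. Your bulk-versus-boundary split cannot rescue the multiplicative bound \eqref{eq:RprimeR-multiplicative}, because that bound has no additive term into which the bulk count could be absorbed: $d_{\mu,\mathbb{B},\delta,R}(x,y)$ can equal $1$ while $\Leb(\mathbb{B}^{(R)})/(R')^{2}$ is arbitrarily large, so the bulk contribution is simply not controlled by $(R/R')\,d_{\mu,\mathbb{B},\delta,R}(x,y)$.

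Two further problems. First, your estimate $\sum_k\mu(D_{j,k})\le C\mu(B_j)$ with $C>1$ forces the step $\kappa_\delta(C\mu(B_j))\le C\kappa_\delta(\mu(B_j))$ and hence a leading coefficient $C>1$ in front of $d_{\mu,\mathbb{B},\delta,R}(x,y)$ in the additive bound \eqref{eq:RprimeR}, which asserts coefficient exactly $1$; that constant cannot be pushed into the term $R\Leb(\mathbb{B}^{(R)})/(R')^{3}$, since the distance is not controlled by Lebesgue measure. A path covering by essentially disjoint balls inside $B_j$ has total measure at most $\mu(B_j)$, so Lemma~\ref{lem:kappascale} gives $\kappa_\delta(\mu(B_j))+n_j$ with coefficient $1$. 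Second, for the additive bound you must count how many balls get replaced; the paper bounds this by $C\Leb(\mathbb{B}^{(R)})/(R')^{2}$ using the structural fact that no point lies in three geodesic balls (otherwise one could be deleted), whence $\sum_j\Leb(B_j)\le 2\Leb(\mathbb{B}^{(R)})$. Your ``extra factor of $R/R'$ for worst-case overlap'' is not a substitute: an arbitrary near-optimal cover carries no a priori multiplicity bound, and the factor $R/R'$ in the additive error actually arises from the $O(R/R')$ replacement balls per large ball, not from overlap.
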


\begin{proof}
Let $x,y\in\mathbb{B}$. Let $B_{1},\ldots,B_{N}$ be a set of geodesic
balls for a geodesic between $x$ and $y$. We note that for any $1\le i<j<k\le N$,
we have that $B_{i}\cap B_{j}\cap B_{k}=\emptyset$; if not, then
one of the balls could be eliminated to get a shorter distance. Therefore,
if $S=\{i\in\{1,\ldots,N\}\mid\diam_{\Euc}(B_{i})\ge R'\}$, then
we have that
\[
\frac{1}{2}|S|(R')^{2}\le\sum_{i\;:\;\diam_{\Euc}(B_{i})\ge R'}\Leb(B_{i})\le2\Leb\left(\bigcup_{i\;:\;\diam_{\Euc}(B_{i})\ge R'}B_{i}\right)\le2\Leb(\mathbb{B}^{(R)}),
\]
so
\[
|S|\le4\pi\frac{\Leb(\mathbb{B}^{(R)})}{(R')^{2}}.
\]
Now for each $i\in S$, $B_{i}$ can be replaced by a set of at most
$2\pi\lceil R/R'\rceil$ subset balls of radius $R'$, with centers
in $\mathbb{B}$, so that a path from $x$ to $y$ in $\mathbb{B}$
is still covered, and the new balls in each older ball have total
measure no larger than the measure of the original ball. This implies,
by \lemref{kappascale}, that
\begin{equation}
d_{\mu,\mathbb{B},\delta,R'}(x,y)\le d_{\mu,\mathbb{B},\delta,R}(x,y)+2\pi\lceil R/R'\rceil|S|\le d_{\mu,\mathbb{B},\delta,R}(x,y)+C\frac{R\Leb(\mathbb{B}^{(R)})}{(R')^{3}}\label{eq:RrprimeintermsofS}
\end{equation}
for some absolute constant $C$, which completes the proof of \eqref{RprimeR}.
On the other hand, \eqref{RprimeR-multiplicative} follows from \eqref{RrprimeintermsofS}
by the trivial bound $|S|\le N$.
\end{proof}
We will also want to relate our modified distances $d$ back to our
original distances of interest $\underline{d}$. This turns out to
be very simple, but we first require a definition.
\begin{defn}
\label{def:ballsnonzero}By an \emph{admissible measure} on a set
$\mathbb{A}$ we will mean a nonatomic Radon measure $\mu$ such that
for every $x\in\mathbb{A}$ and $r>0$ such that $B(x,r)\subset\mathbb{A}$,
we have
\begin{equation}
\mu(B(x,r))>0.\label{eq:ballsnonzero}
\end{equation}

By \propref{ballsnonzero}, the Liouville quantum gravity measures
are admissible almost surely. Note also that if $\mu$ is an admissible
measure then $\alpha\mu$ is also an admissible measure for any $\alpha\in(0,\infty)$.
Now we can state the relationship between $d$ and $\underline{d}$.
\end{defn}

\begin{prop}
\label{prop:ddunderline}If $\mu$ is an admissible measure on $\mathbb{B}$
and $\delta,R>0$, then
\[
d_{\mu,\mathbb{B},\delta,R}(x,y)\le\underline{d}_{\mu,\mathbb{B},\delta,R}(x,y)\le2d_{\mu,\mathbb{B},\delta,R}(x,y).
\]
\end{prop}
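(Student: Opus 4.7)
The plan is to establish the two inequalities separately from \defref{modifiedgraphlength} and the definition of $\underline{d}$, using that admissibility of $\mu$ supplies small-mass balls at every point and scale.

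For the left inequality $d_{\mu,\mathbb{B},\delta,R}(x,y) \le \underline{d}_{\mu,\mathbb{B},\delta,R}(x,y)$, I would simply note that any cover $B_1,\ldots,B_n \in \underline{\mathscr{B}}(\mu,\mathbb{B},\delta)$ (restricted to radii less than $R$) of a path $\pi \in \mathcal{P}_{\mathbb{B}}(x,y)$ has $\mu(B_k) < \delta$ for each $k$, so $\kappa_\delta(\mu(B_k)) = 1$. Therefore the same cover is admissible for $d_{\mu,\mathbb{B},\delta,R}$ with total cost exactly $n$, and taking an infimum over admissible covers gives the bound.

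For the right inequality $\underline{d}_{\mu,\mathbb{B},\delta,R}(x,y) \le 2 d_{\mu,\mathbb{B},\delta,R}(x,y)$, I would fix $\varepsilon>0$ and choose a near-minimizing cover $\{B_k\}_{k=1}^n \subset \mathscr{B}(\mathbb{B},R)$ of a path $\pi$ with $\sum_k \kappa_\delta(\mu(B_k)) \le d_{\mu,\mathbb{B},\delta,R}(x,y) + \varepsilon$. Split the indices into those with $\mu(B_k)<\delta$ (where $B_k$ is already in $\underline{\mathscr{B}}(\mu,\mathbb{B},\delta)$ and contributes $\kappa_\delta(\mu(B_k)) = 1$) and those with $\mu(B_k)\ge\delta$ (where $\kappa_\delta(\mu(B_k))=\mu(B_k)/\delta$ and $B_k$ must be refined). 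The key sub-claim is then that each subarc $\alpha$ of $\pi$ contained in such a large ball $B_k$ can be covered by at most $2\mu(B_k)/\delta = 2\kappa_\delta(\mu(B_k))$ balls from $\underline{\mathscr{B}}(\mu,\mathbb{B},\delta)$; accepting this sub-claim and summing yields $\underline{d} \le 2(d+\varepsilon)$, and sending $\varepsilon\to 0$ finishes the proof.

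For the sub-claim I would argue by a greedy construction along $\alpha$. Parametrize $\alpha:[0,T]\to B_k$; starting at $x_0=\alpha(0)$, use the admissibility hypothesis (in particular the nonatomicity of $\mu$, so that $\mu(B(x_0,r))\to 0$ as $r\to 0$) together with the monotonicity of $r \mapsto \mu(B(x_0,r))$ to select $r_0$ as the supremum of radii with $\mu(B(x_0,r))<\delta$, then let $x_1 = \alpha(t_1)$ where $t_1$ is the first exit time of $\alpha$ from $B(x_0,r_0)$, and iterate. Each ball $B(x_i,r_i)$ lies in $\underline{\mathscr{B}}(\mu,\mathbb{B},\delta)$ by construction, while maximality of $r_i$ implies $\mu(\overline{B(x_i,r_i)})\ge\delta$ and $|x_{i+1}-x_i| = r_i$. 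The main technical obstacle is bounding the total count $N$ of balls by $2\mu(B_k)/\delta$: the forced separation $|x_{i+1}-x_i|=r_i$ of consecutive centers provides a Besicovitch/Vitali-style multiplicity bound on the cover $\{B(x_i,r_i/2)\}$, so that $\sum_i \mu(\overline{B(x_i,r_i)})$ cannot greatly exceed $\mu(B_k)$, yielding the factor of $2$ after accounting for boundary-mass jumps of $r \mapsto \mu(B(x,r))$ via an arbitrarily small perturbation of $r_i$.
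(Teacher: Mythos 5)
Your first inequality is exactly the paper's (it is immediate from $\kappa_{\delta}\ge1$), and your reduction of the second inequality to the sub-claim ``each large ball $B_{k}$ can be traded for at most $2\kappa_{\delta}(\mu(B_{k}))$ balls of mass $<\delta$'' is the right shape. The gap is in your proof of that sub-claim. Your greedy construction forces every covering ball to be centered \emph{on the arc} with radius equal to the critical radius $r_{i}=\sup\{r:\mu(B(x_{i},r))<\delta\}$, and the counting argument you propose does not control the number of such balls. First, the separation $|x_{i+1}-x_{i}|=r_{i}$ holds only for \emph{consecutive} centers; the arc may return arbitrarily often to the same neighborhood, so the balls $B(x_{i},r_{i}/2)$ have no bounded multiplicity. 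Concretely, take $\mu=10\delta\cdot(\text{uniform measure on a disc }A\text{ of radius }10^{-6}\text{ at the origin})+\eps\,\Leb$ with $\eps$ tiny, and let the subarc wind many times around the annulus $\{10^{-3}\le|z|\le2\cdot10^{-3}\}$. Every critical radius is $r_{i}\approx|x_{i}|\approx10^{-3}$ (the greedy ball must nearly reach $A$ to accumulate mass $\delta$), so the number of greedy balls grows linearly in the Euclidean length of the arc, while $2\mu(B_{k})/\delta\approx20$ is fixed; also $\sum_{i}\mu(\overline{B(x_{i},r_{i})})\gg\mu(B_{k})$, contradicting your claimed mass comparison. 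Second, even absent revisits, the greedy balls need not be contained in $B_{k}$ (nothing prevents $r_{i}>\dist_{\Euc}(x_{i},\partial B_{k})$), so $\mu(\overline{B(x_{i},r_{i})})\ge\delta$ gives no comparison with $\mu(B_{k})$. Third, even granting a multiplicity bound, the planar Besicovitch constant is much larger than $2$, so the precise factor $2$ would not emerge this way.

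The missing idea is that the covering balls for $\underline{d}$ need not be centered on, or even cover, the original path: $\underline{d}(x,y)$ is an infimum over paths, so you are free to reroute. In the example above, a handful of large balls centered \emph{outside} the annulus, each avoiding $A$ and hence of mass $<\delta$, cover a rerouted arc. The paper exploits exactly this freedom via \lemref{internaltangent}: given a ball $B$ with $\mu(B)=m$ and entry/exit points $x,y\in\partial B$, one finds two internally tangent sub-balls $B',B''\subset B$ with $\mu(B')<\alpha m$, $\mu(B'')<(1-\alpha)m$, whose union is connected and contains $x$ and $y$; the semicontinuity of $r\mapsto\mu(B_{r})$ plus admissibility (to exclude the degenerate equality case) makes the choice of tangent radius work. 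Peeling off a mass-$(<\delta)$ ball at each step and recursing on the remainder yields at most $1+\lceil m/\delta\rceil\le2\kappa_{\delta}(m)$ balls, all inside $B$, which gives the factor $2$ exactly. If you want to salvage your approach you would need to replace the greedy on-path balls by such a rerouting construction; as written, the sub-claim's proof does not go through.
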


The first inequality is trivial by \eqref{kappadelta}. The second
follows from the following lemma.
\begin{lem}
\label{lem:internaltangent}Suppose that $\mu$ is an admissible measure
on $\mathbb{B}$, $\alpha\in(0,1)$, $B$ is a closed ball so that
$\mu(B)=\delta$, and $x,y\in\partial B$. Then there are closed balls
$B',B''\subset B$ so that $\mu(B_{1})<\alpha\delta$, $\mu(B_{2})<(1-\alpha)\delta$,
$B'\cup B''$ is path-connected, and $x,y\in B_{1}\cup B_{2}$.
\end{lem}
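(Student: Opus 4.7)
The plan is to parametrize the segment from $x$ to $y$ and apply a monotonicity/intermediate-value argument to a one-parameter family of pairs of internally tangent balls. For $t\in[0,1]$, set $z_t = x + t(y-x)$, and let $B'_t$ be the closed ball with diameter $[x,z_t]$ and $B''_t$ the closed ball with diameter $[z_t,y]$. Convexity of $B$ gives $B'_t,B''_t\subset B$, and the two balls meet precisely at the tangent point $z_t$, so $B'_t\cup B''_t$ is path-connected and contains both $x$ and $y$. Setting $f(t) = \mu(B'_t)$ and $g(t) = \mu(B''_t)$, the goal is to produce $t\in(0,1)$ with $f(t) < \alpha\delta$ and $g(t) < (1-\alpha)\delta$.

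First I would record two structural facts via the Thales characterization $p\in B'_s \iff \angle x p z_s \geq \pi/2$ (and the analogue for $B''_s$). The family $\{B'_s\}$ is non-decreasing in $s$ with $\bigcap_{s>t} B'_s = B'_t$, and $\{B''_s\}$ is non-increasing in $s$ with $\bigcap_{s<t} B''_s = B''_t$. Continuity of the finite Radon measure $\mu$ along these monotone families then makes $f$ right-continuous and $g$ left-continuous.

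The essential quantitative input is the strict inequality
\[
f(t) + g(t) < \delta \qquad\text{for all }t\in(0,1).
\]
Since $B'_t$ and $B''_t$ are internally tangent to $B$ at $x$ and $y$ respectively, with radii summing to $|x-y|/2$ and each strictly less than the radius of $B$, a direct area comparison shows $B'_t\cup B''_t$ cannot cover all of $B$; the complement $B\setminus(B'_t\cup B''_t)$ is relatively open in $B$ and contains some open Euclidean ball that lies inside $\mathbb{B}$. Admissibility (\defref{ballsnonzero}) forces that ball to have positive $\mu$-mass, while nonatomicity gives $\mu(\{z_t\}) = 0$, so $f(t)+g(t) = \mu(B'_t\cup B''_t) < \mu(B) = \delta$.

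To finish, set $t_1 = \sup\{t : f(t) < \alpha\delta\}$ and $t_2 = \inf\{t : g(t) < (1-\alpha)\delta\}$. Since $f(0) = 0$ and $g(1) = 0$, we have $t_1 > 0$ and $t_2 < 1$; right-continuity of $f$ yields $f(t_1) \geq \alpha\delta$, and left-continuity of $g$ yields $g(t_2) \geq (1-\alpha)\delta$. If one had $t_1 \leq t_2$, monotonicity of $f$ and $g$ would produce some $\tau\in(0,1)$ with $f(\tau) + g(\tau) \geq \delta$, contradicting the display. Hence $t_1 > t_2$, and any $t\in(t_2, t_1)$ lies in $(0,1)$ and satisfies both strict inequalities simultaneously. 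The main obstacle is securing the strict slack $f + g < \delta$: with only a weak inequality one could not rule out the borderline case $t_1 = t_2$ in which the two targets hold only with equality, and admissibility is precisely what supplies this slack.
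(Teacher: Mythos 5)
Your overall strategy --- a one-parameter family of tangent ball pairs, monotonicity plus one-sided continuity of $t\mapsto\mu(B'_t)$ and $t\mapsto\mu(B''_t)$, a strict slack $f+g<\delta$ supplied by admissibility, and a threshold argument to locate a parameter where both bounds hold simultaneously --- is exactly the paper's strategy, and your endgame (two thresholds $t_1,t_2$ rather than the paper's single $r_*$ followed by left-continuity) is a fine variant. The problem is the construction itself. The ball with diameter $[x,z_t]$ is \emph{not} contained in $B$ in general; convexity of $B$ only gives $[x,z_t]\subset B$, not that the disk erected on that chord segment stays inside. Concretely, take $B$ the closed unit disk, $x=(1,0)$, $y=(0,1)$, $t=1/2$, so $z_t=(1/2,1/2)$: the ball with diameter $[x,z_t]$ has center $(3/4,1/4)$ and radius $1/(2\sqrt2)\approx0.354$, while the distance from the origin to its center is $\sqrt{10}/4\approx0.791$, so it contains points of Euclidean norm $\approx1.14>1$ and sticks out of $B$. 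This breaks the lemma's requirement $B',B''\subset B$, and it also invalidates the key inequality $f(t)+g(t)\le\mu(B)$ (the part of $B'_t$ outside $B$ can carry $\mu$-mass), so the strict slack you rely on is not secured. Note also that your later description of $B'_t$ as ``internally tangent to $B$ at $x$'' contradicts the diameter-on-the-chord definition unless $[x,y]$ is a diameter of $B$; you are implicitly conflating two different families.

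The repair is the paper's construction: let $B_r$ be the closed ball of radius $r$ internally tangent to $B$ at $x$ (center on the radius of $B$ through $x$), and let $\tilde B_r$ be the closed ball internally tangent to $B$ at $y$ and externally tangent to $B_r$. Both are contained in $B$ by construction, they meet in a single point (which is $\mu$-null by nonatomicity), and $B_0=\{x\}$, $B_1=B$. With that family in place, your monotonicity, right-/left-continuity, and threshold arguments go through essentially verbatim and recover the paper's proof.
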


\begin{proof}
For $r\in[0,1]$, let $B_{r}$ be the closed ball of radius $r$ which
is internally tangent to $B$ at $x$, so $B_{0}=\{x\}$ and $B_{1}=B$.
Let $\tilde{B}_{r}$ be the closed ball which is internally tangent
to $B$ at $y$ and externally tangent to $B_{r}$. (See \figref{internaltangent}.)
\begin{figure}
\centering
\begin{tikzpicture}[x=0.1in,y = 0.1in,label distance=-0.07in]
\filldraw [fill opacity=0.1] (2.9932734401629206,1.4491422063763153) circle (3.325612578441148);
\filldraw [fill opacity=0.1] (-1.627700077915068,-0.7880231891211238) circle (1.8084214360146655);
\filldraw [fill opacity=0.05] (0.7950078747294753,2.18543439716675) circle (5.643908868044657);
\draw (2.9932734401629206,1.4491422063763153) node[label=center:{$\tilde{B}_r$}] {};
\draw (-1.627700077915068,-0.7880231891211238) node[label=center:{$B_r$}] {};
\draw (-1.5,4) node[label=center:{$B$}] {};
\fill (-2.77,-2.19) circle (2pt) node [label=below left:{$x$}] {}; 
\fill (6.146701087847164,0.39292580207907596) circle (2pt) node [label=below right:{$y$}] {}; 
\end{tikzpicture}

\caption{Construction in the proof of \lemref{internaltangent}.\label{fig:internaltangent}}
\end{figure}
 Let $f(r)=\mu(B_{r})$ and let $\tilde{f}(r)=\mu(\tilde{B}_{r})$.
We note that $f$ is increasing and $\tilde{f}$ is decreasing. Since
$\mu$ is nonatomic, we have that 
\[
f(r)+\tilde{f}(r)\le\mu(B)=\delta.
\]
We claim that $f$ is right-continuous. Indeed, if $r_{n}\downarrow r$,
then we have
\[
\lim_{n\to\infty}f(r_{n})=\mu\left(\bigcap_{n=1}^{\infty}B_{r_{n}}\right)=\mu(B_{r}).
\]
Similarly, $\tilde{f}$ is left-continuous, because if $r_{n}\uparrow r$,
then we have
\[
\lim_{n\to\infty}\tilde{f}(r_{n})=\mu\left(\bigcap_{n=1}^{\infty}\tilde{B}_{r_{n}}\right)=\mu(\tilde{B}_{r}).
\]
In particular, this implies that $f$ and $\tilde{f}$ are both upper-semicontinuous.
Let
\begin{equation}
r_{*}=\sup\{r\in[0,1]\mid f(r)<\alpha\delta\}.\label{eq:rstardef}
\end{equation}
Because $f$ is upper-semicontinuous, we have that
\begin{equation}
f(r_{*})\ge\alpha\delta,\label{eq:frstar}
\end{equation}
so\,
\begin{equation}
\tilde{f}(r_{*})\le(1-\alpha)\delta.\label{eq:ftilderstar}
\end{equation}
Now if $\tilde{f}(r_{*})=(1-\alpha)\delta$, then \eqref{frstar}
implies that $f(r_{*})=\alpha\delta$, so $\mu(B\setminus(B'\cup B''))=0$,
contradicting the assumption that $\mu$ is admissible. Therefore,
\eqref{ftilderstar} implies that we must in fact have $\tilde{f}(r_{*})<(1-\alpha)\delta$.
By the left-continuity of $\tilde{f}$, there is an $r'<r_{*}$ so
that $\tilde{f}(r')<(1-\alpha)\delta$ as well, so since $r'<r_{*}$,
we must have that $f(r')<\alpha\delta$ by \eqref{rstardef} and the
fact that $f$ is increasing. Thus we can take $B'=B_{r'}$ and $B''=\tilde{B}_{r'}$
to complete the proof.
\end{proof}
Admissibility of a measure also implies that the crossing distance
of a box can be made arbitrarily large by requiring the geodesic balls
to have sufficiently small LQG measure.
\begin{lem}
\label{lem:limtoinfty-general}If $\mu$ is an admissible measure
on $\mathbb{B}$ and $R>0$, then
\begin{equation}
\lim_{\delta\downarrow0}d_{\mu,\mathbb{B},\delta,R}(\mathrm{L},\mathrm{R})=\infty.\label{eq:limtoinfty-general}
\end{equation}
\end{lem}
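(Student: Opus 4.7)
The plan is to combine a trivial geometric length bound with a uniform lower bound on $\mu$-measures of Euclidean balls of a given minimum radius, then case-analyze the sizes of the balls in an optimal covering of a crossing path.

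First, I would establish the uniform positivity statement that for every sufficiently small $r_{0}>0$,
\[
c(r_{0})\coloneqq\inf_{x\in\mathbb{B}}\mu(\overline{B(x,r_{0})})>0.
\]
The idea is that for any $x\in\mathbb{B}$, one can produce a nearby point $y$ by clipping each coordinate of $x$ into the ``safe'' subbox, obtaining $|y-x|\le r_{0}/2$ and $B(y,r_{0}/4)\subset\mathbb{B}$, so that $\overline{B(x,r_{0})}\supset B(y,r_{0}/4)$. The map $y\mapsto\mu(B(y,r_{0}/4))$ is lower semicontinuous (by monotone convergence for open balls under small perturbations of the center), strictly positive on the compact safe subbox by admissibility of $\mu$, and therefore attains a strictly positive minimum there. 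This uniform bound is the main technical step; its delicate point is handling ball centers near $\partial\mathbb{B}$, where admissibility alone does not directly apply.

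Next, for any continuous path $\pi$ from $\mathrm{L}$ to $\mathrm{R}$ in $\mathbb{B}$ covered by closed balls $B_{k}=\overline{B(x_{k},r_{k})}\in\mathscr{B}(\mathbb{B},R)$, the horizontal projections of the $B_{k}$ cover an interval of length $W\coloneqq\width(\mathbb{B})$, so $\sum_{k}2r_{k}\ge W$. Splitting the balls into ``large'' ($r_{k}\ge r_{0}$) and ``small'' ($r_{k}<r_{0}$), at least one of the two subsums is $\ge W/2$. In the large case, the number of such balls is at least $W/(4R)$, and each contributes at least $\mu(B_{k})/\delta\ge c(r_{0})/\delta$ to the cost via $\kappa_{\delta}(t)\ge t/\delta$. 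In the small case, the number of such balls exceeds $W/(4r_{0})$, and each contributes at least $1$ via $\kappa_{\delta}\ge 1$. Either way,
\[
d_{\mu,\mathbb{B},\delta,R}(\mathrm{L},\mathrm{R})\ge\min\!\left(\frac{W}{4r_{0}},\;\frac{W\,c(r_{0})}{4R\,\delta}\right).
\]

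Finally, fixing such an $r_{0}$ and letting $\delta\downarrow 0$ sends the second term to $+\infty$, so $\liminf_{\delta\downarrow 0}d_{\mu,\mathbb{B},\delta,R}(\mathrm{L},\mathrm{R})\ge W/(4r_{0})$; afterwards sending $r_{0}\downarrow 0$ completes the proof. The whole argument rests on the uniform lower bound $c(r_{0})>0$; everything else is a pigeonhole on the cost decomposition.
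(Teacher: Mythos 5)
Your proof is correct and follows essentially the same route as the paper's: a uniform lower bound, coming from admissibility plus compactness, on the $\mu$-measure of all balls of radius at least $r_{0}$, followed by a counting argument showing that a crossing covered by balls of radius less than $r_{0}$ requires order $r_{0}^{-1}$ of them. The paper obtains the uniform bound more quickly by fixing a finite collection of reference balls, each contained in every $B(x,r_{0})$ with $x\in\mathbb{B}$, and taking the minimum of their finitely many (positive) measures, whereas your explicit pigeonhole on the $\kappa_{\delta}$-cost of the large balls makes precise a point that the paper's one-line conclusion (which is really the argument for $\underline{d}$) leaves implicit.
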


\begin{proof}
For any $r>0$, there is a finite collection of balls $B_{1},\ldots,B_{N}$
so that if $x\in\mathbb{B}$, then there is an $i\in\{1,\ldots,N\}$
so that $B_{i}\subset B(x,r)$. Let $\delta_{0}=\min\limits _{1\le i\le N}\mu(B_{i})$,
which is positive by \eqref{ballsnonzero}. Then if $\delta<\delta_{0}$,
any ball of $\mu$-measure at most $\delta$ must have radius at most
$r$, so
\[
d_{\mu,\mathbb{B},\delta,R}(\mathrm{L},\mathrm{R})\ge\lfloor r^{-1}\dist_{\Euc}(\mathrm{L}_{\mathbb{B}},\mathrm{R}_{\mathbb{B}})\rfloor,
\]
which implies the desired limit.
\end{proof}
The last two propositions of this section will be used in lower bounds
on the crossing distances. The first (\propref{crossingbigimpliescrossingsmall})
implies, informally, that if the geodesic between two points crosses
a smaller box, then the two points must be at least as far apart as
the crossing distance of the smaller box. The second (\propref{crossmanysmalls})
extends this to the situation where a geodesic between two points
crosses many smaller boxes, and says that the distance between the
two points must be at least the \emph{sum} of the crossing distances
of the boxes. The caveat, however, is that the smaller boxes must
be sufficiently separated so that a single geodesic ball cannot cover
several of the smaller boxes. This is the primary importance of the
parameter $R$---it is used to prevent the use of geodesic balls
with large Euclidean diameter that would cover many smaller boxes.
\begin{prop}
\label{prop:crossingbigimpliescrossingsmall}Suppose that $\mathbb{C}\subset\mathbb{R}$
are boxes and $\mu$ is a measure on $\mathbb{R}^{(R)}$. Let $\pi$
be a $(\mu,\mathbb{R},\delta,R)$-geodesic between two points in $\mathbb{R}$.
Suppose that $w,z\in\mathbb{C}\cap\pi$, that $w$ appears before
$z$ on the path $\pi$, and that the part of $\pi$ between $w$
and $z$ is completely contained in $\mathbb{C}$. Then
\[
d_{\mu,\mathbb{C},\delta,R}(w,z)=d_{\mu,\mathbb{R},\delta,R}(w,z).
\]
\end{prop}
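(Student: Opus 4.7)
The proposition equates the distance $d_{\mu,\mathbb{C},\delta,R}(w,z)$, computed using balls centered in the smaller box $\mathbb{C}$, with $d_{\mu,\mathbb{R},\delta,R}(w,z)$, computed using balls centered in the larger box $\mathbb{R}$. Of the two inequalities required, $d_{\mu,\mathbb{R},\delta,R}(w,z)\le d_{\mu,\mathbb{C},\delta,R}(w,z)$ is immediate from the monotonicity property \eqref{box-comparison}, since every element of $\mathscr{B}(\mathbb{C},R)$ also belongs to $\mathscr{B}(\mathbb{R},R)$ and every path in $\mathbb{C}$ is a path in $\mathbb{R}$.

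My plan for the reverse inequality $d_{\mu,\mathbb{C},\delta,R}(w,z)\le d_{\mu,\mathbb{R},\delta,R}(w,z)$ is to exploit the geodesic property of $\pi$. Write $\pi=\pi_{1}\cup\pi_{2}\cup\pi_{3}$ where $\pi_{2}$ is the sub-path from $w$ to $z$, contained in $\mathbb{C}$ by assumption, and let $B_{1},\ldots,B_{N}\in\mathscr{B}(\mathbb{R},R)$ be the associated geodesic balls centered at vertices $z_{1},\ldots,z_{N}$ on $\pi$ with radii $r_{1},\ldots,r_{N}<R$. I would first isolate a sub-collection $B_{j_{1}},\ldots,B_{j_{m}}$ whose union covers $\pi_{2}$; by a splicing argument relying on the geodesic optimality of $\pi$, the cost $\sum_{l=1}^{m}\kappa_{\delta}(\mu(B_{j_{l}}))$ must equal $d_{\mu,\mathbb{R},\delta,R}(w,z)$, since any strictly cheaper $(\mu,\mathbb{R},\delta,R)$-cover of a path from $w$ to $z$ could be spliced in between the covers of $\pi_{1}$ and $\pi_{3}$ inherited from $\pi$ to yield a total cover of a path from the starting point of $\pi$ to its ending point that is strictly cheaper than $\pi$, contradicting its minimality.

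The main obstacle is then to convert this sub-cover into one using balls centered in $\mathbb{C}$ rather than in $\mathbb{R}$, without increasing the cost. My approach is to refine the geodesic structure by inserting auxiliary vertices along $\pi_{2}$ at the points where $\pi_{2}$ first enters and last exits each ball $B_{j_{l}}$, so that each piece of $\pi_{2}$ that lies inside a single geodesic ball has endpoints on $\pi_{2}\subset\mathbb{C}$. For each $B_{j_{l}}$ whose center $z_{j_{l}}$ lies outside $\mathbb{C}$, I would then replace it by a ball recentered at a suitable new vertex on $\pi_{2}$, with radius bounded by $R$ and with $\mu$-measure controlled in terms of $\mu(B_{j_{l}})$. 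Verifying that this recentering does not increase the sum $\sum_{l}\kappa_{\delta}(\mu(B_{j_{l}}))$ is the delicate point of the argument, and would yield the desired bound $d_{\mu,\mathbb{C},\delta,R}(w,z)\le d_{\mu,\mathbb{R},\delta,R}(w,z)$.
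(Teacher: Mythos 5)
Your overall architecture is the same as the paper's: the inequality $d_{\mu,\mathbb{R},\delta,R}(w,z)\le d_{\mu,\mathbb{C},\delta,R}(w,z)$ is \eqref{box-comparison}, the geodesic balls are restricted to a minimal sub-collection covering the sub-path from $w$ to $z$, and the remaining task is to turn that sub-collection into a cover by balls in $\mathscr{B}(\mathbb{C},R)$ of no greater cost. However, the step you defer as ``the delicate point'' is the entire content of the proposition, and as you have set it up it does not go through. If you insert auxiliary vertices and recenter an \emph{arbitrary} ball $B_{j_{l}}$ with $z_{j_{l}}\notin\mathbb{C}$ at some point of $\pi_{2}$, there is in general no ball centered on $\pi_{2}$, of radius less than $R$, that still covers $\pi_{2}\cap B_{j_{l}}$ and has controlled measure: the set $\pi_{2}\cap B_{j_{l}}$ need not be connected and may hug $\partial B_{j_{l}}$ on opposite sides, and ``measure controlled in terms of $\mu(B_{j_{l}})$'' is too weak anyway, since any multiplicative loss in the measure propagates through $\kappa_{\delta}$ and destroys the claimed equality.

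The paper avoids both problems with two observations you are missing. First, by minimality of the covering substring $B_{j},\ldots,B_{k}$, every \emph{intermediate} geodesic center $y_{j+1},\ldots,y_{k-1}$ lies on the sub-path between $w$ and $z$, hence in $\mathbb{C}$; those balls already belong to $\mathscr{B}(\mathbb{C},R)$ and need no modification. Only the two end balls $B_{j}$ and $B_{k}$ can have centers outside $\mathbb{C}$. Second, for the end ball $B_{j}$ one takes $u$ to be the intersection of the segment $[y_{j},y_{j+1}]$ with $\partial\mathbb{C}$ and chooses a closed ball $B$ centered at $u$, \emph{contained in} $\overline{B(y_{j},r_{j})}$ and internally tangent to it at the points of $\pi\cap\mathbb{C}\cap\partial B(y_{j},r_{j})$. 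Containment gives $\mu(B)\le\mu(\overline{B(y_{j},r_{j})})$, hence $\kappa_{\delta}(\mu(B))\le\kappa_{\delta}(\mu(\overline{B(y_{j},r_{j})}))$ by monotonicity of $\kappa_{\delta}$ --- so there is nothing delicate to verify, the cost can only decrease --- while the tangency guarantees $\pi\cap\overline{B(y_{j},r_{j})}\cap\mathbb{C}\subset B$, so the relevant portion of the path is still covered. Doing the same at the other end and concatenating with the unchanged intermediate balls yields a cover in $\mathscr{B}(\mathbb{C},R)$ of total cost at most $\sum_{i=j}^{k}\kappa_{\delta}(\mu(\overline{B(y_{i},r_{i})}))$, which equals $d_{\mu,\mathbb{R},\delta,R}(w,z)$ by the splicing argument you correctly describe.
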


\begin{proof}
By \eqref{box-comparison}, it is sufficient to prove that
\[
d_{\mu,\mathbb{C},\delta,R}(w,z)\le d_{\mu,\mathbb{R},\delta,R}(w,z).
\]
Let $y_{1},\ldots,y_{d}$ and $r_{1},\ldots,r_{d}$ be the geodesic
centers and radii for $\pi$, respectively. Let $y_{j},\ldots,y_{k}$
be the minimal substring of the $y_{i}$s so that the segment of $\pi$
between $w$ and $z$ is contained in $\overline{B(y_{j},r_{j})}\cup\cdots\cup\overline{B(y_{k},r_{k})}$.
Then $d_{\mu,\mathbb{R},\delta,R}(w,z)=\sum_{i=j}^{k}\kappa_{\delta}(\mu(\overline{B(y_{i},r_{i})}))$.
For each $j+1\le i\le k-1$, we have $y_{i}\in\mathbb{C}$, so $\overline{B(y_{i},r_{i})}\in\mathscr{B}(\mathbb{C},R)$.
Let $u$ be the intersection of the segment between $y_{j}$ and $y_{j+1}$
with $\partial\mathbb{C}$. We can create a closed ball $B$ around
$u$ that is contained in $\overline{B(y_{j},r_{j})}$ and internally
tangent to $\overline{B(y_{j},r_{j})}$ at every element of $\pi\cap\mathbb{C}\cap\partial B(y_{j},r_{j})$
(which is a singleton unless $y_{j}\in\partial\mathbb{C}$). Then,
we have $B\in\mathscr{B}(\mathbb{C},R)$, $\mu(B)\le\mu(\overline{B(y_{j},r_{j})})$,
and $\pi\cap\overline{B(y_{j},r_{j})}\cap\mathbb{C}\subset B$. We
can perform a similar construction around $u'$, the intersection
of the segment between $y_{k-1}$ and $y_{k}$ with $\partial\mathbb{C}$,
to get a closed ball $B'\in\mathscr{B}(\mathbb{C},R)$ so that $\pi\cap\overline{B(y_{k},r_{K})}\cap\mathbb{C}\subset B'$.
(See \figref{passcrossplus1-1-1}.) 
\begin{figure}
\centering{}\begin{tikzpicture}[x=2.7in,y=2.7in,label distance=-0.07in,remember picture]

\begin{pgfinterruptboundingbox}
\clip (-0.2,0.1) rectangle (1.2,1.2);
\end{pgfinterruptboundingbox}
\draw [black, name path=R1] (0,0.3) rectangle (1,1.2);
\coordinate (zi) at (-0.1,0.7);
\coordinate (zi1) at (0.2,0.6);
\fill (zi) circle (2pt) node [label=above left:{$y_j$}] {};
\fill (zi1) circle (2pt)  node [label=below right:{$y_{j+1}$}] {};
\filldraw [fill opacity=0.1,name path=ziball] (zi) circle (0.19);
\draw [name path=l1] (zi) -- (zi1);
\filldraw [fill opacity=0.1] (zi1) circle (0.15);
\node[coordinate, name intersections = {of = l1 and R1}] (u) at  (intersection-1) {};
\node[coordinate, name intersections = {of = l1 and ziball}] (zizi1edge) at (intersection-1) {};
\fill[red] (u) circle (2pt) node [label = above right:{$u$}] {};
\node [draw,fill,fill opacity=0.1,red] at (u) [circle through={(zizi1edge)}] {};
\coordinate (zi2) at (0.4,0.9);
\fill (zi2) circle (2pt) node {};
\filldraw [fill opacity = 0.1] (zi2) circle (0.25);
\coordinate (zi3) at (0.7,0.8);
\draw (zi1)--(zi2);
\draw (zi2)--(zi3);
\fill (zi3) circle (2pt) node [label = above right:{$y_{k-1}$}] {};
\filldraw [fill opacity = 0.1] (zi3) circle (0.22);
\coordinate (zi4) at (1.05,0.6);
\fill (zi4) circle (2pt) node [label=above right:{$y_{k}$}] {};
\filldraw [fill opacity = 0.1,name path=zjball] (zi4) circle (0.24);
\draw [name path=l2] (zi3)--(zi4);
\node[coordinate, name intersections = {of = l2 and R1}] (up) at  (intersection-1) {};
\node[coordinate, name intersections = {of = l2 and zjball}] (zjzj1edge) at (intersection-1) {};
\fill[red] (up) circle (2pt) node [label = below left:{$u'$}] {};
\node [draw,fill,fill opacity=0.1,red] at (up) [circle through={(zjzj1edge)}] {};
\draw (zi4) -- (1.3,0.55);
\draw (-0.2,0.65) -- (zi);
\end{tikzpicture}\caption{\label{fig:passcrossplus1-1-1}Construction in the proof of \propref{crossingbigimpliescrossingsmall}.}
\end{figure}
 This completes the proof, since a path from $u$ to $u'$ is contained
in $B\cup\overline{B(y_{j+1},r_{j+1})}\cup\cdots\cup\overline{B(y_{k-1},r_{k-1})}\cup B'$.
(If $j=k$, then $B'$ is not necessary and the construction still
works.)
\end{proof}
\begin{prop}
\label{prop:crossmanysmalls}Suppose that $\mathbb{C}_{1},\ldots,\mathbb{C}_{J}\subset\mathbb{R}$
are boxes,
\begin{equation}
R<\min_{1\le i,j\le J}\dist_{\Euc}(\mathbb{C}_{i},\mathbb{C}_{j}),\label{eq:Rnottoobig}
\end{equation}
and $\mu$ is a measure on $\mathbb{R}^{(R)}$. Let $x,y\in\mathbb{R}$
and let $\pi$ be a $(\mu,\mathbb{R},\delta,R)$-geodesic between
$x$ and $y$. Suppose that, for each $1\le i\le j$, we have $w_{i},z_{i}\in\mathbb{C}_{i}\cap\pi$,
with $w_{i}$ appearing before $z_{i}$ on the path $\pi$, and the
part of $\pi$ between $w_{i}$ and $z_{j}$ being completely contained
in $\mathbb{C}_{i}$. Then
\[
d_{\mu,\mathbb{R},\delta,R}(x,y)\ge\sum_{i=1}^{J}d_{\mu,\mathbb{R},\delta,R}(w_{i},z_{i}).
\]
\end{prop}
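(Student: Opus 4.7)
The plan is to run the argument of \propref{crossingbigimpliescrossingsmall} in parallel across the $J$ disjoint boxes, using the separation hypothesis \eqref{Rnottoobig} to ensure that no single geodesic ball of $\pi$ can simultaneously participate in the coverings needed in two distinct $\mathbb{C}_i$'s. Let $y_1,\ldots,y_N$ be the geodesic centers and $r_1,\ldots,r_N\in(0,R)$ the geodesic radii of $\pi$, and set $B_k=\overline{B(y_k,r_k)}$. For each $i\in\{1,\ldots,J\}$ let $\pi_i$ denote the portion of $\pi$ between $w_i$ and $z_i$, which by hypothesis lies entirely in $\mathbb{C}_i$, and define
\[
S_i=\{k\in\{1,\ldots,N\}:B_k\cap\pi_i\neq\emptyset\}.
\]

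The central step is to show that the sets $S_i$ are pairwise disjoint. If some $k$ were to lie in $S_i\cap S_j$ for $i\neq j$, then $B_k$ would contain both a point of $\pi_i\subset\mathbb{C}_i$ and a point of $\pi_j\subset\mathbb{C}_j$, so its center $y_k$ would be within distance $r_k<R$ of each of $\mathbb{C}_i$ and $\mathbb{C}_j$; equivalently, the $R$-neighbourhoods of $\mathbb{C}_i$ and $\mathbb{C}_j$ would overlap. This is precisely what \eqref{Rnottoobig} is set up to rule out: the hypothesis is used here and only here, in order to keep the $R$-neighbourhoods of the $\mathbb{C}_i$'s separated. This is the only genuinely new ingredient compared to \propref{crossingbigimpliescrossingsmall} and is the main (if modest) technical point.

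Once disjointness is established, for each $i$ the balls $\{B_k\}_{k\in S_i}$ cover the continuous path $\pi_i$ from $w_i$ to $z_i$ in $\mathbb{R}$. Running the boundary-inscription trick from the proof of \propref{crossingbigimpliescrossingsmall} at the two endpoints (replacing the initial and terminal balls of $\{B_k\}_{k\in S_i}$ by smaller inscribed subballs at the first and last points where $\pi$ meets $\mathbb{C}_i$) yields a cover of a path from $w_i$ to $z_i$ by balls in $\mathscr{B}(\mathbb{R},R)$ whose total $\kappa_\delta$-weight is at most $\sum_{k\in S_i}\kappa_\delta(\mu(B_k))$. Thus
\[
d_{\mu,\mathbb{R},\delta,R}(w_i,z_i)\le\sum_{k\in S_i}\kappa_\delta(\mu(B_k)).
\]

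Summing over $i$ and using the disjointness of the $S_i$,
\[
d_{\mu,\mathbb{R},\delta,R}(x,y)=\sum_{k=1}^N\kappa_\delta(\mu(B_k))\ge\sum_{i=1}^J\sum_{k\in S_i}\kappa_\delta(\mu(B_k))\ge\sum_{i=1}^J d_{\mu,\mathbb{R},\delta,R}(w_i,z_i),
\]
which is the claimed inequality. Everything after the disjointness step is essentially a direct rerun of the argument for \propref{crossingbigimpliescrossingsmall} carried out once per small box.
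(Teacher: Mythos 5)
Your proposal is correct and takes essentially the same route as the paper, whose proof is exactly the one-line observation that the argument of \propref{crossingbigimpliescrossingsmall} can be rerun in each $\mathbb{C}_i$ once \eqref{Rnottoobig} guarantees that no single geodesic ball (of radius less than $R$) can meet two distinct boxes. Your write-up merely makes explicit the disjointness of the index sets $S_i$ and the resulting additivity of the $\kappa_\delta$-weights, which the paper leaves implicit.
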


\begin{proof}
This follows from the proof of \propref{crossingbigimpliescrossingsmall}
when we note that the condition \eqref{Rnottoobig} implies that a
single ball cannot intersect two distinct $\mathbb{C}_{i}$s.
\end{proof}

\subsection{Liouville graph distance}

We now discuss the graph metric with respect to the Liouville quantum
gravity measure: the Liouville graph distance.
\begin{defn}
If $\mathbb{A}\subset\mathbb{B}$ are boxes and $\delta>0$, $\gamma\in(0,2)$,
and $R\le\dist_{\Euc}(\mathbb{A},\partial\mathbb{B})$, then we use
the notation
\[
d_{\mathbb{A},\mathbb{B},\delta,R}=d_{\mu_{h,\gamma},\mathbb{B},\delta,R}.
\]
\end{defn}

To avoid a proliferation of subscripts, we will often abbreviate the
notation for distances. If $\mathbb{A}$ is unspecified, we will mean
that $\mathbb{A}=\mathbb{B}^{*}$. If $R$ is unspecified then we
will mean that $R=\diam_{\Euc}(\mathbb{B})$ (which is a natural choice
in light of \eqref{nopointbeinghuge}), and if in addition to $R$,
the parameter $\delta$ is also unspecified, then we will mean that
$\delta=1$. Thus we have
\begin{align*}
d_{\mathbb{A},\mathbb{B},\delta} & =d_{\mathbb{A},\mathbb{B},\delta,\diam_{\Euc}(\mathbb{B})},\\
d_{\mathbb{B},\delta} & =d_{\mathbb{B}^{*},\mathbb{B},\delta,\diam_{\Euc}(\mathbb{B}),}\\
d_{\mathbb{A},\mathbb{B}} & =d_{\mathbb{A},\mathbb{B},1,\diam_{\Euc}(\mathbb{B})},\\
d_{\mathbb{B}} & =d_{\mathbb{B}^{*},\mathbb{B},1,\diam_{\Euc}(\mathbb{B})}.
\end{align*}
(Of course syntactically the second and third lines are ambiguous,
but it will always be clear, from the font if nothing else, which
variables represent boxes and which represent numbers, so there is
no risk of confusion.)
\begin{defn}
We define $\Theta_{\mathbb{A},\mathbb{B},\delta,R}^{\mathrm{LR}}(p)$,
$\Theta_{\mathbb{A},\mathbb{B},\delta,R}^{\mathrm{easy}}(p)$, and
$\Theta_{\mathbb{A},\mathbb{B},\delta,R}^{\mathrm{hard}}(p)$ to be
the $p$th quantiles of the random variables $d_{\mathbb{A},\mathbb{B},\delta,R}(\mathrm{L},\mathrm{R})$,
$d_{\mathbb{A},\mathbb{B},\delta,R}(\mathrm{easy})$, and $d_{\mathbb{A},\mathbb{B},\delta,R}(\mathrm{hard})$,
respectively. We will abbreviate these notations in the same way as
the $d$ notations.
\end{defn}

We now establish several properties of the Liouville graph distance.
\begin{prop}
If $\mathbb{B}^{(R)}\subset\mathbb{A}$ and $\delta,R>0$, then the
processes
\[
\{\underline{d}_{\mathbb{A},\mathbb{B},\delta,R}(x,y)\}_{x,y\in\mathbb{B}}
\]
and
\[
\{d_{\mathbb{A},\mathbb{B},\delta,R}(x,y)\}_{x,y\in\mathbb{B}}
\]
are measurable with respect to the field $h_{\mathbb{A}}$.
\end{prop}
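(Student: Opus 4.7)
The plan is to reduce each distance, which is \emph{a priori} an infimum over an uncountable family of paths and balls, to a countable infimum of explicit measurable functionals of $h_{\mathbb{A}}$. The key input is that $\mu_{h_{\mathbb{A}}}$ is the almost-sure weak limit of the circle-average approximations in~\eqref{LQGapprox}, each of which is explicitly a measurable function of $h_{\mathbb{A}}$; consequently, the real-valued random variable $\mu_{h_{\mathbb{A}}}(B)$ is measurable with respect to $h_{\mathbb{A}}$ for every fixed closed ball $B$.

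Next, fix $(x,y)\in \mathbb{B}\times\mathbb{B}$. Lemmas~\lemref{GLrational} and~\lemref{MGLQ} allow us to restrict the defining infima of $\underline{d}_{\mathbb{A},\mathbb{B},\delta,R}(x,y)$ and $d_{\mathbb{A},\mathbb{B},\delta,R}(x,y)$ to the countable family $\mathcal{F}$ of finite tuples $(B_1,\ldots,B_n)$ of closed balls with centers in $\mathbf{Q}^2$ and radii in $(0,R)\cap\mathbf{Q}$. For each such tuple, the geometric condition that some path in $\mathbb{B}$ from $x$ to $y$ is contained in $B_1\cup\cdots\cup B_n$ is equivalent to $x$ and $y$ lying in the same connected component of $(B_1\cup\cdots\cup B_n)\cap\mathbb{B}$, and this is a purely deterministic statement, independent of $\omega$. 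The cost attached to each tuple, namely $n$ for $\underline{d}$ (together with the additional random constraints $\mu_{h_{\mathbb{A}}}(B_i)<\delta$ for $i=1,\ldots,n$) or $\sum_{i=1}^n \kappa_\delta(\mu_{h_{\mathbb{A}}}(B_i))$ for $d$, is then a measurable function of $h_{\mathbb{A}}$ by the preceding paragraph. Each distance is thus a countable infimum of random variables that are measurable with respect to $h_{\mathbb{A}}$, hence is itself measurable.

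To upgrade this pointwise statement (in $(x,y)$) to measurability of the full process, one observes that for each fixed tuple in $\mathcal{F}$ the set of pairs $(x,y)\in\mathbb{B}\times\mathbb{B}$ for which the tuple is valid is Borel, since the connected components of a fixed finite union of closed balls intersected with $\mathbb{B}$ are themselves Borel and a valid pair is one lying in the same component. A countable infimum of jointly Borel functions of $(x,y,\omega)$ remains jointly Borel, yielding the desired measurability of the whole process. No single step is expected to be delicate; the main care is to keep the topological part of the definition (covering of a path) separate from the probabilistic part (the measures of the balls), so that one sees cleanly which factors in the infimum are deterministic in $\omega$ and which are measurable functions of $h_{\mathbb{A}}$.
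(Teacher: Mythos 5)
Your proof is correct and follows essentially the same route as the paper's, which simply observes that \lemref{GLrational} and \lemref{MGLQ} express both distances as infima over countably many functions, each measurable with respect to $h_{\mathbb{A}}$ via the almost-sure convergence of the circle-average approximations. The extra details you supply (the deterministic path-covering criterion via connected components, and the joint Borel measurability in $(x,y)$) are sound elaborations of the same argument rather than a different approach.
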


\begin{proof}
This follows from \lemref{GLrational} and \lemref{MGLQ}, since both
distances can be written as a minimum of countably many functions.
\end{proof}
\begin{prop}
If $\mathbb{A}_{1}\cap\mathbb{A}_{2}=\emptyset$, $R_{1},R_{2}>0$,
$\mathbb{B}_{i}^{(R_{i})}\subset\mathbb{A}_{i}$, and $\delta_{i}>0$
for $i=1,2$, then $d_{\mathbb{A}_{1},\mathbb{B}_{1},\delta_{1},R_{1}}$
and $d_{\mathbb{A}_{2},\mathbb{B}_{2},\delta_{2},R_{2}}$ are independent.
\end{prop}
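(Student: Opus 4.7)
The plan is to combine the measurability statement from the immediately preceding proposition with the locality property of the GFF recorded in Proposition~\ref{prop:independent-disjoint}. Since the proof is essentially a one-line application of these two facts, the bulk of the work is simply checking that the $\sigma$-algebras appearing on each side match up with the hypothesis $\mathbb{A}_1\cap\mathbb{A}_2=\emptyset$.

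First I would recall that by the previous proposition, for each $i\in\{1,2\}$ the process $\{d_{\mathbb{A}_i,\mathbb{B}_i,\delta_i,R_i}(x,y)\}_{x,y\in\mathbb{B}_i}$ is measurable with respect to the $\sigma$-algebra generated by the Gaussian free field $h_{\mathbb{A}_i}$. (This uses only the hypothesis $\mathbb{B}_i^{(R_i)}\subset\mathbb{A}_i$, which ensures that every ball in $\mathscr{B}(\mathbb{B}_i,R_i)$ is contained in $\mathbb{A}_i$, so that its LQG mass is a measurable function of $h_{\mathbb{A}_i}$ via the almost sure limit in \eqref{LQGlimit}.) Hence the entire random variable $d_{\mathbb{A}_i,\mathbb{B}_i,\delta_i,R_i}$ (viewed as a random element of the space of functions on $\mathbb{B}_i\times\mathbb{B}_i$) lies in $\sigma(h_{\mathbb{A}_i})$.

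Next, since the domains $\mathbb{A}_1$ and $\mathbb{A}_2$ are disjoint, Proposition~\ref{prop:independent-disjoint} gives that $h_{\mathbb{A}_1}$ and $h_{\mathbb{A}_2}$ are independent, so the $\sigma$-algebras $\sigma(h_{\mathbb{A}_1})$ and $\sigma(h_{\mathbb{A}_2})$ are independent. The independence of $d_{\mathbb{A}_1,\mathbb{B}_1,\delta_1,R_1}$ and $d_{\mathbb{A}_2,\mathbb{B}_2,\delta_2,R_2}$ then follows immediately. I do not anticipate any real obstacle: the only subtlety is the implicit appeal to the coupling of the fields on different domains constructed in Subsection~\ref{subsec:GFF}, but under that coupling the statement of Proposition~\ref{prop:independent-disjoint} applies directly to disjoint $\mathbb{A}_1,\mathbb{A}_2$.
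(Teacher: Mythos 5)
Your argument is correct and matches the paper's own proof, which likewise deduces the claim directly from the locality property of the GFF on disjoint domains (Proposition~\ref{prop:independent-disjoint}), with the measurability statement of the preceding proposition supplying the needed $\sigma$-algebra containments.
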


\begin{proof}
This follows from the independence of the Gaussian free field on disjoint
boxes (\propref{independent-disjoint}).
\end{proof}

\subsubsection{Covariance properties of the Liouville graph distance}

The conformal covariance of the LQG measure heuristically implies
conformal covariance of the Liouville graph distance. However, because
the definition of Liouville graph distance specifies \emph{Euclidean}
balls, which are not in general taken by conformal maps to Euclidean
balls, this is not exactly true. The next two propositions observe
that the Liouville graph distance is covariant under Euclidean similarity
transformations, which do of course takes Euclidean balls to Euclidean
balls. The RSW result presented in \secref{RSW} will rely on an approximate
more general conformal covariance of the Liouville graph distance,
which will be proved in that section.

Our first result is that the Liouville graph distance is invariant
under Euclidean isometries.
\begin{prop}
If $\mathbb{B}\subset\mathbb{A}$, $\delta,R>0$, and $f$ is a translation
or rotation map, then 
\[
\left\{ d_{\mathbb{A},\mathbb{B},\delta,R}(x,y)\right\} _{x,y\in\mathbb{B}}\overset{\mathrm{law}}{\mathrm{=}}\left\{ d_{f(\mathbb{A}),f(\mathbb{B}),\delta,R}(f(x),f(y))\right\} _{x,y\in\mathbb{B}}.
\]
\end{prop}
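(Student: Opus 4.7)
The plan is essentially a bookkeeping exercise: combine the conformal invariance of the GFF with the conformal covariance of the LQG measure, and then observe that Euclidean isometries take Euclidean balls to Euclidean balls of the same radius, so the minimization in the definition of $d$ is preserved.

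First, I would note that any translation or rotation $f$ is a conformal homeomorphism from $\mathbb{A}$ onto $f(\mathbb{A})$ with $|f'|\equiv|(f^{-1})'|\equiv 1$. By the conformal invariance of the GFF stated before Proposition~\ref{prop:conformalcovariance}, we have $h_{f(\mathbb{A})}\stackrel{\mathrm{law}}{=}h_{\mathbb{A}}\circ f^{-1}$. Applying the conformal covariance of the LQG measure (Proposition~\ref{prop:conformalcovariance}) with this $f$, the prefactor $\e^{(2+\gamma^{2}/2)\log|(f^{-1})'|}$ is identically $1$, so
\[
\mu_{h_{f(\mathbb{A})}}\;\stackrel{\mathrm{law}}{=}\;\mu_{h_{\mathbb{A}}}\circ f^{-1}
\]
as random Radon measures on $f(\mathbb{A})$.

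Next I would transport paths and covering balls. Since $f$ is a Euclidean isometry, for any $x\in\mathbf{R}^{2}$ and $r>0$ we have $f(B(x,r))=B(f(x),r)$, and composition with $f$ gives a bijection $\mathcal{P}_{\mathbb{B}}\to\mathcal{P}_{f(\mathbb{B})}$ sending $\mathcal{P}_{\mathbb{B}}(x,y)$ onto $\mathcal{P}_{f(\mathbb{B})}(f(x),f(y))$. In particular $f$ induces a bijection between $\mathscr{B}(\mathbb{B},R)$ and $\mathscr{B}(f(\mathbb{B}),R)$, and for any closed ball $\overline{B(x,r)}\in\mathscr{B}(\mathbb{B},R)$,
\[
\bigl(\mu_{h_{\mathbb{A}}}\circ f^{-1}\bigr)\bigl(\overline{B(f(x),r)}\bigr)=\mu_{h_{\mathbb{A}}}\bigl(\overline{B(x,r)}\bigr).
\]
Hence a covering of $\pi$ by balls $B_{1},\ldots,B_{n}\in\mathscr{B}(\mathbb{B},R)$ corresponds via $f$ to a covering of $f\circ\pi$ by balls $f(B_{1}),\ldots,f(B_{n})\in\mathscr{B}(f(\mathbb{B}),R)$ with the same values of $\kappa_{\delta}(\mu(B_{k}))$. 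Taking infima in Definition~\ref{def:modifiedgraphlength} yields
\[
d_{\mu_{h_{\mathbb{A}}}\circ f^{-1},\,f(\mathbb{B}),\,\delta,\,R}(f(x),f(y))\;=\;d_{\mu_{h_{\mathbb{A}}},\,\mathbb{B},\,\delta,\,R}(x,y)
\]
for all $x,y\in\mathbb{B}$, jointly.

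Combining this deterministic identity with the equality in law of $\mu_{h_{f(\mathbb{A})}}$ and $\mu_{h_{\mathbb{A}}}\circ f^{-1}$ gives
\[
\bigl\{d_{f(\mathbb{A}),f(\mathbb{B}),\delta,R}(f(x),f(y))\bigr\}_{x,y\in\mathbb{B}}\;\stackrel{\mathrm{law}}{=}\;\bigl\{d_{\mu_{h_{\mathbb{A}}}\circ f^{-1},\,f(\mathbb{B}),\,\delta,\,R}(f(x),f(y))\bigr\}_{x,y\in\mathbb{B}}\;=\;\bigl\{d_{\mathbb{A},\mathbb{B},\delta,R}(x,y)\bigr\}_{x,y\in\mathbb{B}},
\]
which is the claim. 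There is no real obstacle here; the only point deserving a moment of care is the transfer of the infimum in the definition of $d$ through the pushforward, which is immediate because an isometry carries balls to balls of the same radius and preserves their measure under $\mu\circ f^{-1}$.
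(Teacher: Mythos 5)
Your proof is correct and follows exactly the route the paper intends: conformal covariance of the LQG measure with $|(f^{-1})'|\equiv 1$ for isometries, plus the observation that translations and rotations carry Euclidean balls to Euclidean balls of the same radius, so the infimum defining $d$ is transported verbatim. The paper's proof is a one-line citation of these same two facts; yours just writes out the bookkeeping.
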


\begin{proof}
This follows from the conformal covariance (\propref{conformalcovariance})
and the fact that translations and rotations take Euclidean balls
to Euclidean balls.
\end{proof}
Now we show how the Liouville graph distance transforms under scalings.
\begin{prop}
\label{prop:LQGscaling}If $\mathbb{B}\subset\mathbb{A}$ and $\delta,R>0$,
then for any $\alpha>0$, we have
\[
\left\{ d_{\alpha\mathbb{A},\alpha\mathbb{B},\alpha^{\gamma^{2}/2+2}\delta,\alpha R}(\alpha x,\alpha y)\right\} _{x,y\in\mathbb{B}}\overset{\mathrm{law}}{\mathrm{=}}\left\{ d_{\mathbb{A},\mathbb{B},\delta,R}(x,y)\right\} _{x,y\in\mathbb{B}}.
\]
\end{prop}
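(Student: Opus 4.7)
The plan is to deduce the statement directly from the conformal covariance of the LQG measure (\propref{conformalcovariance}) applied to the scaling map $F:\mathbb{A}\to\alpha\mathbb{A}$ defined by $F(x)=\alpha x$, together with the fact that $F$ takes Euclidean balls to Euclidean balls (just with radii rescaled by $\alpha$). The combinatorial structure of the definition of $d$ is invariant under this bijection on balls, and the scaling exponents were chosen precisely so that the weights $\kappa_\delta$ applied to the $\mu$-masses of balls also match. Everything is done as a statement about the joint law over all $x,y\in\mathbb{B}$ at once, using a single realization of the coupling.

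Concretely, I would first observe that $F$ is a conformal homeomorphism with $|(F^{-1})'|\equiv 1/\alpha$, so \propref{conformalcovariance} gives
\[
\mu_{h_\mathbb{A}}\circ F^{-1} \;\stackrel{\mathrm{law}}{=}\; \alpha^{-(2+\gamma^{2}/2)}\,\mu_{h_{\alpha\mathbb{A}}},
\]
which (as a statement about the joint laws of the two measures, evaluated as functions of Borel sets) is equivalent to saying that for any Borel set $E\subset\mathbb{A}$,
\[
\alpha^{2+\gamma^{2}/2}\,\mu_{h_\mathbb{A}}(E) \;\stackrel{\mathrm{law}}{=}\; \mu_{h_{\alpha\mathbb{A}}}(\alpha E),
\]
jointly in $E$. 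In particular, for every closed Euclidean ball $\overline{B(z,r)}\subset\mathbb{A}$,
\[
\alpha^{2+\gamma^{2}/2}\,\mu_{h_\mathbb{A}}\!\left(\overline{B(z,r)}\right) \;\stackrel{\mathrm{law}}{=}\; \mu_{h_{\alpha\mathbb{A}}}\!\left(\overline{B(\alpha z,\alpha r)}\right),
\]
jointly in all such balls.

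Next I would note the trivial identity $\kappa_{\alpha^{2+\gamma^{2}/2}\delta}\!\big(\alpha^{2+\gamma^{2}/2}t\big)=\kappa_\delta(t)$ from the definition \eqref{kappadelta}, and that $F$ induces a bijection between elements of $\mathscr{B}(\mathbb{B},R)$ and elements of $\mathscr{B}(\alpha\mathbb{B},\alpha R)$ via $\overline{B(z,r)}\mapsto\overline{B(\alpha z,\alpha r)}$, as well as between paths $\pi\in\mathcal{P}_\mathbb{B}(x,y)$ and paths $F(\pi)\in\mathcal{P}_{\alpha\mathbb{B}}(\alpha x,\alpha y)$, with $\pi\subset B_1\cup\cdots\cup B_n$ iff $F(\pi)\subset F(B_1)\cup\cdots\cup F(B_n)$. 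Combining these three observations, for each fixed $x,y\in\mathbb{B}$,
\[
\sum_{k=1}^{n}\kappa_{\alpha^{2+\gamma^{2}/2}\delta}\!\left(\mu_{h_{\alpha\mathbb{A}}}(F(B_k))\right) \;\stackrel{\mathrm{law}}{=}\; \sum_{k=1}^{n}\kappa_{\delta}\!\left(\mu_{h_\mathbb{A}}(B_k)\right),
\]
jointly over all admissible covers of $\pi$. Taking the infimum over covers and over $\pi\in\mathcal{P}_\mathbb{B}(x,y)$ (with $F$ identifying the two minimization problems) then yields the claimed distributional equality for that pair $(x,y)$; since the coupling through $F$ is the same for all pairs, the equality holds jointly as processes indexed by $x,y\in\mathbb{B}$.

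I do not expect a serious obstacle here — the only thing that needs care is that the identity in law of the measures from \propref{conformalcovariance} is used \emph{jointly} over all Borel sets (which is a consequence of the fact that the equality $\mu_{h_\mathbb{A}}\circ F^{-1}=e^{(2+\gamma^{2}/2)\log|(F^{-1})'|}\mu_{h_\mathbb{A}\circ F^{-1}}$ is an almost-sure identity between random measures, with the distributional equality of $h_\mathbb{A}\circ F^{-1}$ and $h_{\alpha\mathbb{A}}$ performed at the level of fields and then pushed through the a.s.\ construction of LQG in \eqref{LQGlimit}). This is why the statement can be read as an equality of stochastic processes in $(x,y)$ rather than just a pointwise one.
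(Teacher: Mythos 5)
Your argument is correct and is essentially the paper's own proof: both rest on applying \propref{conformalcovariance} to the dilation $F(x)=\alpha x$ to get $\mu_{h_{\alpha\mathbb{A}}}\overset{\mathrm{law}}{=}\alpha^{2+\gamma^{2}/2}\,\mu_{h_{\mathbb{A}}}\circ F^{-1}$, combined with the bijection $\mathscr{B}(\alpha\mathbb{B},\alpha R)=\alpha\mathscr{B}(\mathbb{B},R)$ and the identity $\kappa_{\alpha^{2+\gamma^{2}/2}\delta}(\alpha^{2+\gamma^{2}/2}t)=\kappa_{\delta}(t)$. Your additional remark that the measure identity must be used jointly over all balls (via the a.s.\ construction of the LQG measure) is a correct and worthwhile clarification of a point the paper leaves implicit.
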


\begin{proof}
Let $F:\mathbb{A}\to\alpha\mathbb{A}$ be given by scaling by $\mathbb{\alpha}$.
Then $F$ is a conformal map, so we have that
\[
\mu_{\alpha\mathbb{A}}\overset{\mathrm{law}}{=}\alpha^{2+\gamma^{2}/2}(\mu_{\mathbb{A}}\circ F^{-1})
\]
by the conformal covariance (\propref{conformalcovariance}). Then
we have, recalling \eqref{Bsetdef-1}, that
\begin{align*}
\mathscr{B}(\alpha\mathbb{B},\alpha R) & =\{\overline{B(x,r)}\mid r\in(0,\alpha R),x\in\alpha\mathbb{B}\}=\{\alpha\overline{B(\alpha^{-1}x,\alpha^{-1}r)}\mid r\in(0,\alpha R),x\in\alpha\mathbb{B}\}\\
 & =\{\alpha\overline{B(x,r)}\mid r\in(0,R),x\in\mathbb{B}\}=\alpha\mathscr{B}(\mathbb{B},R),
\end{align*}
and that
\[
\{\kappa_{\alpha^{2+\gamma^{2}/2}\delta}(\mu(\alpha B))\}_{B\in\mathscr{B}(\mathbb{B},R)}=\{\kappa_{\delta}(\alpha^{-2-\gamma^{2}/2}\mu(\alpha B))\}_{B\in\mathscr{B}(\mathbb{B},R)}\overset{\mathrm{law}}{=}\{\kappa_{\delta}(\mu(B))\}_{B\in\mathscr{B}(\mathbb{B},R)},
\]
which means that
\[
\left\{ d_{\alpha\mathbb{A},\alpha\mathbb{B},\alpha^{\gamma^{2}/2+2}\delta,\alpha R}(\alpha x,\alpha y)\right\} _{x,y\in\mathbb{B}}\overset{\mathrm{law}}{\mathrm{=}}\left\{ d_{\mathbb{A},\mathbb{B},\delta,R}(x,y)\right\} _{x,y\in\mathbb{B}}.\qedhere
\]
\end{proof}
We note that a simple consequence of \propref{LQGscaling} and \eqref{delta-comparison}
is the following monotonicity of the quantiles:
\begin{prop}
\label{prop:thetamonotone}If $\alpha\ge1$, then for all boxes $\mathbb{B}\subset\mathbb{A}$,
all $p\in(0,1)$, and all $R>0$ we have
\begin{equation}
\Theta_{\alpha\mathbb{A},\alpha\mathbb{B},1,\alpha R}^{\mathrm{LR}}(p)=\Theta_{\mathbb{A},\mathbb{B},\alpha^{-\gamma^{2}/2-2},R}^{\mathrm{LR}}(p)\ge\Theta_{\mathbb{A},\mathbb{B},R}^{\mathrm{LR}}(p).\label{eq:thetamonotone}
\end{equation}
\end{prop}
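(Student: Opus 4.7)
The proof should be a direct two-step calculation combining Proposition \ref{prop:LQGscaling} (scaling covariance) with the comparison inequality \eqref{eq:delta-comparison} (monotonicity in $\delta$), so the plan is really just to name the two ingredients and check that the exponents line up.

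First I would handle the equality. Apply Proposition \ref{prop:LQGscaling} with the choice $\delta = \alpha^{-\gamma^2/2 - 2}$, so that $\alpha^{\gamma^2/2+2}\delta = 1$. Proposition \ref{prop:LQGscaling} then states that the processes
\[
\{d_{\alpha\mathbb{A}, \alpha\mathbb{B}, 1, \alpha R}(\alpha x, \alpha y)\}_{x,y \in \mathbb{B}} \quad\text{and}\quad \{d_{\mathbb{A}, \mathbb{B}, \alpha^{-\gamma^2/2-2}, R}(x, y)\}_{x,y \in \mathbb{B}}
\]
have the same law. Since scaling by $\alpha$ maps $\mathrm{L}_{\mathbb{B}}$ to $\mathrm{L}_{\alpha\mathbb{B}}$ and $\mathrm{R}_{\mathbb{B}}$ to $\mathrm{R}_{\alpha\mathbb{B}}$, this equality in law passes to the left--right crossing distances, and hence to all of their quantiles. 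That gives the equality in \eqref{eq:thetamonotone}.

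For the inequality, note that since $\alpha \ge 1$ and $\gamma^2/2 + 2 > 0$, we have $\alpha^{-\gamma^2/2-2} \le 1$. Then the monotonicity \eqref{eq:delta-comparison} in $\delta$ yields, pointwise,
\[
d_{\mathbb{A}, \mathbb{B}, \alpha^{-\gamma^2/2-2}, R}(\mathrm{L}, \mathrm{R}) \ge d_{\mathbb{A}, \mathbb{B}, 1, R}(\mathrm{L}, \mathrm{R}) = d_{\mathbb{A}, \mathbb{B}, R}(\mathrm{L}, \mathrm{R}),
\]
where the last identity is just unpacking the convention that an unspecified $\delta$ means $\delta = 1$. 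Taking $p$th quantiles preserves the inequality, which produces the desired bound on $\Theta^{\mathrm{LR}}$.

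There is no substantial obstacle here; the only thing one needs to be careful about is bookkeeping the exponents on $\alpha$ and confirming that the two distinct abbreviation conventions (suppressing $\delta$ versus writing it explicitly) match up as claimed. No independent estimate is required.
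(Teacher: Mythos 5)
Your proof is correct and is exactly the argument the paper intends: the paper states this proposition as "a simple consequence of Proposition \ref{prop:LQGscaling} and \eqref{eq:delta-comparison}," which is precisely your two-step combination of the scaling covariance (with $\delta=\alpha^{-\gamma^{2}/2-2}$) and the monotonicity in $\delta$. The exponent bookkeeping and the direction of the inequality are both right.
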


\begin{lem}
\label{lem:limtoinfty}For any $\mathbb{B}\subset\mathbb{A}$, any
$p\in(0,1)$, and any $R>0$, we have
\[
\lim_{\delta\to0}\Theta_{\mathbb{A},\mathbb{B},\delta,R}^{\mathrm{LR}}(p)=\infty.
\]
\end{lem}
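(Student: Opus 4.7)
The plan is to deduce this from the deterministic lemma \lemref{limtoinfty-general} together with \propref{ballsnonzero}, using monotonicity in $\delta$ to pass from the almost sure statement to quantiles.

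First I would recall that by \propref{ballsnonzero}, the Liouville quantum gravity measure $\mu_{h_{\mathbb{A}}}$ is almost surely an admissible measure on $\mathbb{A}$ in the sense of \defref{ballsnonzero}. Therefore \lemref{limtoinfty-general} applies to $\mu_{h_{\mathbb{A}}}$, and we have
\[
\lim_{\delta\downarrow 0} d_{\mathbb{A},\mathbb{B},\delta,R}(\mathrm{L},\mathrm{R}) = \infty \qquad \text{almost surely.}
\]
Moreover, by the comparison inequality \eqref{delta-comparison}, the map $\delta \mapsto d_{\mathbb{A},\mathbb{B},\delta,R}(\mathrm{L},\mathrm{R})$ is almost surely monotonically non-increasing in $\delta$ (equivalently, non-decreasing as $\delta \downarrow 0$).

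Next, fix any $M < \infty$. By monotonicity, the events $\{d_{\mathbb{A},\mathbb{B},\delta,R}(\mathrm{L},\mathrm{R}) > M\}$ are increasing as $\delta \downarrow 0$, and by the almost sure limit above their union has probability $1$. Applying monotone convergence (to the indicator functions) gives
\[
\lim_{\delta \downarrow 0} \mathbf{P}\!\left(d_{\mathbb{A},\mathbb{B},\delta,R}(\mathrm{L},\mathrm{R}) > M\right) = 1.
\]
In particular, there exists $\delta_0 = \delta_0(M,p) > 0$ so that for all $\delta < \delta_0$, $\mathbf{P}(d_{\mathbb{A},\mathbb{B},\delta,R}(\mathrm{L},\mathrm{R}) \le M) < 1-p < p$ would be needed for the quantile interpretation; more precisely, choose $\delta_0$ so that $\mathbf{P}(d_{\mathbb{A},\mathbb{B},\delta,R}(\mathrm{L},\mathrm{R}) > M) > 1-p$, which is equivalent to $\mathbf{P}(d_{\mathbb{A},\mathbb{B},\delta,R}(\mathrm{L},\mathrm{R}) \le M) < p$. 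By the definition of the $p$-quantile, this gives $\Theta_{\mathbb{A},\mathbb{B},\delta,R}^{\mathrm{LR}}(p) \ge M$ for every $\delta < \delta_0$.

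Since $M$ was arbitrary, this shows $\Theta_{\mathbb{A},\mathbb{B},\delta,R}^{\mathrm{LR}}(p) \to \infty$ as $\delta \to 0$, completing the proof. There is no real obstacle here; the work was done in \lemref{limtoinfty-general}, and the present statement is the probabilistic packaging of that deterministic fact via the admissibility of LQG measure and monotonicity in $\delta$.
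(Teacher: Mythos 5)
Your proposal is correct and follows the same route as the paper, which proves this lemma in one line by combining Lemma \ref{lem:limtoinfty-general} with the almost-sure admissibility of the LQG measure (Proposition \ref{prop:ballsnonzero}). The extra steps you supply—monotonicity in $\delta$ from \eqref{eq:delta-comparison} and the passage from almost-sure divergence to divergence of the $p$-quantile—are exactly the routine details the paper leaves implicit.
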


\begin{proof}
This follows from \lemref{limtoinfty-general} and the fact that the
LQG measure is admissible almost surely.
\end{proof}
We also record a simple but useful consequence of the general facts
established in \subsecref{metrics-from-measures}.
\begin{prop}
\label{prop:fieldonsmallerbox}Suppose that $\mathbb{B}\subset\mathbb{B}^{\circ}\subset\mathbb{A}\subset\mathbb{A}'$.
We have
\begin{equation}
d_{\mathbb{A},\mathbb{B},\delta',R}\le d_{\mathbb{A}',\mathbb{B},\delta,R}\le d_{\mathbb{A},\mathbb{B},\delta'',R},\label{eq:scaledeltabymax}
\end{equation}
where
\begin{align*}
\delta' & =\exp\{-\gamma\min_{z\in\mathbb{B}^{\circ}}h_{\mathbb{A}':\mathbb{A}}(z)\}\delta,\\
\delta'' & =\exp\{-\gamma\max_{z\in\mathbb{B}^{\circ}}h_{\mathbb{A}':\mathbb{A}}(z)\}\delta.
\end{align*}
Moreover, we have
\begin{equation}
d_{\mathbb{A}',\mathbb{B},\delta,R}\le\exp\left\{ \gamma\left(\max_{z\in\mathbb{B}^{\circ}}h_{\mathbb{A}':\mathbb{A}}(z)\right)^{+}\right\} d_{\mathbb{A},\mathbb{B},\delta,R}.\label{eq:ddi}
\end{equation}
\end{prop}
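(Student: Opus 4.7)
The proof will essentially be a mechanical combination of the measure-comparison and $\delta$-rescaling properties established in Subsection~\ref{subsec:metrics-from-measures}, applied to the multiplicative relationship between $\mu_{h_{\mathbb{A}'}}$ and $\mu_{h_{\mathbb{A}}}$ provided by \eqref{equatemeasures}.

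The plan is to start from the almost-sure identity
\[
\mu_{h_{\mathbb{A}'}}\big|_{\mathbb{A}}=\exp\{\gamma h_{\mathbb{A}':\mathbb{A}}\}\,\mu_{h_{\mathbb{A}}}
\]
from \eqref{equatemeasures}. Since the $d$-distances implicitly use $R\le\diam_{\Euc}(\mathbb{B})$ by the convention \eqref{nopointbeinghuge}, every ball $B(x,r)$ appearing in the defining infimum is contained in $\mathbb{B}^{(R)}\subset\mathbb{B}^{(\diam_{\Euc}\mathbb{B})}\subset\mathbb{B}^{\circ}$. Because $h_{\mathbb{A}':\mathbb{A}}$ is harmonic, hence continuous, on $\mathbb{A}\supset\mathbb{B}^{\circ}$, we may sandwich
\[
\e^{\gamma\min_{z\in\mathbb{B}^{\circ}}h_{\mathbb{A}':\mathbb{A}}(z)}\mu_{h_{\mathbb{A}}}\;\le\;\mu_{h_{\mathbb{A}'}}\big|_{\mathbb{B}^{(R)}}\;\le\;\e^{\gamma\max_{z\in\mathbb{B}^{\circ}}h_{\mathbb{A}':\mathbb{A}}(z)}\mu_{h_{\mathbb{A}}}
\]
as measures on $\mathbb{B}^{(R)}$.

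To obtain \eqref{scaledeltabymax}, I will set $\alpha=\e^{\gamma\min h_{\mathbb{A}':\mathbb{A}}}$ and $\beta=\e^{\gamma\max h_{\mathbb{A}':\mathbb{A}}}$, and then apply the monotonicity property \eqref{measure-comparison} to the sandwich above, yielding
\[
d_{\alpha\mu_{h_{\mathbb{A}}},\mathbb{B},\delta,R}\;\le\;d_{\mathbb{A}',\mathbb{B},\delta,R}\;\le\;d_{\beta\mu_{h_{\mathbb{A}}},\mathbb{B},\delta,R}.
\]
The scaling identity \eqref{changemeasurechangedelta} rewrites each side as $d_{\mathbb{A},\mathbb{B},\alpha^{-1}\delta,R}$ and $d_{\mathbb{A},\mathbb{B},\beta^{-1}\delta,R}$, which by definition of $\delta'$ and $\delta''$ are exactly the two sides of \eqref{scaledeltabymax}.

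For \eqref{ddi}, I need a bound with the \emph{same} $\delta$ and a multiplicative prefactor. Writing $M=\max_{z\in\mathbb{B}^{\circ}}h_{\mathbb{A}':\mathbb{A}}(z)$, I replace the upper bound $\e^{\gamma M}$ by $\e^{\gamma M^{+}}\ge 1$; the inequality $\mu_{h_{\mathbb{A}'}}|_{\mathbb{B}^{(R)}}\le \e^{\gamma M^{+}}\mu_{h_{\mathbb{A}}}$ still holds trivially. Since $\e^{\gamma M^{+}}\ge 1$, the scaling bound \eqref{mostusefulscaling} applies (this is exactly why $M^{+}$ rather than $M$ is needed, so that the multiplicative scaling in front of the measure does not reduce $\delta$), giving $d_{\e^{\gamma M^{+}}\mu_{h_{\mathbb{A}}},\mathbb{B},\delta,R}\le \e^{\gamma M^{+}}d_{\mathbb{A},\mathbb{B},\delta,R}$; combining with \eqref{measure-comparison} produces \eqref{ddi}. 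There is no real obstacle here — the only thing to verify carefully is that $\mathbb{B}^{(R)}\subset\mathbb{B}^{\circ}$ so that the max/min of $h_{\mathbb{A}':\mathbb{A}}$ is taken over a set guaranteed to contain every ball used in the infimum.
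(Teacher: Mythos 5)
Your proof is correct and follows essentially the same route as the paper's, which likewise deduces \eqref{scaledeltabymax} from \eqref{muineq}, \eqref{measure-comparison}, and \eqref{changemeasurechangedelta}, and then \eqref{ddi} from the $\alpha$-scaling inequality (the paper cites \eqref{scaledists} in the $\delta$-variable, you cite the equivalent \eqref{mostusefulscaling} in the measure variable — the same statement via \eqref{changemeasurechangedelta}). Your extra remark that every admissible ball lies in $\mathbb{B}^{(R)}\subset\mathbb{B}^{\circ}$, so the $\min$/$\max$ over $\mathbb{B}^{\circ}$ suffices, is exactly the point the paper leaves implicit.
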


\begin{proof}
The inequalities in  \eqref{scaledeltabymax} follow immediately from
\eqref{muineq} (relating the measures by multiplicative factors),
\eqref{measure-comparison} (going from a relationship between the
measures to a relationship between the distances) and \eqref{changemeasurechangedelta}
(relating a constant factor in the measure to a constant factor in
$\delta$). Then \eqref{ddi} is implied by \eqref{scaledists}.
\end{proof}
\begin{rem}
We have two ways of understanding the effect on the distance when
the measure is multiplied by a constant (``coarse field''): equality
\eqref{changemeasurechangedelta} (generally applied through \eqref{scaledeltabymax})
and inequality \eqref{scaledists} (often applied through \eqref{ddi}).
As pointed out in \remref{exchangeratesucks}, the estimate \eqref{scaledists}
(and thus also the estimate \eqref{ddi}) is poor unless $\alpha$
is very close to $1$. When we perform estimates that are saturated
as $\gamma\uparrow2$, we thus always need to use \eqref{changemeasurechangedelta}
rather than \eqref{scaledists}. On the other hand, the change in
$\delta$ in \eqref{changemeasurechangedelta} is less suitable if
we want to compare distances for the same path with respect to a slightly
changed measure---in such cases, \eqref{scaledists} will play a
crucial role.
\end{rem}

\section{The RSW theory\label{sec:RSW}}

In this section we give the proof of an RSW result, relating easy
crossings to hard crossings in our setting. This was previously done
in the high-temperature regime by the present authors in \cite{DD16},
using a framework used to prove an RSW result for Voronoi percolation
in \cite{tassion}. In the present paper, we follow the much simpler
RSW proof method which was developed in \cite{DF18} for first-passage
percolation on fields with an approximate conformal invariance property.
In this section, we adapt that proof to our setting. 

The proof of the RSW result comes essentially in two steps. The first
shows that the Liouville graph distance has an approximate conformal
invariance property. This is the main difference between the proof
we give here and that of \cite{DF18}. In \cite{DF18}, the authors
showed that the field on which they consider first-passage percolation
changes only by a limited amount when it is compared to an appropriately
coupled field on another domain, pulled back by a conformal isomorphism.
In our setting, our underlying field (the GFF) is \emph{exactly }conformally
invariant. However, our definition of graph distance is based on Euclidean
balls, which are not preserved by conformal transformations. Therefore,
to show that the Liouville graph distance is approximately conformally
invariant, we need to show that the Euclidean balls, when deformed
by the conformal maps that we consider, can be replaced by only a
constant number of Euclidean balls which lie inside the deformed balls.

The second step of the argument is then very similar to that of \cite{DF18}.
We consider a tall-and-skinny and a short-and-fat rectangle, and find
a conformal isomorphism between them using the Riemann mapping theorem.
(Actually, we need to use ellipses so that conformal maps have better
smoothness properties, but these can be related back to rectangles
by a simple estimate.) If we could map the vertical sides of one rectangle
to the vertical sides of the other, then it would be clear how to
map a left-right crossing of one rectangle to a left-right crossing
of another. This is of course impossible, but we can solve the problem
by finding some small segment of the right-hand side of the long rectangle
which has a geodesic coming to it with positive probability, and can
be mapped inside the right-hand side of the short rectangle. We explain
the details in \subsecref{the-rsw-proof}.

\subsection{Approximate conformal invariance}

The key ingredient of approximate conformal invariance in our setting
is the following.
\begin{prop}
\label{prop:conformalballs}Suppose that $\mathbb{U}$ and $\mathbb{V}$
are bounded domains (open subsets of $\mathbf{R}^{2}$) so that $\overline{\mathbb{U}}\subset\mathbb{V}$.
Suppose further that $\mathbb{V}'$ is another domain and $F:\mathbb{U}\to\mathbb{V}'$
is a conformal homeomorphism. Finally, suppose that $R,R'>0$. Then
there is an $N=N(\mathbb{U},\mathbb{V},\mathbb{V}',F,R,R')\in\mathbf{N}$,
such that for any closed ball $B\subset\mathbb{U}$ of radius at most
$R$, any two points $y_{1},y_{2}\in\partial F(B)$, and either connected
component $X$ of $\partial F(B)\setminus\{y_{1},y_{2}\}$, we have
a sequence of closed balls $B_{1},\ldots,B_{N}$ so that $B_{1}\cup\cdots\cup B_{N}$
is connected, $y_{1},y_{2}\in B_{1}\cup\cdots\cup B_{N}$, $B_{1}\cup\cdots\cup B_{N}\subset F(B)$,
and
\begin{equation}
\sup_{x\in B_{1}\cup\cdots\cup B_{N}}\dist_{\Euc}(x,X)\le R'.\label{eq:ballsnottoofar}
\end{equation}
Moreover, we can take
\begin{equation}
N(\mathbb{U},\mathbb{V},\mathbb{V}',F,R,R')=N(\alpha\mathbb{U},\alpha\mathbb{V},\mathbb{V}',x\mapsto F(x/\alpha),\alpha R,R')=N(\mathbb{U},\mathbb{V},\alpha\mathbb{V}',x\mapsto\alpha F(x),R,\alpha R')\label{eq:Nscale}
\end{equation}
for any $\alpha>0$.
\end{prop}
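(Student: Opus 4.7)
My plan is to exploit the bi-Lipschitz structure of $F$ on the compact set $\overline{\mathbb{U}}$, together with a discretized chain construction along the image boundary arc. Since $\overline{\mathbb{U}}\subset\mathbb{V}$ and $F$ is a conformal homeomorphism (understood to extend conformally to a neighborhood of $\overline{\mathbb{U}}$), I obtain constants $0<m\le M<\infty$, depending only on $\mathbb{U},\mathbb{V},\mathbb{V}',F$, with $m\le|F'|\le M$ on $\overline{\mathbb{U}}$. In particular $F$ is $M$-Lipschitz and $F^{-1}|_{F(\overline{\mathbb{U}})}$ is $m^{-1}$-Lipschitz.

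Fix a ball $B=\overline{B(x_0,r)}\subset\mathbb{U}$ with $r\le R$, two points $y_1,y_2\in\partial F(B)$, and one component $X$ of $\partial F(B)\setminus\{y_1,y_2\}$. Let $\gamma=F^{-1}(X)\subset\partial B$ and subdivide $\gamma$ by points $z_0=F^{-1}(y_1),z_1,\ldots,z_N=F^{-1}(y_2)$ so that consecutive points lie at arc-length distance at most $2\pi r/N$ on $\partial B$. For each $i$ I would construct a Euclidean ball $B_i^{\mathrm{pre}}\subset B$ internally tangent to $\partial B$ at $z_i$ of radius $c_0 r/N$ for a small absolute constant $c_0$. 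The $F$-image $F(B_i^{\mathrm{pre}})$ is then a Jordan subdomain of $F(B)$ touching $\partial F(B)$ only at $F(z_i)$. A Koebe-type distortion estimate, applied to $F$ on $B_i^{\mathrm{pre}}$, yields a genuine Euclidean ball $B_i\subset F(B_i^{\mathrm{pre}})\subset F(B)$, internally tangent to $\partial F(B)$ at $F(z_i)$, of radius at least a constant multiple of $m r/N$.

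Two verifications then remain. First, consecutive balls must overlap: since $F$ is $M$-Lipschitz, $|F(z_i)-F(z_{i+1})|\le 2\pi Mr/N$, whereas $B_i$ and $B_{i+1}$ each have radius of order $mr/N$, so choosing $N$ larger than a constant multiple of $M/m$ forces them to intersect, uniformly in $B$. Second, the chain stays within Euclidean distance $R'$ of $X$: each $B_i$ has diameter of order $Mr/N\le MR/N$ and is tangent to $X$ at $F(z_i)\in X$, so taking $N$ bigger than a constant multiple of $MR/R'$ is enough. The final $N$ is the maximum of these two requirements, depending only on $\mathbb{U},\mathbb{V},\mathbb{V}',F,R,R'$. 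The scaling identity \eqref{Nscale} is automatic: rescaling $\mathbb{U},\mathbb{V},R$ by $\alpha$ and precomposing $F$ by $x\mapsto x/\alpha$ leaves the constants $m,M$ of the relabeled map unchanged, and the chain construction rescales with it; similarly for the target-side rescaling.

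The main technical obstacle I anticipate is the \emph{boundary behavior} of $F$: a priori, a ball $B\subset\mathbb{U}$ may be arbitrarily close to $\partial\mathbb{U}$, where a conformal map can wildly distort distances. The key use of the hypothesis $\overline{\mathbb{U}}\subset\mathbb{V}$ is precisely to rule this out, giving uniform two-sided bounds on $|F'|$ on all of $\overline{\mathbb{U}}$. Once these bounds are in hand, the remainder of the argument is a quantitative version of the fact that a conformal map acts locally as an affine map of modulus $|F'|$, with Koebe's theorem controlling the error on small subballs.
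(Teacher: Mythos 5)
Your overall strategy---uniform two-sided bounds on $|F'|$ over $\overline{\mathbb{U}}$, then a chain of Euclidean balls tangent to the image curve at the images of subdivision points---is the same as the paper's, but the way you couple the two length scales breaks the connectivity step. In your construction both the radius of $B_i$ (at least $c_1 m r/N$ after the Koebe step) and the separation of consecutive tangency points ($|F(z_i)-F(z_{i+1})|\le 2\pi M r/N$) scale like $r/N$. Two balls of radius $\rho$ tangent to the curve from the inside at points a distance $s$ apart intersect (in the best, locally flat case) only if $s\lesssim 2\rho$, i.e.\ only if $\pi M\lesssim c_1 m$. This inequality does not involve $N$ at all: enlarging $N$ shrinks the separation and the radii by the same factor, so ``choosing $N$ larger than a constant multiple of $M/m$'' cannot force the balls to overlap. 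Since $c_1<1$ (Koebe gives $1/4$) and $M\ge m$, the condition generically fails, and the chain is disconnected.

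The fix is to decouple the ball radius from the subdivision fineness, which is what the paper's proof does. Take the balls to have radius comparable to $\min(c\,r,\,R')$, where $c$ depends only on the distortion constants of $F$ on $\overline{\mathbb{U}}$ and \emph{not} on the number of subdivision points (in the paper this is the quantity $a=ra_0$ with $a_0$ as in \eqref{eq:a0def}, and the balls are centered at the inward-offset points $\omega_a(t)$ of \eqref{eq:omegadef}); then choose the parameter spacing $\iota$ between tangency points small compared to that radius, so that $N\asymp 2\pi/\iota$ is controlled by $\mathbb{U},\mathbb{V},\mathbb{V}',F,R,R'$ alone. Once the radii are no longer infinitesimal, you also need a genuine argument that each ball stays inside $F(B)$: locally this is a radius-of-curvature bound on $\partial F(B)$, requiring control of $(F\circ\xi)''$ (the paper's \lemref{curvaturecondition}), and globally one needs a separation estimate like \eqref{eq:FxFxi} to rule out the deformed circle folding back onto the ball. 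Your appeal to Koebe on preimage balls of radius $c_0r/N$ sidesteps curvature only because those balls are infinitesimally small, which is exactly what makes the counting fail; at the fixed scale required for connectivity, the second-derivative (or an equivalent distortion) estimate cannot be avoided.
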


The statement of \propref{conformalballs} has an intuitive interpretation.
It says that a given conformal homeomorphism $F$, on a compact set
inside of its domain, cannot disturb the geometry of a ball \textbf{$B$}
so violently that an unbounded number of balls inside of $F(B)$ are
required to connect two points on the boundary of $F(B)$. This is
essentially true because the derivatives of the conformal homeomorphism
and its inverse must be bounded on a compact set in the interior of
the domain. Quantifying this intuition, however, requires a bit of
work. We first prove the following lemma.
\begin{lem}
\label{lem:curvaturecondition}Suppose that $\xi:[-1,1]\to\mathbf{R}^{2}$
is a smooth curve and $x\in\mathbf{R}^{2}$ is such that $x-\xi(0)$
is perpendicular to $\xi'(0)$. If
\begin{equation}
|x-\xi(0)|<\frac{|\xi'(0)|^{2}}{2\|\xi''\|_{\infty}},\label{eq:xxi0}
\end{equation}
then
\[
|x-\xi(s)|\ge|x-\xi(0)|
\]
for all $s$ satisfying
\begin{equation}
|s|\le\frac{|\xi'(0)|}{2\|\xi''\|_{\infty}}.\label{eq:tcond}
\end{equation}
\end{lem}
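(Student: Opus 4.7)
The plan is to reduce to a convenient coordinate system and then apply Taylor's theorem. By translating and rotating (both of which preserve all hypotheses and the conclusion), I may assume $\xi(0) = 0$, $\xi'(0) = (a, 0)$ with $a = |\xi'(0)|$, and $x = (0, b)$ with $|b| = |x - \xi(0)|$, the perpendicularity condition being exactly what lets us place $x$ on the $y$-axis. Writing $\xi(s) = (u(s), v(s))$, the identity
\[
|x - \xi(s)|^2 - |x - \xi(0)|^2 \;=\; u(s)^2 + v(s)^2 - 2 b v(s)
\]
reduces the desired inequality to $2 b v(s) \le u(s)^2 + v(s)^2$. This is automatic when $b v(s) \le 0$, so it suffices to establish $2|b||v(s)| \le u(s)^2$ in the remaining case.

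Next I would apply Taylor's theorem with integral remainder, using the initial data $u(0) = v(0) = v'(0) = 0$ and $u'(0) = a$, together with the uniform bound $|u''|, |v''| \le \|\xi''\|_\infty$, to obtain
\[
|u(s) - as| \;\le\; \tfrac{1}{2} s^2 \|\xi''\|_\infty, \qquad |v(s)| \;\le\; \tfrac{1}{2} s^2 \|\xi''\|_\infty.
\]
Under the hypothesis \eqref{eq:tcond}, the first estimate forces $|u(s)| \ge \tfrac{3}{4} a |s|$, and hence $u(s)^2 \ge \tfrac{9}{16} a^2 s^2$. Combining the bound on $|v(s)|$ with the hypothesis \eqref{eq:xxi0}, which says precisely $|b|\|\xi''\|_\infty < \tfrac{1}{2} a^2$, gives $2|b||v(s)| \le |b| s^2 \|\xi''\|_\infty < \tfrac{1}{2} a^2 s^2$, and the strict gap $\tfrac{1}{2} < \tfrac{9}{16}$ means this is bounded above by $u(s)^2$, completing the proof.

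I do not expect a serious obstacle here. Geometrically, the factor $1/2$ in \eqref{eq:xxi0} puts $x$ strictly inside a disk whose radius is comparable to the reciprocal curvature bound at $\xi(0)$, so $\xi(0)$ should locally minimize the distance from $x$ to the curve; the Taylor argument above is just the quantitative version of this observation. The one thing to handle carefully is the budget of constants: the lower bound on $|u(s)|$ loses a factor of $3/4$ (giving $9/16$ on the square), and the upper bound on $2|b||v(s)|$ gains a factor of $1/2$ from the hypothesis, and the hypothesis is tuned so that the former beats the latter.
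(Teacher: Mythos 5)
Your proof is correct and follows essentially the same route as the paper's: normalize coordinates so that $\xi(0)=0$, $\xi'(0)=(a,0)$, $x=(0,b)$, Taylor-expand with integral remainder, and check that the hypotheses \eqref{eq:xxi0} and \eqref{eq:tcond} make the leading term $a^2s^2$ dominate the error. The paper bounds the error terms jointly via $(a|s|+|b|)\|\xi''\|_\infty<a^2$ whereas you lower-bound $u(s)^2$ and upper-bound $2|b||v(s)|$ separately (with the $9/16$ versus $1/2$ margin), but this is only a cosmetic repackaging of the same argument.
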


\begin{proof}
Put $v=|\xi'(0)|$ and $d=|x-\xi(0)|$. We may change coordinates
to assume that $\xi(0)=0$, $\xi'(0)=(v,0)$, and $x=(0,d)$. Then
we have that
\begin{align*}
|x-\xi(s)|^{2} & =\xi_{1}(s)^{2}+(d-\xi_{2}(s))^{2}=\left(vs+\int_{0}^{s}(s-r)\xi_{1}''(r)\,\dif r\right)^{2}+\left(d-\int_{0}^{s}(s-r)\xi_{2}''(r)\,\dif r\right)^{2}\\
 & \ge d^{2}+v^{2}s^{2}+2\int_{0}^{s}(s-r)\left[vs\xi_{1}''(r)-d\xi_{2}''(r)\right]\,\dif r\ge d^{2}+s^{2}\left(v^{2}-(vs+d)\sup_{|r|\le s}|\xi''(r)|\right).
\end{align*}
We note that
\[
(vs+d)\sup_{|r|\le s}|\xi''(r)|\le(vs+d)\|\xi''\|_{\infty}<v^{2},
\]
where the last inequality is by \eqref{xxi0} and \eqref{tcond},
so in fact we have $|x-\xi(s)|^{2}>|x-\xi(0)|^{2}$, as claimed.
\end{proof}
Now we can prove \propref{conformalballs}.
\begin{proof}[Proof of \propref{conformalballs}.]
Let $x$ and $r$ be the center and radius of $B$, respectively,
so $B=\overline{B(x,r)}$. Parameterize $\partial B$ by
\[
\xi(t)=x+r(\cos t,\sin t).
\]
Then $F\circ\xi$ is a parameterization of $\partial F(B)$. We have,
using the fact that $F$ is conformal, that
\begin{equation}
|(F\circ\xi)'(t)|=|F'(\xi(t))\cdot\xi'(t)|\in[R\|(F')^{-1}\|_{L^{\infty}(\mathbb{U})}^{-1},r\|F'\|_{L^{\infty}(\mathbb{U})}]\label{eq:1derivbd}
\end{equation}
and
\begin{align}
|(F\circ\xi)''(t)|=|F''(\xi(t))\cdot|\xi'(t)|^{2}+F'(\xi(t))\cdot\xi''(t)| & \le r^{2}\|F''\|_{L^{\infty}(\mathbb{U})}+r\|F'\|_{L^{\infty}(\mathbb{U})}\nonumber \\
 & \le rR\|F''\|_{L^{\infty}(\mathbb{U})}+r\|F'\|_{L^{\infty}(\mathbb{U})}.\label{eq:2derivub}
\end{align}
Let
\begin{align*}
t_{0} & =\frac{\|(F')^{-1}\|_{L^{\infty}(\mathbb{U})}^{-1}}{R\|F''\|_{L^{\infty}(\mathbb{U})}+\|F'\|_{L^{\infty}(\mathbb{U})}}, & \ell_{0} & =\frac{r\|(F')^{-1}\|_{L^{\infty}(\mathbb{U})}^{-2}}{R\|F''\|_{L^{\infty}(\mathbb{U})}+\|F'\|_{L^{\infty}(\mathbb{U})}}.
\end{align*}
Define, for $a>0$ to be fixed later,
\begin{equation}
\omega_{a}(t)=F(\xi(t))+a\frac{\mathcal{R}(F\circ\xi)'(t)}{r\|F'\|_{L^{\infty}(\mathbb{U})}},\label{eq:omegadef}
\end{equation}
where $\mathcal{R}$ denotes counterclockwise rotation by $\pi/2$.
Let
\begin{equation}
b(t)=|\omega_{a}(t)-F(\xi(t))|=\left|a\frac{\mathcal{R}(F\circ\xi)'(t)}{r\|F'\|_{L^{\infty}(\mathbb{U})}}\right|=a\frac{|(F\circ\xi)'(t)|}{r\|F'\|_{L^{\infty}(\mathbb{U})}}.\label{eq:bdef}
\end{equation}
Fix $t\in[0,2\pi]$. By \eqref{1derivbd}, we have that 
\begin{equation}
b(t)\le a.\label{eq:bta}
\end{equation}
Thus, by \lemref{curvaturecondition} (applied with $x=\omega_{a}(t)$,
$\xi(s)=F(\xi(s-t))$), \eqref{1derivbd}, and \eqref{2derivub},
if $a\le\ell_{0}$ and $s\in[t-t_{0},t+t_{0}]$ then
\begin{equation}
|F(\xi(s))-\omega_{a}(t)|\ge b(t).\label{eq:localbd}
\end{equation}
Moreover, we know that if $x\in\partial B\setminus\xi([t-t_{0},t+t_{0}])$,
then
\begin{equation}
|F(x)-F(\xi(t))|\ge cr\|(F')^{-1}\|_{L^{\infty}(\mathbb{U})}^{-1}t_{0}\label{eq:FxFxi}
\end{equation}
for some absolute constant $c$. Let $a=ra_{0}$, where
\begin{equation}
a_{0}=\min\left\{ r^{-1}\ell_{0},\frac{1}{2}c\|(F')^{-1}\|_{L^{\infty}(\mathbb{U})}^{-1}t_{0},\frac{R'}{2r}\right\} .\label{eq:a0def}
\end{equation}
Then we have, for $s\not\in[t-t_{0},t+t_{0}]+2\pi\mathbf{Z}$,
\begin{equation}
|F(\xi(s))-\omega_{a}(t)|\ge|F(\xi(s))-F(\xi(t))|-|F(\xi(t))-\omega_{a}(t)|\ge cr\|(F')^{-1}\|_{L^{\infty}(\mathbb{U})}^{-1}t_{0}-a\ge a\ge b(t),\label{eq:largescalebound}
\end{equation}
where the first inequality is the triangle inequality, the second
is by \eqref{FxFxi} and \eqref{bta}, the third is by \eqref{a0def},
and the fourth is by \eqref{bta}. Together, \eqref{localbd} and
\eqref{largescalebound} mean that \eqref{localbd} hold for \emph{all}
$s,t$. This implies that
\begin{equation}
\overline{B(\omega_{a}(s),b(s))}\subseteq\overline{F(B)}\label{eq:Bsingleok}
\end{equation}
for all $s$. Note that $a_{0}$ only depends on $\mathbb{U},\mathbb{V},F,R,R'$
and not on $B$, and moreover is invariant under the scalings indicated
in \eqref{Nscale}.

We can also apply the triangle inequality to \eqref{omegadef} to
obtain, for all $s,s'$,
\begin{align}
|\omega_{a}(s)-\omega_{a}(s')| & \le|F(\xi(s))-F(\xi(s'))|+\frac{a}{r\|F'\|_{L^{\infty}(\mathbb{U})}}|(F\circ\xi)'(s)-(F\circ\xi)'(s')|\nonumber \\
 & \le r\|F'\|_{L^{\infty}(\mathbb{U})}|s-s'|+\frac{a_{0}}{\|F'\|_{L^{\infty}(\mathbb{U})}}\left(rR\|F''\|_{L^{\infty}(\mathbb{U})}+r\|F'\|_{L^{\infty}(\mathbb{U})}\right)|s-s'|\nonumber \\
 & =r\left(\|F'\|_{L^{\infty}(\mathbb{U})}+a_{0}\left(R\|F''\|_{L^{\infty}(\mathbb{U})}\|F'\|_{L^{\infty}(\mathbb{U})}^{-1}+1\right)\right)|s-s'|,\label{eq:diffub}
\end{align}
where in the second inequality we used \eqref{1derivbd} and \eqref{2derivub}
. On the other hand, we can compute from \eqref{bdef} and \eqref{1derivbd}
that
\begin{equation}
b(t)=a\frac{|(F\circ\xi)'(t)|}{r\|F'\|_{L^{\infty}(\mathbb{U})}}\ge\frac{a_{0}r}{\|(F')^{-1}\|_{L^{\infty}(\mathbb{U})}\|F'\|_{L^{\infty}(\mathbb{U})}}.\label{eq:blb}
\end{equation}
Therefore, if we define
\[
\iota=\frac{2a_{0}\left(\|(F')^{-1}\|_{L^{\infty}(\mathbb{U})}\|F'\|_{L^{\infty}(\mathbb{U})}\right)^{-1}}{\|F'\|_{L^{\infty}(\mathbb{U})}+a_{0}\left(R\|F''\|_{L^{\infty}(\mathbb{U})}\|F'\|_{L^{\infty}(\mathbb{U})}^{-1}+1\right)},
\]
then whenever $|s-s'|\le\iota$, we have
\[
|\omega_{a}(s)-\omega_{a}(s')|\le b(s)+b(s')
\]
by \eqref{diffub} and \eqref{blb}. In particular, this means that
\begin{equation}
\overline{B(\omega_{a}(s),b(s))}\cap\overline{B(\omega_{a}(s'),b(s'))}\ne\emptyset.\label{eq:Bsintersect}
\end{equation}
Note that, like $a_{0}$, the constant $\iota$ depends only on $\mathbb{U},\mathbb{V},F,R,R'$
and not on $B$, and is invariant under the scalings indicated in
\eqref{Nscale}. Now if $s_{0},s_{1}\in\mathbf{R}$ with $s_{1}\in(s_{0},s_{0}+2\pi)$,
then we can choose a sequence of points
\[
s_{0}=t_{0}<t_{1}<t_{2}<\cdots<t_{N}=s_{1}
\]
so that $t_{i}-t_{i-1}<\iota$ for each $i$, and $N\le2\pi/\iota$,
which is invariant under the scalings indicated in \eqref{Nscale}
because $\iota$ is. Then we claim that
\[
\{\overline{B(\omega_{a}(t_{i}),b(t_{i}))}\;:\;i\in\{0,\ldots,N\}\}
\]
is a set of balls as claimed in the lemma if $s_{1}$ and $s_{2}$
are chosen so that $\{y_{1},y_{2}\}=\{\xi(s_{1}),\xi(s_{2})\}$ and
$X=\xi((s_{1},s_{2}))$. (See \figref{putinballs}.)
\begin{figure}
\centering{}\includegraphics{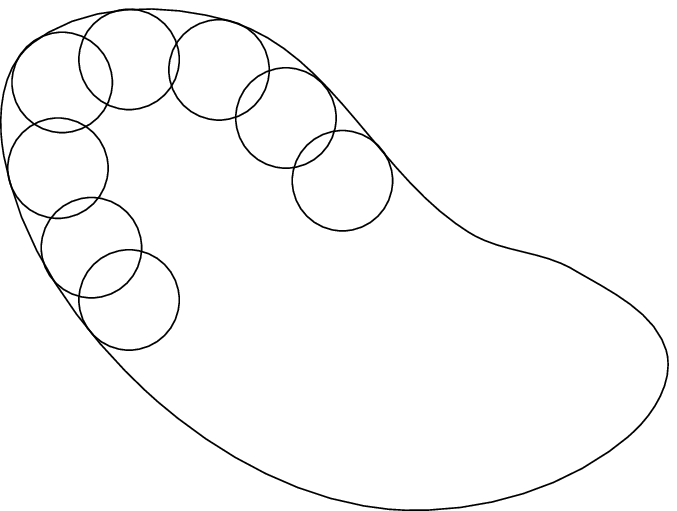}\caption{Connecting two points on the boundary of a deformed ball $F(B)$ with
smaller Euclidean balls along the boundary. The analysis in the proof
of \propref{conformalballs} essentially amounts to ensuring that
each smaller ball has sufficiently small radius of curvature to ``locally''
fit inside of $F(B)$, and sufficiently small diameter to ``globally''
fit inside of $F(B)$.\label{fig:putinballs}}
\end{figure}
 The fact that the balls all lie inside $\overline{F(B)}$ is \eqref{Bsingleok},
the fact that their union is connected is \eqref{Bsintersect}, and
the fact that they have radius at most $R'$, as well as \eqref{ballsnottoofar},
comes from \eqref{bdef} and \eqref{a0def}.
\end{proof}
\begin{cor}
\label{cor:conformalcomparison}Suppose that $\mathbb{U}$ and $\mathbb{V}$
are bounded open subsets of $\mathbf{R}^{2}$ so that $\overline{\mathbb{U}}\subset\mathbb{V}$.
Suppose further that $\mathbb{V}'$ is another domain and $F:\mathbb{V}\to\mathbb{V}'$
is a conformal homeomorphism. Define
\begin{align*}
R & \in(0,\tfrac{1}{4}\dist_{\Euc}(\mathbb{U},\partial\mathbb{V})),\\
R' & \in(0,\tfrac{1}{4}\dist_{\Euc}(F(\mathbb{U}),\partial\mathbb{V}')).
\end{align*}
Suppose further that $\partial B(x,r)\cap\mathbb{U}$ is connected
for every $x\in\mathbf{R}^{2}$, $r\in(0,R)$. Then, for all $x,y\in\mathbb{U}$,
we have
\[
d_{\mu_{h_{\mathbb{V}}}\circ F^{-1},F(\mathbb{U})^{(R')},\delta,R'}(F(x),F(y))\le Nd_{\mathbb{V},\mathbb{U},\delta,R}(x,y),
\]
where
\[
N=N(\mathbb{U},\mathbb{V},\mathbb{V}',F,R,R')
\]
is as in the statement of \propref{conformalballs}.
\end{cor}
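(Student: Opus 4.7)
The plan is to lift a near-optimal ball cover for $d_{\mathbb{V},\mathbb{U},\delta,R}(x,y)$ to a cover of a connected path from $F(x)$ to $F(y)$, replacing each covering ball $B_i$ by at most $N$ Euclidean balls lying inside its conformal image $F(B_i)$, using \propref{conformalballs}. Fix $\varepsilon>0$ and choose a path $\pi \subset \mathbb{U}$ from $x$ to $y$ together with closed balls $B_1,\ldots,B_n \in \mathscr{B}(\mathbb{U},R)$ covering $\pi$ and satisfying
\[
\sum_{i=1}^n \kappa_\delta\bigl(\mu_{h_{\mathbb{V}}}(B_i)\bigr) \le d_{\mathbb{V},\mathbb{U},\delta,R}(x,y)+\varepsilon.
\]
Since each $B_i$ has centre in $\mathbb{U}$ and radius strictly smaller than $R < \tfrac14\dist_\Euc(\mathbb{U},\partial\mathbb{V})$, we have $B_i \subset \mathbb{U}^{(R)} \subset \mathbb{V}$, so $F(B_i)$ is a well-defined closed topological disk inside $\mathbb{V}'$.

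Walk along $\pi$ and pick transition points $x=t_0,t_1,\ldots,t_{n-1},t_n=y$ such that for each $i$ the sub-arc of $\pi$ between $t_{i-1}$ and $t_i$ is contained in $B_i$; by the covering property we can take $t_i \in \pi \cap B_i \cap B_{i+1}$ at each ball-to-ball crossing, so in particular $t_i \in \mathbb{U}$. For each $i$, apply \propref{conformalballs}, using the roles ``$\mathbb{U}$''$=\mathbb{U}^{(R)}$ and ``$\mathbb{V}$''$=\mathbb{U}^{(2R)}$ (so that $B_i$ sits in the interior and the derivatives of $F$ are uniformly controlled, and the output $N$ depends only on $\mathbb{U},\mathbb{V},\mathbb{V}',F,R,R'$), with $y_1=F(t_{i-1})$, $y_2=F(t_i)$, and $X$ the image under $F$ of the arc of $\partial B_i$ lying in $\overline{\mathbb{U}}$. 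The connectedness hypothesis $\partial B(x,r)\cap\mathbb{U}$ connected guarantees this is a single, unambiguously defined arc. The proposition returns a connected union of at most $N$ closed Euclidean balls $B_{i,1},\ldots,B_{i,N}\subset F(B_i)$ containing $F(t_{i-1})$ and $F(t_i)$, each of whose points lie within $R'$ of $X\subset\overline{F(\mathbb{U})}$; in particular each $B_{i,j}$ has radius at most $R'$ and centre in $F(\mathbb{U})^{(R')}$, so $B_{i,j}\in\mathscr{B}(F(\mathbb{U})^{(R')},R')$.

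Concatenating over $i$, the family $\{B_{i,j}\}_{i,j}$ covers a connected path in $F(\mathbb{U})^{(R')}$ from $F(x)$ to $F(y)$. Since $B_{i,j}\subset F(B_i)$, the pushforward satisfies $(\mu_{h_{\mathbb{V}}}\circ F^{-1})(B_{i,j}) \le (\mu_{h_{\mathbb{V}}}\circ F^{-1})(F(B_i))=\mu_{h_{\mathbb{V}}}(B_i)$, and monotonicity of $\kappa_\delta$ then gives
\[
d_{\mu_{h_{\mathbb{V}}}\circ F^{-1},\,F(\mathbb{U})^{(R')},\,\delta,\,R'}(F(x),F(y)) \le \sum_{i=1}^n\sum_{j=1}^N \kappa_\delta\bigl((\mu_{h_{\mathbb{V}}}\circ F^{-1})(B_{i,j})\bigr) \le N\sum_{i=1}^n \kappa_\delta\bigl(\mu_{h_{\mathbb{V}}}(B_i)\bigr) \le N\bigl(d_{\mathbb{V},\mathbb{U},\delta,R}(x,y)+\varepsilon\bigr),
\]
and letting $\varepsilon\downarrow 0$ completes the proof.

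The main obstacle is the bookkeeping in the choice of transition points and the arc $X$: one must verify that $t_i$ can be chosen in $\mathbb{U}$ (where $F$ and the pushforward measure are defined), that the selected sub-arc of $\partial F(B_i)$ lies in $F(\overline{\mathbb{U}})$ so that the balls produced by \propref{conformalballs} genuinely belong to $\mathscr{B}(F(\mathbb{U})^{(R')},R')$, and that the concatenated cover is connected and passes through $F(x)$ and $F(y)$. The connectedness hypothesis on $\partial B(x,r)\cap\mathbb{U}$ is precisely what makes the selection of $X$ unambiguous and ensures the geometric step goes through uniformly over all balls $B_i$ appearing in the geodesic cover.
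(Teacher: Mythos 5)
Your argument is essentially identical to the paper's proof: take a (near-)optimal ball cover of a path realizing $d_{\mathbb{V},\mathbb{U},\delta,R}(x,y)$, replace each deformed ball $F(B_i)$ by the $\le N$ Euclidean balls inside it supplied by \propref{conformalballs}, use $B_{i,j}\subset F(B_i)$ together with monotonicity of $\kappa_{\delta}$ to bound the cost, and sum. Your version is in fact slightly more careful than the paper's on two points (the $\eps$-approximate cover avoids assuming a minimizer exists, and you spell out the choice of $X$ and the role of the connectedness hypothesis), while the one detail you gloss over --- that the transition points $t_i$ must lie on $\partial B_i$ for \propref{conformalballs} to apply as stated --- is treated with exactly the same brevity in the paper and is a routine fix.
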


\begin{proof}
Let $d=d_{\mathbb{V},\mathbb{U},\delta,R}(x,y)$. There is a path
$\pi\in\mathcal{P}_{\mathbb{U}}(x,y)$ and a sequence of closed balls
$B_{1},\ldots,B_{n}\subset\mathbb{U}^{(2R)}$ so that
\[
d=\sum_{j=1}^{n}\kappa_{\delta}(\mu_{h_{\mathbb{V}}}(B_{i}))
\]
and $\pi\subset B_{1}\cup\cdots\cup B_{d}$. Now by \propref{conformalballs},
for each $1\le j\le d$ there is a sequence of closed balls $B_{j;1},\ldots,B_{j;N}\subseteq F(B_{j})$,
where $N$ depends only on $\mathbb{U},\mathbb{V},\mathbb{V}',F,R,R'$,
so that if $P_{j}=B_{j;1}\cup\cdots\cup B_{j;N}$ and $P=P_{1}\cup\cdots\cup P_{d}$,
then $P$ is connected and $F(x),F(y)\in P$. Moreover, $B_{j;1},\ldots,B_{j;N}$
can be chosen to lie in $F(\mathbb{U})^{(R')}$ by \eqref{ballsnottoofar}
and the assumption that $\partial B_{j}\cap\mathbb{U}$ is connected.
In particular, we have that
\[
\mu_{h_{\mathbb{V}}}(F^{-1}(B_{j;i}))\le\mu_{h_{\mathbb{V}}}(B_{j})\le\delta.
\]
Therefore, by \lemref{kappascale}, we have that
\[
d_{\mu_{h_{\mathbb{V}}}\circ F^{-1},F(\mathbb{U})^{(R')},\delta,R}(F(x),F(y))\le Nd_{\mathbb{V},\mathbb{U},\delta,R}(x,y),
\]
as claimed.
\end{proof}

\subsection{The RSW result\label{subsec:the-rsw-proof}}

Now we can prove our RSW result. In this section, in order to access
the conformal mappings that we will need to make the argument work,
it will be more convenient to work with ellipses rather than rectangles.
Define $\mathbb{E}(S_{1},S_{2})$ to be the filled, closed ellipse
centered at $0$ with horizontal axis of length $S_{1}$ and vertical
axis of length $S_{2}$; that is,
\[
\mathbb{E}(S_{1},S_{2})=\{(x_{1},x_{2})\in\mathbf{R}^{2}\;:\;(2x_{1}/S_{1})^{2}+(2x_{2}/S_{2})^{2}\le1\}.
\]

\begin{thm}
\label{thm:RSW}Let $\mathbb{R}_{1}=\mathbb{B}(1,2)$ and $\mathbb{R}_{2}=\mathbb{B}(2,1)$.
There is a constant $c>0$ so that, for any $w>0$, we have
\begin{equation}
\mathbf{P}\left(d_{\mathbb{R}_{2},\delta}(\mathrm{L},\mathrm{R})\le w\right)\ge c\mathbf{P}\left(d_{\mathbb{R}_{1},\delta}(\mathrm{L},\mathrm{R})\le cw\right).\label{eq:RSW}
\end{equation}
Thus we have a constant $C_{\mathrm{RSW}}<\infty$ so that for any
$p\in(0,C_{\mathrm{RSW}})$,
\begin{equation}
\Theta_{\mathbb{R}_{1},\delta,R}^{\mathrm{hard}}(p)\le C_{\mathrm{RSW}}\Theta_{\mathbb{R}_{1},\delta,R}^{\mathrm{easy}}(C_{\mathrm{RSW}}p).\label{eq:RSW-quantile}
\end{equation}
\end{thm}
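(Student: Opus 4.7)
The plan is to transport a left-to-right (hard) crossing of the tall rectangle $\mathbb{R}_1$ to a left-to-right (easy) crossing of the short rectangle $\mathbb{R}_2$ via a conformal map, invoking the approximate conformal invariance of the Liouville graph distance furnished by Corollary~\ref{cor:conformalcomparison}. As foreshadowed in the theorem statement, I would work with ellipses containing or contained in $\mathbb{R}_1, \mathbb{R}_2$ as auxiliary domains so that the Riemann maps involved are smooth up to the boundary of the (smaller) domain on which Proposition~\ref{prop:conformalballs} is applied; the constant $N$ from that proposition is then finite, and the geometric difference between each ellipse and its inscribed/circumscribed rectangle is absorbed by short local connectivity estimates at bounded multiplicative cost.

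The central obstacle, as the excerpt notes, is that no single conformal map from a neighborhood of $\mathbb{R}_1$ onto a neighborhood of $\mathbb{R}_2$ can simultaneously send $\mathrm{L}_{\mathbb{R}_1}, \mathrm{R}_{\mathbb{R}_1}$ onto $\mathrm{L}_{\mathbb{R}_2}, \mathrm{R}_{\mathbb{R}_2}$, because these are the short sides of $\mathbb{R}_1$ but the long sides of $\mathbb{R}_2$. I would handle this via a pigeonhole argument. Partition $\mathrm{L}_{\mathbb{R}_1}$ and $\mathrm{R}_{\mathbb{R}_1}$ into $N$ equal closed sub-segments $I_1^{\mathrm{L}}, \ldots, I_N^{\mathrm{L}}$ and $I_1^{\mathrm{R}}, \ldots, I_N^{\mathrm{R}}$ for an absolute constant $N$ to be fixed later. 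For each pair $(j,k) \in \{1,\ldots,N\}^2$, using the three real parameters of freedom in the Riemann map from the disk, I would construct a conformal homeomorphism $F_{j,k}$ from a neighborhood $\mathbb{V}$ of $\mathbb{R}_1$ onto a neighborhood $\mathbb{V}'$ of $\mathbb{R}_2$ whose restriction sends small arcs around $I_j^{\mathrm{L}}$ and $I_k^{\mathrm{R}}$ into $\mathrm{L}_{\mathbb{R}_2}$ and $\mathrm{R}_{\mathbb{R}_2}$, respectively. Since this is a finite family, all of its smoothness constants (and hence the constants invoked through Proposition~\ref{prop:conformalballs} and Corollary~\ref{cor:conformalcomparison}) can be chosen uniform in $(j,k)$.

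Now suppose that $d_{\mathbb{R}_1,\delta}(\mathrm{L},\mathrm{R}) \le cw$; fix any geodesic $\pi$ realizing this distance, and let $x \in \mathrm{L}_{\mathbb{R}_1}$, $y \in \mathrm{R}_{\mathbb{R}_1}$ be its endpoints. By pigeonhole some pair $(j^{*},k^{*})$ satisfies
\[
\mathbf{P}\left(x \in I_{j^{*}}^{\mathrm{L}},\ y \in I_{k^{*}}^{\mathrm{R}},\ d_{\mathbb{R}_1,\delta}(\mathrm{L},\mathrm{R}) \le cw\right) \ge N^{-2}\,\mathbf{P}\left(d_{\mathbb{R}_1,\delta}(\mathrm{L},\mathrm{R}) \le cw\right).
\]
On this event, $F_{j^{*},k^{*}}(\pi)$ is a continuous path that starts on a sub-arc of $\mathrm{L}_{\mathbb{R}_2}$ and ends on a sub-arc of $\mathrm{R}_{\mathbb{R}_2}$. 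Invoking Corollary~\ref{cor:conformalcomparison} to control the Liouville graph distance of $F_{j^{*},k^{*}}(\pi)$ by a uniform multiple of that of $\pi$, together with the conformal covariance of the LQG measure (\propref{conformalcovariance}) and \eqref{mostusefulscaling} to absorb the (uniformly bounded) derivative/Jacobian factor from the push-forward into a constant-factor redefinition of $\delta$, one obtains a left-to-right crossing of $\mathbb{R}_2$ of $d_{\mathbb{R}_2,\delta}$-length at most $C w$. After adjusting the constants $c$ and $C$, this is precisely \eqref{RSW}.

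The quantile version \eqref{RSW-quantile} then follows by direct substitution in \eqref{RSW}, using the rotational symmetry of the setup to identify $\Theta^{\mathrm{easy}}_{\mathbb{R}_1,\delta,R}$ with $\Theta^{\mathrm{LR}}_{\mathbb{R}_2,\delta,R}$. The main technical obstacle is the careful construction of the maps $F_{j,k}$, together with verifying that the derivative bounds, the nesting conditions $\overline{\mathbb{U}} \subset \mathbb{V}$, and the separation constants in Proposition~\ref{prop:conformalballs} are all uniform across the finite family. A secondary subtlety is that the image path $F_{j^{*},k^{*}}(\pi)$ may not quite touch $\mathrm{L}_{\mathbb{R}_2}$ or $\mathrm{R}_{\mathbb{R}_2}$ exactly (because Corollary~\ref{cor:conformalcomparison} only lets one cover the image by balls inside a small neighborhood of it); this can be remedied by a short local crossing estimate inside thin strips along $\mathrm{L}_{\mathbb{R}_2}$ and $\mathrm{R}_{\mathbb{R}_2}$ that adds only a bounded multiplicative factor to the graph distance and does not affect the quantile comparison.
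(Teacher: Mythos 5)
Your overall strategy---transport a crossing of $\mathbb{R}_1$ to a crossing of $\mathbb{R}_2$ by a conformal map, pigeonhole over finitely many boundary locations so that one map from a finite family works with probability comparable to the original crossing probability, and pay only a bounded multiplicative cost via Corollary~\ref{cor:conformalcomparison}---is the same as the paper's. But the geometric core of your construction has a genuine gap. A map that is conformal on a neighborhood of $\overline{\mathbb{R}_1}$ cannot send $\mathbb{R}_1$ onto $\mathbb{R}_2$ while carrying arcs of $\mathrm{L}_{\mathbb{R}_1},\mathrm{R}_{\mathbb{R}_1}$ into $\mathrm{L}_{\mathbb{R}_2},\mathrm{R}_{\mathbb{R}_2}$: conformality at the corners forces corners to corners, and since the two rectangles are congruent this pins the map down to a $90^{\circ}$ rotation (up to symmetries), which sends $\mathrm{L}_{\mathbb{R}_1}$ to a horizontal side. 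So whatever $F_{j,k}$ you build, $F_{j,k}(\mathbb{R}_1)$ is some Jordan domain other than $\mathbb{R}_2$, and there is no reason for it to lie inside the horizontal strip bounded by the lines through $\mathrm{T}_{\mathbb{R}_2}$ and $\mathrm{B}_{\mathbb{R}_2}$. This is fatal for the final step: a path joining a point of $\mathrm{L}_{\mathbb{R}_2}$ to a point of $\mathrm{R}_{\mathbb{R}_2}$ that is allowed to exit $\mathbb{R}_2$ through its top or bottom need \emph{not} contain any connected left--right crossing of $\mathbb{R}_2$ (it can simply go around over the top), so $d_{\mathbb{R}_2,\delta}(\mathrm{L},\mathrm{R})\le Cw$ does not follow from the existence of $F_{j^{*},k^{*}}(\pi)$. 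Your ``secondary subtlety'' paragraph patches the behavior near $\mathrm{L}_{\mathbb{R}_2}$ and $\mathrm{R}_{\mathbb{R}_2}$, but not the escape through $\mathrm{T}_{\mathbb{R}_2}$ and $\mathrm{B}_{\mathbb{R}_2}$, which is the whole difficulty of RSW.

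The paper's proof is engineered precisely to close this gap: the conformal map sends the tall ellipse $\mathbb{E}_1=\mathbb{E}(1,3)$ onto the flat ellipse $\mathbb{E}_2=\mathbb{E}(3,3/4)$, which is strictly contained between the horizontal lines through $\mathrm{T}_{\mathbb{R}_2}$ and $\mathrm{B}_{\mathbb{R}_2}$ but protrudes past $\mathrm{L}_{\mathbb{R}_2}$ and $\mathrm{R}_{\mathbb{R}_2}$; the boundary normalization places the image of the entire left arc $X$ to the left of $\mathrm{L}_{\mathbb{R}_2}$ and the image of the pigeonholed arc $Y_k$ to the right of $\mathrm{R}_{\mathbb{R}_2}$, so any path in (a slight fattening of) $\mathbb{E}_2$ joining the two image sets is topologically forced to cross $\mathbb{R}_2$ from left to right. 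If you rebuild your argument with the ellipses in this role, your double pigeonhole is also unnecessary: the three real parameters of the Riemann map already let you pin both endpoints of $X$ plus one point of $Y$, so only the right-hand arc needs to be subdivided. Finally, you should make explicit how the base field changes: the exact conformal invariance is that of $h_{\mathbb{F}_1}$ on the reflected domain, not of $h_{\mathbb{R}_1^{*}}$, so one must pass between the two via the Gibbs--Markov property; the paper does this by restricting to the independent positive-probability events $E$ and $E'$ on which the harmonic corrections are nonpositive, which is where the constants $q_{*},q_{**}$ in \eqref{RSW} come from. Absorbing the Jacobian through \eqref{mostusefulscaling} handles the conformal covariance of the measure but not this change of field.
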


\begin{proof}
Define
\[
q_{0}=\mathbf{P}\left[d_{\mathbb{R}_{1},\delta}(\mathrm{L},\mathrm{R})\le w\right].
\]
Let $\mathbb{E}_{1}=\mathbb{E}(1,3)$ and $\mathbb{E}_{2}=\mathbb{E}(3,3/4)$.
Let $X$ and $Y$ be the left and right, respectively, connected components
of $\partial\mathbb{E}_{1}\cap\mathbb{R}_{1}$, and let $x_{0}$ and
$x_{1}$ be the bottom and top, respectively, endpoints of $X$. By
the Riemann mapping theorem and Schwarz reflection, for any $y\in Y$
there are open sets $\mathbb{F}_{1},\mathbb{F}_{2}(y)$ and a conformal
homeomorphism $F_{y}:\mathbb{F}_{1}\to\mathbb{F}_{2}(y)$ such that
the following properties hold:
\begin{enumerate}
\item We have $\mathbb{E}_{1}\subset\mathbb{F}_{1}\subset\mathbb{R}_{1}^{*}$,
$\mathbb{E}_{2}\subset\mathbb{F}_{2}(y)\subset\mathbb{R}_{2}^{*}$,
and $F_{y}(\mathbb{E}_{1})=\mathbb{E}_{2}$.
\item \label{enu:Fx0x1}$F_{y}(x_{0})$ and $F_{y}(x_{1})$ are the upper-
and lower-left, respectively, points of $\partial\mathbb{E}_{2}\cap\partial\mathbb{R}_{2}$.
\item $F_{y}(y)$ is the lower-right point of $\partial\mathbb{E}_{2}\cap\partial\mathbb{R}_{2}$.
\item We have that
\begin{equation}
Q\coloneqq\sup_{y\in Y}\max\left\{ \|F_{y}'\|_{L^{\infty}(\mathbb{E}_{1})},\|(F_{y}')^{-1}\|_{L^{\infty}(\mathbb{E}_{2})}\right\} <\infty.\label{eq:Qdef}
\end{equation}
(Here, $(F'_{y})^{-1}(x)=(F'_{y}(x))^{-1}$.)
\end{enumerate}
This construction was used in the proof of \cite[Theorem 3.1]{DF18}.
We note that condition \enuref{Fx0x1} implies that $F_{y}(X)$ is
the left connected component of $\overline{\mathbb{E}_{2}\setminus\mathbb{R}_{2}}$.
See \figref{RSWsetup} for a partial illustration of this setup.
\begin{figure}
\centering{}\begin{tikzpicture}[x=1in,y=1in,label distance=-0.07in,remember picture]

\tikzstyle{reverseclip}=[insert path={(current page.north east) --
  (current page.south east) --
  (current page.south west) --
  (current page.north west) --
  (current page.north east)}
]

\draw [black, name path=R1] (0,0) rectangle (1,2);
\filldraw [black, fill=black, fill opacity=0.05, name path=E1] (0.5,1) ellipse [x radius = 0.5, y radius = 1.5];
\filldraw [black, fill=black, fill opacity=0.05, name path=F1] (0.5,1) ellipse [x radius = 0.6, y radius = 1.6];
\begin{scope}
	\begin{pgfinterruptboundingbox}
	\clip (-1,0) rectangle (3,2);
	\end{pgfinterruptboundingbox}
	\draw [blue, ultra thick] (0.5,1) ellipse [x radius = 0.5, y radius = 1.5];
\end{scope}
\fill[red] ($(0.5,1)+(330:0.5 and 1.5)$) circle (2pt) node [label=left:{$y$}] {};
\fill[red,name intersections={of=R1 and E1}]
    (intersection-3) circle (2pt) node [label = below right:{$x_0$}] {}
    (intersection-5) circle (2pt) node [label = above right:{$x_1$}] {};
\draw[blue,name intersections={of=R1 and E1}]
    (intersection-1) node [label = right:{$X$}] {}
    (intersection-4) node [label = left:{$Y$}] {};

\draw ($(0.5,1)+(280:0.5 and 1.5)$) node [label = above left:{$\mathbb{E}_1$}] {};
\draw ($(0.5,1)+(310:0.6 and 1.6)$) node [label = below right:{$\mathbb{F}_1$}] {};
\draw[->,line width=2pt] (1.6,1) to (2.1,1);
\draw (1.85,0.75) node [label={$F_y$}] {};
\begin{scope}[shift = {(3,0.5)}]
\draw [black, name path=R2] (0,0) rectangle (2,1);
\filldraw [black,fill=black, fill opacity=0.05, name path=E2] (1,0.5) ellipse [x radius = 1.5, y radius = 0.375];
\begin{scope}
	\begin{pgfinterruptboundingbox}
		\path [clip] (0,-1) -- (4,-1) -- (4,2)-- (0,2) -- cycle [reverseclip];
	\end{pgfinterruptboundingbox}
	\draw [blue, ultra thick] (1,0.5) ellipse [x radius = 1.5, y radius = 0.375];
\end{scope}
\draw[blue] (-0.5,0.5) node [label = right:{$F_y(X)$}] {};
\fill[red,name intersections={of=R2 and E2}]
    (intersection-1) circle (2pt) node [label = below right:{$F_y(x_0)$}] {}
    (intersection-2) circle (2pt) node [label = above right:{$F_y(x_1)$}] {}
    (intersection-4) circle (2pt) node [label = above left:{$F_y(y)$}] {};

\end{scope}
\end{tikzpicture}\caption{Illustration of the geometrical setup for the proof of \thmref{RSW}.\label{fig:RSWsetup}}
\end{figure}

Let $R_{1}=\frac{1}{4}\dist(\mathbb{E}_{1},\partial\mathbb{F}_{1})$.
Now, on the event $E=\left\{ \max\limits _{z\in\mathbb{E}_{1}^{(R_{1})}}h_{\mathbb{R}_{1}^{*}:\mathbb{F}_{1}}(z)\le0\right\} $
we have that
\[
d_{\mathbb{F}_{1},\mathbb{E}_{1},\delta,R_{1}}(X,Y)\le d_{\mathbb{R}_{1},\delta,R_{1}}(X,Y)\le Cd_{\mathbb{R}_{1},\delta}(X,Y),
\]
where the second inequality is by \eqref{RprimeR-multiplicative}.
(Here we have folded the geometrical factor in \eqref{RprimeR-multiplicative}
into the constant $C$.) Now we note that $d_{\mathbb{F}_{1},\mathbb{E}_{1},\delta,R_{1}}(X,Y)$
and $E$ are independent. Let $q_{*}=\mathbf{P}[E]$. We note that
$q_{*}$ is strictly positive because $\mathbb{E}_{1}^{(R_{1})}$
is separated from $\partial\mathbb{F}_{1}$ by a positive Euclidean
distance and thus $h_{\mathbb{R}_{1}^{*}:\mathbb{F}_{1}}|_{\mathbb{E}_{1}^{(R_{1})}}$
is a uniformly smooth centered Gaussian process on a compact domain---thus
there is a positive probability that it is nonpositive on $\mathbb{E}_{1}^{(R_{1})}$.
Then we have that
\begin{equation}
\mathbf{P}\left(d_{\mathbb{F}_{1},\mathbb{E}_{1},\delta,R_{1}}(X,Y)\le Cw\right)\ge q_{*}q_{0}.\label{eq:reducetoellipses}
\end{equation}
Now write $Y$ as a union of disjoint curvilinear segments $Y_{1},\ldots,Y_{M}$
so that $\diam_{\Euc}(Y_{k})\le(2Q)^{-1}$, where $Q$ is as in \eqref{Qdef}
and $M$ is taken to satisfy $M\le3Q$. By \eqref{reducetoellipses},
we have that
\[
q_{*}q_{0}\le\sum_{k=1}^{M}\mathbf{P}(d_{\mathbb{F}_{1},\mathbb{E}_{1},\delta,R_{1}}(X,Y)\le Cw),
\]
so there is some $1\le k\le M$ so that
\begin{equation}
\mathbf{P}(d_{\mathbb{F}_{1},\mathbb{E}_{1},\delta,R_{1}}(X,Y_{k})\le Cw)\ge\frac{q_{*}q_{0}}{M}\ge\frac{q_{*}q_{0}}{3Q}.\label{eq:pickk}
\end{equation}
Fix $y$ to be the bottom endpoint of $Y_{k}$. By \eqref{Qdef},
we observe that $F_{y}(Y_{k})$ is contained in the right-hand connected
component of $\mathbb{E}_{2}\setminus\mathbb{R}_{2}$. Let
\[
R_{2}=\min\left\{ \frac{1}{4}\dist(\mathbb{E}_{2},\partial\mathbb{F}_{2}(y)),\frac{1}{32}\right\} 
\]
and define $\widetilde{\mathbb{E}}_{2}=\mathbb{E}_{2}^{(R_{2})}$.
By \corref{conformalcomparison}, we have an $N<\infty$ so that
\begin{equation}
d_{\mu_{h_{\mathbb{F}_{1}}}\circ F_{y}^{-1},\widetilde{\mathbb{E}}_{2},\delta,R_{2}}(F_{y}(X),F_{y}(Y_{k}))\le Nd_{\mathbb{F}_{1},\mathbb{E}_{1},\delta,R_{1}}(X,Y_{k}).\label{eq:apply-conformal-comparison}
\end{equation}
Note that, by \propref{conformalcovariance}, we have
\begin{equation}
d_{\mathbb{F}_{2}(y),\widetilde{\mathbb{E}}_{2},\delta,R_{2}}(F_{y}(X),F_{y}(Y_{k}))\overset{\mathrm{law}}{=}d_{\e^{-(2+\gamma^{2}/2)\log|(F_{y}^{-1})'|}\mu_{h_{\mathbb{F}_{1}}}\circ F_{y}^{-1},\widetilde{\mathbb{E}}_{2},\delta,R_{2}}(F_{y}(X),F_{y}(Y_{k})).\label{eq:law-the-same}
\end{equation}
On the other hand, we have
\[
d_{\mathbb{F}_{2}(y),\widetilde{\mathbb{E}}_{2},\delta,R_{2}}(F_{y}(X),F_{y}(Y_{k}))\ge d_{\mathbb{F}_{2}(y),\widetilde{\mathbb{E}}_{2},\delta,R_{2}}(F_{y}(X),F_{y}(Y_{k}))\ge d_{\mathbb{F}_{2}(y),\widetilde{\mathbb{E}}_{2},\delta,R_{2}}(\widetilde{\mathbb{E}}_{2}\cap\mathrm{L}_{\mathbb{R}_{2}},\widetilde{\mathbb{E}}_{2}\cap\mathrm{R}_{\mathbb{R}_{2}}),
\]
and on the event $E'=\left\{ \max\limits _{z\in\widetilde{\mathbb{E}}_{2}^{(R_{2})}}h_{\mathbb{R}_{2}^{*}:\mathbb{F}_{2}(y)}(z)\le0\right\} $
(which is independent of $d_{\mathbb{F}_{2}(y),\widetilde{\mathbb{E}}_{2},\delta,R_{2}}(F_{y}(X),F_{y}(Y_{k}))$)
we have that
\[
d_{\mathbb{F}_{2}(y),\widetilde{\mathbb{E}}_{2},\delta,R_{2}}(\widetilde{\mathbb{E}}_{2}\cap\mathrm{L}_{\mathbb{R}_{2}},\widetilde{\mathbb{E}}_{2}\cap\mathrm{R}_{\mathbb{R}_{2}})\ge d_{\mathbb{R}_{2}^{*},\widetilde{\mathbb{E}}_{2},\delta,R_{2}}(\widetilde{\mathbb{E}}_{2}\cap\mathrm{L}_{\mathbb{R}_{2}},\widetilde{\mathbb{E}}_{2}\cap\mathrm{R}_{\mathbb{R}_{2}}).
\]
This implies that, if we define $q_{**}=\mathbf{P}[E']$ (which again
is strictly positive since $\dist_{\Euc}(\widetilde{\mathbb{E}}_{2}^{(R_{2})},\partial\mathbb{F}_{2}(y))>0$),
then for any $v>0$,
\begin{equation}
\mathbf{P}\left(d_{\mathbb{R}_{2}^{*},\widetilde{\mathbb{E}}_{2},\delta,R_{2}}(\widetilde{\mathbb{E}}_{2}\cap\mathrm{L}_{\mathbb{R}_{2}},\widetilde{\mathbb{E}}_{2}\cap\mathrm{R}_{\mathbb{R}_{2}})\le v\right)\ge q_{**}\mathbf{P}\left(d_{\mathbb{F}_{2}(y),\widetilde{\mathbb{E}}_{2},\delta,R_{2}}(F_{y}(X),F_{y}(Y_{k}))\le v\right).\label{eq:F2to3R}
\end{equation}
We can also write, using \eqref{box-comparison} and then \eqref{R-comparison},
\begin{equation}
d_{\mathbb{R}_{2}^{*},\widetilde{\mathbb{E}}_{2},\delta,R_{2}}(\widetilde{\mathbb{E}}_{2}\cap\mathrm{L}_{\mathbb{R}_{2}},\widetilde{\mathbb{E}}_{2}\cap\mathrm{R}_{\mathbb{R}_{2}})\ge d_{\mathbb{R}_{2},\delta,R_{2}}(\widetilde{\mathbb{E}}_{2}\cap\mathrm{L}_{\mathbb{R}_{2}},\widetilde{\mathbb{E}}_{2}\cap\mathrm{R}_{\mathbb{R}_{2}})\ge d_{\mathbb{R}_{2},\delta,R_{2}}(\mathrm{L},\mathrm{R})\ge d_{\mathbb{R}_{2},\delta}(\mathrm{L},\mathrm{R}).\label{eq:final-chain}
\end{equation}
Therefore, we have
\begin{align*}
\mathbf{P}( & d_{\mathbb{R}_{2},\delta}(\mathrm{L},\mathrm{R})\le CNw)\\
 & \ge\mathbf{P}\left(d_{\mathbb{R}_{2}^{*},\widetilde{\mathbb{E}}_{2},\delta,R_{2}}(\widetilde{\mathbb{E}}_{2}\cap\mathrm{L}_{\mathbb{R}_{2}},\widetilde{\mathbb{E}}_{2}\cap\mathrm{R}_{\mathbb{R}_{2}})\le CNw\right)\ge q_{**}\mathbf{P}\left(d_{\mathbb{F}_{2}(y),\widetilde{\mathbb{E}}_{2},\delta,R_{2}}(F_{y}(X),F_{y}(Y_{k}))\le CNw\right)\\
 & \ge q_{**}\mathbf{P}\left(d_{\e^{-(2+\gamma^{2}/2)\log|(F_{y}^{-1})'|}\mu_{h_{\mathbb{F}_{1}}}\circ F_{y}^{-1},\widetilde{\mathbb{E}}_{2},\delta,R_{2}}(F_{y}(X),F_{y}(Y_{k}))\le CNw\right)\ge q_{**}\mathbf{P}\left(d_{\mathbb{F}_{1},\mathbb{E}_{1},\delta,R_{1}}(X,Y_{k})\le Cw\right)\\
 & \ge\frac{q_{**}q_{*}q_{0}}{3Q},
\end{align*}
where the first inequality is by \eqref{final-chain}, the second
is by \eqref{F2to3R}, the third is by \eqref{law-the-same}, the
fourth is by \eqref{apply-conformal-comparison} and \eqref{mostusefulscaling}
(applied with $\alpha=\e^{(2+\gamma^{2}/2)\log|(F_{y}^{-1})'|}\vee1$,
which we fold into the constant $C$ as it is bounded by \eqref{Qdef}),
and the last is by \eqref{pickk}. This implies \eqref{RSW} with
appropriately chosen constants.
\end{proof}

\section{Percolation arguments\label{sec:Percolation}}

In this section we prove concentration for crossing distances at a
given scale, as well as relationships between crossing distances at
different scales, using percolation-type arguments. Recall the definition
\eqref{dminadef} of $d_{\cdots}(\min;a).$
\begin{prop}
\label{prop:lowerbound-tail}There is a $p_{0}>0$ so that for every
$\theta>2$, there is a constant $C<\infty$ so that the following
holds. For any $a>0$, any box $\mathbb{R}$ with $\AR(\mathbb{R})\in[1/3,3]$,
any $R>0$, and any $K\in[1,\infty)$, if we define
\begin{align}
S & =\diam_{\Euc}(\mathbb{R}),\nonumber \\
\omega & =\frac{2\gamma\theta}{4+\gamma^{2}},\label{eq:ouromega}
\end{align}
then we have that
\begin{equation}
\mathbf{P}\left(d_{\mathbb{R},\delta,R}(\min;a)\le\Theta_{\mathbb{B}(K^{-1-\omega}S),\delta,K^{-\omega}R}^{\mathrm{easy}}(p_{0})\right)\le C\left(K^{2-\theta^{2}/2}+\e^{-aK/C}\right).\label{eq:lowerbound-tail-nogrowth}
\end{equation}
If we further assume that $R\le K^{-1}S$, then we have that
\begin{equation}
\mathbf{P}\left(d_{\mathbb{R},\delta,R}(\min;a)\le\frac{aK}{C}\Theta_{\mathbb{B}(K^{-1-\omega}),\delta,K^{-\omega}R}^{\mathrm{easy}}(p_{0})\right)\le C\left(K^{2-\theta^{2}/2}+\e^{-aK/C}\right).\label{eq:lowerbound-tail-growth}
\end{equation}
\end{prop}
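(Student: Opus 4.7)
The plan is to lower bound $d_{\mathbb{R},\delta,R}(\min;a)$ by showing that every candidate geodesic is forced to make easy crossings of order $aK$ disjoint sub-boxes of scale $S/K$, and then to use the LQG scaling covariance (\propref{LQGscaling}) to trade the logarithmically-sized coarse field accumulated in each sub-box for a rescaling of the sub-box's effective diameter from $S/K$ down to $K^{-1-\omega}S$. Because $\omega(\gamma^{2}/2+2)=\gamma\theta$ by the defining formula \eqref{ouromega}, the rescaling lands exactly on $\Theta^{\mathrm{easy}}_{\mathbb{B}(K^{-1-\omega}S),\delta,K^{-\omega}R}(p_{0})$, and independence across the sub-boxes (\propref{independent-disjoint}) then turns the quantile bound into the desired tail.

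For the geometric setup, fix a pair $x,y\in\mathbb{R}$ with $|x-y|\ge aS$; a finite discretization of such pairs at scale much less than $S/K$, together with the stability of $d_{\mathbb{R},\delta,R}(\cdot,\cdot)$ under small perturbations of the endpoints, reduces the minimum to a union bound over fixed pairs. Choose $N\ge c_{0}aK$ disjoint sub-boxes $\mathbb{C}_{1},\ldots,\mathbb{C}_{N}$, each a rigid motion of $\mathbb{B}(S/K)$ with its short side perpendicular to $\overline{xy}$, placed consecutively across $\mathbb{R}$ along the direction of $\overline{xy}$, so that (i) each $\mathbb{C}_{i}$ separates $x$ from $y$ inside $\mathbb{R}$ and the crossing from the $x$-side of $\mathbb{C}_{i}$ to its $y$-side is an easy crossing, and (ii) the enlarged boxes $\mathbb{C}_{i}^{*}$ are pairwise disjoint. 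Condition (i) implies, via \propref{crossingbigimpliescrossingsmall}, that the $d_{\mathbb{R},\delta,R}(x,y)$-realizing geodesic contains a sub-path inside $\mathbb{C}_{i}$ joining its two easy sides, so $d_{\mathbb{R},\delta,R}(x,y)\ge d_{\mathbb{R}^{*},\mathbb{C}_{i},\delta,R}(\mathrm{easy})$. Condition (ii) makes the fine fields $h_{\mathbb{C}_{i}^{*}}$ jointly independent by \propref{independent-disjoint}.

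To process the coarse field, set $m_{i}=\min_{z\in\mathbb{C}_{i}^{\circ}}h_{\mathbb{R}^{*}:\mathbb{C}_{i}^{*}}(z)$. Proposition~\ref{prop:fieldonsmallerbox} combined with \eqref{delta-comparison} gives
\[
d_{\mathbb{R}^{*},\mathbb{C}_{i},\delta,R}(\mathrm{easy})\;\ge\;d_{\mathbb{C}_{i}^{*},\mathbb{C}_{i},e^{-\gamma m_{i}}\delta,R}(\mathrm{easy}).
\]
By the symmetry $h\stackrel{\mathrm{law}}{=}-h$, \propref{maxcoarse} applied to $-h_{\mathbb{R}^{*}:\mathbb{C}_{i}^{*}}$ together with a union bound over $i$ (note $N\lesssim K^{2}$) shows that the event $E=\{m_{i}\ge-\theta\log K\text{ for all }i\}$ satisfies $\mathbf{P}(E^{c})\le CK^{2-\theta^{2}/2}$. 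On $E$ we have $e^{-\gamma m_{i}}\delta\le K^{\gamma\theta}\delta$, so \propref{LQGscaling} applied with the scaling factor that takes a side $S/K$ to a side $K^{-1-\omega}S$ yields
\[
D_{i}\;:=\;d_{\mathbb{C}_{i}^{*},\mathbb{C}_{i},K^{\gamma\theta}\delta,R}(\mathrm{easy})\;\stackrel{\mathrm{law}}{=}\;d_{\mathbb{B}(K^{-1-\omega}S)^{*},\mathbb{B}(K^{-1-\omega}S),\delta,K^{-\omega}R}(\mathrm{easy}),
\]
where the $\delta$-exponents match precisely because $\omega(\gamma^{2}/2+2)=\gamma\theta$; moreover, by Step~2(ii) the $D_{i}$'s are mutually independent.

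Let $q:=\Theta^{\mathrm{easy}}_{\mathbb{B}(K^{-1-\omega}S),\delta,K^{-\omega}R}(p_{0})$. For \eqref{lowerbound-tail-nogrowth}, choose $p_{0}$ small enough to be a legitimate quantile; then the event $\{D_{i}\le q\text{ for all }i\}$ has probability at most $p_{0}^{N}\le e^{-aK/C}$, and off this event some $D_{i}>q$ forces, via the chain built in Step~2--3, $d_{\mathbb{R},\delta,R}(x,y)\ge q$. For \eqref{lowerbound-tail-growth}, the extra hypothesis $R\le K^{-1}S$ makes the gaps (of size $\sim S/K$) between consecutive $\mathbb{C}_{i}$'s exceed $R$, so \propref{crossmanysmalls} upgrades the single-box bound to
\[
d_{\mathbb{R},\delta,R}(x,y)\;\ge\;\sum_{i=1}^{N}d_{\mathbb{R}^{*},\mathbb{C}_{i},\delta,R}(\mathrm{easy}),
\]
and a Chernoff bound on the i.i.d.\ Bernoullis $\mathbf{1}\{D_{i}>q\}$ shows that, except on an event of probability $\le e^{-aK/C}$, a constant fraction of the $D_{i}$'s exceed $q$, producing $d_{\mathbb{R},\delta,R}(x,y)\ge(aK/C)q$. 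The main obstacle is to arrange the $\mathbb{C}_{i}$'s so that the separation requirement (ii) (needed both for \propref{crossmanysmalls} with appropriate $R$ and for independence via \propref{independent-disjoint}) and the forcing requirement (i) (needed for \propref{crossingbigimpliescrossingsmall}) hold simultaneously, uniformly across the continuum of pairs $(x,y)$ with $|x-y|\ge aS$; the algebraic identity $\omega(\gamma^{2}/2+2)=\gamma\theta$ and the Gaussian symmetry used to bound $\min h$ by $\max(-h)$ are the two exact ingredients that make the various exponents line up.
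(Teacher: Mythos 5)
There is a genuine gap, and it sits at the heart of your Step 2. You place a deterministic family of $N\sim aK$ boxes, each congruent to $\mathbb{B}(S/K)$, along the segment $\overline{xy}$ and assert in (i) that each such box separates $x$ from $y$ inside $\mathbb{R}$, so that the geodesic is forced to make an easy crossing of every one of them. This is geometrically false: a box of diameter $\sim S/K$ cannot disconnect two points at distance $aS$ inside a rectangle $\mathbb{R}$ of aspect ratio in $[1/3,3]$ and diameter $S$ --- the geodesic simply goes around it. Consequently neither \propref{crossingbigimpliescrossingsmall} nor \propref{crossmanysmalls} applies to your fixed collection, and the independence/Chernoff step collapses, because the set of small boxes that the geodesic actually crosses is random (it depends on the field). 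The paper's proof is built precisely to handle this: it covers $\mathbb{R}$ by $O(K^2)$ square annuli, uses the fact that a path exiting an annulus must make an easy crossing of \emph{one of the eight} rectangles covering it (without knowing which one in advance), extracts from the crossed boxes a well-separated subfamily of size $N\ge caK$ depending on the path, and then pays an entropy cost $|\mathscr{C}_{a,K}|\le C^{N}$ for union-bounding over all possible such subfamilies (each being a walk on a bounded-degree graph). The condition that $p_{0}$ be small enough is exactly what beats this $C^{N}$ factor via \lemref{hugedeviations}; in your proposal $p_{0}$ plays no such role and could apparently be anything less than $1$, which is a sign the entropy of the geodesic's choices has been lost.

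Your remaining ingredients are sound and do match the paper: the identity $\omega(\gamma^{2}/2+2)=\gamma\theta$ making \propref{LQGscaling} convert the worst-case coarse field $K^{\gamma\theta}\delta$ into the box rescaling $S/K\mapsto K^{-1-\omega}S$ with the $\delta$ and $R$ parameters landing exactly on $\Theta^{\mathrm{easy}}_{\mathbb{B}(K^{-1-\omega}S),\delta,K^{-\omega}R}(p_{0})$, and the bound $\mathbf{P}(F\le-\theta\log K)\le CK^{2-\theta^{2}/2}$ via \propref{maxcoarse}. Note also that the paper avoids your discretization over pairs $(x,y)$ entirely: the collection $\mathscr{C}_{a,K}$ simultaneously dominates all paths between all sufficiently separated pairs, so no continuity-in-endpoints argument (which you invoke but do not justify) is needed. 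To repair your proof you would need to replace the deterministic perpendicular boxes by the annulus construction and insert the $C^{N}$ union bound over crossed-box configurations.
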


We will prove \propref{lowerbound-tail} in \subsecref{lowerbounds}.
The constant $p_{0}$ will remain fixed throughout the remainder of
the paper. Note that if $a$ is treated as a fixed constant, as it
often will be in the sequel, then the second terms of the right-hand
sides of \eqref{lowerbound-tail-nogrowth} and \eqref{lowerbound-tail-growth}
can be ignored.
\begin{prop}
\label{prop:hardconcentration}We have constants $p_{1}<1$ and $C<\infty$
so that the following holds. Let $\mathbb{R}$ be a box with $\AR(\mathbb{R})\in[1/3,3]$,
let $S=\diam_{\Euc}(\mathbb{R})$, and let $K\in[1,\infty)$. Then
we have, with $\eta$ as in \eqref{etadef}, that
\begin{equation}
\mathbf{P}\left(d_{\mathbb{R},\delta,R}(\mathrm{L},\mathrm{R})\ge\e^{K^{7/8}}\Theta_{\mathbb{B}(K^{\eta-1}S),\delta,K^{\eta}R}^{\mathrm{hard}}(p_{1})\right)\le C\e^{-K/C}.\label{eq:hardconcentration-tail}
\end{equation}
Moreover, for every $p>0$ there is a $K_{0}<\infty$ so that if $K\ge K_{0}$,
then for every $\mathbb{R}$ with $\AR(\mathbb{R})\in[1/3,3]$, and
setting $S=\diam_{\Euc}(\mathbb{R})$, we have
\begin{equation}
\mathbf{P}\left(d_{\mathbb{R},\delta,R}(\mathrm{L},\mathrm{R})\ge K^{C}\Theta_{\mathbb{B}(K^{\eta-1}S),\delta,K^{\eta}R}^{\mathrm{hard}}(p_{1})\right)\le p.\label{eq:quantileub}
\end{equation}
 
\end{prop}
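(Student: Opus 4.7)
The proof proceeds by a percolation-type argument combining the RSW theorem (Theorem~\ref{thm:RSW}) with the scaling covariance of LQG (Proposition~\ref{prop:LQGscaling}) and the coarse-field control of Corollary~\ref{cor:maxcoarse}. My plan is to tile $\mathbb{R}$ with sub-boxes at the natural scale $K^{-1}S$, declare sub-boxes \emph{good} if their internal hard crossing distance (measured at a coarse-field-adjusted value of $\delta$) is small, run a dependent percolation argument to connect good sub-boxes across $\mathbb{R}$ from left to right, and sum up the contributions.

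The crucial algebraic identity, which makes the formulation of the proposition natural, is a consequence of Proposition~\ref{prop:LQGscaling} and the choice of $\eta$ in \eqref{etadef}, which is precisely calibrated so that $\eta(\gamma^{2}/2+2)=\gamma\theta_{0}$: applying the scaling with $\alpha=K^{-\eta}$ yields
\[
\Theta^{\mathrm{hard}}_{\mathbb{B}(K^{\eta-1}S),\delta,K^{\eta}R}(p_{1})=\Theta^{\mathrm{hard}}_{\mathbb{B}(K^{-1}S),K^{-\gamma\theta_{0}}\delta,R}(p_{1}).
\]
Thus, working with sub-boxes of the natural scale $K^{-1}S$ and absorbing the worst-case ``coarse field'' factor $\e^{\gamma\theta_{0}\log K}=K^{\gamma\theta_{0}}$ (as controlled by Corollary~\ref{cor:maxcoarse}) is exactly equivalent to targeting the quantile on the right-hand side of \eqref{hardconcentration-tail}.

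I would then tile $\mathbb{R}$ with $J\asymp K^{2}$ sub-boxes $\mathbb{B}_{j}$ of diameter $\asymp K^{-1}S$ (rotated as needed so that each sub-box's hard direction is aligned with the $\mathrm{L}$--$\mathrm{R}$ direction of $\mathbb{R}$), declaring $\mathbb{B}_{j}$ \emph{good} provided: (i) the coarse field satisfies $\max_{z\in\mathbb{B}_{j}^{\circ}}h_{\mathbb{R}^{*}:\mathbb{B}_{j}^{*}}(z)\le\theta_{0}\log K$, and (ii) the hard crossing distance of $\mathbb{B}_{j}$ computed with the internal field $h_{\mathbb{B}_{j}^{*}}$ and $\delta$-parameter $K^{-\gamma\theta_{0}}\delta$ is at most the above quantile, together with compatible easy crossings (guaranteed by Theorem~\ref{thm:RSW}) of connecting rectangles between consecutive sub-boxes so that hard crossings can be chained. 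Corollary~\ref{cor:maxcoarse} makes (i) occur with probability close to 1 uniformly in $j$, and Theorem~\ref{thm:RSW} makes (ii) occur with positive probability to be optimized via $p_{1}$; by Proposition~\ref{prop:fieldonsmallerbox} (equation~\eqref{scaledeltabymax}), on (i) the internal hard crossing distance dominates the actual contribution of $\mathbb{B}_{j}$ to $d_{\mathbb{R},\delta,R}$. A standard dependent-percolation argument (using independence of $\{h_{\mathbb{B}_{j}^{*}}\}$ for separated sub-boxes via Proposition~\ref{prop:independent-disjoint}, and using a block decomposition to handle the long-range coarse-field coupling) then yields, for $p_{1}$ chosen sufficiently small, a left-right crossing path of $O(K)$ good sub-boxes with probability at least $1-C\e^{-K/C}$.

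The contribution from this path is bounded by $CK\cdot\Theta^{\mathrm{hard}}_{\mathbb{B}(K^{\eta-1}S),\delta,K^{\eta}R}(p_{1})$, which is already far smaller than the $\e^{K^{7/8}}$ prefactor stated in \eqref{hardconcentration-tail}; the $\e^{K^{7/8}}$ slack is used to absorb the \emph{bad} sub-boxes on the path. Specifically, applying Corollary~\ref{cor:maxcoarse} with a union bound over the $J\asymp K^{2}$ sub-boxes gives that the event $\bigl\{\max_{j}\max_{z\in\mathbb{B}_{j}^{\circ}}h_{\mathbb{R}^{*}:\mathbb{B}_{j}^{*}}(z)\le K^{7/8}\bigr\}$ holds with probability at least $1-C\e^{-K/C}$, and on this event Proposition~\ref{prop:fieldonsmallerbox} (equation~\eqref{ddi}) shows that the contribution of any single sub-box is at most $\e^{\gamma K^{7/8}}$ times its typical hard crossing distance; since the path length is polynomial in $K$, this yields \eqref{hardconcentration-tail}. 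The second estimate \eqref{quantileub} follows from the same scheme but with $K^{7/8}$ replaced by $C\log K$, producing a polynomial factor $K^{C}$ at the price of only controlling the failure probability by a prescribed $p$ once $K\ge K_{0}(p)$.

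The main obstacle is Step 4 above: reconciling the dependent-percolation estimate, the uniform coarse-field bound, and the exact scaling identity so that the bad-sub-box contribution is strictly subleading. It is precisely at this point that the requirement $\eta<1$ (equivalently $\theta_{0}>2$), built into the definition of $\eta$ in \eqref{etadef}, captures the critical threshold at which the scheme works throughout the sub-critical range $\gamma\in(0,2)$.
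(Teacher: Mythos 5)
Your strategy is essentially the paper's: tile at scale $K^{-1}S$, use the calibration $\eta(\gamma^{2}/2+2)=\gamma\theta_{0}$ so that \propref{LQGscaling} converts the internal crossing at $\delta$-parameter $K^{-\gamma\theta_{0}}\delta$ into the stated quantile at scale $K^{\eta-1}S$, chain hard crossings along a left--right path of grid cells, and control the coarse field via \corref{maxcoarse}. Two points would, however, make the argument fail as written. First, you build the coarse-field condition (i) into the definition of a good sub-box. The harmonic extensions $h_{\mathbb{R}^{*}:\mathbb{B}_{j}^{*}}$ for different $j$ are correlated at all distances, so no block decomposition makes the good events independent or finitely dependent, and the $1-C\e^{-K/C}$ estimate for the existence of a good crossing cannot be extracted from a union bound over dual paths. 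The fix (which is what the paper does) is to drop (i) from the good event: bound $F=\max_{j}\max_{x}h_{\mathbb{R}^{*}:\mathbb{B}_{j}^{*}}(x)$ once and for all, factor $\e^{(\gamma F-\gamma\theta_{0}\log K)^{+}}$ out in front of the entire sum via \eqref{eq:ddi}, and run the duality/union-bound argument only on the internal crossing distances $d_{\mathbb{B}_{j},K^{-\gamma\theta_{0}}\delta,R}(\mathrm{hard})$, which are genuinely independent for separated boxes by \propref{independent-disjoint} (a bounded partition of the grid into independence classes then suffices). Relatedly, $p_{1}$ must be close to $1$, not ``sufficiently small'': the good event is that the crossing distance lies below its $p_{1}$-quantile, so its probability is $p_{1}$ and must exceed the percolation threshold.

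Second, the exponent bookkeeping does not close: truncating the coarse field at $K^{7/8}$ produces a multiplicative factor $\e^{\gamma K^{7/8}}$ per box, and $\mathrm{poly}(K)\,\e^{\gamma K^{7/8}}\le\e^{K^{7/8}}$ fails whenever $\gamma\ge1$. One should instead truncate the \emph{excess} of the coarse field over $\theta_{0}\log K$ at, say, $K^{3/4}$; by \corref{maxcoarse} the probability of exceeding this is $\exp\{-cK^{3/2}/\log K\}\ll\e^{-K/C}$, and $\mathrm{poly}(K)\,\e^{K^{3/4}}\le\e^{K^{7/8}}$ for large $K$. (Also, once the path consists of good boxes there are no ``bad sub-boxes on the path'' to absorb, and \thmref{RSW} is not needed here: the gluing can be arranged entirely with hard crossings of congruent $2{:}1$ rectangles, i.e.\ with circuits around annuli, as in the paper.) With these repairs your proof coincides with the paper's.
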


We will prove \propref{hardconcentration} in \subsecref{upperbounds}.
Like $p_{0}$, the constant $p_{1}$ will remain fixed throughout
the remainder of the paper.

Before establishing some important applications and consequences of
\propref[s]{lowerbound-tail}~and~\ref{prop:hardconcentration},
we will set up some notation that we use to express the ratios of
different quantiles to each other. Then, in \subsecref[s]{quantile-relationships}~and~\ref{subsec:tailsandmoments},
we will use the notation to express bounds on quantiles, moments,
and tails of certain crossing distances, using \thmref{RSW} and \propref[s]{lowerbound-tail}~and~\ref{prop:hardconcentration}.
Later, in \secref{inductiveargument}, we will show that these ratios
are not too large by induction on the scale.
\begin{defn}
\label{def:chidefs}For $U\in(0,\infty)$, define
\[
\chi_{U}=\frac{\Theta_{\mathbb{B}(U)}^{\mathrm{hard}}(p_{1})}{\Theta_{\mathbb{B}(U)}^{\mathrm{easy}}(p_{0})}
\]
and
\[
\overline{\chi}_{U}=\sup_{V\in(0,U]}\chi_{V}.
\]
Further define $\mathcal{S}_{\chi}=\{U>0\;:\;\chi_{U}\le\chi\}$.
\end{defn}

By \thmref{RSW} and \lemref{relatequantiles}, we have a constant
$C<\infty$ so that
\begin{equation}
\chi_{U}\le C\exp\left\{ C\sqrt{\Var(\log d_{\mathbb{B}(U)}(\mathrm{hard}))}\right\} .\label{eq:chiUvarbound}
\end{equation}
Thus, our strategy to bound $\chi_{U}$ will be to inductively bound
$\Var(\log d_{\mathbb{B}(U)}(\mathrm{hard}))$. Also, it is proved
in \cite[(74)]{DZZ18} that there is a constant $C$ so that
\begin{equation}
\overline{\chi}_{U}\le C\e^{(\log(U+1))^{0.95}}\label{eq:chiUaprioribound}
\end{equation}
for all $U$. This will be important for the base case of our induction.

\subsection{Quantile relationships\label{subsec:quantile-relationships}}

In this section, we show how \subsecref[s]{quantile-relationships}~and~\ref{subsec:tailsandmoments}
allow us to prove upper and lower bounds on ratios between quantiles
of crossing distances at different scales.

\subsubsection{Easy crossing quantile ratios}

Here we prove the following proposition establishing at least power-law
growth of easy crossing quantiles.
\begin{prop}
\label{prop:pllb-inductive}There are constants $T_{0}\ge0$ and $c>0$
so that for every $S\ge T_{0}$ and $K\ge1$, we have
\begin{equation}
\Theta_{\mathbb{B}(KS)}^{\mathrm{easy}}(p_{0})\ge cK^{c}\Theta_{\mathbb{B}(S)}^{\mathrm{easy}}(p_{0}).\label{eq:pllb-inductive}
\end{equation}
\end{prop}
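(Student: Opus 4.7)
The plan is to establish a one-step comparison of the form $\Theta^{\mathrm{easy}}_{\mathbb{B}(K_1 S)}(p_0) \ge 3\,\Theta^{\mathrm{easy}}_{\mathbb{B}(S)}(p_0)$ for a suitable fixed $K_1 > 1$ and all $S$ above a threshold $T_0$, and then to iterate. For arbitrary $K\ge 1$ not a power of $K_1$, I will interpolate using the monotonicity of $W \mapsto \Theta^{\mathrm{easy}}_{\mathbb{B}(W)}(p_0)$, which follows from \propref{thetamonotone} and \eqref{delta-comparison} (a uniform dilation of a box is equivalent to shrinking $\delta$, which can only increase distances and hence quantiles).

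To prove the one-step comparison, I will apply \eqref{lowerbound-tail-growth} to $\mathbb{R} = \mathbb{B}(W)$ with $W = K_1 S$, choosing $K_1 = K_\star^{1+\omega}/\sqrt{5}$ so that the sub-box appearing in the conclusion is exactly $\mathbb{B}(S)$; here $K_\star$ denotes the parameter fed into \eqref{lowerbound-tail-growth}. The hypothesis $R\le K_\star^{-1}\diam_{\Euc}(\mathbb{R})$ forces the estimate to be applied with $R_{\mathrm{apply}} = K_\star^{-1}\sqrt{5}\,W$, rather than with the default $R_{\mathrm{default}} = \sqrt{5}\,W$. Taking $a = 1/\sqrt{5}$ is legitimate because every pair $x\in\mathrm{L}, y\in\mathrm{R}$ satisfies $|x-y|\ge W = (1/\sqrt{5})\diam_{\Euc}(\mathbb{R})$, so $d(\mathrm{L},\mathrm{R}) \ge d(\min;1/\sqrt{5})$. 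I will also check that the $R$-parameter in the sub-quantile on the right-hand side, namely $K_\star^{-\omega}R_{\mathrm{apply}} = K_\star^{-1-\omega}\sqrt{5}\,W$, is at most the diameter of the sub-box, so by \eqref{R-comparison} that quantile is bounded below by the default $\Theta^{\mathrm{easy}}_{\mathbb{B}(S)}(p_0)$. This yields
\[
d_{\mathbb{B}(W),\delta,R_{\mathrm{apply}}}(\mathrm{L},\mathrm{R}) \;\ge\; c_\star K_\star\, \Theta^{\mathrm{easy}}_{\mathbb{B}(S)}(p_0)
\]
with probability at least $1 - C(K_\star^{2-\theta^2/2} + \e^{-K_\star/C})$, where $c_\star$ depends only on the constant from \propref{lowerbound-tail}.

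The critical step is to convert this into a lower bound at the default $R$. The multiplicative bound \eqref{RprimeR-multiplicative} would cost a factor of $K_\star$ and exactly cancel the polynomial gain, so I will instead use the additive bound \eqref{RprimeR}: since $\Leb(\mathbb{B}(W)^{(R)}) \lesssim W^2$ uniformly, the additive error $CR\,\Leb(\mathbb{B}(W)^{(R)})/R_{\mathrm{apply}}^3$ is a constant of order $K_\star^3$, independent of $W$. Fixing $\theta$ first (say $\theta = 3$), then choosing $K_\star$ large enough that $c_\star K_\star \ge 4$ and that the failure probability drops below $p_0$, the quantile interpretation yields
\[
\Theta^{\mathrm{easy}}_{\mathbb{B}(K_1 S)}(p_0) \;\ge\; 4\, \Theta^{\mathrm{easy}}_{\mathbb{B}(S)}(p_0) - C' K_\star^3.
\]
By \lemref{limtoinfty}, I can choose $T_0$ so that $\Theta^{\mathrm{easy}}_{\mathbb{B}(S)}(p_0) \ge C'K_\star^3$ whenever $S\ge T_0$; the additive error is then absorbed and yields the desired one-step inequality. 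Iterating $n$ times produces $\Theta^{\mathrm{easy}}_{\mathbb{B}(K_1^n S)}(p_0) \ge 3^n \Theta^{\mathrm{easy}}_{\mathbb{B}(S)}(p_0)$, which is polynomial growth with exponent $c = \log 3 / \log K_1 > 0$, and monotonicity fills in the intermediate scales.

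The main obstacle is precisely the tension just highlighted: the polynomial factor $K_\star/C$ extracted from \eqref{lowerbound-tail-growth} must survive the passage from $R_{\mathrm{apply}}$ back to the default $R$. What makes this feasible is that the additive form of \lemref{RprimeR} pays only a $W$-independent error (at the price of a $K_\star^3$ dependence on the chosen parameter) rather than a multiplicative factor growing with $R_{\mathrm{default}}/R_{\mathrm{apply}}$. It is exactly the need to absorb this $K_\star^3$ error into a quantile that is already moderately large which introduces the threshold $T_0$ into the statement.
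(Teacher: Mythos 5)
Your proposal is correct and follows essentially the same route as the paper: the paper's \lemref{pllb} applies \eqref{lowerbound-tail-growth} with $R=K^{-1}\diam_{\Euc}(\mathbb{R})$ (there with $\theta$ chosen so that $\omega=1$, whence $M=K^2$), converts back to the default $R$ via the additive bound \eqref{RprimeR} at a $K^3$ cost, and the proposition then iterates the resulting one-step inequality, using \lemref{limtoinfty} to pick $T_0$ and \propref{thetamonotone} to interpolate. The only cosmetic difference is that you absorb the additive error at each step rather than carrying it through the induction, which is equally valid.
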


To prove \propref{pllb-inductive}, we will require the following
lemma.
\begin{lem}
\label{lem:pllb}We have constants $c>0$ and $C<\infty$ so that
for any $M\ge1$ and any $S>0$, we have that
\begin{equation}
\Theta_{\mathbb{B}(MS)}^{\mathrm{easy}}(p_{0})\ge cM^{1/2}\Theta_{\mathbb{B}(S)}^{\mathrm{easy}}(p_{0})-CM^{3/2}.\label{eq:pllb}
\end{equation}
\end{lem}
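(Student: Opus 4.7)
The plan is to partition $\mathbb{B}(MS)$ into $K := \lfloor c_0 M^{1/2}\rfloor$ disjoint vertical strips $\mathbb{S}_1,\dots,\mathbb{S}_K$, each of width $S$ and height $2MS$, with consecutive strips separated horizontally by $R' := c_1 M^{1/2}S$, the constants $c_0,c_1$ being chosen so the strips fit inside $\mathbb{B}(MS)$. Any easy (left-to-right) geodesic of $\mathbb{B}(MS)$ must traverse each strip.

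First I would apply \lemref{RprimeR} to reduce the maximum ball radius from the default $R = \diam_{\Euc}(\mathbb{B}(MS)) \asymp MS$ to $R'$; the resulting additive error
\[
\frac{CR\,\Leb(\mathbb{B}(MS)^{(R)})}{(R')^3} = O(M^{3/2})
\]
accounts precisely for the $-CM^{3/2}$ term in the statement. The choice $K \asymp M^{1/2}$ balances two constraints: fewer strips give a weaker main term, while a larger $K$ would force smaller strip separation and thus larger error; the balance $K^3 \asymp (MS/R')^3$ is optimized at $R' \asymp M^{1/2}S$, and $K=M^{1/2}$ is the maximum number of strips compatible with this spacing.

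Next I would apply \propref{crossmanysmalls} (combined with \propref{crossingbigimpliescrossingsmall}) to a $(\mathbb{B}(MS),1,R')$-geodesic realizing the crossing distance. Writing $w_i \in \mathrm{L}_{\mathbb{S}_i}$, $z_i \in \mathrm{R}_{\mathbb{S}_i}$ for the first-entry and last-exit points of the geodesic through strip $\mathbb{S}_i$ (the separation $R'$ between strips being exactly what makes \propref{crossmanysmalls} applicable at radius parameter $R'$), this yields
\[
d_{\mathbb{B}(MS),1,R'}(\mathrm{L},\mathrm{R}) \ge \sum_{i=1}^K d_{\mathbb{S}_i,1,R'}(\mathrm{L}_{\mathbb{S}_i},\mathrm{R}_{\mathbb{S}_i}).
\]
It then remains to show that with probability at least $1 - p_0/K$ each summand is at least $c\,\Theta_{\mathbb{B}(S)}^{\mathrm{easy}}(p_0)$; a union bound combined with the additive error estimate then produces the claimed main term $cM^{1/2}\Theta_{\mathbb{B}(S)}^{\mathrm{easy}}(p_0)$.

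The main obstacle is this last quantile step. Each strip $\mathbb{S}_i$ contains $M$ copies of a $\mathbb{B}(S)$-shaped sub-box stacked vertically, and any left-to-right crossing of $\mathbb{S}_i$ must either include an easy crossing of one of those sub-boxes or traverse several adjacent sub-boxes via hard (top-to-bottom) crossings. Converting this topological observation into a quantile bound at the correct probability level is delicate because the minimum over $M$ sub-box crossings is stochastically much smaller than a single crossing; handling it will require \thmref{RSW} to compare easy and hard crossings, translation invariance of the GFF to move between sub-box locations, and the Gibbs--Markov property \propref{markovfieldproperty} together with the coarse-field estimate \propref{maxcoarse} to decouple the contributions from different strips by conditioning on the harmonic interpolation on strip complements.
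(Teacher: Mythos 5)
Your outer architecture (restrict the maximal ball radius via \lemref{RprimeR} at cost $O(M^{3/2})$, then lower-bound the crossing by a sum over well-separated obstacles via \propref{crossmanysmalls}) is sound and is in fact exactly the skeleton of the paper's \propref{lowerbound-tail}, which the paper's own proof simply invokes. The $M^{3/2}$ accounting and the choice $R'\asymp M^{1/2}S$ are correct. But the final step is not merely "delicate" --- as stated it is false, and the tools you propose for it do not repair it.

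The claim that each strip crossing satisfies $d_{\mathbb{S}_i,1,R'}(\mathrm{L}_{\mathbb{S}_i},\mathrm{R}_{\mathbb{S}_i})\ge c\,\Theta_{\mathbb{B}(S)}^{\mathrm{easy}}(p_0)$ with probability $1-p_0/K$ cannot hold for large $M$. The left--right crossing of an $S\times 2MS$ strip is bounded above by the minimum over $\sim M$ vertically stacked $\mathbb{B}(S)$-shaped sub-boxes of their easy crossing distances; conditionally on the coarse field these are independent, and each one falls below $\Theta_{\mathbb{B}(S)}^{\mathrm{easy}}(p_0)$ with probability bounded below by a constant multiple of $p_0$ (e.g.\ on the positive-probability event that the coarse field on that sub-box is nonpositive). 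Hence the strip crossing is \emph{below} your benchmark with probability at least $1-(1-cp_0)^{cM}\to 1$. Relating the correct (much lower) quantile $\Theta^{\mathrm{easy}}(q)$ with $q\sim 1/M$ back to $\Theta^{\mathrm{easy}}(p_0)$ would require the concentration of \thmref{variancebound}, which is proved later by an induction that uses this very lemma --- so that route is circular. \thmref{RSW} and Gibbs--Markov decoupling address neither point: the failure is that you are targeting the wrong quantile for an object that is a minimum of $M$ trials.

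The paper's \propref{lowerbound-tail} resolves exactly this tension, and its proof shows what your argument is missing: (i) the obstacles are $\sim K$ boxes of side $K^{-1}$ times the big box (order-one obstacles per unit of horizontal progress, not $M$ stacked trials per obstacle); (ii) one does not demand that \emph{every} obstacle be expensive, only that at least half along any candidate geodesic be, and the Bernoulli bound \lemref{hugedeviations} ($\le(8p_0)^{N/2}$) beats the $C^N$ entropy of possible obstacle sequences; and (iii) the benchmark is the $p_0$-quantile at the further-reduced scale $K^{-1-\omega}S$, the discount $\omega$ absorbing the worst-case coarse field via \propref{LQGscaling}. With $\theta=(4+\gamma^2)/(2\gamma)$ one gets $\omega=1$, and \eqref{eq:lowerbound-tail-growth} applied with $M=K^2$ together with \lemref{RprimeR} gives \eqref{eq:pllb} in three lines. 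I recommend you replace your Step 3 onward by a direct appeal to that proposition.
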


\begin{proof}
Choose $\theta=\frac{4+\gamma^{2}}{2\gamma}>2$. Note that with this
choice of $\theta$ we have $\omega=1$ in the statement of \propref{lowerbound-tail}.
If $K$ is large enough so that $CK^{2-\theta^{2}/2}\le p_{0}$, then
we have that
\begin{equation}
\Theta_{\mathbb{B}(K^{2}S),1,\sqrt{5}KS}^{\mathrm{easy}}(p_{0})\ge\Theta_{\mathbb{B}(K^{2}S),1,\sqrt{5}KS}^{\mathrm{easy}}(CK^{2-\theta^{2}/2})\ge\frac{K}{C}\Theta_{\mathbb{B}(S),1,\sqrt{5}S}^{\mathrm{easy}}(p_{0})=\frac{K}{C}\Theta_{\mathbb{B}(S)}^{\mathrm{easy}}(p_{0})\label{eq:pllb-main}
\end{equation}
by \eqref{lowerbound-tail-growth} of \propref{lowerbound-tail}.
On the other hand, by \eqref{RprimeR} of \lemref{RprimeR}, we have
a constant $C$ so that
\[
\Theta_{\mathbb{B}(K^{2}S),1,\sqrt{5}KS}^{\mathrm{easy}}(p_{0})\le\Theta_{\mathbb{B}(K^{2}S)}^{\mathrm{easy}}(p_{0})+CK^{3}.
\]
Combining this with \eqref{pllb-main}, we have
\[
\Theta_{\mathbb{B}(K^{2}S)}^{\mathrm{easy}}(p_{0})+CK^{3}\ge\frac{K}{C}\Theta_{\mathbb{B}(S)}^{\mathrm{easy}}(p_{0}),
\]
which proves \eqref{pllb} when we take $M=K^{2}$.
\end{proof}
Now we can prove \propref{pllb-inductive} by induction.
\begin{proof}[Proof of \propref{pllb-inductive}.]
By \lemref{pllb}, we can choose $M>1$ so that $\Theta_{\mathbb{B}(MS)}^{\mathrm{easy}}(p_{0})\ge2\Theta_{\mathbb{B}(MS)}^{\mathrm{easy}}(p_{0})-C$
for some constant $C<\infty$ (distinct, of course, from those in
\eqref{pllb}). By induction, we have 
\begin{equation}
\Theta_{\mathbb{B}(M^{k}S)}^{\mathrm{easy}}(p_{0})\ge2^{k}\Theta_{\mathbb{B}(S)}^{\mathrm{easy}}(p_{0})-\sum_{j=0}^{k-1}2^{j}C\ge2^{k}\left(\Theta_{\mathbb{B}(S)}^{\mathrm{easy}}(p_{0})-C\right).\label{eq:powersofMinductiveTheta}
\end{equation}
Now let $k=\lfloor\log_{M}K\rfloor$. Then we have, by \eqref{thetamonotone}
and \eqref{powersofMinductiveTheta}, that
\[
\Theta_{\mathbb{B}(KS)}^{\mathrm{easy}}(p_{0})\ge\Theta_{\mathbb{B}(M^{k}S)}^{\mathrm{easy}}(p_{0})\ge2^{k}\left(\Theta_{\mathbb{B}(S)}^{\mathrm{easy}}(p_{0})-C\right)\ge\frac{1}{2}K^{\log_{M}2}\left(\Theta_{\mathbb{B}(S)}^{\mathrm{easy}}(p_{0})-C\right).
\]
Now by \lemref{limtoinfty}, there is a $T_{0}$ so that if $S\ge T_{0}$,
then $\Theta_{\mathbb{B}(S)}^{\mathrm{easy}}(p_{0})\ge2C$. Therefore,
\[
\Theta_{\mathbb{B}(KS)}^{\mathrm{easy}}(p_{0})\ge\frac{1}{2}K^{\log_{M}2}\Theta_{\mathbb{B}(S)}^{\mathrm{easy}}(p_{0}),
\]
which is \eqref{pllb-inductive} with the appropriate choice of $c$.
\end{proof}

\subsubsection{Hard crossing quantile ratios}

Here we show that the hard-crossing quantiles can grow at most like
a power of the scale. We will in fact use only \eqref{hardtoeasy}
of the following proposition in the sequel, but we include \eqref{hardtohard}
as well for completeness.
\begin{prop}
\label{prop:hardquantile}For every $p\in(0,1)$ there is a $K_{1}<\infty$
and a constant $C<\infty$ so that if $K\ge K_{1}$, then we have
\begin{equation}
\Theta_{\mathbb{B}(KS)}^{\mathrm{hard}}(p)\le K^{C}\Theta_{\mathbb{B}(S)}^{\mathrm{hard}}(p_{1})\label{eq:hardtohard}
\end{equation}
and
\begin{equation}
\Theta_{\mathbb{B}(KS)}^{\mathrm{easy}}(p)\le K^{C}\Theta_{\mathbb{B}(S)}^{\mathrm{hard}}(p_{1}).\label{eq:hardtoeasy}
\end{equation}
\end{prop}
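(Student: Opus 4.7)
The plan is to iterate the one-step quantile bound \eqref{quantileub} of \propref{hardconcentration} a logarithmic number of times to go from the scale $KS$ down to the scale $S$.

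For \eqref{hardtohard}, I would first note that by rotational invariance of the Liouville quantum gravity measure (\propref{conformalcovariance} applied to the $90^\circ$ rotation), $\Theta^{\mathrm{hard}}_{\mathbb{B}(KS)}(p)$ equals the L--R crossing quantile of the rotated box $[-KS,KS] \times [-KS/2, KS/2]$, which has aspect ratio $2 \in [1/3,3]$. Choose $K_1$ large enough that it simultaneously exceeds the threshold $K_0$ from \propref{hardconcentration} for both probabilities $1-p$ and $1-p_1$, and also satisfies $K_1^{1-\eta} > \sqrt{5}$ (here we crucially use $\eta<1$; see \eqref{etadef}). Let $\tau = K_1^{1-\eta}/\sqrt{5} > 1$ be the per-step shrinkage factor. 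Applying \eqref{quantileub} with $K = K_1$ and probability $1-p$ to this rotated box yields
\[
\Theta^{\mathrm{hard}}_{\mathbb{B}(KS)}(p) \le K_1^C \,\Theta^{\mathrm{hard}}_{\mathbb{B}(KS/\tau)}(p_1).
\]
Subsequent iterations require only probability $1-p_1$ on the left side of \eqref{quantileub}, which matches the $p_1$ appearing on the right side of the previous step. So each further iteration multiplies the quantile by at most $K_1^C$ while dividing the scale by $\tau$. After $n = \lceil \log K / \log \tau \rceil$ total iterations, the remaining scale is at most $S$, and the overall factor is at most $K_1^{C(n+1)} \le K^{C \log K_1/\log \tau + C}$, which is $K^{C'}$ for a constant $C'$ depending only on $p$.

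To close the argument, I would invoke the scale monotonicity of the hard quantile (which follows from \propref{LQGscaling} and \eqref{delta-comparison} in exactly the same way as \propref{thetamonotone}, just rotating the roles of the two sides) to bound $\Theta^{\mathrm{hard}}_{\mathbb{B}(KS/\tau^{n+1})}(p_1) \le \Theta^{\mathrm{hard}}_{\mathbb{B}(S)}(p_1)$. For \eqref{hardtoeasy}, the easy direction of $\mathbb{B}(KS)$ is already L--R, so \eqref{quantileub} applies without rotation; the same iteration scheme then gives the bound with $\Theta^{\mathrm{hard}}_{\mathbb{B}(S)}(p_1)$ on the right.

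The only real technical nuisance is tracking the auxiliary parameter $R$, since each application of \eqref{quantileub} replaces $R$ by $K_1^{\eta} R$ while the default convention is $R = \diam_{\Euc}$ of the current box. This mismatch can be absorbed at each step using \lemref{RprimeR}, at the cost of an additional multiplicative constant per iteration, which simply inflates $C'$ by a bounded amount. I do not anticipate any genuine obstacle; the statement is essentially a clean iteration of the single-scale estimate, with $\eta<1$ providing the geometric decay that makes only $O(\log K)$ steps necessary.
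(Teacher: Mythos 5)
Your proposal is correct and rests on the same key input as the paper, namely \eqref{quantileub} of \propref{hardconcentration}; the paper simply collapses your $O(\log K)$-step iteration into a single application by taking the parameter of \eqref{quantileub} to be $K^{1/(1-\eta)}$, so that one step already brings the scale from $KS$ down to $S$ (again using $\eta<1$). Your $R$-bookkeeping is even easier than you suggest: each application multiplies $R$ by $K_{1}^{\eta}$ while the box diameter shrinks only by $K_{1}^{1-\eta}/\sqrt{5}$, so the $R$-parameter always exceeds the current diameter and \eqref{nopointbeinghuge} removes it at no cost, making \lemref{RprimeR} unnecessary.
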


\begin{proof}
This follows from \eqref{quantileub} of \propref{hardconcentration},
taking $K=K^{\eta-1}$.
\end{proof}

\subsection{Tail and moment estimates\label{subsec:tailsandmoments}}

In this section, we use the results of the percolation arguments (\propref[s]{lowerbound-tail}~and~\ref{prop:hardconcentration})
to prove moment and tail estimates on crossing probabilities. Recall
the definition of $\chi_{U}$ and $\overline{\chi}_{U}$ from \defref{chidefs}.

\subsubsection{Moment estimates for easy crossings}
\begin{prop}
\label{prop:minmomentbound}For any $A,Q>0$ there exists a constant
$C=C(A,Q)<\infty$ so that if $a\in(0,1)$, $\mathbb{R}$ is a rectangle
with $\AR(\mathbb{R})\in[1/3,3]$, and $S=\diam_{\Euc}(\mathbb{R})$,
then we have
\begin{equation}
\mathbf{E}[d_{\mathbb{R}}(\min;a)^{-A}]\le C(1+a^{-C})\overline{\chi}_{Q^{-1}S}^{A}\Theta_{\mathbb{B}(S)}^{\mathrm{easy}}(p_{0})^{-A}.\label{eq:minmomentbound}
\end{equation}
\end{prop}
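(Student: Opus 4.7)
The plan is to combine the layer-cake formula with the tail estimate of \propref{lowerbound-tail}, using a lower bound on the small-scale easy quantile derived from \propref{hardquantile} and \defref{chidefs}. Write $Y=d_{\mathbb{R}}(\min;a)$, $\Theta_{*}=\Theta_{\mathbb{B}(S)}^{\mathrm{easy}}(p_{0})$, and $\Psi(K)=\Theta_{\mathbb{B}(K^{-1-\omega}S)}^{\mathrm{easy}}(p_{0})$, with $\theta>2$ (and hence $\omega=\tfrac{2\gamma\theta}{4+\gamma^{2}}$) to be fixed large enough depending on $A$. The key preparatory step is to show that $\Psi(K)$ does not decay too rapidly in $K$: for $K\geq\max(K_{1},Q^{1/(1+\omega)})$ with $K_{1}$ from \propref{hardquantile}, applying \eqref{eq:hardtoeasy} at scales $K^{-1-\omega}S\to S$ gives $\Theta_{*}\leq K^{C(1+\omega)}\Theta_{\mathbb{B}(K^{-1-\omega}S)}^{\mathrm{hard}}(p_{1})$; combining with the identity $\Theta^{\mathrm{easy}}(p_{0})=\chi^{-1}\Theta^{\mathrm{hard}}(p_{1})$ from \defref{chidefs} together with the bound $\chi_{K^{-1-\omega}S}\leq\overline{\chi}_{Q^{-1}S}$ (valid since $K^{-1-\omega}S\leq Q^{-1}S$) yields
\[
\Psi(K)\geq c\,K^{-C(1+\omega)}\,\overline{\chi}_{Q^{-1}S}^{-1}\,\Theta_{*}.
\]

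Next, choose $K_{0}=K_{0}(A,Q,a)$ large enough that $K_{0}\geq\max(K_{1},Q^{1/(1+\omega)})$ and that both $CK_{0}^{2-\theta^{2}/2}$ and $Ce^{-aK_{0}/C}$ are at most $1/4$; this is achieved by $K_{0}=C(A,Q)(1+a^{-1})$. Set $K_{j}=K_{0}\cdot 2^{j/(C(1+\omega))}$ for $j\geq 0$. Using $Y\geq 1$ trivially, split
\[
\mathbf{E}[Y^{-A}]\leq\Psi(K_{0})^{-A}+\sum_{j=0}^{\infty}\Psi(K_{j+1})^{-A}\,\mathbf{P}(Y\leq\Psi(K_{j})),
\]
and apply \propref{lowerbound-tail} to each probability. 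By the preparatory lower bound, $\Psi(K_{j+1})^{-A}\leq c^{-A}\,2^{(j+1)A}\,K_{0}^{CA(1+\omega)}\,\overline{\chi}_{Q^{-1}S}^{A}\,\Theta_{*}^{-A}$.

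Choosing $\theta$ large enough (depending on $A$) so that $A+(2-\theta^{2}/2)/(C(1+\omega))<-1$, the power-law part $\sum_j 2^{(j+1)A}K_{j}^{2-\theta^{2}/2}$ forms a convergent geometric series dominated by its $j=0$ term, while the exponential part $\sum_j 2^{(j+1)A}e^{-aK_{j}/C}$ converges (since $K_j$ grows exponentially in $j$) and is likewise dominated by its first term. By the choice of $K_{0}$, both sums are bounded by a constant multiple of $\Psi(K_{0})^{-A}$. Collecting everything and using $K_{0}^{CA(1+\omega)}\leq C(A,Q)(1+a^{-C})$ yields \eqref{eq:minmomentbound}.

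The main obstacle is the preparatory lower bound on $\Psi(K)$: the tail bound of \propref{lowerbound-tail} controls $\mathbf{P}(Y\leq\Psi(K))$ in terms of the small-scale easy quantile, but the target bound is in terms of the large-scale easy quantile $\Theta_{*}$, so one must pass through the small-scale hard quantile via \propref{hardquantile} and then convert back to easy using the $\chi$ factor, absorbing its supremum into $\overline{\chi}_{Q^{-1}S}$. The remaining bookkeeping—choosing $\theta$ depending on $A$ and tracking the $a^{-C}$ dependence through $K_{0}$—is routine once this bridge is in place.
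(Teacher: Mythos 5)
Your argument is correct and follows essentially the same route as the paper's proof: a layer-cake/dyadic decomposition of $\mathbf{E}[Y^{-A}]$, the tail bound of \propref{lowerbound-tail}, and the same bridge from the small-scale easy quantile through the small-scale hard quantile (via $\chi$, absorbed into $\overline{\chi}_{Q^{-1}S}$) to $\Theta_{\mathbb{B}(S)}^{\mathrm{easy}}(p_{0})$ using \eqref{eq:hardtoeasy}, with $\theta$ chosen large depending on $A$ and the $a^{-C}$ factor coming from the exponential term. The only difference from the paper is the discrete sum over $K_{j}$ in place of a continuous integral in $u$, which is purely presentational.
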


\begin{proof}
We first recall the simple formula
\[
\mathbf{E}[d_{\mathbb{R}}(\min;a)^{-A}]=\int_{0}^{\infty}\mathbf{P}[d_{\mathbb{R}}(\min;a)\le u^{-1/A}]\,\dif u.
\]
We have by \eqref{lowerbound-tail-nogrowth} of \propref{lowerbound-tail}
that, if we define 
\begin{equation}
\omega(\theta)=\frac{2\gamma\theta}{4+\gamma^{2}}\label{eq:omegathetadef}
\end{equation}
as in \eqref{ouromega}, then
\begin{equation}
\mathbf{P}\left(d_{\mathbb{R}}(\min;a)\le\Theta_{\mathbb{B}(K^{-1-\omega(\theta)}S),1,K^{-\omega(\theta)}S}^{\mathrm{easy}}(p_{0})\right)\le C(K^{2-\theta^{2}/2}+\e^{-aK/C}).\label{eq:applylb}
\end{equation}
But on the other hand, we have, as long as $K^{1+\omega(\theta)}\ge K_{1}$
from \propref{hardquantile},
\begin{align*}
\Theta_{\mathbb{B}(K^{-1-\omega(\theta)}S),1,K^{-\omega(\theta)}\sqrt{5}S}^{\mathrm{easy}}(p_{0})=\Theta_{\mathbb{B}(K^{-1-\omega(\theta)}S)}^{\mathrm{easy}}(p_{0}) & \ge\chi_{K^{-1-\omega(\theta)}S}^{-1}\Theta_{\mathbb{B}(K^{-1-\omega(\theta)}S)}^{\mathrm{hard}}(p_{1})\\
 & \ge CK^{-C(1+\omega(\theta))}\chi_{K^{-1-\omega(\theta)}S}^{-1}\Theta_{\mathbb{B}(S)}^{\mathrm{easy}}(p_{1}),
\end{align*}
where the equality is by \eqref{nopointbeinghuge}, the first inequality
is by the definition of $\chi_{K^{-1-\omega(\theta)}S}$, and the
second inequality is by \eqref{hardtoeasy} of \propref{hardquantile}.
Applying this to \eqref{applylb} yields
\[
\mathbf{P}\left(d_{\mathbb{R}}(\min;a)\le K^{-C(1+\omega(\theta))}\chi_{K^{-1-\omega(\theta)}S}^{-1}\Theta_{\mathbb{B}(S)}^{\mathrm{hard}}(p_{1})\right)\le C(K^{2-\theta^{2}/2}+\e^{-aK/C}).
\]
This implies that
\[
\mathbf{P}\left(\left(\frac{d_{\mathbb{R}}(\min;a)\chi_{K^{-1-\omega(\theta)}S}}{\Theta_{\mathbb{B}(S)}^{\mathrm{easy}}(p_{1})}\right)^{-A}\ge K^{AC(1+\omega(\theta))}\right)\le C(K^{2-\theta^{2}/2}+\e^{-aK/C}).
\]
Putting $u=K^{AC(1+\omega(\theta))}$ yields, for each $Q<\infty$,
a $u_{0}<\infty$ so that if $u\ge u_{0}$ then
\[
\mathbf{P}\left(\left(\frac{d_{\mathbb{R}}(\min;a)\overline{\chi}_{Q^{-1}S}}{\Theta_{\mathbb{B}(S)}^{\mathrm{easy}}(p_{1})}\right)^{-A}\ge u\right)\le C\left(u^{\frac{2-\theta^{2}/2}{AC(1+\omega(\theta))}}+\exp\left\{ -\frac{a}{C}u^{\frac{1}{AC(1+\omega(\theta))}}\right\} \right),
\]
and thus
\[
\mathbf{E}\left[\left(\frac{d_{\mathbb{R}}(\min;a)}{\Theta_{\mathbb{B}(S)}^{\mathrm{easy}}(p_{1})}\right)^{-A}\right]\le C\overline{\chi}_{Q^{-1}S}^{A}\left(u_{0}+\int_{u_{0}}^{\infty}1\vee u^{\frac{2-\theta^{2}/2}{AC(1+\omega(\theta))}}\,\dif u+\int_{0}^{\infty}\exp\left\{ -\frac{a}{C}u^{\frac{1}{AC(1+\omega(\theta))}}\right\} \,\dif u\right).
\]
Now $\theta$ can be chosen large enough so that $\frac{2-\theta^{2}/2}{AC(1+\omega(\theta))}<1$
(recalling the definition \eqref{omegathetadef} of $\omega(\theta)$)
and hence the first integral of the last expression is finite. On
the other hand, by a change of variables, the second integral is a
constant times $a^{-AC(1+\omega(\theta))}$. This implies \eqref{minmomentbound}.
\end{proof}

\subsubsection{Tail and moment estimates for hard crossings}

\begin{figure}
\centering{}\hfill{}\subfloat[\label{fig:annularfromhard}Creating an annular crossing from hard
crossings of rectangles.]{\begin{tikzpicture}[x=0.5in,y=0.5in]
\draw[thin] (0,0) rectangle (2,1);
\draw[thick, red,style={decorate,decoration={snake,amplitude=2,segment length=50}}] (0,0.4) -- (2,0.6);
\draw[thin] (1,0) rectangle (3,1);
\draw[thick, red,style={decorate,decoration={snake,amplitude=2,segment length=50}}] (1,0.2) -- (3,0.2);
\draw[thin] (1,-1) rectangle (2,1);
\draw[thick, red,style={decorate,decoration={snake,amplitude=2,segment length=50}}] (1.3,-1) -- (1.2,1);
\draw[thin] (2,0) rectangle (3,2);
\draw[thick, red,style={decorate,decoration={snake,amplitude=2,segment length=50}}] (2.8,0) -- (2.7,2);
\draw[thin] (2,1) rectangle (4,2);
\draw[thick, red,style={decorate,decoration={snake,amplitude=2,segment length=50}}] (2.6,1) -- (2.2,3);
\draw[thin] (2,1) rectangle (3,3);
\draw[thick, red,style={decorate,decoration={snake,amplitude=2,segment length=50}}] (2,1.4) -- (4,1.9);
\draw[thin] (1,2) rectangle (3,3);
\draw[thick, red,style={decorate,decoration={snake,amplitude=2,segment length=50}}] (1,2.5) -- (3,2.8);
\draw[thin] (0,2) rectangle (2,3);
\draw[thick, red,style={decorate,decoration={snake,amplitude=2,segment length=50}}] (0,2.2) -- (2,2.3);
\draw[thin] (-1,1) rectangle (1,2);
\draw[thick, red,style={decorate,decoration={snake,amplitude=2,segment length=50}}] (1.8,2) -- (1.7,4);
\draw[thin] (1,2) rectangle (2,4);
\draw[thick, red,style={decorate,decoration={snake,amplitude=2,segment length=50}}] (0.4,1) -- (0.5,3);
\draw[thin] (0,0) rectangle (1,2);
\draw[thick, red,style={decorate,decoration={snake,amplitude=2,segment length=50}}] (-1,1.3) -- (1,1.4);
\draw[thick, red,style={decorate,decoration={snake,amplitude=2,segment length=50}}] (0.7,0) -- (0.8,2);
\draw[ultra thick] (1,1) rectangle (2,2);
\draw[ultra thick] (0,0) rectangle (3,3);
\end{tikzpicture}

}\hfill{}\subfloat[\label{fig:annulitocrossing}Assembling annuli into a crossing.]{\begin{centering}
\tiny
\begin{tikzpicture}[x=0.5in,y=0.5in]
\draw[step=1,gray,thin] (0,0) grid (5,4);
\draw[thick, red,style={decorate,decoration={snake,amplitude=1}}] (0.8,0.8) -- (2.2,0.8) -- (2.2,2.2) -- (0.8,2.2) -- cycle;
\draw[thick, red,style={decorate,decoration={snake,amplitude=1}}] (1.8,1.8) -- (3.2,1.8) -- (3.2,3.2) -- (1.8,3.2) -- cycle;
\draw[thick, red,style={decorate,decoration={snake,amplitude=1}}] (0,3.5) -- (1.2,3.5) -- (1.2,1.5) -- (0,1.5);
\draw[thick, red,style={decorate,decoration={snake,amplitude=1}}] (2.6,1.6) -- (4.2,1.6) -- (4.2,3.6) -- (2.6,3.6) -- (2.6,1.6);
\draw[thick, red,style={decorate,decoration={snake,amplitude=1}}] (5,0.6) -- (3.7,0.6) -- (3.7,2.2) -- (5,2.2);
\end{tikzpicture}
\par\end{centering}
}\hfill{}\caption{}
\end{figure}

In this section we establish estimates for the tails of hard-crossing
distances and for moments of crossings of annuli. We state our bound
for hard crossings in terms of a tail bound rather than in terms of
moments because in the proof of \propref{chaining} below, we will
need an explicit superpolynomial concentration bound for hard-crossing
distances.
\begin{prop}
\label{prop:concentration-boost-hard}There is a constant $C<\infty$
so that for any $S,\delta,R,\theta>0$ we have
\[
\mathbf{P}\left(d_{\mathbb{B}(S),\delta,R}(\mathrm{hard})\ge(1+\theta)\Theta_{\mathbb{B}(S),\delta,R}^{\mathrm{hard}}(p_{1})\right)\le C\e^{-(\log(1+\theta))^{8/7}/C}.
\]
\end{prop}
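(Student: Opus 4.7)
The plan is to apply Proposition~\ref{prop:hardconcentration} with $K$ chosen so that $e^{K^{7/8}}$ matches the multiplicative threshold $1+\theta$, and then to convert the smaller-scale hard-crossing quantile appearing on the right-hand side of that estimate into the same-scale hard-crossing quantile appearing in the present claim. The key point is that the scaling covariance of the LQG measure (Proposition~\ref{prop:LQGscaling}) trades a spatial rescaling by $\alpha$ for a rescaling of $\delta$ by $\alpha^{2+\gamma^2/2}$; combined with $\delta$- and $R$-monotonicity, this is exactly what is needed.

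First I would use rotational invariance of the GFF (noting that $\mathbb{B}(S,2S)^*$ and $\mathbb{B}(2S,S)^*$ are related by a $90^\circ$ rotation) together with the rotation-invariance of the Liouville graph distance to reduce a hard-crossing of $\mathbb{B}(S)=\mathbb{B}(S,2S)$ to a left--right crossing of the rotated box $\mathbb{B}(2S,S)$. Applying Proposition~\ref{prop:hardconcentration} to $\mathbb{R}=\mathbb{B}(2S,S)$ (which has Euclidean diameter $\sqrt{5}\,S$) then yields
\[
\mathbf{P}\!\left(d_{\mathbb{B}(S),\delta,R}(\mathrm{hard}) \ge e^{K^{7/8}}\,\Theta^{\mathrm{hard}}_{\mathbb{B}(K^{\eta-1}\sqrt{5}\,S),\delta,K^\eta R}(p_1)\right) \le C e^{-K/C}
\]
for every $K\ge 1$.

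Second, for $K$ exceeding a fixed threshold $K_0$ chosen so that $\alpha := K^{\eta-1}\sqrt{5} \le 1$ (possible since $\eta<1$) and $K\ge\sqrt{5}$, I would establish the comparison
\[
\Theta^{\mathrm{hard}}_{\mathbb{B}(K^{\eta-1}\sqrt{5}\,S),\delta,K^\eta R}(p_1) \le \Theta^{\mathrm{hard}}_{\mathbb{B}(S),\delta,R}(p_1)
\]
by chaining three elementary steps: Proposition~\ref{prop:LQGscaling} with scaling factor $\alpha$ gives the identity $\Theta^{\mathrm{hard}}_{\mathbb{B}(\alpha S),\alpha^{2+\gamma^2/2}\delta,\alpha R}(p_1)=\Theta^{\mathrm{hard}}_{\mathbb{B}(S),\delta,R}(p_1)$; since $\alpha^{2+\gamma^2/2}\delta \le \delta$, $\delta$-monotonicity \eqref{eq:delta-comparison} pushes the quantile at $\delta$ below the quantile at $\alpha^{2+\gamma^2/2}\delta$; and since $K^\eta R \ge \alpha R$, $R$-monotonicity \eqref{eq:R-comparison} pushes the quantile at $K^\eta R$ below the quantile at $\alpha R$.

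Substituting this comparison into the estimate from the first step and taking $K=(\log(1+\theta))^{8/7}$ (so that $e^{K^{7/8}}=1+\theta$) yields the claimed bound for all $\theta$ with $K\ge K_0$. For smaller $\theta$, the quantity $(\log(1+\theta))^{8/7}$ is bounded, so the right-hand side of the target inequality is bounded below by a positive constant and the claim holds trivially after enlarging $C$. I expect the only real challenge to be careful bookkeeping of the directions of the scaling and monotonicity inequalities; each individual step is elementary once one recognizes that the scaling identity and $\delta$/$R$-monotonicity all point the right way precisely because $\eta<1$.
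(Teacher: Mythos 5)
Your proposal is correct and follows essentially the same route as the paper: the paper's proof is precisely an application of \eqref{eq:hardconcentration-tail} with $K=(\log(1+\theta))^{8/7}$ together with \propref{thetamonotone}, and your quantile comparison via \propref{LQGscaling}, \eqref{eq:delta-comparison}, and \eqref{eq:R-comparison} is exactly the content of that monotonicity statement (spelled out for general $\delta$ and $R$). All inequality directions and the treatment of small $\theta$ check out.
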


\begin{proof}
This follows from \eqref{hardconcentration-tail} of \propref{hardconcentration},
taking $K=(\log(1+\theta))^{8/7}$, and \propref{thetamonotone}.
\end{proof}
\begin{cor}
\label{cor:hardmomentbound}For any $B\ge0$ we have a constant $C<\infty$
so that the following holds. Let $\delta,R>0$. Suppose that $\mathbb{B}$
is an $S\times S$ square, $\mathbb{R}$ is a rectangle so that either
$3\mathbb{B}\setminus\mathbb{B}\subset\mathbb{R}$ or $(3\mathbb{B}\setminus\mathbb{B})\cap\partial\mathbb{R}$
has exactly two connected components, and $\mathbb{A}=(3\mathbb{B}\setminus\mathbb{B})\cap\mathbb{R}$.
Then we have, recalling the definition \eqref{darounddef} of $d_{\cdots}(\mathrm{around})$,
that
\begin{equation}
\mathbf{E}[d_{(3\mathbb{B})^{*},\mathbb{A},\delta,R}(\mathrm{around})^{B}]\le C\Theta_{\mathbb{B}(S),\delta,R}^{\mathrm{hard}}(p_{1})^{B}.\label{eq:hardmomentbound}
\end{equation}
\end{cor}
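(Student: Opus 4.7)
The plan is to cover the annular region $\mathbb{A} = (3\mathbb{B} \setminus \mathbb{B}) \cap \mathbb{R}$ with a bounded number $K$ of rectangles $\mathbb{R}_1, \ldots, \mathbb{R}_K$, each congruent (after a rotation) to $\mathbb{B}(S')$ for some fixed $S' \asymp S$, arranged so that successive hard crossings of the $\mathbb{R}_i$ concatenate into either a circuit around $\mathbb{B}$ (in the annular case) or a path joining the two components of $\mathbb{A} \cap \partial \mathbb{R}$ (in the simply-connected case); compare Figure 4.2a. By choosing the $\mathbb{R}_i$ with shortest side a small enough fraction of $S$, we can ensure that $\mathbb{R}_i^* \subset (3\mathbb{B})^*$ for every $i$. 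Then by \eqref{triangleineq} and \eqref{box-comparison},
\[
d_{(3\mathbb{B})^*,\mathbb{A},\delta,R}(\mathrm{around}) \le \sum_{i=1}^K d_{(3\mathbb{B})^*,\mathbb{R}_i,\delta,R}(\mathrm{hard}),
\]
and by Minkowski's inequality it suffices to bound each term on the right in $L^B$ by a constant multiple of $\Theta_{\mathbb{B}(S),\delta,R}^{\mathrm{hard}}(p_1)$.

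For each $i$, apply \eqref{ddi} with $\mathbb{A}' = (3\mathbb{B})^*$ and $\mathbb{A} = \mathbb{R}_i^*$ to obtain
\[
d_{(3\mathbb{B})^*,\mathbb{R}_i,\delta,R}(\mathrm{hard}) \le \exp\bigl\{\gamma\bigl(\max_{z \in \mathbb{R}_i^\circ} h_{(3\mathbb{B})^*:\mathbb{R}_i^*}(z)\bigr)^+\bigr\} \cdot d_{\mathbb{R}_i^*,\mathbb{R}_i,\delta,R}(\mathrm{hard}).
\]
The two factors are independent by the Gibbs--Markov property (\propref{markovfieldproperty}). Because the fixed geometry gives $h_{(3\mathbb{B})^*:\mathbb{R}_i^*}$ uniformly bounded variance on $\mathbb{R}_i^\circ$ (by \lemref{varislog} combined with \lemref{fluctstailbound}), Fernique's inequality and the Borell--TIS inequality yield a constant $C = C(B)$ such that $\mathbf{E}\bigl[\exp\{B\gamma\,(\max h)^+\}\bigr] \le C$. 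By the translation and rotation invariance of the Liouville graph distance, $d_{\mathbb{R}_i^*,\mathbb{R}_i,\delta,R}(\mathrm{hard})$ has the same distribution as $d_{\mathbb{B}(S'),\delta,R}(\mathrm{hard})$, and an application of \propref{LQGscaling} (or just the analog of \propref{thetamonotone} for moments) lets us pass from scale $S'$ to scale $S$ at the cost of a constant. Therefore
\[
\mathbf{E}[d_{(3\mathbb{B})^*,\mathbb{R}_i,\delta,R}(\mathrm{hard})^B] \le C\,\mathbf{E}[d_{\mathbb{B}(S),\delta,R}(\mathrm{hard})^B].
\]

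Finally, \propref{concentration-boost-hard} provides the super-polynomial tail bound
\[
\mathbf{P}\bigl(d_{\mathbb{B}(S),\delta,R}(\mathrm{hard}) \ge (1+\theta)\Theta_{\mathbb{B}(S),\delta,R}^{\mathrm{hard}}(p_1)\bigr) \le C\e^{-(\log(1+\theta))^{8/7}/C},
\]
so integrating $B\theta^{B-1}$ against this tail gives $\mathbf{E}[d_{\mathbb{B}(S),\delta,R}(\mathrm{hard})^B] \le C\,\Theta_{\mathbb{B}(S),\delta,R}^{\mathrm{hard}}(p_1)^B$. Chaining the three bounds produces \eqref{hardmomentbound}. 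The main (modest) obstacle is the geometric setup: one has to verify that a bounded number of rectangles $\mathbb{R}_i$ suffices in \emph{both} the annular and simply-connected configurations, while simultaneously keeping each $\mathbb{R}_i^*$ inside $(3\mathbb{B})^*$; this is a routine covering argument that does not depend on $S$, $\delta$, or $R$.
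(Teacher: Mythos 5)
Your proposal is correct and follows essentially the same route as the paper: cover the annulus by a bounded number of rectangles congruent to $\mathbb{B}(S)$ whose hard crossings concatenate into the circuit, control the coarse field on each via \eqref{ddi} (with Fernique/Borell--TIS, as in \corref{maxcoarse}), invoke \propref{concentration-boost-hard} for the tail of each hard crossing, and integrate; the paper merely union-bounds the tails before integrating where you apply Minkowski to the moments. One small caution: take the covering rectangles to be exactly $S\times 2S$ (pairs of adjacent $S\times S$ squares, as in \figref{annularfromhard}), since passing from a scale $S'>S$ back down to $S$ is not a ``cost of a constant'' without invoking \propref{hardquantile}; with $S'\le S$ the needed comparison follows from monotonicity of the quantiles.
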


\begin{proof}
The construction illustrated in \figref{annularfromhard} shows how
to build an annular circuit out of hard crossings of twelve rectangles.
Thus, union-bounding \propref{concentration-boost-hard} over these
twelve rectangles, and using \eqref{ddi} (of \propref{fieldonsmallerbox})
and \corref{maxcoarse} to control the effect of the coarse field
on these rectangles, we have
\[
\mathbf{P}\left(d_{(3\mathbb{B})^{*},\mathbb{A},\delta,R}(\mathrm{around})\ge\theta\Theta_{\mathbb{B}(S),\delta,R}^{\mathrm{hard}}(p_{1})\right)\le C\e^{-(\log(\theta+1))^{8/7}/C}.
\]
Then we have
\[
\mathbf{E}\left[\left(\frac{d_{(3\mathbb{B})^{*},\mathbb{A},\delta,R}(\mathrm{around})}{\Theta_{\mathbb{B}(S),\delta,R}^{\mathrm{hard}}(p_{1})}\right)^{B}\right]=\int_{0}^{\infty}\mathbf{P}\left(\frac{d_{(3\mathbb{B})^{*},\mathbb{A},\delta,R}(\mathrm{around})}{\Theta_{\mathbb{B}(S),\delta,R}^{\mathrm{hard}}(p_{1})}\ge\theta^{1/B}\right)\,\dif\theta\le C\int_{0}^{\infty}\e^{-(\log(1+\theta))^{8/7}/(CB^{8/7})}\,\dif\theta<C,
\]
where as usual, the constant $C$ has been allowed to change from
step to step. This proves \eqref{hardmomentbound}.
\end{proof}

\subsection{Proof of Proposition~\ref{prop:lowerbound-tail}\label{subsec:lowerbounds}}

In this section, we prove \propref{lowerbound-tail}. We do this using
a percolation argument, relying on the fact that a path between two
macroscopically separated points must make many easy crossings of
smaller boxes. The geometry used in the arguments of this subsection
is essentially identical to that of \cite[Section 4]{DD16}.
\begin{proof}[Proof of \propref{lowerbound-tail}.]
We claim that there are constants $c>0$ and $C<\infty$, independent
of $a$, so that, for every $K\ge C$, there is a collection $\mathcal{C}_{K}$
of $K^{-1}S\times2K^{-1}S$ and $2K^{-1}S\times K^{-1}S$ subboxes
of $\mathbb{R}$ and a set $\mathscr{C}_{a,K}$ of subsets of $\mathcal{C}_{K}$
so that the following properties hold.
\begin{enumerate}
\item $|\mathcal{C}_{K}|\le CK^{2}$.
\item \label{enu:Ndef}All elements $\mathcal{D}\subset\mathscr{C}_{a,K}$
have the same cardinality $N\in[cKa,CKa]$.
\item \label{enu:numpathbound}$|\mathscr{C}_{a,K}|\le C^{N}$. (As we describe
below, this is a consequence of the fact that each element of $\mathscr{C}_{a,K}$
is a walk of length $N$ on a bounded-degree graph.)
\item \label{enu:boxesdisjoint}For each $\mathbb{C}_{1},\mathbb{C}_{2}\in\mathcal{D}\in\mathscr{C}_{a,K}$,
we have $\dist_{\Euc}(\mathbb{C}_{1}^{*},\mathbb{C}_{2}^{*})>0$.
\item \label{enu:Dpidef}Whenever $x,y\in\mathbb{R}$ and $|x-y|\ge a\diam_{\Euc}(\mathbb{R})$,
if $\pi$ is any path between $x$ and $y$ then we have a $\mathcal{D}(\pi)\in\mathscr{C}_{a,K}$
so that $\pi$ includes an easy crossing of $\mathbb{C}$ for each
$\mathbb{C}\in\mathcal{D}(\pi)$.
\end{enumerate}
These collections can be constructed as follows. First, construct
at most $CK^{2}$ annuli which are all congruent to $\mathbb{B}(3K^{-1}S,3K^{-1}S)\setminus\mathbb{B}(K^{-1}S,K^{-1}S)$,
such that the regions enclosed by the inner boundaries of the annuli
cover all of $\mathbb{R}$. Cover each annulus with eight boxes, each
having dimensions $K^{-1}S\times2K^{-1}S$ or $2K^{-1}S\times K^{-1}S$.
Let $\mathcal{C}_{K}$ be the set of all of these boxes, translated
by a minimal amount so that each lies completely inside $\mathbb{R}$.
Now any path that starts in the region enclosed by the inner boundary
of an annulus, and then travels outside of the annulus, must make
an easy crossing of one of the eight boxes associated to that annulus.
(\figref{annularmusteasy}.)
\begin{figure}
\centering
\begin{tikzpicture}[x=0.5in,y=0.5in]
\draw[thin] (0,0) rectangle (2,1);
\draw[thin] (1,0) rectangle (3,1);
\draw[thin] (2,0) rectangle (3,2);
\draw[thin] (2,1) rectangle (3,3);
\draw[thin] (1,2) rectangle (3,3);
\fill[thin,fill opacity=0.3] (0,2) rectangle (2,3);
\draw[thin] (0,0) rectangle (1,2);
\draw[ultra thick] (1,1) rectangle (2,2);
\draw[ultra thick] (0,0) rectangle (3,3);
\draw[thick, red,style={decorate,decoration={snake,amplitude=1}}] (1,1.4) -- (0.7,2.1) -- (1.4,3);
\end{tikzpicture}

\caption{\label{fig:annularmusteasy}A path from the inside boundary to the
outside boundary of an annulus must make an easy crossing of one of
the eight rectangles that cover the boundary of the annulus.}
\end{figure}
 Therefore, if a path travels a distance $a\diam_{\Euc}(\mathbb{R})$,
it must cross at least $caK$ of the boxes in $\mathcal{C}_{K}$.
From these boxes we can extract a subset of boxes $\{\mathbb{C}_{i}\}$
such that $\{\mathbb{C}_{i}^{*}\}$ (recall that $\mathbb{C}_{i}^{*}=\mathbb{C}_{i}^{(2\diam_{\Euc}\mathbb{C}_{i})}$)
is a pairwise-disjoint family by a greedy algorithm. There still must
be at least $caK$ elements in this subset, perhaps with a smaller
constant $c$, and thus we can truncate all of these subsets so they
all have the same size $N\ge cKa$. Call the subset associated to
the path $\mathcal{D}(\pi)$ and let $\mathscr{C}_{a,K}=\{\mathcal{D}(\pi)\mid x,y\in\mathbb{R}\text{, }|x-y|\ge a\diam_{\Euc}(\mathbb{R})\text{, and }\pi\text{ is a path between \ensuremath{x} and \ensuremath{y}}\}$.
Now the cardinality of $\mathscr{C}_{a,K}$ can be at most $C^{N}$,
since successive elements of $\mathcal{D}(\pi)$ can only be a bounded
distance from each other, and so $\mathcal{D}(\pi)$ is a walk on
a bounded-degree graph of size $N$.

Now let $x,y\in\mathbb{R}$ be such that $|x-y|\ge a\diam_{\Euc}(\mathbb{R})$
and let $\pi$ be a $(\mu_{\mathbb{R}^{*}},\mathbb{R},\delta,R)$-geodesic
(in the sense of \defref{geodesics}) between $x$ and $y$. Therefore,
there is a $\mathcal{D}\in\mathscr{C}_{a,K}$ so that $\pi$ includes
an easy-crossing of every $\mathbb{C}\in\mathcal{D}$. Let 
\[
F=\min\limits _{\mathbb{C}\in\mathcal{C}_{K}}\min\limits _{x\in\mathbb{C}^{\circ}}h_{\mathbb{R}^{*}:\mathbb{C}^{*}}(x).
\]
If $R\le K^{-1}S$, then we have
\begin{equation}
d_{\mathbb{R},\delta,R}(x,y)\ge\sum_{\mathbb{C}\in\mathcal{D}}d_{\mathbb{R}^{*},\mathbb{C},\delta,R}(\mathrm{easy})\ge\sum_{\mathbb{C}\in\mathcal{D}}d_{\mathbb{C},\e^{\gamma F},R\wedge(\sqrt{5}K^{-1}S)}(\mathrm{easy}),\label{eq:applycrossmanysmalls}
\end{equation}
where the first inequality is by \propref{crossmanysmalls} (in which
the hypothesis that the $\mathbb{C}_{i}$s are separated by at least
$R$ is satisfied by the condition on $R$ and property~\enuref{boxesdisjoint}
above), and the second inequality is by \propref{fieldonsmallerbox}.
On the other hand, if we do not assume that $R\le K^{-1}S$, then
we still have that
\begin{equation}
d_{\mathbb{R},\delta,R}(x,y)\ge\max_{\mathbb{C}\in\mathcal{D}}d_{\mathbb{R}^{*},\mathbb{C},\delta,R}(\mathrm{easy})\ge\max_{\mathbb{C}\in\mathcal{D}}d_{\mathbb{R}^{*},\mathbb{C},\delta,R\wedge(\sqrt{5}K^{-1}S)}(\mathrm{easy})\ge\max_{\mathbb{C}\in\mathcal{D}}d_{\mathbb{C},\e^{\gamma F}\delta,R\wedge(\sqrt{5}K^{-1}S)}(\mathrm{easy}),\label{eq:applycrossingbigcrossingsmall}
\end{equation}
where the first inequality is by \propref{crossingbigimpliescrossingsmall},
the second is by \eqref{nopointbeinghuge}, and the third is by \propref{fieldonsmallerbox}.

Recall $N$ from property~\ref{enu:Ndef} above and let $u>0$ (to
be fixed in \eqref{uspec} below). Under the assumption that $R\le K^{-1}S$,
\eqref{applycrossmanysmalls} implies that on the event that $d_{\mathbb{R},\delta,R}(\max;a)\le Nu/2$,
the set $\mathcal{D}(\pi)$ (defined in property~\ref{enu:Dpidef}
above) can contain no more than $N/2$ boxes $\mathbb{C}$ such that
$d_{\mathbb{C},\e^{\gamma F}\delta,R\wedge(\sqrt{5}K^{-1}S)}(\mathrm{easy})\le u$.
Therefore, we have
\begin{align}
\mathbf{P} & \left[d_{\mathbb{R},\delta,R}(\min;a)\le Nu/2\right]\nonumber \\
 & \le\mathbf{P}\left[\text{there exists a \ensuremath{\mathcal{D}\in\mathscr{C}_{a,K}} such that \ensuremath{\#\left\{ \mathbb{C}\in\mathcal{D}\;:\;d_{\mathbb{C},\e^{\gamma F}\delta,R\wedge(\sqrt{5}K^{-1}S)}(\mathrm{easy})\le u\right\} \ge N/2}}\right]\nonumber \\
 & \le\mathbf{P}\left[\text{there exists a \ensuremath{\mathcal{D}\in\mathscr{C}_{a,K}} such that \ensuremath{\#\left\{ \mathbb{C}\in\mathcal{D}\;:\;d_{\mathbb{C},K^{\gamma\theta}\delta,R\wedge(\sqrt{5}K^{-1}S)}(\mathrm{easy})\le u\right\} \ge N/2}}\right]\nonumber \\
 & \qquad\qquad+\mathbf{P}\left[F\le-\theta\log K\right]\nonumber \\
 & \eqqcolon\Xi_{1}+\Xi_{2},\label{eq:maybecoarsebig-RltS}
\end{align}
where the second inequality is by a union bound and $\Xi_{1}$ and
$\Xi_{2}$ are the first and second terms of the second-to-last expression,
respectively. On the other hand, if we do not assume that $R\le K^{-1}S$,
then we still have, by \eqref{applycrossingbigcrossingsmall}, that
\begin{align}
\mathbf{P} & \left[d_{\mathbb{R},\delta,R}(\min;a)\le u\right]\le\mathbf{P}\left[\text{there exists a \ensuremath{\mathcal{D}\in\mathscr{C}_{a,K}} such that \ensuremath{\max_{\mathbb{C}\in\mathcal{D}}d_{\mathbb{C},\e^{\gamma F}\delta,R\wedge(\sqrt{5}K^{-1}S)}(\mathrm{easy})\le u} }\right]\nonumber \\
 & \le\mathbf{P}\left[\text{there exists a \ensuremath{\mathcal{D}\in\mathscr{C}_{a,K}} such that \ensuremath{\max_{\mathbb{C}\in\mathcal{D}}d_{\mathbb{C},K^{\gamma\theta}\delta,R\wedge(\sqrt{5}K^{-1}S)}(\mathrm{easy})\le u}}\right]+\mathbf{P}\left[F\le-\theta\log K\right]\le\Xi_{1}+\Xi_{2},\label{eq:maybecoarsebig-noassumption}
\end{align}
where the last inequality is because, if $\max\limits _{\mathbb{C}\in\mathcal{D}}d_{\mathbb{C},K^{\gamma\theta}\delta,R\wedge(\sqrt{5}K^{-1}S)}(\mathrm{easy})\le u$,
then in particular
\[
\#\left\{ \mathbb{C}\in\mathcal{D}\;:\;d_{\mathbb{C},K^{\gamma\theta}\delta,R\wedge(\sqrt{5}K^{-1}S)}(\mathrm{easy})\le u\right\} =|\mathcal{D}|=N\ge N/2.
\]

Now define, for some $p_{0}\in(0,1)$ to be chosen later,
\begin{equation}
u=\Theta_{\mathbb{B}(K^{-1}S),K^{\gamma\theta}\delta,R\wedge(\sqrt{5}K^{-1}S)}^{\mathrm{easy}}(p_{0})=\Theta_{\mathbb{B}(K^{-\omega-1}S),\delta,K^{-\omega}(R\wedge(\sqrt{5}K^{-1}S))}^{\mathrm{easy}}(p_{0})=\Theta_{\mathbb{B}(K^{-\omega-1}S),\delta,K^{-\omega}R}^{\mathrm{easy}}(p_{0}),\label{eq:uspec}
\end{equation}
where the second equality is by \propref{LQGscaling} and the second
is by \eqref{nopointbeinghuge}. (Recall that $\omega$ was defined
in \eqref{ouromega}.) By \lemref{hugedeviations} and a union bound
along with property~\enuref{numpathbound} above, we have
\[
\Xi_{1}\le|\mathscr{C}_{a,K}|p_{0}^{N/2}\le C^{K}(Cp_{0})^{N/2}\le C^{N}p_{0}^{N/2}.
\]
On the other hand, we have
\[
\Xi_{2}\le CK^{2-\theta^{2}/2}
\]
by \eqref{maxcoarse-easiertouse} of \propref{maxcoarse}. Plugging
the last two bounds into \eqref{maybecoarsebig-RltS} and \eqref{maybecoarsebig-noassumption},
and noting \eqref{uspec}, we obtain
\begin{multline*}
\mathbf{P}\left[d_{\mathbb{R},\delta,R}(\min;a)\le\frac{N}{2}\Theta_{\mathbb{B}(K^{-\omega-1}S),\delta,K^{-\omega}R}^{\mathrm{easy}}(p_{0})\right],\mathbf{P}\left[d_{\mathbb{R},\delta,R}(\min;a)\le\Theta_{\mathbb{B}(K^{-\omega-1}S),\delta,K^{-\omega}R}^{\mathrm{easy}}(p_{0})\right]\\
\le C^{N}p_{0}^{N/2}+CK^{2-\theta^{2}/2}.
\end{multline*}
Then we obtain \eqref{lowerbound-tail-nogrowth} and \eqref{lowerbound-tail-growth}
by taking $p_{0}$ so small that $C\sqrt{p_{0}}<1$ and recalling
property~\enuref{Ndef} above.
\end{proof}

\subsection{Proof of Proposition~\ref{prop:hardconcentration}\label{subsec:upperbounds}}

Although the statement of \propref{hardconcentration} is similar
in spirit to \cite[Proposition 6.1]{DD16}, here we need a better
bound and thus use a more sophisticated percolation argument.
\begin{proof}[Proof of \propref{hardconcentration}.]
We assume that $\AR(\mathbb{R})\in\{1/3,3\}$ and that $K$ is an
integer, the general case follows from \propref{thetamonotone} and
a simple coarse-field bound. Choose an appropriate $L\in\{K/3,K\}$
and divide $\mathbb{R}$ into a $K\times L$ grid of $S\times S$
subboxes, indexing them according to their position in the grid $\mathcal{G}_{K,L}=\{1,\ldots,K\}\times\{1,\ldots,L\}$
as $(\mathbb{C}_{k,\ell})_{(k,\ell)\in\mathcal{G}}$, with the following
layout:
\[
\begin{array}{ccc}
\mathbb{\mathbb{C}}_{1,L} & \cdots & \mathbb{\mathbb{C}}_{K,L}\\
\vdots & \text{\reflectbox{\ensuremath{\ddots}}} & \vdots\\
\mathbb{\mathbb{C}}_{1,1} & \cdots & \mathbb{\mathbb{C}}_{K,1}
\end{array}.
\]
For each $(k,\ell)\in\mathcal{G}_{K,L}$, define $\mathbb{A}_{k,\ell}=(3\mathbb{C}_{k,\ell}\setminus\mathbb{C}_{k,\ell})\cap\mathbb{R}$,
so $\mathbb{A}_{k,\ell}$ is an intersection of a square annulus with
$\mathbb{R}$. Define $\mathcal{Q}$ to be the set of paths $\omega=(\omega_{1},\ldots,\omega_{J(\omega)})$
so that $\omega_{j}\in\mathcal{G}_{K,L}$ for each $j$ and $\omega_{1}^{(1)}=1$,
$\omega_{J(\omega)}^{(1)}=K$ (here the superscript $(1)$ means to
take the $x$-coordinate) and $|\omega_{j}-\omega_{j-1}|_{\infty}\le1$,
where $|\cdot|_{\infty}$ denotes the $\ell^{\infty}$ norm. This
is to say that the paths $\omega$ must cross from left to right in
the grid $\mathcal{G}_{K,L}$ and must move as a chess king: one square
at a time, either left, right, up, down, or diagonally. Then circuits
around $\mathbb{A}_{\omega_{j}}$, $j=1,\ldots,J(\omega)$ can be
joined together as illustrated in \figref{annulitocrossing} to form
a left--right crossing of $\mathbb{R}$, so we have
\[
d_{\mathbb{R},\delta,R}(\mathrm{L},\mathrm{R})\le\min_{\omega\in\mathcal{Q}}\sum_{j=1}^{J(\omega)}d_{\mathbb{R}^{*},\mathbb{A}_{\omega_{j}},\delta,R}(\mathrm{around}).
\]
(Recall the notation $d_{\cdots}(\mathrm{around})$ defined in \eqref{darounddef}--\eqref{darounddef2}.)
Define
\[
\mathcal{B}_{k,\ell}=\{\mathbb{B}\in\widetilde{\mathcal{B}}_{k,\ell}\::\;\mathbb{B}\subset\mathbb{R}\},
\]
where $\widetilde{\mathcal{B}}_{k,\ell}$ comprises the eight unions
of pairs of adjacent $\mathbb{C}_{k',\ell'}$s contained in $\mathbb{A}_{k,\ell}$,
as well as the four unions of pairs $(\mathbb{C}_{k',\ell'},\mathbb{C}_{k'',\ell''})$
such that $\mathbb{C}_{k',\ell'},\mathbb{C}_{k'',\ell''}$ are translates
of $\mathbb{C}_{k,\ell}$ by one and two squares, respectively, either
up, down, left, or right. That is, $\widetilde{\mathcal{B}}_{k,\ell}$
comprises twelve rectangles, and hard crossings of these rectangles
can be joined together to create a circuit around $\mathbb{A}_{k,\ell}$
as shown in \figref{annularfromhard}. This means that
\begin{equation}
d_{\mathbb{R}^{*},\mathbb{A}_{k,\ell},\delta,R}(\mathrm{around})\le\sum_{\mathbb{B}\in\mathcal{B}_{k,\ell}}d_{\mathbb{R}^{*},\mathbb{B},\delta,R}(\mathrm{hard}).\label{eq:breakintohards}
\end{equation}

Define
\[
F=\adjustlimits\max_{(k,\ell)\in\mathcal{G}_{K,L}}\max_{\mathbb{B}\in\mathcal{B}_{k,\ell}}\max_{x\in\mathbb{B}^{\circ}}h_{\mathbb{R}^{*}:\mathbb{B}^{*}}(x).
\]
By \corref{maxcoarse}, we have, for any $\nu>1$, that
\begin{equation}
\mathbf{P}[\e^{\gamma F-\gamma\theta_{0}\log K}\ge\nu]\le C\e^{-\frac{(\log\nu)^{2}}{2\gamma^{2}\log K}}.\label{eq:hbcoarse}
\end{equation}
(Recall that $\theta_{0}$ was fixed in \eqref{etadef}.) Combining
\eqref{breakintohards} and \eqref{hbcoarse}, we have
\begin{equation}
d_{\mathbb{R},\delta,R}(\mathrm{L},\mathrm{R})\le\min_{\omega\in\mathcal{Q}}\sum_{j=1}^{J(\omega)}\sum_{\mathbb{B}\in\mathcal{B}_{k,\ell}}d_{\mathbb{R}^{*},\mathbb{B},\delta,R}(\mathrm{hard})\le C\e^{(\gamma F-\gamma\theta_{0}\log K)^{+}}\min_{\omega\in\mathcal{Q}}\sum_{j=1}^{J(\omega)}\sum_{\mathbb{B}\in\mathcal{B}_{k,\ell}}d_{\mathbb{B},K^{-\gamma\theta_{0}}\delta,R}(\mathrm{hard}).\label{eq:dLRpercolation}
\end{equation}
Now we note that we can partition $\mathcal{G}$ into finitely many
subsets $\mathcal{G}_{1},\ldots,\mathcal{G}_{M}$, with $M$ an absolute
constant not depending on $K$, so that
\[
\left\{ \sum_{\mathbb{B}\in\mathcal{B}_{k,\ell}}d_{\mathbb{B},K^{-\gamma\theta_{0}}\delta,R}(\mathrm{hard})\;:\;(k,\ell)\in\mathcal{G}_{m}\right\} 
\]
is a collection of independent random variables for each $1\le m\le M$.
Noting that $|\mathcal{B}_{k,\ell}|\le12$ for all $k,\ell$, we have
by a union bound
\begin{align}
\mathbf{P}\left(\sum_{\mathbb{B}\in\mathcal{B}_{k,\ell}}d_{\mathbb{B},K^{-\gamma\theta_{0}}\delta,R}(\mathrm{hard})\ge w\right) & \le12\mathbf{P}\left(d_{\mathbb{B}(S),K^{-\gamma\theta_{0}}\delta,R}(\mathrm{hard})\ge w/12\right)\nonumber \\
 & =12\mathbf{P}\left(d_{\mathbb{B}(K^{\eta}S),\delta,K^{\eta}R}(\mathrm{hard})\ge w/12\right),\label{eq:unionboundannulus}
\end{align}
where the equality is by \eqref{etadef} and \propref{LQGscaling}.
Let $\mathcal{R}_{L}$ be the set of paths in $\mathcal{G}$ which
start at the top of $\mathcal{G}$ and have length $L$, where only
nearest-neighbor (horizontal and vertical, not diagonal) edges are
allowed. We note that
\begin{equation}
|\mathcal{R}_{L}|\le L4^{L}\le5^{L}.\label{eq:RLsize}
\end{equation}
If there is no $\omega\in\mathcal{Q}$ so that $\sum\limits _{\mathbb{B}\in\mathcal{B}_{k,\ell}}d_{\mathbb{B},K^{-\gamma\theta_{0}}\delta,R}(\mathrm{hard})\le w$
for all $(k,\ell)\in\omega$, then by planar duality there must be
a $\xi\in\mathcal{R}_{L}$ so that $\sum\limits _{\mathbb{B}\in\mathcal{B}_{k,\ell}}d_{\mathbb{B},K^{-\gamma\theta_{0}}\delta,R}(\mathrm{hard})\ge w$
for all $(k,\ell)\in\xi$. Therefore,
\begin{align}
\mathbf{P} & \left(\min_{\omega\in\mathcal{Q}}\sum_{j=1}^{J(\omega)}\sum_{\mathbb{B}\in\mathcal{B}_{k,\ell}}d_{\mathbb{B},K^{-\gamma\theta_{0}}\delta,R}(\mathrm{hard})\ge KLw\right)\le\mathbf{P}\left(\bigcup_{\xi\in\mathcal{R}_{L}}\bigcap_{(k,\ell)\in\xi}\left\{ \sum_{\mathbb{B}\in\mathcal{B}_{k,\ell}}d_{\mathbb{B},K^{-\gamma\theta_{0}}\delta,R}(\mathrm{hard})\ge w\right\} \right)\nonumber \\
 & \le|\mathcal{R}_{L}|\max_{\xi\in\mathcal{R}_{L}}\mathbf{P}\left(\bigcap_{(k,\ell)\in\xi}\left\{ \sum_{\mathbb{B}\in\mathcal{B}_{k,\ell}}d_{\mathbb{B},K^{-\gamma\theta_{0}}\delta,R}(\mathrm{hard})\ge w\right\} \right).\label{eq:minbound}
\end{align}
Now for any $\xi\in\mathcal{R}_{L}$, we have
\begin{align*}
\mathbf{P} & \left(\bigcap_{(k,\ell)\in\xi}\left\{ \sum_{\mathbb{B}\in\mathcal{B}_{k,\ell}}d_{\mathbb{B},K^{-\gamma\theta_{0}}\delta,R}(\mathrm{hard})\ge w\right\} \right)\le\min_{m=1}^{M}\mathbf{P}\left(\bigcap_{(k,\ell)\in\xi\cap\mathcal{G}_{m}}\left\{ \sum_{\mathbb{B}\in\mathcal{B}_{k,\ell}}d_{\mathbb{B},K^{-\gamma\theta_{0}}\delta,R}(\mathrm{hard})\ge w\right\} \right)\\
 & \le\min_{m=1}^{M}\prod_{(k,\ell)\in\mathcal{G}_{m}}\mathbf{P}\left(\sum_{\mathbb{B}\in\mathcal{B}_{k,\ell}}d_{\mathbb{B},K^{-\gamma\theta_{0}}\delta,R}(\mathrm{hard})\ge w\right)\le\min_{m=1}^{M}[12\mathbf{P}(d_{\mathbb{B}(K^{\eta}S),\delta,K^{\eta}R}(\mathrm{hard})\ge w/12)]^{|\xi\cap\mathcal{G}_{m}|}\\
 & \le[12\mathbf{P}(d_{\mathbb{B}(K^{\eta}S),\delta,K^{\eta}R}(\mathrm{hard})\ge w/12)]^{L/M},
\end{align*}
where in the third inequality we used \eqref{unionboundannulus}.
The right-hand side does not depend on $\xi$, so we have by \eqref{RLsize}
and \eqref{minbound} that
\[
\mathbf{P}\left(\min_{\omega\in\mathcal{Q}}\sum_{j=1}^{J(\omega)}\sum_{\mathbb{B}\in\mathcal{B}_{k,\ell}}d_{\mathbb{B},K^{-\gamma\theta_{0}}\delta,R}(\mathrm{hard})\ge12KLw\right)\le\left[5[12\mathbf{P}(d_{\mathbb{B}(K^{\eta}S),\delta,K^{\eta}R}(\mathrm{hard})\ge w/12]^{1/M}\right]^{L}.
\]
Now we take $w=12\Theta_{\mathbb{B}(KS),\delta,KR}^{\mathrm{hard}}(1-p)$,
with $p<1/(12\cdot10^{M})$, so
\[
\mathbf{P}\left(\min_{\omega\in\mathcal{Q}}\sum_{j=1}^{J(\omega)}\sum_{\mathbb{B}\in\mathcal{B}_{k,\ell}}d_{\mathbb{B},K^{-\gamma\theta_{0}}\delta,R}(\mathrm{hard})\ge12KL\Theta_{\mathbb{B}(K^{\eta}S),\delta,K^{\eta}R}^{\mathrm{hard}}(1-p)\right)\le\left[5\cdot(12p)^{1/M}\right]^{L}\le2^{-L}.
\]
Recalling \eqref{dLRpercolation}, \eqref{hbcoarse}, and the definition
of $L\in\{K/3,3K\}$, we obtain, for all $\nu>1$,
\[
\mathbf{P}\left(d_{\mathbb{R},\delta,R}(\mathrm{L},\mathrm{R})\ge36\nu K^{2}\Theta_{\mathbb{B}(K^{\eta}S),\delta,K^{\eta}R}^{\mathrm{hard}}(1-p)\right)\le C(2^{-K/3}+\e^{-\frac{(\log\nu)^{2}}{2\gamma^{2}\log K}}).
\]
Now taking $\nu=\e^{K^{3/4}}$ yields \eqref{hardconcentration-tail},
while taking $\nu=C\e^{(\log K)^{3/4}}$ for an appropriate constant
$C$ and taking $K$ sufficiently large (depending on the choice of
$p$) yields \eqref{quantileub}.
\end{proof}

\section{Fluctuations of the crossing distance: Efron--Stein inequality and
multiscale analysis\label{sec:inductiveargument}}

In this section, we prove the following theorem, bounding the variance
of the hard crossing distance at a given scale by an absolute constant.
Recall \defref{chidefs} of $\chi_{U}$.
\begin{thm}
\label{thm:variancebound}There is a constant $C<\infty$ so that
\begin{equation}
\sup_{U\in(0,\infty)}\chi_{U}\le C\label{eq:supchiU}
\end{equation}
and
\begin{equation}
\Var(\log d_{\mathbb{B}(U)}(\mathrm{hard}))\le C\label{eq:variancebound}
\end{equation}
for all $U\ge0$.
\end{thm}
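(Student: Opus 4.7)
The plan is to prove \eqref{eq:supchiU} and \eqref{eq:variancebound} together by induction on the scale $U$, leveraging the Efron--Stein inequality applied to a white noise decomposition of the Gaussian free field. The base case is handled by the a priori bound \eqref{eq:chiUaprioribound} for all $U$ below some threshold, and in view of \eqref{eq:chiUvarbound} it suffices to establish an absolute bound on $\Var(\log d_{\mathbb{B}(U)}(\mathrm{hard}))$ in terms of quantile ratios $\chi_V$ at a strictly smaller scale $V \le K^{\eta-1}U$, which are already controlled by the inductive hypothesis.

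To set up the Efron--Stein argument, I would realize $h_{\mathbb{B}(U)^{*}}$ as a deterministic functional of an i.i.d.\ family of standard Gaussians coming from a space--time white noise representation, and partition these into a coarse block $\mathbf{W}^{c}$, carrying the spatial modes of scale $\gtrsim U/K$, and fine blocks $\mathbf{W}^{f}_{\mathbb{C}_{1}},\ldots,\mathbf{W}^{f}_{\mathbb{C}_{J}}$ indexed by a grid of $J \le CK^{2}$ subboxes $\mathbb{C}_{j}$ of $\mathbb{B}(U)$ at scale $K^{-1}U$, chosen so that the fine blocks are conditionally independent given $\mathbf{W}^{c}$. The conditional variance decomposition together with Efron--Stein then yields
$$\Var(\log d_{\mathbb{B}(U)}(\mathrm{hard})) \le \Var\bigl(\mathbf{E}[\log d \mid \mathbf{W}^{c}]\bigr) + \sum_{j=1}^{J}\mathbf{E}\bigl[(\log d - \log d^{(j)})^{2}\bigr],$$
where $d^{(j)}$ is the distance after an independent resampling of $\mathbf{W}^{f}_{\mathbb{C}_{j}}$. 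The coarse term is bounded by an absolute constant by Gaussian concentration: a uniform shift $\Delta$ of the coarse field multiplies the LQG measure by $e^{\gamma\Delta}$ and hence changes $\log d$ by at most $\gamma\Delta$ via Proposition~\ref{prop:fieldonsmallerbox}, so the map $\mathbf{W}^{c}\mapsto\mathbf{E}[\log d\mid\mathbf{W}^{c}]$ is Lipschitz with controlled constant.

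For each fine term, I would bound $|\log d - \log d^{(j)}|$ by showing that the geodesic can, in the worst case, be re-routed around $\mathbb{C}_{j}$ using an annular crossing of $\mathbb{A}_{j} = (3\mathbb{C}_{j}\setminus\mathbb{C}_{j})\cap\mathbb{B}(U)$, at an additional cost of $d_{(3\mathbb{C}_{j})^{*},\mathbb{A}_{j},\delta,R}(\mathrm{around})$. On the event that $\max_{\mathbb{C}_{j}^{\circ}}h_{\mathbb{B}(U)^{*}:\mathbb{C}_{j}^{*}} \le \theta_{0}\log K$, which holds uniformly in $j$ with probability $1 - O(K^{2-\theta_{0}^{2}/2})$ by Corollary~\ref{cor:maxcoarse}, the scaling covariance of Propositions~\ref{prop:fieldonsmallerbox} and~\ref{prop:LQGscaling} identifies the detour cost in law with an around-cost on a box of effective size $K^{\eta-1}U$, where $\eta<1$ by \eqref{eq:etadef}. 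Corollary~\ref{cor:hardmomentbound} then bounds its moments by $C\,\Theta_{\mathbb{B}(K^{\eta-1}U)}^{\mathrm{hard}}(p_{1})^{B}$. Dividing by $d \ge d_{\mathbb{B}(U)}(\min;a)$ for a fixed small $a$, whose negative moments are controlled by $C\,\overline{\chi}_{Q^{-1}U}^{A}\,\Theta_{\mathbb{B}(U)}^{\mathrm{easy}}(p_{0})^{-A}$ via Proposition~\ref{prop:minmomentbound}, and invoking the inductive hypothesis $\chi_{K^{\eta-1}U}\le C$ to upgrade $\Theta^{\mathrm{hard}}$ to $\Theta^{\mathrm{easy}}$ at the smaller scale, each term is at most
$$\mathbf{E}\bigl[(\log d - \log d^{(j)})^{2}\bigr] \le C\left(\frac{\Theta_{\mathbb{B}(K^{\eta-1}U)}^{\mathrm{easy}}(p_{0})}{\Theta_{\mathbb{B}(U)}^{\mathrm{easy}}(p_{0})}\right)^{\!2} \le C K^{-2c(1-\eta)},$$
by the power-law growth of Proposition~\ref{prop:pllb-inductive}.

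Summing the $J \le CK^{2}$ fine contributions gives a total of order $K^{2-2c(1-\eta)}$ on top of the $O(1)$ coarse term, and this variance bound must, via \eqref{eq:chiUvarbound}, feed back into a bound on $\chi_{U}$ matching the inductive hypothesis. The main obstacle is to close this fixed-point induction: the output constant must be at most the constant assumed on input, which is tight when $\gamma$ is close to $2$ because then $\eta$ is close to $1$ and the per-scale gain $K^{-c(1-\eta)}$ is small. The essential leverage is the strict inequality $\eta<1$ ensured by \eqref{eq:etadef}, which is the precise manifestation of $\gamma<2$ and which keeps the effective size $K^{\eta-1}U$ strictly smaller than $U$. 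Executing this step requires choosing $K$ large (in absolute terms, independent of $U$), using Cauchy--Schwarz or Hölder to split the detour/denominator estimate in the precise way that balances the $\chi$-factors brought in by Proposition~\ref{prop:minmomentbound}, and using Proposition~\ref{prop:concentration-boost-hard} to absorb the contributions from the rare event where the coarse field is large.
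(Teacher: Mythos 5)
Your architecture matches the paper's: induction on the scale, Efron--Stein applied to a coarse/fine white-noise decomposition, annular rerouting near the resampled block, and the effective size $K^{\eta-1}U$ coming from $\theta_{0}\log K$ of coarse field together with \propref{LQGscaling}. But there is a genuine gap in the summation over the fine blocks, and it is precisely the step on which the whole induction lives or dies. Your per-block bound $\mathbf{E}[(\log d-\log d^{(j)})^{2}]\le C\chi^{A}K^{-2c(1-\eta)}$ is essentially right, but summing $J\asymp K^{2}$ copies of it gives $C\chi^{A}K^{2-2c(1-\eta)}$, and since $c$ is merely \emph{some} positive exponent from \propref{pllb-inductive} there is no reason to have $c(1-\eta)>1$; generically this total \emph{grows} in $K$. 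Feeding $V\asymp\chi^{A}K^{2-2c(1-\eta)}$ back through \eqref{eq:chiUvarbound} requires $C\exp\{C\chi^{A/2}K^{1-c(1-\eta)}\}\le\chi$, which fails for every $\chi$ large enough to dominate the base case \eqref{eq:chiUaprioribound}. The induction therefore does not close. The paper avoids the naive sum by writing $\sum_{i}X_{i}^{2}\le(\max_{i}X_{i})(\sum_{i}X_{i})$ and then exploiting a geometric fact you have not used: after splitting the geodesic into segments of diameter $\asymp S$, each segment is near only $O(1)$ of the blocks $\mathbb{C}_{i}$, so the \emph{sum} of all annular detour costs is at most $CH\cdot d_{\mathbb{R},1,S}(\mathrm{L},\mathrm{R})$ where $H$ is a maximal detour-to-local-crossing ratio with moments of order $\overline{\chi}^{B}$ (\lemref{boundintermsofH}, \lemref{sumconclusion}). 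Combined with $\max_{i}X_{i}\lesssim\overline{\chi}K^{-c}$ from \lemref{maxofsmaller}, the total fine contribution becomes $K^{-c/2}\overline{\chi}^{3}$, which \emph{decays} in $K$, and that is what lets one fix $K$ large and run the fixed-point argument.

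Two further points. First, resampling the white noise on $\mathbb{C}_{j}\times(0,S^{2})$ perturbs the field everywhere in $\mathbb{R}^{*}$, not only near $\mathbb{C}_{j}$, so the weight of the geodesic \emph{away} from $\mathbb{C}_{j}$ also changes; your proposal silently assumes it does not. This far-field effect is controlled in the paper by showing the perturbation is smooth with Gaussian tails decaying like $\exp\{-\dist(\mathbb{C}_{i},\mathbb{C}_{j})^{2}/(CS^{2})\}$ and again comparing against the total geodesic length (\lemref{Gbounds}, \lemref{farawaystuff}); it contributes another term of the same shape $K^{-c}\overline{\chi}^{3}$. Second, the coarse term is not $O(1)$: the pointwise variance of the coarse field on a box of size $KS$ cut at time $S^{2}$ is of order $\log K$, so Gaussian concentration gives $\Var(\mathbf{E}[\log d\mid\cdot])\le C\log K$. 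This is harmless once $K$ is fixed, but it changes the bookkeeping of the fixed point: the inductive constant must be taken of the form $\chi\ge C\e^{C\sqrt{\log K}}$, which is exactly why the fine-field total must carry a factor that decays in $K$ fast enough to beat $\chi^{3}$.
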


Before we prove \thmref{variancebound}, we point out some corollaries.
\begin{cor}
\label{cor:Thetastardef}There is an increasing function $\Theta^{*}:\mathbf{R}_{>0}\to\mathbf{R}_{>0}$
so that for any $0<p<1$ there is a constant $C=C(p)$ so that, for
all $U\in(0,\infty)$,
\begin{equation}
\Theta_{\mathbb{B}(U)}^{\mathrm{easy}}(p),\Theta_{\mathbb{B}(U)}^{\mathrm{hard}}(p)\in[C^{-1}\Theta^{*}(U),C\Theta^{*}(U)].\label{eq:Thetastarprop}
\end{equation}
\end{cor}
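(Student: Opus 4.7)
My plan is to take $\Theta^{*}(U)=\Theta_{\mathbb{B}(U)}^{\mathrm{hard}}(p_{1})$ and verify, for every $p\in(0,1)$, that both $\Theta_{\mathbb{B}(U)}^{\mathrm{easy}}(p)$ and $\Theta_{\mathbb{B}(U)}^{\mathrm{hard}}(p)$ lie in $[C(p)^{-1}\Theta^{*}(U),C(p)\Theta^{*}(U)]$, and that $\Theta^{*}$ is increasing. Monotonicity follows from \propref{thetamonotone} after rotating by $90^{\circ}$: since the GFF and the LQG measure are rotationally invariant, the hard (TB) crossing of $\mathbb{B}(U)$ is equidistributed with the LR crossing of $\mathbb{B}(2U,U)$, and \propref{thetamonotone} applied to the family $\{\mathbb{B}(2U,U)\}_{U>0}$ (with scaling factor $U_{2}/U_{1}\ge1$) gives $\Theta^{*}(U_{1})\le\Theta^{*}(U_{2})$.

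The comparison of hard quantiles to $\Theta^{*}$ is a direct application of \thmref{variancebound}: the uniform bound $\Var(\log d_{\mathbb{B}(U)}(\mathrm{hard}))\le C$, combined with a standard Chebyshev-type quantile comparison lemma (\lemref{relatequantiles}) applied to $\log d_{\mathbb{B}(U)}(\mathrm{hard})$, yields $\Theta_{\mathbb{B}(U)}^{\mathrm{hard}}(p)/\Theta^{*}(U)\in[c(p),C(p)]$.

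The lower bound on easy quantiles combines \thmref{variancebound} (which supplies $\chi_{U}\le C$, i.e.\ $\Theta_{\mathbb{B}(U)}^{\mathrm{easy}}(p_{0})\ge C^{-1}\Theta^{*}(U)$) with monotonicity in $p$ for $p\ge p_{0}$, and, for $p<p_{0}$, with the RSW estimate \eqref{eq:RSW-quantile} rewritten as $\Theta^{\mathrm{easy}}(C_{\mathrm{RSW}}p')\ge C_{\mathrm{RSW}}^{-1}\Theta^{\mathrm{hard}}(p')$ together with the hard-quantile comparison to replace $\Theta^{\mathrm{hard}}(p')$ by a constant times $\Theta^{*}(U)$.

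The upper bound on easy quantiles is the trickiest step, since \thmref{variancebound} does not bound the variance of $\log d_{\mathbb{B}(U)}(\mathrm{easy})$ directly. I instead obtain it from \eqref{eq:quantileub} of \propref{hardconcentration} applied to $\mathbb{R}=\mathbb{B}(S)$: because $\mathbb{B}(S)=\mathbb{B}(S,2S)$ is taller than wide, the LR crossing there \emph{is} the easy crossing, so for every $p\in(0,1)$ and every $K\ge K_{0}(1-p)$ we get
\[
\Theta_{\mathbb{B}(S)}^{\mathrm{easy}}(p)\le K^{C}\,\Theta_{\mathbb{B}(K^{\eta-1}S)}^{\mathrm{hard}}(p_{1})\le K^{C}\,\Theta^{*}(S),
\]
where the last inequality uses $K^{\eta-1}\le1$ (since $K\ge1$ and $\eta<1$) together with the monotonicity of $\Theta^{*}$. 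This asymmetry between the hard and easy directions is the main obstacle; once it is handled through \propref{hardconcentration}, everything else reduces to a routine combination of \thmref{variancebound}, \thmref{RSW}, and \propref{thetamonotone}.
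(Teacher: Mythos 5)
Your proof is correct and follows essentially the same route as the paper: define $\Theta^{*}$ as a fixed hard-crossing quantile, get monotonicity from \propref{thetamonotone}, compare hard quantiles via \thmref{variancebound} and \lemref{relatequantiles}, lower-bound easy quantiles via \thmref{RSW}, and upper-bound them via the hard-concentration estimate. The only cosmetic differences are that you use $p_{1}$ in place of $1/2$ in the definition of $\Theta^{*}$ and invoke \eqref{eq:quantileub} directly rather than through its one-line corollary \propref{hardquantile}, which is exactly what the paper does under the hood.
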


\begin{proof}
Take $\Theta^{*}(U)=\Theta_{\mathbb{B}(U)}^{\mathrm{hard}}(1/2)$.
The fact that $\Theta^{*}$ is increasing then follows from \propref{thetamonotone}.
The fact that, for any $p\in(0,1)$, there is a constant $C=C(p)$
so that, for all $U>0$, we have $\Theta_{\mathbb{B}(U)}^{\mathrm{hard}}(p)\in[C^{-1}\Theta^{*}(U),C\Theta^{*}(U)]$
is a consequence of \lemref{relatequantiles} in light of \eqref{variancebound}.
Therefore, by \thmref{RSW} we have, for all $q\in(0,1)$ and $U\in(0,\infty)$,
that
\[
\Theta_{\mathbb{B}(U)}^{\mathrm{easy}}(q)\ge\frac{1}{C_{\mathrm{RSW}}}\Theta_{\mathbb{B}(U)}^{\mathrm{hard}}(q/C_{\mathrm{RSW}})\ge\frac{1}{C(q/C_{\mathrm{RSW}})\cdot C_{\mathrm{RSW}}}\Theta^{*}(U).
\]
On the other hand, we also have from \eqref{hardtoeasy} of \propref{hardquantile},
\propref{thetamonotone}, and \lemref{relatequantiles} in light of
\eqref{variancebound} that for every $p\in(0,1)$ there are constants
$c>0$, $C<\infty$ and $K\in[1,\infty)$ so that, for all $U\in(0,\infty)$,
\[
\Theta_{\mathbb{B}(U)}^{\mathrm{easy}}(p)\le K^{C}\Theta_{\mathbb{B}(K^{-1}U)}^{\mathrm{hard}}(p_{1})\le K^{C}\Theta_{\mathbb{B}(U)}^{\mathrm{hard}}(p_{1})\le CK^{C}\Theta^{*}(U).
\]
Thus we have \eqref{Thetastarprop}.
\end{proof}
As noted in the introduction, the proof of our main result works without
precise knowledge of the growth rate of the quantiles, hence without
precise knowledge of $\Theta^{*}(U)$. However, we do know that the
growth of $\Theta^{*}(U)$ is bounded above and below by power laws.
\begin{cor}
\label{cor:Thetapower}We have a constant $C<\infty$ so that, for
all $U\ge1$, we have
\[
\Theta^{*}(U)\in[C^{-1}U^{C^{-1}},CU^{C}].
\]
\end{cor}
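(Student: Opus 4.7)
The plan is to deduce the two-sided power-law bound on $\Theta^*(U)$ from existing growth estimates on easy- and hard-crossing quantiles, using Corollary~\ref{cor:Thetastardef} to interchange the probability thresholds $p_0$, $p_1$, $1/2$. Throughout, recall that $\Theta^*(U) = \Theta_{\mathbb{B}(U)}^{\mathrm{hard}}(1/2)$ and, by Corollary~\ref{cor:Thetastardef}, $\Theta_{\mathbb{B}(U)}^{\mathrm{easy}}(p_0) \asymp \Theta_{\mathbb{B}(U)}^{\mathrm{hard}}(p_1) \asymp \Theta^*(U)$ uniformly in $U$.

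For the lower bound, I would apply Proposition~\ref{prop:pllb-inductive}: there exist constants $T_0 \ge 0$ and $c>0$ so that $\Theta_{\mathbb{B}(KS)}^{\mathrm{easy}}(p_0) \ge c K^c \Theta_{\mathbb{B}(S)}^{\mathrm{easy}}(p_0)$ whenever $S \ge T_0$ and $K \ge 1$. Fixing $S_0 = \max(T_0, 1)$ and writing any $U \ge S_0$ as $U = K S_0$, the uniform equivalence from Corollary~\ref{cor:Thetastardef} turns this into $\Theta^*(U) \ge c' U^c$. For $U \in [1, S_0]$, monotonicity of $\Theta^*$ (established in the proof of Corollary~\ref{cor:Thetastardef}) gives $\Theta^*(U) \ge \Theta^*(1) > 0$, while $U^c \le S_0^c$ is bounded, so the desired inequality survives after enlarging the constant.

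For the upper bound, I would invoke estimate~\eqref{hardtohard} of Proposition~\ref{prop:hardquantile}: for any $p \in (0,1)$ there exist $K_1 < \infty$ and $C < \infty$ such that $\Theta_{\mathbb{B}(KS)}^{\mathrm{hard}}(p) \le K^C \Theta_{\mathbb{B}(S)}^{\mathrm{hard}}(p_1)$ for all $K \ge K_1$. Specializing to $p = 1/2$ and $S = 1$, this yields, for $U \ge K_1$,
\[
  \Theta^*(U) \le U^C \cdot \Theta_{\mathbb{B}(1)}^{\mathrm{hard}}(p_1) \le C' U^C .
\]
For $U \in [1, K_1]$, monotonicity gives $\Theta^*(U) \le \Theta^*(K_1)$, a finite constant, so after enlarging $C$ we get $\Theta^*(U) \le CU^C$ uniformly for $U \ge 1$.

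There is no substantive obstacle: all of the hard work—Russo--Seymour--Welsh, percolation, and Efron--Stein—is already packaged into Propositions~\ref{prop:pllb-inductive} and~\ref{prop:hardquantile} together with the variance bound~\eqref{variancebound} that underlies Corollary~\ref{cor:Thetastardef}. The only minor care required is the translation between quantile levels, which is exactly what Corollary~\ref{cor:Thetastardef} enables, and the handling of the bounded range $U \in [1, \max(T_0, K_1, 1)]$ by absorbing constants.
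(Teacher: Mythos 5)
Your proof is correct and follows essentially the same route as the paper, whose proof of this corollary is exactly the combination of Corollary~\ref{cor:Thetastardef} with the power-law bounds of Propositions~\ref{prop:pllb-inductive} and~\ref{prop:hardquantile}; you simply spell out the quantile-level translation and the treatment of the bounded range of $U$, which the paper leaves implicit.
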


\begin{proof}
This follows from \corref{Thetastardef}, relating $\Theta^{*}(U)$
to the easy and hard crossing quantiles, and \propref[s]{pllb-inductive}~and~\ref{prop:hardquantile},
which give power-law upper and lower bounds on those quantiles.
\end{proof}
We will see that \corref{Thetapower} implies \thmref{plbound}, as
$\Theta^{*}(U)$ will be the normalizing constant for the metric.
(See \corref{finalcor} below.)

Our proof of \thmref{variancebound} will be by induction on the scale
$S$. Throughout the argument in this section, we will fix a scale
$S$ and let $\mathbb{R}=\mathbb{B}(KS,LS)$, where $K/L\in[1/3,3]$.
The scale $S$ will be our inductive parameter and $K$ will represent
the increment of scales that we use in our induction. Ultimately,
$K$ will be chosen to be very large but fixed.

The inductive procedure relies on the Efron--Stein inequality. In
\subsecref{resampling}, we describe a procedure for breaking up an
underlying white noise for the Gaussian free field into independent
chunks, each of which can be resampled independently. One of the chunks
corresponds to a smooth ``coarse'' part of the GFF, while the rest
of the chunks correspond to the ``fine field'' on a small part of
the domain. We apply the Efron--Stein inequality and describe the
induction procedure in \subsecref{efronstein}. The induction procedure
works by assuming that \eqref{supchiU} holds at scale $S$, and then
proving that it still holds at scale $KS$, for the same constant
$C$. The analysis required to show that $C$ and $K$ can be chosen
so that the induction closes is rather delicate. 

To make the induction work, we need to bound the result of the Efron--Stein
inequality, which means we need to estimate how much the crossing
distance of the box changes when each piece of the white noise is
resampled. When we resample the coarse field, this comes from Gaussian
concentration as described in \subsecref{coarsefieldeffect}. When
we resample the fine field, this involves breaking up the geodesic
into a piece which is close to the resampled box and a part that is
far away from the resampled box; this construction is described in
\subsecref{splitting}. To estimate the increase in the number of
LQG balls needed to cover the ``far away'' part of the geodesic,
we don't need to obtain a new geodesic for the resampled field, since
the far away part of the GFF will change by a smooth function. (See
\subsecref{farfinefield}.) On the other hand, to estimate the increase
in the number of LQG balls needed to cover the ``close'' part of
the geodesic, we replace the geodesic by an encircling annular crossing
optimized for the resampled field. (See \subsecref{closefinefield}.)
Throughout the arguments, the RSW result \thmref{RSW} and the percolation-based
bounds of \secref{Percolation} will play essential roles in providing
the bounds that we need to close the induction.

\subsection{Resampling the field\label{subsec:resampling}}

Let $(\mathbb{C}_{k})_{k\in\mathcal{Q}_{*}}$ ($\mathcal{Q}_{*}$
being an index set) be a grid partition of $\mathbb{R}^{*}$ into
disjoint identical squares of size at most $S$ and at least $S/10$
so that $(\mathbb{C}_{i})_{i\in\mathcal{Q}_{\circ}}$ is a partition
of $\mathbb{R}^{\circ}$ for some subset $\mathcal{Q}_{\circ}\subset\mathcal{Q}_{*}$.
We note that there is a constant $C$ so that $|\mathcal{Q}_{*}|\le CK^{2}$.

We note that there is a space-time white noise $W$ on $\mathbb{R}^{*}\times(0,\infty)$
so that we can write
\begin{equation}
h_{\mathbb{R}^{*}}(x)=\sqrt{\pi}\int_{0}^{\infty}\int_{\mathbb{R}^{*}}p_{t/2}^{\mathbb{R}^{*}}(x,y)W(\dif y\,\dif t),\label{eq:WNexpr}
\end{equation}
where $p_{t}^{\mathbb{R}^{*}}$ is the transition kernel at time $t$
for a standard Brownian motion killed on the boundary of $\mathbb{R}^{*}$.
(This can be verified by checking that this yields the covariance
function \eqref{covariancekernel}.) Note that the symbol $h_{\mathbb{R}^{*}}(x)$
is an abuse of notation since the Gaussian free field does not take
pointwise values: \eqref{WNexpr} should be interpreted in the sense
of distributions. Now let $\widetilde{W}$ denote a space-time white
noise on $(0,\infty)\times\mathbb{R}^{*}$ which is independent of
$W$. Define (again in the sense of distributions)
\[
h_{\mathbb{R}^{*}}^{(i)}(x)=\sqrt{\pi}\int_{0}^{S^{2}}\int_{\mathbb{C}_{i}}p_{t/2}^{\mathbb{R}^{*}}(x,y)\widetilde{W}(\dif y\,\dif t)+\sqrt{\pi}\iint_{(\mathbf{R}_{>0}\times\mathbb{R}^{*})\setminus((0,S^{2})\times\mathbb{C}_{i})}p_{t/2}^{\mathbb{R}^{*}}(x,y)W(\dif y\,\dif t)
\]
for each $i\in\mathcal{Q}_{*}$, and
\[
\tilde{h}_{\mathbb{R}^{*}}(x)=\sqrt{\pi}\int_{S^{2}}^{\infty}\int_{\mathbb{R}^{*}}p_{t/2}^{\mathbb{R}^{*}}(x,y)\widetilde{W}(\dif y\,\dif t)+\sqrt{\pi}\int_{0}^{S^{2}}\int_{\mathbb{R}^{*}}p_{t/2}^{\mathbb{R}^{*}}(x,y)W(\dif y\,\dif t).
\]

Of course, $h_{\mathbb{R}^{*}}^{(i)}$, for each $i\in\mathcal{Q}_{*}$,
and $\tilde{h}_{\mathbb{R}^{*}}$ are all Gaussian free fields, identical
in law to $h_{\mathbb{R}^{*}}$. As in \subsecref{GFF}, for all $\mathbb{B}\subset\mathbb{R}^{*}$,
define $h_{\mathbb{R}^{*}:\mathbb{B}}^{(i)}$ (resp. $\tilde{h}_{\mathbb{R}^{*}:\mathbb{B}}$)
to be the harmonic interpolation of $h_{\mathbb{R}^{*}}^{(i)}$ (resp.
$\tilde{h}_{\mathbb{R}^{*}}$) onto $\mathbb{B}$, and simply $h_{\mathbb{R}^{*}}^{(i)}$
(resp. $\tilde{h}_{\mathbb{R}^{*}}$) outside of $\mathbb{B}$. Also
define $h_{\mathbb{B}}^{(i)}=h_{\mathbb{R}^{*}}^{(i)}-h_{\mathbb{R^{*}}:\mathbb{B}}^{(i)}$
and $\tilde{h}_{\mathbb{B}}=\tilde{h}_{\mathbb{R}^{*}}-\tilde{h}_{\mathbb{R}^{*}:\mathbb{B}}$
on $\mathbb{B}$.

For all $\mathbb{B}\subset\mathbb{R}^{*}$, let $\mu_{\mathbb{B}}^{(i)}$
(resp. $\tilde{\mu}_{\mathbb{B}}$) denote the Liouville quantum gravity
measure on $\mathbb{B}$ using the GFF $h_{\mathbb{B}}^{(i)}$ (resp.
$\tilde{h}_{\mathbb{B}}$) and let $d_{\mathbb{B},\ldots}^{(i)}$
(resp. $\tilde{d}_{\mathbb{B},\ldots}$) denote the corresponding
Liouville graph distance. Thus $d_{\mathbb{B},\ldots}^{(i)}$ and
$\tilde{d}_{\mathbb{B},\ldots}$ are identical in law to $d_{\mathbb{B},\ldots}$.
The only difference is that they are defined using different realizations
of the underlying white noise field.

\subsection{Splitting the geodesic\label{subsec:splitting}}

To control the variance of the LFPP crossing distance, we will use
the Efron--Stein inequality to control the effect of resampling each
piece of the white noise on the crossing distance. The idea is that,
resampling the white noise on a small box only has a significant effect
on the weight of the part of the geodesic close to that box. Put simply,
by breaking up the white noise and also breaking up the geodesic,
we can control the influence of each part of the white noise on each
part of the geodesic. In the last subsection, we described how we
break up the white noise. In this one, we describe how we break up
the geodesic.

Suppose that $\pi$ is an $(\mathbb{R}^{*},\mathbb{R},1,S)$-geodesic
of $\mathbb{R}$ and let $B_{1},\ldots,B_{N}$ be the corresponding
sequence of geodesic balls, so $N$ is the LQG distance between the
two endpoints of $\pi$. Fix a parameterization of $\pi$, which we
will also call $\pi:[0,1]\to\mathbb{R}$. Define
\[
\mathcal{T}=\pi^{-1}\left(\bigcup_{n=1}^{N-1}(B_{n}\cap B_{n+1})\right).
\]
Let $\tau_{0}=0$, and inductively define $\tau_{m}=1\wedge\min\{\tau\ge\tau_{m-1}\::\;\tau\in\mathcal{T}\text{ and }\diam_{\Euc}(\pi([\tau_{m-1},\tau]))\ge S\}$.
Let $M$ be the first $m$ so that $\tau_{m}=1$. For each $m=1,\ldots,M$,
let $x_{m}=\pi(\tau_{m})$ and let $\pi_{m}=\pi([\tau_{m-1},\tau_{m}])$.
Thus $\pi$ is the union of $\pi_{1},\ldots,\pi_{M}$. By the definition
of $\mathcal{T}$ and the fact that the $B_{i}$s have radius at most
$S$, we have
\begin{equation}
\diam_{\Euc}(\pi_{m})\le2S.\label{eq:diampibound}
\end{equation}
Also, by the definition of $\mathcal{T}$ and the triangle inequality,
we have
\begin{equation}
\sum_{m=1}^{M}d_{\mathbb{R},1,S}(x_{m-1},x_{m})=d_{\mathbb{R},1,S}(\mathrm{L},\mathrm{R}).\label{eq:splitisgood}
\end{equation}

We will need a way to identify which parts of our path are ``close''
to a given box $\mathbb{C}_{i}$. For $i\in\mathcal{Q}_{*}$, define
\begin{equation}
\mathcal{N}(i)=\{j\in\mathcal{Q}_{\circ}\;:\;\dist_{\Euc}(\mathbb{C}_{i},\mathbb{C}_{j})<4S\}\label{eq:Nidef}
\end{equation}
to be the set of indices of ``nearby'' boxes to $\mathbb{C}_{i}$,
and define
\[
\mathbb{D}_{i}=\bigcup_{j\in\mathcal{N}(i)}\mathbb{C}_{j}
\]
as the union of these boxes. (Thus, $\mathbb{D}_{i}$ is a larger
box surrounding $\mathbb{C}_{i}$.) Also define
\begin{equation}
\mathcal{D}(i)=\mathcal{Q}_{\circ}\setminus\mathcal{N}(i)=\{j\in\mathcal{Q}_{\circ}\;:\;\dist_{\Euc}(\mathbb{C}_{i},\mathbb{C}_{j})\ge4S\}.\label{eq:Didef}
\end{equation}

For $i\in\mathcal{Q}_{*}$, we then define
\[
\mathcal{M}(i)=\{1\le m\le M\;:\;\pi_{m}\cap\mathbb{D}_{i}\ne\emptyset\},
\]
the set of segments $\pi_{m}$ that ``get close to'' $\mathbb{C}_{i}$.
Define
\begin{equation}
\pi_{i}^{-}=\pi\setminus\bigcup_{m\in\mathcal{M}(i)}\pi_{m},\label{eq:piiminusdef}
\end{equation}
the part of $\pi$ comprised of segments which do not get too close
to $\mathbb{C}_{i}$. We note for later use that there is a constant
$C<\infty$ so that
\begin{equation}
\#\{j\in\mathcal{Q}_{\circ}\;:\;\pi_{m}\cap\mathbb{C}_{j}\ne\emptyset\}\le C\label{eq:jpiCj}
\end{equation}
and that
\begin{equation}
\#\{i\in\mathcal{Q}_{*}\;:\;m\in\mathcal{M}(i)\}\le C.\label{eq:iMsize}
\end{equation}

Now define 
\[
\mathbb{E}_{i}=\bigcup_{\substack{j\in\mathcal{Q}_{\circ}\\
\dist_{\Euc}(\mathbb{C}_{i},\mathbb{C}_{j})<8S
}
}\mathbb{C}_{j},
\]
which is an even larger box around $\mathbb{D}_{i}$; that is, we
have $\mathbb{C}_{i}\subset\mathbb{D}_{i}\subset\mathbb{E}_{i}$.
Then whenever $m\in\mathcal{M}(i)$, we have $\pi_{m}\subset\mathbb{E}_{i}$
by \eqref{diampibound}. That is, $\mathbb{E}_{i}$ completely contains
all segments of the geodesic which get close (in our sense) to $\mathbb{C}_{i}$.
By \propref{crossingbigimpliescrossingsmall}, we therefore have
\begin{equation}
d_{\mathbb{R},1,S}(x_{m-1},x_{m})=d_{\mathbb{R}^{*},\mathbb{E}_{i},1,S}(x_{m-1},x_{m}).\label{eq:pathstaysinEi}
\end{equation}

Another consequence of the fact $\pi_{m}\subset\mathbb{E}_{i}$ is
that $\pi_{m}$ can be ``replaced'' in the path $\pi$ by a loop
around an annulus surrounding $\mathbb{E}_{i}$. That is, if we define
\begin{equation}
\mathbb{A}_{i}=(3\mathbb{E}_{i}\setminus\mathbb{E}_{i})\cap\mathbb{R},\label{eq:Aidef}
\end{equation}
then we have
\begin{equation}
d_{\mathbb{R},1,S}^{(i)}(\mathrm{L},\mathrm{R})\le d_{\mathbb{R},1,S}^{(i)}(\pi_{i}^{-})+d_{\mathbb{R}^{*},\mathbb{A}_{i},1,S}^{(i)}(\mathrm{around}).\label{eq:replacepimbyanannulus}
\end{equation}
The motivation for using \eqref{replacepimbyanannulus} will be that,
when we resample the white noise in box $\mathbb{C}_{i}$, we will
not have pointwise control on the change in the Gaussian free field
close to $\mathbb{C}_{i}$. Therefore, there is no reason to believe
that the paths $\pi_{m}$, $m\in\mathcal{M}(i)$, will still be close
to geodesics after the resampling. %
Instead, we will to replace the $\pi_{m}$s by a geodesic circuit
around an annulus, which will be optimized with respect to the resampled
field.

\subsection{The Efron--Stein argument and induction procedure\label{subsec:efronstein}}

We now describe how to apply the Efron--Stein inequality to obtain
a bound on the variance. Because our procedure is inductive, we will
need to consider separately an error term coming from the event that
the coarse field on a given box is larger than $\theta_{0}\log K$,
where $\theta_{0}$ is as in \eqref{etadef}: if this does not hold,
then the small boxes may not in fact look smaller than $\mathbb{R}$
when the coarse field is taken into account. We define the following
events, recalling the definition \eqref{Didef} of $\mathcal{D}(i)$
for each $i\in\mathcal{Q}_{*}$:
\begin{align}
\mathcal{A}_{1;i} & =\left\{ \max_{x\in(3\mathbb{E}_{i})^{\circ}}\max\{h_{\mathbb{R}^{*}:(3\mathbb{E}_{i})^{*}}^{(i)}(x),h_{\mathbb{R}^{*}:(3\mathbb{E}_{i})^{*}}(x)\}\le\theta_{0}\log K\right\} ;\label{eq:A1idef}\\
\mathcal{A}_{2;i} & =\left\{ \max_{j\in\mathcal{D}(i)}\max_{x\in(3\mathbb{E}_{j})^{\circ}}\max\{h_{\mathbb{R}^{*}:(3\mathbb{E}_{j})^{*}}^{(i)}(x),h_{\mathbb{R}^{*}:(3\mathbb{E}_{j})^{*}}(x)\}\le\theta_{0}\log K+S^{-1}\dist_{\Euc}(\mathbb{C}_{i},\mathbb{C}_{j}^{\circ})\right\} ;\label{eq:A2idef}\\
\mathcal{A}_{i} & =\mathcal{A}_{1;i}\cap\mathcal{A}_{2;i}.\label{eq:scrAidef}
\end{align}

\begin{lem}
\label{lem:Aicbound}We have a constant $C$ so that $\mathbf{P}(\mathcal{A}_{i}^{c})\le CK^{-\theta_{0}^{2}/2}$.
\end{lem}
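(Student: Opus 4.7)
The plan is to use a union bound combined with the single-box estimate \eqref{maxcoarse-single} of \propref{maxcoarse}. Both $h_{\mathbb{R}^{*}}$ and $h_{\mathbb{R}^{*}}^{(i)}$ are Gaussian free fields on $\mathbb{R}^{*}$ with the same law, so it suffices to prove the bound for the unprimed field and multiply by $2$ at the end. Throughout the argument the ratio $\diam_{\Euc}(\mathbb{R})/\diam_{\Euc}(3\mathbb{E}_{j})$ is comparable to $K$ (up to an absolute constant absorbed into $C$), so \propref{maxcoarse} applies uniformly in $j\in\mathcal{Q}_{\circ}$.

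For $\mathcal{A}_{1;i}$, I would simply apply \eqref{maxcoarse-single} with $\theta=\theta_{0}$ to the box $3\mathbb{E}_{i}$, yielding
\[
\mathbf{P}(\mathcal{A}_{1;i}^{c})\le 2C K^{-\theta_{0}^{2}/2}.
\]

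The main work is in bounding $\mathbf{P}(\mathcal{A}_{2;i}^{c})$, which requires a union bound over $j\in\mathcal{D}(i)$. Set $\ell_{j}=S^{-1}\dist_{\Euc}(\mathbb{C}_{i},\mathbb{C}_{j}^{\circ})$. Applying \eqref{maxcoarse-single} with $\theta=\theta_{0}+\ell_{j}/\log K$ gives
\[
\mathbf{P}\!\left(\max_{x\in(3\mathbb{E}_{j})^{\circ}}h_{\mathbb{R}^{*}:(3\mathbb{E}_{j})^{*}}(x)\ge \theta_{0}\log K+\ell_{j}\right)\le C K^{-(\theta_{0}+\ell_{j}/\log K)^{2}/2}\le C K^{-\theta_{0}^{2}/2}\e^{-\theta_{0}\ell_{j}},
\]
where I used $K^{-\theta_{0}\ell_{j}/\log K}=\e^{-\theta_{0}\ell_{j}}$ and discarded the subgaussian factor $K^{-\ell_{j}^{2}/(2(\log K)^{2})}\le 1$. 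The boxes $\mathbb{C}_{j}$ form a grid partition with each cell of side between $S/10$ and $S$, so by a standard perimeter count the number of indices $j\in\mathcal{D}(i)$ with $\ell_{j}\in[\ell,\ell+1]$ is at most $C\ell$. Summing,
\[
\mathbf{P}(\mathcal{A}_{2;i}^{c})\le 2\sum_{j\in\mathcal{D}(i)}C K^{-\theta_{0}^{2}/2}\e^{-\theta_{0}\ell_{j}}\le C K^{-\theta_{0}^{2}/2}\sum_{\ell\ge 4}\ell\, \e^{-\theta_{0}\ell}\le C K^{-\theta_{0}^{2}/2},
\]
since $\theta_{0}>2>0$ makes the sum converge. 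Combining the two bounds gives the claim.

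The only minor obstacle is verifying that the geometric count of boxes at distance $\sim\ell S$ is $O(\ell)$ and that the linear growth of the threshold in $\ell$ is strong enough to kill the perimeter factor; both are automatic once one passes from the $K$-exponential form of \eqref{maxcoarse-single} to the $\e^{-\theta_{0}\ell}$ form above. No new probabilistic input beyond \propref{maxcoarse} is needed.
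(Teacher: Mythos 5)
Your proposal is correct and follows essentially the same route as the paper: the event $\mathcal{A}_{1;i}^{c}$ is handled by a single application of \eqref{maxcoarse-single}, and $\mathcal{A}_{2;i}^{c}$ by a union bound over $j\in\mathcal{D}(i)$ in which the linearly growing threshold $\theta_{0}\log K+\ell_{j}$ produces an exponentially decaying factor (the paper keeps $\e^{-(\theta_{0}+2)\ell_{j}/2}$ where you keep $\e^{-\theta_{0}\ell_{j}}$, an immaterial difference) that beats the $O(\ell)$ count of grid boxes at distance $\sim\ell S$. No gap.
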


\begin{proof}
We note that there is a constant $C$ so that
\begin{equation}
\mathbf{P}(\mathcal{A}_{1;i}^{c})\le CK^{-\theta_{0}^{2}/2}\label{eq:PAi1}
\end{equation}
by \eqref{maxcoarse-single}. Also, we have
\begin{align*}
\mathbf{P} & \left(\max_{x\in\mathbb{E}_{j}^{\circ}}\max\{h_{\mathbb{R}^{*}:\mathbb{E}_{i}^{*}}^{(i)}(x),h_{\mathbb{R}^{*}:\mathbb{E}_{i}^{*}}(x)\}\le\theta_{0}\log K+S^{-1}\dist_{\Euc}(\mathbb{C}_{i},\mathbb{C}_{j}^{\circ})\right)\\
 & \le2\mathbf{P}\left(\max_{x\in\mathbb{E}_{j}^{\circ}}h_{\mathbb{R}^{*}:\mathbb{E}_{i}^{*}}(x)\le\left(\theta_{0}+\frac{\dist_{\Euc}(\mathbb{C}_{i},\mathbb{C}_{j}^{\circ})}{S\log K}\right)\log K\right)\\
 & \le C\exp\left\{ -\frac{1}{2}\left(\theta_{0}+\frac{\dist(\mathbb{C}_{i},\mathbb{C}_{j}^{\circ})}{S\log K}\right)^{2}\log K\right\} \le CK^{-\theta_{0}^{2}/2}\exp\left\{ -\frac{(\theta_{0}+2)\dist(\mathbb{C}_{i},\mathbb{C}_{j}^{\circ})}{2S}\right\} ,
\end{align*}
where the second inequality is by \eqref{maxcoarse-single}. Therefore,
by a union bound, we obtain
\begin{equation}
\mathbf{P}(\mathcal{A}_{2;i})\le CK^{-\theta_{0}^{2}/2}\sum_{j\in\mathcal{D}(i)}\exp\left\{ -\frac{(\theta_{0}+2)\dist(\mathbb{C}_{i},\mathbb{C}_{j}^{\circ})}{2S}\right\} \le CK^{-\theta_{0}^{2}/2}.\label{eq:PAi2}
\end{equation}
Taking a union bound over \eqref{PAi1} and \eqref{PAi2}, we obtain
the conclusion of the lemma.
\end{proof}
Now we are ready to state the result of our application of the Efron--Stein
inequality.
\begin{lem}
\label{lem:efronstein-readyforlemmas}We have
\begin{align}
\Var & \left(\log d_{\mathbb{R}}(\mathrm{L},\mathrm{R})\right)\nonumber \\
 & \le\mathbf{E}\left[\left(\log\frac{\tilde{d}_{\mathbb{R},1,S}(\mathrm{L},\mathrm{R})}{d_{\mathbb{R},1,S}(\mathrm{L},\mathrm{R})}\right)^{2}\right]+2\sum_{i\in\mathcal{Q}_{*}}\mathbf{E}\left[\mathbf{1}_{\mathcal{A}_{i}}\left(\left(\frac{d_{\mathbb{R},1,S}^{(i)}(\pi_{i}^{-})}{d_{\mathbb{R},1,S}(\pi)}-1\right)^{+}\right)^{2}\right]+2\Var\left(\log\frac{d_{\mathbb{R},1,S}(\mathrm{L},\mathrm{R})}{d_{\mathbb{R}}(\mathrm{L},\mathrm{R})}\right)\nonumber \\
 & \qquad+2\left(\mathbf{E}\left[\max_{i\in\mathcal{Q}_{*}}d_{(3\mathbb{E}_{i})^{*},\mathbb{A}_{i},K^{-\gamma\theta_{0}},S}^{(i)}(\mathrm{around})^{3}\right]\mathbf{E}[d_{\mathbb{R},1,S}(\mathrm{L},\mathrm{R})^{-3}]\mathbf{E}\left[\left(\sum_{i\in\mathcal{Q}_{*}}\frac{\mathbf{1}_{\mathcal{A}_{i}}d_{\mathbb{R}^{*},\mathbb{A}_{i},1,S}^{(i)}(\mathrm{around})}{d_{\mathbb{R},1,S}(\mathrm{L},\mathrm{R})}\right)^{3}\right]\right)^{1/3}\nonumber \\
 & \qquad+\sum_{i\in\mathcal{Q}_{*}}\mathbf{E}\left[\left(\left(\log\frac{d_{\mathbb{R},1,S}^{(i)}(\mathrm{L},\mathrm{R})}{d_{\mathbb{R},1,S}(\mathrm{L},\mathrm{R})}\right)^{+}\right)^{2}\mathbf{1}_{\mathcal{A}_{i}^{c}}\right].\label{eq:efronstein-readyforlemmas}
\end{align}
\end{lem}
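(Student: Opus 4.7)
The plan is to split off the dependence on the parameter $R$ at the outset by writing, via the triangle inequality for variance,
$$\Var(\log d_{\mathbb{R}}(\mathrm{L},\mathrm{R})) \le 2\Var(\log d_{\mathbb{R},1,S}(\mathrm{L},\mathrm{R})) + 2\Var(\log d_{\mathbb{R},1,S}(\mathrm{L},\mathrm{R})/d_{\mathbb{R}}(\mathrm{L},\mathrm{R})),$$
which immediately produces the third term of the target bound. The residual task is to bound $2\Var(\log d_{\mathbb{R},1,S}(\mathrm{L},\mathrm{R}))$ by the other four terms. I would accomplish this via the Efron--Stein inequality applied to the white-noise decomposition of \subsecref{resampling}. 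The restrictions of $W$ to the disjoint space-time blocks $[S^{2},\infty)\times\mathbb{R}^{*}$ (``coarse'') and $(0,S^{2})\times\mathbb{C}_{i}$ for $i\in\mathcal{Q}_{*}$ are independent, so Efron--Stein in its one-sided form gives
$$\Var(\log d_{\mathbb{R},1,S})\le \mathbf{E}\bigl[((\log d_{\mathbb{R},1,S}-\log\tilde d_{\mathbb{R},1,S})^{+})^{2}\bigr] + \sum_{i\in\mathcal{Q}_{*}}\mathbf{E}\bigl[((\log d^{(i)}_{\mathbb{R},1,S}/d_{\mathbb{R},1,S})^{+})^{2}\bigr].$$
By symmetry under swapping $W$ and $\widetilde W$ on the coarse block, the first right-hand summand equals $\tfrac12\mathbf{E}[(\log\tilde d/d)^{2}]$, and after multiplication by $2$ this is the first term of the lemma.

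For each fine block $i$, I would partition the expectation by the event $\mathcal{A}_{i}$ of \eqref{scrAidef}. On $\mathcal{A}_{i}^{c}$, the term is simply retained as the fifth term of the bound; \lemref{Aicbound} ensures the probability of the bad event is small enough for this tail contribution to be absorbed later in the induction. On $\mathcal{A}_{i}$, I would invoke the geodesic splitting of \subsecref{splitting}: for an $(\mathbb{R}^{*},\mathbb{R},1,S)$-geodesic $\pi$ of $d_{\mathbb{R},1,S}(\mathrm{L},\mathrm{R})$, the close segments $\pi_{m}$ with $m\in\mathcal{M}(i)$ all lie inside $\mathbb{E}_{i}$ and, under the resampled field, can be replaced by a loop around the surrounding annulus $\mathbb{A}_{i}$; this is exactly \eqref{replacepimbyanannulus}. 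Dividing by $d_{\mathbb{R},1,S}(\pi)=d_{\mathbb{R},1,S}(\mathrm{L},\mathrm{R})$ and applying the elementary bounds $(\log x)^{+}\le(x-1)^{+}$ and $((a-1)^{+}+b)^{2}\le 2((a-1)^{+})^{2}+2b^{2}$ for $b\ge 0$ yields
$$((\log d^{(i)}/d)^{+})^{2}\le 2\bigl((d^{(i)}_{\mathbb{R},1,S}(\pi_{i}^{-})/d_{\mathbb{R},1,S}(\pi)-1)^{+}\bigr)^{2}+2\bigl(d^{(i)}_{\mathbb{R}^{*},\mathbb{A}_{i},1,S}(\mathrm{around})/d_{\mathbb{R},1,S}(\mathrm{L},\mathrm{R})\bigr)^{2}.$$
Summed and integrated, the first summand is precisely the second term of the lemma, so the remaining task is to bound the annular contribution by the Hölder product of the fourth term.

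For that, I would invoke \eqref{scaledeltabymax} of \propref{fieldonsmallerbox}: on $\mathcal{A}_{1;i}$ the coarse field $h^{(i)}_{\mathbb{R}^{*}:(3\mathbb{E}_{i})^{*}}$ is at most $\theta_{0}\log K$ on $\mathbb{A}_{i}^{\circ}\subset(3\mathbb{E}_{i})^{\circ}$, which allows one to pass from the ambient field on $\mathbb{R}^{*}$ to the field on $(3\mathbb{E}_{i})^{*}$ at the cost of shrinking $\delta$ from $1$ to $K^{-\gamma\theta_{0}}$; writing $M_{i}=d^{(i)}_{(3\mathbb{E}_{i})^{*},\mathbb{A}_{i},K^{-\gamma\theta_{0}},S}(\mathrm{around})$ and $Y_{i}=\mathbf{1}_{\mathcal{A}_{i}}d^{(i)}_{\mathbb{R}^{*},\mathbb{A}_{i},1,S}(\mathrm{around})/d_{\mathbb{R},1,S}(\mathrm{L},\mathrm{R})$, this gives $Y_{i}\le M_{i}/d_{\mathbb{R},1,S}(\mathrm{L},\mathrm{R})$. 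The essential decoupling step is then
$$\sum_{i}Y_{i}^{2}\le \Bigl(\max_{j}M_{j}/d_{\mathbb{R},1,S}(\mathrm{L},\mathrm{R})\Bigr)\sum_{i}Y_{i},$$
after which the three-way Hölder inequality with exponents $3,3,3$ applied to $d_{\mathbb{R},1,S}(\mathrm{L},\mathrm{R})^{-1}$, $\max_{j}M_{j}$, and $\sum_{i}Y_{i}$ produces exactly the fourth term.

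The conceptual heart of the argument, and the step with genuine content beyond bookkeeping, is the annular replacement on $\mathcal{A}_{i}$ together with the handling of the resulting sum of squared annulus crossings. A naive estimate of $\sum_{i}\mathbf{E}[Y_{i}^{2}]$ would involve order $K^{2}$ summands and be far too lossy to close the eventual induction at large $K$; the decoupling via $\sum a_{i}^{2}\le(\max a_{j})(\sum a_{i})$ is essential because the max is controlled uniformly by a single annular crossing (to which \corref{hardmomentbound} will apply, once the coarse-field-induced shift to $\delta=K^{-\gamma\theta_{0}}$ has been performed on $\mathcal{A}_{1;i}$), while the sum is kept small by the fact that a geodesic visits only $O(1)$ of the annuli $\mathbb{A}_{i}$ thanks to \eqref{iMsize}. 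The negative moment $\mathbf{E}[d_{\mathbb{R},1,S}(\mathrm{L},\mathrm{R})^{-3}]$ produced by Hölder is then controlled by \propref{minmomentbound}, and the three terms thus combine to give the claimed bound.
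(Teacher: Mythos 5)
Your proposal is correct and follows essentially the same route as the paper's proof: the Cauchy--Schwarz split to isolate the $R$-truncation error, Efron--Stein over the white-noise blocks with exchangeability to pass to one-sided positive parts, the split along $\mathcal{A}_{i}$, the annular replacement \eqref{replacepimbyanannulus} with the elementary inequalities $(\log(X/Y))^{+}\le(X-Y)^{+}/Y$ and $(a+b)^{2}\le2a^{2}+2b^{2}$, and finally the $\sum a_{i}^{2}\le(\max_{j}a_{j})(\sum_{i}a_{i})$ decoupling followed by three-way H\"older and the passage to $\delta=K^{-\gamma\theta_{0}}$ on $(3\mathbb{E}_{i})^{*}$ via $\mathcal{A}_{1;i}$ and \eqref{scaledeltabymax}. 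No gaps.
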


\begin{rem}
Before deriving \eqref{efronstein-readyforlemmas}, we give a brief
description of its terms. By symmetry, we only need to consider the
potential \emph{increase} in the distance after resampling the field.
To this end, we consider a geodesic path with respect to the original
field, and for each type of resampling use it to construct a slightly
perturbed path that has a not-too-much-larger LQG graph length with
respect to the resampled field. The first term in \eqref{efronstein-readyforlemmas}
represents the effect of resampling the white noise at times far in
the future, which corresponds to resampling a smooth ``coarse field.''
This has a ``Lipschitz'' effect on the weight of the path, so we
can bound it using Gaussian concentration inequalities. The second
term represents the effect of resampling the white noise at times
close to $0$ on the path far away from the resampled region. Here
again, the resampling should be smooth, but since we are only considering
the effect on part of the path, we need to use a more customized argument.
The fourth term represents the effect of resampling the white noise
on the path close to the resampled region. In this case, the LQG in
the relevant region should change substantially, so we replace the
path in this region by an optimal circuit of an annulus surrounding
the region. The third term accounts for the error incurred in passing
from $d_{\mathbb{R},1,S}(\mathrm{L},\mathrm{R})$ to $d_{\mathbb{R}}(\mathrm{L},\mathrm{R})$,
which is to say the error incurred by requiring the geodesic balls
to have radius at most $S$. This error should be negligible if the
scale is large enough; we deal with small scales by the \emph{a priori}
bound \eqref{chiUaprioribound}, which was proved in \cite{DZZ18}.
Finally, the last term is an error term incurred from the possibility
that the coarse field could be extraordinarily large, which happens
with low probability and thus is bounded in \subsecref{largecoarsefield}.
\end{rem}

\begin{proof}[Proof of \lemref{efronstein-readyforlemmas}.]
By the Cauchy--Schwarz inequality, we have
\begin{equation}
\Var\left(\log d_{\mathbb{R}}(\mathrm{L},\mathrm{R})\right)\le2\Var\left(\log d_{\mathbb{R},1,S}(\mathrm{L},\mathrm{R})\right)+2\Var\left(\log\frac{d_{\mathbb{R},1,S}(\mathrm{L},\mathrm{R})}{d_{\mathbb{R}}(\mathrm{L},\mathrm{R})}\right).\label{eq:splitoffSpiece}
\end{equation}
By the Efron--Stein inequality and the decomposition of the white
noise described in \subsecref{resampling}, we have
\begin{equation}
\begin{aligned}\Var\left(\log d_{\mathbb{R}}(\mathrm{L},\mathrm{R})\right) & \le\frac{1}{2}\mathbf{E}\left[\left(\log\frac{\tilde{d}_{\mathbb{R},1,S}(\mathrm{L},\mathrm{R})}{d_{\mathbb{R},1,S}(\mathrm{L},\mathrm{R})}\right)^{2}\right]+\frac{1}{2}\sum_{i\in\mathcal{Q}_{*}}\mathbf{E}\left[\left(\log\frac{d_{\mathbb{R},1,S}^{(i)}(\mathrm{L},\mathrm{R})}{d_{\mathbb{R},1,S}(\mathrm{L},\mathrm{R})}\right)^{2}\right].\end{aligned}
\label{eq:efronstein}
\end{equation}

We note that the first term of \eqref{efronstein} corresponds to
the resampling of the coarse field, while the second corresponds to
the resampling of the fine field. For the present, we satisfy ourselves
with further developing the fine field term. Using the exchangeability
of the resampled and unresampled white noises, we obtain
\begin{equation}
\mathbf{E}\left[\left(\log\frac{d_{\mathbb{R},1,S}^{(i)}(\mathrm{L},\mathrm{R})}{d_{\mathbb{R},1,S}(\mathrm{L},\mathrm{R})}\right)^{2}\right]=2\mathbf{E}\left[\left(\left(\log\frac{d_{\mathbb{R},1,S}^{(i)}(\mathrm{L},\mathrm{R})}{d_{\mathbb{R},1,S}(\mathrm{L},\mathrm{R})}\right)^{+}\right)^{2}\right].\label{eq:useplus}
\end{equation}
Now we split the right-hand side of \eqref{useplus} as
\begin{equation}
\mathbf{E}\left[\left(\left(\log\frac{d_{\mathbb{R},1,S}^{(i)}(\mathrm{L},\mathrm{R})}{d_{\mathbb{R},1,S}(\mathrm{L},\mathrm{R})}\right)^{+}\right)^{2}\right]=\mathbf{E}\left[\left(\left(\log\frac{d_{\mathbb{R},1,S}^{(i)}(\mathrm{L},\mathrm{R})}{d_{\mathbb{R},1,S}(\mathrm{L},\mathrm{R})}\right)^{+}\right)^{2}(\mathbf{1}_{\mathcal{A}_{i}}+\mathbf{1}_{\mathcal{A}_{i}^{c}})\right].\label{eq:splitbyAi}
\end{equation}

Recalling the elementary inequality $(\log\frac{X}{Y})^{+}\le(X-Y)^{+}/Y$
for $X,Y>0$, we have
\begin{equation}
\left(\left(\log\frac{d_{\mathbb{R},1,S}^{(i)}(\mathrm{L},\mathrm{R})}{d_{\mathbb{R},1,S}(\mathrm{L},\mathrm{R})}\right)^{+}\right)^{2}\le\left(\frac{\left(d_{\mathbb{R},1,S}^{(i)}(\mathrm{L},\mathrm{R})-d_{\mathbb{R},1,S}(\mathrm{L},\mathrm{R})\right)^{+}}{d_{\mathbb{R},1,S}(\mathrm{L},\mathrm{R})}\right)^{2}.\label{eq:boundlocalpart}
\end{equation}
Now we have by \eqref{replacepimbyanannulus} that
\begin{align}
\left(d_{\mathbb{R},1,S}^{(i)}(\mathrm{L},\mathrm{R})-d_{\mathbb{R},1,S}(\mathrm{L},\mathrm{R})\right)^{+} & \le\left(d_{\mathbb{R},1,S}^{(i)}(\pi_{i}^{-})+d_{\mathbb{R}^{*},\mathbb{A}_{i},1,S}^{(i)}(\mathrm{around})-d_{\mathbb{R},1,S}(\pi)\right)^{+}\nonumber \\
 & \le\left(d_{\mathbb{R},1,S}^{(i)}(\pi_{i}^{-})-d_{\mathbb{R},1,S}(\pi)\right)^{+}+d_{\mathbb{R}^{*},\mathbb{A}_{i},1,S}^{(i)}(\mathrm{around}).\label{eq:breakup-useannulus}
\end{align}
Substituting \eqref{breakup-useannulus} into \eqref{boundlocalpart},
and using the fact that $\pi$ is an $(\mathbb{R}^{*},\mathbb{R},1,S)$-geodesic,
gives us
\begin{align}
\left(\left(\log\frac{d_{\mathbb{R},1,S}^{(i)}(\mathrm{L},\mathrm{R})}{d_{\mathbb{R},1,S}(\mathrm{L},\mathrm{R})}\right)^{+}\right)^{2} & \le\left(\frac{\left(d_{\mathbb{R},1,S}^{(i)}(\pi_{i}^{-})-d_{\mathbb{R},1,S}(\pi)\right)^{+}+d_{\mathbb{R}^{*},\mathbb{A}_{i},1,S}^{(i)}(\mathrm{around})}{d_{\mathbb{R},1,S}(\mathrm{L},\mathrm{R})}\right)^{2}\nonumber \\
 & \le2\left(\left(\frac{d_{\mathbb{R},1,S}^{(i)}(\pi_{i}^{-})}{d_{\mathbb{R},1,S}(\pi)}-1\right)^{+}\right)^{2}+2\left(\frac{d_{\mathbb{R}^{*},\mathbb{A}_{i},1,S}^{(i)}(\mathrm{around})}{d_{\mathbb{R},1,S}(\mathrm{L},\mathrm{R})}\right)^{2}.\label{eq:developfinefield}
\end{align}
Summing the $\mathbf{1}_{\mathcal{A}_{i}}$ part of \eqref{splitbyAi},
and then applying \eqref{developfinefield}, we get
\begin{multline}
\sum_{i\in\mathcal{Q}_{*}}\mathbf{E}\left[\mathbf{1}_{\mathcal{A}_{i}}\left(\left(\log\frac{d_{\mathbb{R},1,S}^{(i)}(\mathrm{L},\mathrm{R})}{d_{\mathbb{R},1,S}(\mathrm{L},\mathrm{R})}\right)^{+}\right)^{2}\right]\le2\sum_{i\in\mathcal{Q}_{*}}\mathbf{E}\left[\mathbf{1}_{\mathcal{A}_{i}}\left(\left(\frac{d_{\mathbb{R},1,S}^{(i)}(\pi_{i}^{-})}{d_{\mathbb{R},1,S}(\pi)}-1\right)^{+}\right)^{2}\right]\\
+2\sum_{i\in\mathcal{Q}_{*}}\mathbf{E}\left[\mathbf{1}_{\mathcal{A}_{i}}\left(\frac{d_{\mathbb{R}^{*},\mathbb{A}_{i},1,S}^{(i)}(\mathrm{around})}{d_{\mathbb{R},1,S}(\mathrm{L},\mathrm{R})}\right)^{2}\right].\label{eq:sumitup}
\end{multline}
We bound the second term of \eqref{sumitup} by 
\begin{align}
\sum_{i\in\mathcal{Q}_{*}}\mathbf{E} & \left[\mathbf{1}_{\mathcal{A}_{i}}\left(\frac{d_{\mathbb{R}^{*},\mathbb{A}_{i},1,S}^{(i)}(\mathrm{around})}{d_{\mathbb{R},1,S}(\mathrm{L},\mathrm{R})}\right)^{2}\right]\le\mathbf{E}\left[\left(\frac{\max\limits _{i\in\mathcal{Q}_{*}}\mathbf{1}_{\mathcal{A}_{i}}d_{\mathbb{R}^{*},\mathbb{A}_{i},1,S}^{(i)}(\mathrm{around})}{d_{\mathbb{R},1,S}(\mathrm{L},\mathrm{R})}\right)\left(\frac{\sum\limits _{i\in\mathcal{Q}_{*}}\mathbf{1}_{\mathcal{A}_{i}}d_{\mathbb{R}^{*},\mathbb{A}_{i},1,S}^{(i)}(\mathrm{around})}{d_{\mathbb{R},1,S}(\mathrm{L},\mathrm{R})}\right)\right]\nonumber \\
 & \le\left(\mathbf{E}\left[\max_{i\in\mathcal{Q}_{*}}d_{\mathbb{R}^{*},\mathbb{A}_{i},1,S}^{(i)}(\mathrm{around})^{3}\mathbf{1}_{\mathcal{A}_{i}}\right]\mathbf{E}[d_{\mathbb{R},1,S}(\mathrm{L},\mathrm{R})^{-3}]\mathbf{E}\left[\left(\sum_{i\in\mathcal{Q}_{*}}\frac{\mathbf{1}_{\mathcal{A}_{i}}d_{\mathbb{R}^{*},\mathbb{A}_{i},1,S}^{(i)}(\mathrm{around})}{d_{\mathbb{R},1,S}(\mathrm{L},\mathrm{R})}\right)^{3}\right]\right)^{1/3}.\label{eq:firstHolder}
\end{align}
We note that by \eqref{scaledeltabymax} and \eqref{A1idef}, we have
\[
d_{\mathbb{R}^{*},\mathbb{A}_{i},1,S}^{(i)}(\mathrm{around})\mathbf{1}_{\mathcal{A}_{i}}\le d_{(3\mathbb{E}_{i})^{*},\mathbb{A}_{i},K^{-\gamma\theta_{0}},S}^{(i)}(\mathrm{around});
\]
applying this in \eqref{firstHolder} and substituting into \eqref{sumitup},
we bound the first term of the sum of \eqref{splitbyAi} by
\begin{multline}
\sum_{i\in\mathcal{Q}_{*}}\mathbf{E}\left[\mathbf{1}_{\mathcal{A}_{i}}\left(\left(\log\frac{d_{\mathbb{R},1,S}^{(i)}(\mathrm{L},\mathrm{R})}{d_{\mathbb{R},1,S}(\mathrm{L},\mathrm{R})}\right)^{+}\right)^{2}\right]\le2\sum_{i\in\mathcal{Q}_{*}}\mathbf{E}\left[\mathbf{1}_{\mathcal{A}_{i}}\left(\left(\frac{d_{\mathbb{R},1,S}^{(i)}(\pi_{i}^{-})}{d_{\mathbb{R},1,S}(\pi)}-1\right)^{+}\right)^{2}\right]+\\
+2\left(\mathbf{E}\left[\max_{i\in\mathcal{Q}_{*}}d_{(3\mathbb{E}_{i})^{*},\mathbb{A}_{i},K^{-\gamma\theta_{0}},S}^{(i)}(\mathrm{around})^{3}\right]\mathbf{E}[d_{\mathbb{R},1,S}(\mathrm{L},\mathrm{R})^{-3}]\mathbf{E}\left[\left(\sum_{i\in\mathcal{Q}_{*}}\frac{\mathbf{1}_{\mathcal{A}_{i}}d_{\mathbb{R}^{*},\mathbb{A}_{i},1,S}^{(i)}(\mathrm{around})}{d_{\mathbb{R},1,S}(\mathrm{L},\mathrm{R})}\right)^{3}\right]\right)^{1/3}.\label{eq:firstterm-final}
\end{multline}

Then \eqref{efronstein-readyforlemmas} follows from \eqref{splitoffSpiece},
\eqref{efronstein}, \eqref{useplus}, \eqref{splitbyAi} and \eqref{firstHolder}.
\end{proof}
Most of the remainder of this section will be devoted to bounding
the terms of \eqref{efronstein-readyforlemmas}. Before we get to
that, though, we state the overall bound we obtain, and how we use
it to prove \thmref{variancebound}.
\begin{lem}
\label{lem:finalvariancebound}We have constants $0<c,C<\infty$ and
$K_{0}\ge1$ so that, if $K\ge K_{0}$ and $S\ge K^{-\eta}T_{0}$
(where $\eta$ is defined in \eqref{etadef} and $T_{0}$ is defined
as in \propref{pllb-inductive}),
\begin{equation}
\Var(\log d_{\mathbb{R}}(\mathrm{L},\mathrm{R}))\le C\log K+CK^{-c/2}\overline{\chi}_{KS/100}^{3}+CK^{C}S^{-c}.\label{eq:finalvariancebound}
\end{equation}
\end{lem}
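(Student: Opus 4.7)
The plan is to bound individually each of the five summands on the right-hand side of \eqref{efronstein-readyforlemmas} and combine them. The paper's outline already signals the natural grouping: coarse-field contribution (first summand), far-fine-field contribution (second summand), $R$-truncation error (third summand), close-fine-field circuit contribution (fourth summand), and the low-probability remainder on $\mathcal{A}_i^c$ (fifth summand). Three of these are comparatively straightforward, while the fourth is the heart of the induction and is where the hypothesis $\gamma<2$ enters in a strong way.

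For the coarse-field term, the difference $\tilde h - h$ restricted to $\mathbb{R}^\circ$ is a centered smooth Gaussian process whose variance and Hölder fluctuation are of order $\log K$ by \lemref{varislog} and \lemref{fluctstailbound}; since $\log d_{\mathbb{R},1,S}(\mathrm{L},\mathrm{R})$ is essentially $\gamma$-Lipschitz in uniform shifts of the field via \eqref{scaledists}, one obtains $\mathbf{E}[(\log(\tilde d/d))^2]=O(\log K)$. The second summand is handled by the same ideas restricted to the subpaths $\pi_i^-$: on $\mathcal{A}_i$ the field $h^{(i)}-h$ is smooth on $\mathbb{R}\setminus\mathbb{D}_i$ and its sup-norm decays in $\dist_{\Euc}(\mathbb{C}_i,\pi_m)$, quantitatively via event $\mathcal{A}_{2;i}$ in \eqref{A2idef}; summation over $i$ (using \eqref{iMsize} to limit the number of $m\in\mathcal{M}(i)$) yields another $O(\log K)$ contribution. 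The $R$-truncation term is bounded multiplicatively by \eqref{RprimeR-multiplicative}, giving $d_{\mathbb{R},1,S}/d_\mathbb{R}\le 1+CK$; dividing by the power-law lower bound $d_\mathbb{R}(\mathrm{L},\mathrm{R})\gtrsim S^c$ from \propref{pllb-inductive} and taking expectation using \propref{minmomentbound} yields the $CK^C S^{-c}$ contribution.

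The fourth (circuit) term is the main obstacle, and I would estimate its three Hölder factors separately. The first factor $\mathbf{E}[\max_i d_{(3\mathbb{E}_i)^*,\mathbb{A}_i,K^{-\gamma\theta_0},S}^{(i)}(\mathrm{around})^3]$ is reduced by \propref{LQGscaling} to third moments of annular crossing distances at effective scale $K^{-\eta}S$; \corref{hardmomentbound} together with a maximum bound over the $O(K^2)$ resampled boxes gives at most $K^C\,\Theta^{\mathrm{hard}}_{\mathbb{B}(K^{-\eta}S)}(p_1)^3$. The second factor is bounded by \propref{minmomentbound} by $C\,\overline\chi_{KS/100}^3\,\Theta^{\mathrm{easy}}_{\mathbb{B}(KS)}(p_0)^{-3}$. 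The third factor—the cube of a sum of $\mathbf{1}_{\mathcal{A}_i}d^{(i)}(\mathrm{around})/d$—is controlled by expanding the cube, using independence for well-separated $i$'s, and noting that the geodesic can pass through only $O(K)$ of the $\mathbb{A}_i$'s (by \propref{crossmanysmalls}-style disjointness); this yields $\le K^C\,\Theta^{\mathrm{hard}}_{\mathbb{B}(K^{-\eta}S)}(p_1)^3/\Theta^{\mathrm{easy}}_{\mathbb{B}(KS)}(p_0)^3$. Taking cube roots and multiplying, the resulting ratio $\Theta^{\mathrm{hard}}_{\mathbb{B}(K^{-\eta}S)}(p_1)/\Theta^{\mathrm{easy}}_{\mathbb{B}(KS)}(p_0)$ factors as $\chi_{K^{-\eta}S}\cdot\bigl(\Theta^{\mathrm{easy}}_{\mathbb{B}(K^{-\eta}S)}(p_0)/\Theta^{\mathrm{easy}}_{\mathbb{B}(KS)}(p_0)\bigr)$, and \propref{pllb-inductive} gives the second factor a $K^{-c(1+\eta)}$ decay. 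Cubing and absorbing logarithmic $K^C$ losses—which is possible precisely because $\eta<1$ by \eqref{etadef} and $K$ may be taken large—produces the stated $CK^{-c/2}\overline\chi_{KS/100}^3$ term.

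The fifth summand is negligible: \lemref{Aicbound} gives $\mathbf{P}(\mathcal{A}_i^c)\le CK^{-\theta_0^2/2}$, and a Cauchy--Schwarz bound combined with the positive-moment bound \propref{momentsbounded} (to control $(\log(d^{(i)}/d))^+$ crudely) absorbs this contribution into the existing estimates once summed over the $O(K^2)$ resampled boxes. The key obstacle throughout is balancing the constants in the fourth term: one must exploit $\eta<1$ so that the effective side-length $K^{-\eta}S$ of the resampled boxes is strictly smaller than the large scale $KS$, producing via \propref{pllb-inductive} a polynomial gap large enough to outcompete the $K^C$ losses from the maxima and from the $O(K^2)$ union bounds—precisely where the subcriticality hypothesis $\gamma\in(0,2)$ is used in full strength.
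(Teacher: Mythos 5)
Your decomposition into the five summands of \eqref{efronstein-readyforlemmas} and your identification of the circuit term as the crux match the paper's strategy, and your treatment of the coarse-field term and the $R$-truncation term is essentially correct (for the latter you should use the additive bound \eqref{RprimeR}, giving $d_{\mathbb{R},1,S}\le d_{\mathbb{R}}+CK^{3}$, rather than \eqref{RprimeR-multiplicative}: a multiplicative excess of $1+CK$ gives a $(\log K)^{2}$ contribution that neither decays in $S$ nor fits in the $C\log K$ slot). However, there are two substantive problems. First, you have the effective scale of the resampled boxes backwards. By \propref{LQGscaling}, absorbing the worst-case coarse field $\theta_{0}\log K$ turns $d_{\mathbb{B}(S),K^{-\gamma\theta_{0}}\delta}$ into a crossing at scale $K^{+\eta}S$, not $K^{-\eta}S$: a large coarse field makes sub-boxes look \emph{bigger}. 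The whole point of requiring $\theta_{0}>2$ together with $\eta=2\gamma\theta_{0}/(4+\gamma^{2})<1$ --- which is possible precisely when $\gamma<2$ --- is that $K^{\eta}S$ is still strictly smaller than $KS$, so \propref{pllb-inductive} yields a gap $K^{-c(1-\eta)}$ per moment power. With your $K^{-\eta}S$ the inequality ``effective scale $<KS$'' is vacuous for every $\eta>0$ and the hypothesis $\gamma<2$ is never used; moreover the gap you would then need to beat the $O(K^{2})$ union bound is obtained as in \lemref{maxofsmaller} by taking the moment power $B$ large so that $Bc(1-\eta)>2$, not by hoping that a fixed $K^{-c(1+\eta)}$ outruns an unspecified $K^{C}$.

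Second, two of your summands are missing the key pathwise mechanism that converts local quantities into fractions of the total crossing weight. For the third Hölder factor of the circuit term, ``expanding the cube and using independence'' does not close: there are $O(K^{6})$ cross terms, the $d^{(i)}_{\mathbb{R}^{*},\mathbb{A}_{i},1,S}(\mathrm{around})$ are not independent of $d_{\mathbb{R},1,S}(\mathrm{L},\mathrm{R})$, and the geodesic may enter order $K^{2}$ of the annuli. What is actually needed is the deterministic inequality $\sum_{i}\mathbf{1}_{\mathcal{A}_{i}}d^{(i)}_{\mathbb{R}^{*},\mathbb{A}_{i},1,S}(\mathrm{around})\le CH\,d_{\mathbb{R},1,S}(\mathrm{L},\mathrm{R})$ of \lemref{boundintermsofH}, where each replacement circuit is compared to the piece $d(x_{m(i)-1},x_{m(i)})$ of the geodesic it replaces via the ratio $H_{i}$, and these pieces sum to $Cd(\pi)$ by \eqref{splitisgood} and \eqref{iMsize}; the moments of $H_{i}$ are then what produce $\overline{\chi}_{K^{\eta}S}$ (\lemref{sumconclusion}). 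The same issue defeats your claim that the far-fine-field summand is ``$O(\log K)$ by the same ideas'': after squaring $\sum_{j}(\e^{\gamma\|h-h^{(i)}\|_{L^{\infty}(\mathbb{C}_{j}^{\circ})}}-1)\,d(\pi\cap\mathbb{C}_{j})/d(\pi)$ one must bound $\max_{j}d(\pi\cap\mathbb{C}_{j})/d(\pi)$, which is done by comparing $d(\pi\cap\mathbb{C}_{j})$ to an annular circuit around $\mathbb{A}_{j}$ (optimality of $\pi$, as in \eqref{boundthesmallerpart}) and then invoking \lemref{maxofsmaller} and \propref{minmomentbound}; this term therefore lands in the $CK^{-c/2}\overline{\chi}_{KS/100}^{3}$ bucket, not the $C\log K$ one, and an unconditional bound of the form $\e^{C\sqrt{\log K}}$ without the accompanying $K^{-c}$ would be too large for the induction in the proof of \thmref{variancebound}.
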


We will prove \lemref{finalvariancebound} in \subsecref{proofoffinalvarbound}.
First we show how to use \lemref{finalvariancebound} to carry out
the inductive argument to prove \thmref{variancebound}.
\begin{proof}[Proof of \thmref{variancebound}.]
\label{varianceboundproof}We have constants $0<z,Z<\infty$ such
that the following holds. First, by \lemref{finalvariancebound},
we have that whenever $K\ge Z$ and $S\ge K^{-\eta}T_{0}$, 
\begin{equation}
\Var(\log d_{\mathbb{R}}(\mathrm{L},\mathrm{R}))\le Z\left(\log K+K^{-z/2}\overline{\chi}_{KS/100}^{3}+K^{Z}S^{-z}\right).\label{eq:finalvariancebound-1}
\end{equation}
Moreover, by \eqref{chiUaprioribound}, for every $S_{0}>0$, we have
\begin{equation}
\overline{\chi}_{S_{0}}\le Z\e^{(\log S_{0})^{0.95}}.\label{eq:chiS0basecase}
\end{equation}
Finally, by \eqref{chiUvarbound}, for all $U>0$ we have
\begin{equation}
\chi_{U}\le Z\exp\left\{ Z\sqrt{\Var\left(\log d_{\mathbb{B}(U)}(\mathrm{hard})\right)}\right\} .\label{eq:chiUvarbound-application}
\end{equation}
At this point we treat the values of $z$ and $Z$ as fixed, and do
not let them change throughout the proof as we have been doing for
constants labeled $c$ and $C$. We want to choose $K$ and $S_{0}$
so that
\begin{align}
K & \ge Z,\label{eq:Kbigenough}\\
K^{\eta}S_{0} & \ge T_{0},\label{eq:S0satisfies}\\
K^{Z}S_{0}^{-z} & \le\log K,\label{eq:firsttermOK}\\
ZK^{-z}\e^{3(\log S_{0})^{0.95}\vee3Z\sqrt{3Z\log K}} & \le\log K.\label{eq:secondtermOK}
\end{align}
We can do this as follows. First, we fix
\begin{equation}
S_{0}=K^{Z/z+1},\label{eq:S0def}
\end{equation}
which guarantees that \eqref{firsttermOK} holds as long as $K$ is
sufficiently large. Then $K^{\eta-1}S_{0}=K^{Z/z+\eta}$, so as long
as $K$ is sufficiently large, \eqref{S0satisfies} will hold. Finally,
for \eqref{secondtermOK} to hold, we need
\[
\e^{(3(Z/z+z)^{0.95}(\log K)^{0.95})\vee3Z\sqrt{3Z\log K}-z\log K}\le\log K,
\]
which can be achieved by taking $K$ sufficiently large. Finally,
\eqref{Kbigenough} of course holds for sufficiently large $K$. Therefore,
we can achieve \eqref{Kbigenough}--\eqref{secondtermOK} by fixing
$K$ sufficiently large and then imposing \eqref{S0def}. Henceforth
we assume that $K$ and $S_{0}$ have been chosen in this way.

We note that \eqref{chiS0basecase} and \eqref{secondtermOK} imply
that if we put
\begin{equation}
\chi\coloneqq\overline{\chi}_{S_{0}}\vee Z\e^{Z\sqrt{3Z\log K}},\label{eq:chidef}
\end{equation}
then
\begin{equation}
K^{-z}\chi\le\log K.\label{eq:middletermOK}
\end{equation}

Now note that $[0,S_{0}]\subset\mathcal{S}_{\chi}$ by \eqref{chidef}
and the definition of $\mathcal{S}_{\chi}$ in \defref{chidefs}.
Suppose that $S$ is such that $KS\ge S_{0}$ and $[0,KS/100]\subset\mathcal{S}_{\chi}$.
By \eqref{S0satisfies}, we have that
\[
K^{\eta}S=K^{\eta-1}KS\ge K^{\eta-1}S_{0}\ge T_{0},
\]
so by plugging in \eqref{firsttermOK} and \eqref{middletermOK} into
\eqref{finalvariancebound-1}, we obtain
\[
\Var\left(\log d_{\mathbb{R}}(\mathrm{L},\mathrm{R})\right)\le Z\left(\log K+K^{-z}\chi^{3}+K^{Z}S^{-z}\right)\le Z\left(\log K+K^{-z}\chi^{3}+K^{Z}S^{-z}\right)\le3Z\log K.
\]
Therefore, by \eqref{chiUvarbound-application}, we have
\[
\chi_{KS}\le Z\e^{Z\sqrt{3Z\log K}}\le\chi,
\]
so $KS\in\mathcal{S}_{\chi}$ as well. By induction, this implies
that $\mathcal{S}_{\chi}=[0,\infty)$. From this, \eqref{finalvariancebound-1}
implies that the variance $\Var(\log d_{\mathbb{R}}(\mathrm{L},\mathrm{R}))$
is bounded uniformly in $S$, hence the result.
\end{proof}
Thus it remains to prove \lemref{finalvariancebound} by bounding
the terms of \eqref{efronstein-readyforlemmas}. That will be our
goal in the following subsections.

\subsubsection{The coarse field effect\label{subsec:coarsefieldeffect}}

First we address the effect of the coarse field (the first term of
\eqref{efronstein-readyforlemmas}) using Gaussian concentration.
\begin{lem}
\label{lem:coarsefieldeffect}We have a constant $C$ so that
\begin{equation}
\mathbf{E}\left[\left(\log\frac{\tilde{d}_{\mathbb{R},1,S}(\mathrm{L},\mathrm{R})}{d_{\mathbb{R},1,S}(\mathrm{L},\mathrm{R})}\right)^{2}\right]\le C\log K.\label{eq:coarseexpr}
\end{equation}
\end{lem}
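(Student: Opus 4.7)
The plan is to view $\log d_{\mathbb{R},1,S}(\mathrm{L},\mathrm{R})$ as a functional $\Phi$ of the \emph{coarse} part of the white noise $W|_{(S^{2},\infty)\times\mathbb{R}^{*}}$ (with the fine part $W|_{(0,S^{2})\times\mathbb{R}^{*}}$ held fixed) and to apply the Gaussian Poincar\'e inequality in the Cameron--Martin framework. Set $g(x)=\sqrt{\pi}\int_{S^{2}}^{\infty}\int_{\mathbb{R}^{*}}p_{t/2}^{\mathbb{R}^{*}}(x,y)W(\dif y\,\dif t)$ and define $\tilde g$ analogously with $\widetilde{W}$; both are smooth Gaussian fields. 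Since a smooth additive perturbation to the GFF multiplies the LQG measure by the exponential of that perturbation, $\mu_{\tilde h_{\mathbb{R}^{*}}}=\e^{\gamma(\tilde g-g)}\mu_{h_{\mathbb{R}^{*}}}$ almost surely. Combining \eqref{muineq}, \eqref{measure-comparison}, and \eqref{mostusefulscaling} in both directions gives the pointwise bound
\[
\left|\log\frac{\tilde d_{\mathbb{R},1,S}(\mathrm{L},\mathrm{R})}{d_{\mathbb{R},1,S}(\mathrm{L},\mathrm{R})}\right|\le\gamma\sup_{x\in\mathbb{R}^{(R)}}|\tilde g(x)-g(x)|.
\]

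A naive Borell--TIS bound on this supremum would yield only $O((\log K)^{2})$, which is why I would go through the sharper route: show that $\Phi$ is Lipschitz with respect to the $L^{2}$ (Cameron--Martin) norm on the coarse noise with constant $C\sqrt{\log K}$. For a shift $h\in L^{2}((S^{2},\infty)\times\mathbb{R}^{*})$, the perturbation $\Delta g(x)=\sqrt{\pi}\iint p_{t/2}^{\mathbb{R}^{*}}(x,y)h(y,t)\,\dif y\,\dif t$ satisfies, by Cauchy--Schwarz and the semigroup identity $\int p_{t/2}^{\mathbb{R}^{*}}(x,y)^{2}\,\dif y=p_{t}^{\mathbb{R}^{*}}(x,x)$,
\[
|\Delta g(x)|\le\sqrt{\pi}\|h\|_{L^{2}}\sqrt{\int_{S^{2}}^{\infty}p_{t}^{\mathbb{R}^{*}}(x,x)\,\dif t}\le C\sqrt{\log K}\|h\|_{L^{2}}
\]
uniformly for $x\in\mathbb{R}^{(R)}$. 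The heat-kernel integral estimate is the heat-kernel incarnation of \lemref{varislog}: points in $\mathbb{R}^{(R)}$ sit at distance at least $KS$ from $\partial\mathbb{R}^{*}$, so $p_{t}^{\mathbb{R}^{*}}(x,x)\lesssim 1/t$ on $[S^{2},(KS)^{2}]$ with exponential decay beyond, and the integral is $O(\log K)$. The same chain of inequalities as in the first paragraph then yields $|\Phi(W+h)-\Phi(W)|\le\gamma\|\Delta g\|_{\infty}\le C\sqrt{\log K}\|h\|_{L^{2}}$.

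Finally, conditional on the fine noise, $\Phi$ is a $C\sqrt{\log K}$-Lipschitz (in Cameron--Martin norm) functional of a standard white noise, so the Gaussian Poincar\'e inequality gives $\Var(\Phi\mid\text{fine})\le C\log K$. Since $\widetilde{W}$ is independent of $W$, the variables $\Phi(W)$ and $\Phi(\widetilde{W})$ are i.i.d.\ conditional on the fine noise, so
\[
\mathbf{E}\left[\left(\log\frac{\tilde d_{\mathbb{R},1,S}(\mathrm{L},\mathrm{R})}{d_{\mathbb{R},1,S}(\mathrm{L},\mathrm{R})}\right)^{2}\right]=2\,\mathbf{E}[\Var(\Phi\mid\text{fine})]\le C\log K,
\]
which is \eqref{coarseexpr}. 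The key conceptual point, and the only subtle step, is that the sharp $\log K$ (rather than $(\log K)^{2}$) comes from Gaussian concentration with respect to the $L^{2}$ Lipschitz constant, which exploits the spatial averaging built into the covariance of $g$; the only technical input is the heat-kernel integral bound, which is standard and follows from the fact that the variance contribution from scales between $S$ and $\diam_{\Euc}\mathbb{R}$ is logarithmic in their ratio.
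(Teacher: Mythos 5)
Your proposal is correct and follows essentially the same route as the paper: reduce to the conditional variance given the fine noise via the resampling identity, observe that $\log d_{\mathbb{R},1,S}(\mathrm{L},\mathrm{R})$ is $\gamma$-Lipschitz in the $L^{\infty}$ norm of the coarse field, bound the pointwise variance of the coarse field by $C\log K$, and conclude by Gaussian concentration. The only difference is packaging: the paper cites the infinite-dimensional concentration inequality of \cite[Lemma 2.1]{DZZ18} directly, whereas you unpack it into the Cameron--Martin Lipschitz estimate plus the Gaussian Poincar\'e inequality, which amounts to the same computation.
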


\begin{proof}
Let $\mathcal{F}_{S^{2}}$ denote the $\sigma$-algebra generated
by the white noise on $\mathbb{R}^{*}\times[0,S^{2}]$. We note that
\begin{equation}
\frac{1}{2}\mathbf{E}\left[\left(\log\frac{\tilde{d}_{\mathbb{R},1,S}(\mathrm{L},\mathrm{R})}{d_{\mathbb{R},1,S}(\mathrm{L},\mathrm{R})}\right)^{2}\right]=\mathbf{E}\Var(\log d_{\mathbb{R},1,S}(\mathrm{L},\mathrm{R})\mid\mathcal{F}_{S^{2}}).\label{eq:condvar}
\end{equation}
Let
\[
h_{\mathrm{coarse}}(x)=\sqrt{\pi}\int_{S^{2}}^{\infty}\int_{\mathbb{R}^{*}}p_{t/2}^{\mathbb{R}^{*}}(x,y)W(\dif y\,\dif t).
\]
Now we claim that there is a constant $C<\infty$ so that, for all
$x\in\mathbb{R}^{\circ}$, we have
\begin{equation}
\sup_{x\in\mathbb{R}^{\circ}}\Var(h_{\mathrm{coarse}}(x))\le C\log K.\label{eq:coarsevarbound}
\end{equation}
This is because we can write 
\[
h_{\mathrm{coarse}}(x)=\int_{\mathbb{R}^{*}}p_{S^{2}/2}^{\mathbb{R}^{*}}(x,y)h_{\mathbb{R}^{*}}(y)\,\dif y;
\]
from this is it is clear that the pointwise variance of $h_{\mathrm{coarse}}$
is finite, and \eqref{coarsevarbound} comes from rescaling so that
$S=1$ and then using \eqref{greensfunctionest}.

Since $\log d_{\mathbb{R},1,S}$ is measurable with respect to the
$\sigma$-algebra generated by $h_{\mathrm{fine}}$ and $h_{\mathrm{coarse}}$,
there is a deterministic functional $\widetilde{\mathscr{D}}$ so
that 
\[
\log d_{\mathbb{R},1,S}(\mathrm{L},\mathrm{R})=\widetilde{\mathscr{D}}(W|_{\mathbb{R}^{*}\times[0,S^{2}]},h_{\mathrm{coarse}})
\]
with probability $1$. Let $\mathscr{D}$ be an $\mathcal{F}_{S^{2}}$-measurable
random functional on $L^{\infty}(\mathbb{R}^{\circ})$ given by
\[
\mathscr{D}(h)=\widetilde{\mathscr{D}}(W|_{\mathbb{R}^{*}\times[0,S^{2}]},h).
\]
We note that, with probability $1$, $\mathscr{D}$ is a $\gamma$-Lipschitz
functional on $L^{\infty}(\mathbb{R}^{\circ})$. This is to say that
we have, for all $\Delta\in L^{\infty}(\mathbb{R}^{\circ})$, that
\[
\left|\mathscr{D}(h_{\mathrm{coarse}}+\Delta)-\mathscr{D}(h_{\mathrm{coarse}})\right|\le\gamma\|\Delta\|_{L^{\infty}(\mathbb{R}^{\circ})}
\]
by \eqref{measure-comparison} and \eqref{mostusefulscaling}.

Let $Q$ be the median value of $\mathscr{D}(h_{\mathrm{coarse}})$,
conditional on $\mathcal{F}_{S^{2}}$, so $Q$ is an $\mathcal{F}_{S^{2}}$-measurable
random variable so that 
\[
\mathbf{P}\left(\mathscr{D}(h_{\mathrm{coarse}})\le Q\mid\mathcal{F}_{S^{2}}\right),\mathbf{P}\left(\mathscr{D}(h_{\mathrm{coarse}})\ge Q\mid\mathcal{F}_{S^{2}}\right)\ge\frac{1}{2}
\]
almost surely. Let $\mathcal{X}=\{h\in L^{\infty}(\mathbb{R}^{\circ})\::\;\mathscr{D}(h_{\mathrm{coarse}})\le Q\}$,
so $\mathbf{P}(\mathscr{D}(h_{\mathrm{coarse}})\in\mathcal{X}\mid\mathcal{F}_{S^{2}})\ge\frac{1}{2}$
almost surely. We can then apply (an infinite-dimensional version
of) the Gaussian concentration inequality given in \cite[Lemma 2.1]{DZZ18},
along with \eqref{coarsevarbound}, to observe that
\[
\mathbf{P}\left(\log d_{\mathbb{R},1,S}(\mathrm{L},\mathrm{R})\ge Q+\lambda\mid\mathcal{F}_{S^{2}}\right)\le\mathbf{P}\left(\min_{h\in\mathcal{X}}\|h_{\mathrm{coarse}}-h\|_{L^{\infty}(\mathbb{R}^{\circ})}\ge\gamma^{-1}\lambda\;\middle|\;\mathcal{F}_{S^{2}}\right)\le C\exp\left\{ -\frac{(\lambda-C\sqrt{\log K})^{2}}{C\log K}\right\} 
\]
almost surely. A similar argument implies that
\[
\mathbf{P}\left(\log d_{\mathbb{R},1,S}(\mathrm{L},\mathrm{R})\le X-\lambda\mid\mathcal{F}_{S^{2}}\right)\le C\exp\left\{ -\frac{(\lambda-C\sqrt{\log K})^{2}}{C\log K}\right\} 
\]
almost surely. This implies that $\Var\left(\log d_{\mathbb{R},1,S}(\mathrm{L},\mathrm{R})\mid\mathcal{F}_{S^{2}}\right)\le C\log K$
almost surely, which means that 
\[
\mathbf{E}\Var\left(\log d_{\mathbb{R},1,S}(\mathrm{L},\mathrm{R})\mid\mathcal{F}_{S^{2}}\right)\le C\log K,
\]
implying \eqref{coarseexpr} by \eqref{condvar}.
\end{proof}

\subsubsection{The far fine field effect\label{subsec:farfinefield}}

Now we turn to the second term of \eqref{efronstein-readyforlemmas}.
First we deal with the part of the path which is ``far away'' from
the white noise being resampled. In this case we don't need to change
the path when we resample to get our bound. Rather, we simply bound
the increase in the weight of the path by the maximum of the increase
in the LQG measure. The bound we obtain in \lemref{farawaystuff}
is in terms of the maximum annular circuit distance and the total
crossing distance. These terms will also appear in the close fine
field bound in the next section, so we wait to bound them together
in \subsecref{maxsmall}.
\begin{lem}
\label{lem:Gbounds}For every $\lambda>0$, there is a $C<\infty$
so that, for all $i\in\mathcal{Q}_{*}$ and $j\in\mathcal{D}(i)$,
we have
\begin{equation}
\mathbf{E}\exp\left\{ \lambda\|h_{\mathbb{R}^{*}}-h_{\mathbb{R}^{*}}^{(i)}\|_{L^{\infty}(\mathbb{C}_{j}^{\circ})}\right\} \le C.\label{eq:Gijmoment}
\end{equation}
Moreover, if we define
\begin{equation}
G^{*}=\max_{\substack{i\in\mathcal{Q}_{*}\\
j\in\mathcal{D}(i)
}
}\left(\|h_{\mathbb{R}^{*}}-h_{\mathbb{R}^{*}}^{(i)}\|_{L^{\infty}(\mathbb{C}_{j}^{\circ})}\exp\left\{ \frac{\dist_{\Euc}(\mathbb{C}_{i},\mathbb{C}_{j}^{\circ})^{2}}{CS^{2}}\right\} \right)\label{eq:Gstardef}
\end{equation}
(where $\mathcal{D}(i)$ is defined as in \eqref{Didef}) then we
have
\begin{equation}
\mathbf{E}\exp\{\lambda G^{*}\}\le C\e^{C\sqrt{\log K}}.\label{eq:Gstarmoment}
\end{equation}
\end{lem}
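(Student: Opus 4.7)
The difference field $D^{(i)}(x) := h_{\mathbb{R}^*}(x) - h_{\mathbb{R}^*}^{(i)}(x)$ admits the explicit white-noise representation
\[
D^{(i)}(x) = \sqrt{\pi}\int_0^{S^2}\int_{\mathbb{C}_i} p_{t/2}^{\mathbb{R}^*}(x,y)(W - \widetilde{W})(\dif y\,\dif t),
\]
so it is a centered Gaussian field whose covariance is the $L^2$-inner product of the two integrands. The plan is to prove decaying pointwise variance and a good modulus of continuity using heat-kernel bounds, then apply the Fernique and Borell--TIS inequalities exactly as in the proof of \propref{maxcoarse}.

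Fix $i\in\mathcal{Q}_*$, $j\in\mathcal{D}(i)$, and set $D_{ij} := \dist_{\Euc}(\mathbb{C}_i,\mathbb{C}_j^\circ)$. The grid geometry ensures $D_{ij}\geq 2S$, and for $x\in\mathbb{C}_j^\circ$, $y\in\mathbb{C}_i$ we have $|x-y|\geq D_{ij}$. Dominating the Dirichlet heat kernel by the whole-plane one gives $p_t^{\mathbb{R}^*}(x,y)\leq Ct^{-1}e^{-|x-y|^2/(Ct)}$, and the substitution $u=D_{ij}^2/t$ yields
\[
\sup_{x\in\mathbb{C}_j^\circ}\Var D^{(i)}(x)\leq CS^2\int_0^{S^2}t^{-2}e^{-D_{ij}^2/(Ct)}\,\dif t\leq Ce^{-D_{ij}^2/(CS^2)}.
\]
Using the analogous gradient bound $|\nabla_z p_t^{\mathbb{R}^*}(z,y)|\leq Ct^{-3/2}e^{-|z-y|^2/(Ct)}$ together with the mean value theorem, the same type of computation yields the increment bound
\[
\Var(D^{(i)}(x)-D^{(i)}(x'))\leq C\frac{|x-x'|^2}{S^2}e^{-D_{ij}^2/(CS^2)}\qquad\text{for }x,x'\in\mathbb{C}_j^\circ.
\]

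The diameter of $\mathbb{C}_j^\circ$ in the canonical metric of $D^{(i)}$ is therefore at most $Ce^{-D_{ij}^2/(CS^2)}$. Fernique's inequality gives $\mathbf{E}\sup_{x\in\mathbb{C}_j^\circ}|D^{(i)}(x)|\leq Ce^{-D_{ij}^2/(CS^2)}$, and the Borell--TIS inequality upgrades this to a sub-Gaussian tail with variance parameter of the same order. Integrating the tail against $e^{\lambda t}$ proves \eqref{Gijmoment}.

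For \eqref{Gstarmoment}, choose the constant $C$ in the definition \eqref{Gstardef} large enough that the weighted random variables $Z_{ij} := e^{D_{ij}^2/(CS^2)}\sup_{x\in\mathbb{C}_j^\circ}|D^{(i)}(x)|$ have both mean and sub-Gaussian parameter bounded by an absolute constant, uniformly over pairs $(i,j)$ with $j\in\mathcal{D}(i)$. Because $|\mathcal{Q}_*|\leq CK^2$ and $|\mathcal{D}(i)|\leq CK^2$, there are at most $CK^4$ such pairs; the standard bound on the maximum of (possibly dependent) sub-Gaussian variables with uniform scale then gives $\mathbf{E}[G^*]\leq C\sqrt{\log K}$ together with a tail $\mathbf{P}(G^* \geq C\sqrt{\log K}+\lambda)\leq Ce^{-c\lambda^2}$, from which \eqref{Gstarmoment} follows by integration. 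The main technical obstacle is the variance computation above: the Gaussian decay in $D_{ij}/S$ must dominate the polynomial prefactors coming from the integral in $t$, which uses in an essential way the separation $D_{ij}\gtrsim S$ built into the definition of $\mathcal{D}(i)$.
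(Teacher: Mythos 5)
Your proposal is correct and follows essentially the same route as the paper: the white-noise representation of the difference field, heat-kernel bounds giving pointwise variance and increment variance both decaying like $\e^{-D_{ij}^2/(CS^2)}$, Fernique plus Borell--TIS for a uniformly sub-Gaussian tail of the weighted suprema, and a union bound over the $O(K^4)$ pairs $(i,j)$ to get the $\e^{C\sqrt{\log K}}$ factor. The only differences are cosmetic (a slightly cruder increment bound, and phrasing the final step as a max-of-sub-Gaussians estimate rather than directly integrating the union-bounded tail).
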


\begin{proof}
Let $j\in\mathcal{D}(i)$. Define $D=\dist_{\Euc}(\mathbb{C}_{i},\mathbb{C}_{j}^{\circ})$
and note that $D>0$ by \eqref{Didef}. We have, a constant $C$,
independent of $i,j,D$, so that for $x\in\mathbb{C}_{j}^{\circ}$,
\begin{align}
\Var & \left(h_{\mathbb{R}^{*}}^{(i)}(x)-h_{\mathbb{R}^{*}}(x)\right)=2\pi\int_{0}^{S^{2}}\int_{\mathbb{C}_{i}}|p_{t/2}^{\mathbb{R}^{*}}(x,y)|^{2}\,\dif y\,\dif t\le C\int_{0}^{S^{2}}\int_{\mathbb{C}^{2}}t^{-2}\exp\left\{ -\frac{|x-y|^{2}}{Ct}\right\} \,\dif y\,\dif t\nonumber \\
 & \le C\int_{\mathbb{C}_{i}}\int_{0}^{S^{2}|x-y|^{-2}}|x-y|^{-2}t^{-2}\e^{-1/(Ct)}\,\dif t\,\dif y\le CD^{-2}S^{2}\int_{0}^{S^{2}D^{-2}}t^{-2}\e^{-1/(Ct)}\,\dif t\le C\e^{-(D/S)^{2}/C}.\label{eq:varbound}
\end{align}
Also, for $x,z\in\mathbb{C}_{j}^{\circ}$, using \lemref{gradientbound},
we see that
\begin{align}
\Var & \left(\left(h_{\mathbb{R}^{*}}^{(i)}(x)-h_{\mathbb{R}^{*}}(x)\right)-\left(h_{\mathbb{R}^{*}}^{(i)}(z)-h_{\mathbb{R}^{*}}(z)\right)\right)=2\pi\int_{0}^{S^{2}}\int_{\mathbb{C}_{i}}|p_{t/2}^{\mathbb{R}^{*}}(x,y)-p_{t/2}^{\mathbb{R}^{*}}(z,y)|^{2}\,\dif y\,\dif t\nonumber \\
 & \le2\pi\int_{0}^{S^{2}}\int_{\mathbb{C}_{i}}|x-z|^{2}\|\nabla p_{t/2}^{\mathbb{R}^{*}}(\cdot,y)\|_{L^{\infty}(\mathbb{C}_{j}^{\circ})}^{2}\,\dif y\,\dif t\le C|x-z|^{2}S^{2}\int_{0}^{S^{2}}\frac{D^{2}}{t^{4}}\exp\left\{ -\frac{2D^{2}}{t}\right\} \,\dif t\nonumber \\
 & =C|x-z|^{2}S^{2}\int_{0}^{S^{2}}\frac{D^{2}}{t^{4}D^{8}}\exp\left\{ -\frac{2D^{2}}{tD^{2}}\right\} \,\dif(D^{2}t)=C|x-z|^{2}\frac{S^{2}}{D^{4}}\int_{0}^{S^{2}D^{-2}}\e^{-2/t}\,\dif t\nonumber \\
 & =C\left|\frac{x-z}{S}\right|^{2}\frac{S^{4}}{D^{4}}\e^{-(D/S)^{2}/C}.\label{eq:vardiffbd}
\end{align}
Fernique's inequality (\cite{Fer75}; see e.g. \cite[Theorem 4.1]{A90})
in light of \eqref{vardiffbd} then implies that 
\[
\mathbf{E}\sup_{x\in\mathbb{C}_{j}^{\circ}}\left(h_{\mathbb{R}^{*}}^{(i)}(x)-h_{R}(x)\right)\le C\e^{-(D/S)^{2}/C}=C\exp\left\{ -\frac{\dist_{\Euc}(\mathbb{C}_{i},\mathbb{C}_{j}^{\circ})^{2}}{CS^{2}}\right\} .
\]
Thus, by the Borell--TIS inequality (see e.g. \cite[Theorem 7.1]{L01},
\cite[Theorem 6.1]{biskup}, or \cite[Theorem 2.1]{A90}) and \eqref{varbound},
we have, for all $u\ge0$, that
\[
\mathbf{P}\left(\sup_{x\in\mathbb{C}_{j}^{\circ}}\left(h_{\mathbb{R}^{*}}^{(i)}(x)-h_{R}(x)\right)\exp\left\{ \frac{\dist_{\Euc}(\mathbb{C}_{i},\mathbb{C}_{j}^{\circ})^{2}}{CS^{2}}\right\} \ge C+u\right)\le\e^{-u^{2}}.
\]
Applying the same argument to $-\left(h_{\mathbb{R}^{*}}^{(i)}(x)-h_{R}(x)\right)$
and taking a union bound yields
\begin{equation}
\mathbf{P}\left(\|h_{\mathbb{R}^{*}}^{(i)}-h_{R}\|_{L^{\infty}(\mathbb{C}_{j}^{\circ})}\exp\left\{ \frac{\dist_{\Euc}(\mathbb{C}_{i},\mathbb{C}_{j}^{\circ})^{2}}{CS^{2}}\right\} \ge C+u\right)\le\e^{-u^{2}}.\label{eq:finallinftybd}
\end{equation}
Dropping the $\exp\left\{ \frac{\dist_{\Euc}(\mathbb{C}_{i},\mathbb{C}_{j}^{\circ})^{2}}{CS^{2}}\right\} $
in \eqref{finallinftybd} yields \eqref{Gijmoment}. On the other
hand, taking a union bound over all $i\in\mathcal{Q}_{*},j\in\mathcal{D}(i)$,
and noting that the such of all such $i,j$ has size of order $K^{4}$,
we obtain \eqref{Gstarmoment}.
\end{proof}
\begin{lem}
\label{lem:farawaystuff}There is a constant $C$ so that
\begin{equation}
\mathbf{E}\sum_{i\in\mathcal{Q}_{*}}\left(\left(\frac{d_{\mathbb{R},1,S}^{(i)}(\pi_{i}^{-})}{d_{\mathbb{R},1,S}(\pi)}-1\right)^{+}\right)^{2}\mathbf{1}_{\mathcal{A}_{i}}\le C\e^{C\sqrt{\log K}}\left(\mathbf{E}\max_{j\in\mathcal{Q}_{\circ}}d_{(3\mathbb{E}_{i})^{*},\mathbb{A}_{j},K^{-\gamma\theta_{0}},S}(\mathrm{around})^{3}\right)^{1/3}\left(\mathbf{E}d_{\mathbb{R},1,S}(\pi)^{-3}\right)^{1/3}.\label{eq:farawaystuff}
\end{equation}
\end{lem}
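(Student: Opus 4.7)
The plan is to exploit the fact that every segment $\pi_m$ of $\pi_i^-$ (those with $m\notin\mathcal{M}(i)$) is bounded away from the resampled box $\mathbb{C}_i$, so the change in the underlying field $h_{\mathbb{R}^{*}}^{(i)} - h_{\mathbb{R}^{*}}$ on a neighborhood of $\pi_m$ exhibits Gaussian decay in the distance from $\mathbb{C}_i$---which is exactly what is captured by the quantity $G^{*}$ from \lemref{Gbounds}.

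First I would apply subadditivity \eqref{triangleineq} to write $d^{(i)}_{\mathbb{R},1,S}(\pi_{i}^{-}) \le \sum_{m \notin \mathcal{M}(i)} d^{(i)}_{\mathbb{R},1,S}(\pi_m)$. For each such $m$, $\diam_{\Euc}(\pi_m)\le 2S$ by \eqref{diampibound} and every covering ball has Euclidean radius at most $S$, so all balls covering $\pi_m$ lie in a neighborhood $U_m$ of diameter $O(S)$, and by the $4S$-buffer built into \eqref{Nidef}--\eqref{Didef} the set $U_m$ is contained in $\bigcup_{j\in\mathcal{D}(i)}\mathbb{C}_j^{\circ}$. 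On $U_m$ the difference $h_{\mathbb{R}^{*}}^{(i)}-h_{\mathbb{R}^{*}}$ is a smooth function (the resampled white noise is supported on $\mathbb{C}_i\times(0,S^{2})$, separated from $U_m$), so $\mu^{(i)}|_{U_m} \le \e^{\gamma\Gamma_{i,m}^{+}}\mu|_{U_m}$ with $\Gamma_{i,m}=\sup_{U_m}(h_{\mathbb{R}^{*}}^{(i)}-h_{\mathbb{R}^{*}})$. Applying \eqref{measure-comparison} and \eqref{mostusefulscaling} to the original geodesic cover for $\pi_m$ gives $d^{(i)}(\pi_m)\le \e^{\gamma\Gamma_{i,m}^{+}} d(\pi_m)$. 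Summing and using $d(\pi_i^-) \le d(\pi)$ yields
\[
\left(\frac{d^{(i)}_{\mathbb{R},1,S}(\pi_{i}^{-})}{d_{\mathbb{R},1,S}(\pi)}-1\right)^{+} \le \frac{1}{d_{\mathbb{R},1,S}(\pi)}\sum_{m\notin\mathcal{M}(i)}d_{\mathbb{R},1,S}(\pi_m)\bigl(\e^{\gamma\Gamma_{i,m}^{+}}-1\bigr),
\]
and the definition \eqref{Gstardef} of $G^{*}$ bounds $\Gamma_{i,m}\le C G^{*}\exp\{-\dist_{\Euc}(U_m,\mathbb{C}_i)^{2}/(CS^{2})\}$, so that $\e^{\gamma\Gamma_{i,m}^{+}}-1\le C G^{*}\e^{\gamma G^{*}}\exp\{-\dist_{\Euc}(U_m,\mathbb{C}_i)^{2}/(CS^{2})\}$ by the elementary bound $\e^{t}-1\le t\e^{t}$.

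To match the precise form of \eqref{farawaystuff}, I would next bound $d(\pi_m)$ above by the annular ``around'' distance $d^{(i)}_{(3\mathbb{E}_{j(m)})^{*},\mathbb{A}_{j(m)},K^{-\gamma\theta_0},S}(\mathrm{around})$, invoking \propref{crossingbigimpliescrossingsmall} to confine $\pi_m$ to $\mathbb{E}_{j(m)}$ and the coarse-field bound $\mathcal{A}_{1;i}$ from \eqref{A1idef} together with \eqref{scaledeltabymax} to absorb the shift to $\delta=K^{-\gamma\theta_0}$. After squaring, summing over $i$, and swapping the order of summation, the sum $\sum_{i}\exp\{-2\dist_{\Euc}(U_m,\mathbb{C}_i)^{2}/(CS^{2})\}$ is bounded by an absolute constant by Gaussian integration over the $S$-spaced grid $\{\mathbb{C}_i\}_{i\in\mathcal{Q}_{*}}$. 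A Hölder decomposition with exponents $(3,3,3)$ then produces the three factors in the right-hand side: an expectation of a polynomial-in-$G^{*}$ times $\e^{c\gamma G^{*}}$ whose cube is $\le C\e^{C\sqrt{\log K}}$ by \lemref{Gbounds}; $\mathbf{E}\max_{j}d^{(i)}_{(3\mathbb{E}_j)^*,\mathbb{A}_j,K^{-\gamma\theta_0},S}(\mathrm{around})^{3}$; and $\mathbf{E}d_{\mathbb{R},1,S}(\pi)^{-3}$.

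The main obstacle is the geometric bookkeeping of both reductions. For the first, ensuring $U_m\subset\bigcup_{j\in\mathcal{D}(i)}\mathbb{C}_j^{\circ}$---so that $G^{*}$ genuinely controls $\Gamma_{i,m}$---requires carefully combining the buffer factor $4$ in \eqref{Nidef} with the $R=S$ cap on covering-ball radii inherited from \defref{modifiedgraphlength}. For the second, the replacement of $d(\pi_m)$ by an annular crossing is plausible because the geodesic segment is anchored in $\mathbb{E}_{j(m)}$ and so its length controls (up to the coarse field on $\mathcal{A}_{1;i}$) a typical crossing of the surrounding annulus; but making this rigorous requires a careful choice of the anchoring box $\mathbb{C}_{j(m)}$ among the $O(1)$ boxes meeting $\pi_m$ and an RSW-style comparison to convert a crossing bound into a circuit bound.
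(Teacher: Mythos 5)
Your plan is essentially the paper's argument: the paper also bounds the change on the far part of the geodesic by a smooth multiplicative perturbation of the measure controlled by the Gaussian-decaying quantity $G^{*}$ of \lemref{Gbounds}, replaces in-box path length by an annular circuit distance via geodesic optimality, and finishes with a $(3,3,3)$-H\"older decomposition whose first factor is controlled by \eqref{Gstarmoment}. The one organizational difference is that you index the decomposition by the segments $\pi_{m}$, $m\notin\mathcal{M}(i)$, whereas the paper indexes by the far boxes $\mathbb{C}_{j}$, $j\in\mathcal{D}(i)$, writing $d^{(i)}(\pi_{i}^{-})\le\sum_{j\in\mathcal{D}(i)}\exp\{\gamma\|h_{\mathbb{R}^{*}}-h_{\mathbb{R}^{*}}^{(i)}\|_{L^{\infty}(\mathbb{C}_{j}^{\circ})}\}d(\pi\cap\mathbb{C}_{j})$. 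The box-indexed version sidesteps your $U_{m}$-containment worry, and it also matters for the final summation: after squaring you must apply the $\ell^{1}$--$\ell^{\infty}$ H\"older so that the around-distance is pulled out of exactly \emph{one} factor as a max, the other factor is summed to give $\sum d(\pi\cap\mathbb{C}_{j})\le Cd(\pi)$ (or $\sum_{m}d(\pi_{m})=d(\pi)$), and the Gaussian weights are summed over $i$ for \emph{fixed} $j$ (or fixed $m$). If at any point you need $\sum_{m}\exp\{-\dist_{\Euc}(U_{m},\mathbb{C}_{i})^{2}/(CS^{2})\}$ to be bounded for fixed $i$, the argument breaks, because arbitrarily many segments $\pi_{m}$ can pile up in the same spatial region; your write-up is ambiguous on this order of operations, and it is the one place where the segment-indexed bookkeeping can genuinely go wrong.

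There is one concrete error to fix: you invoke $\mathcal{A}_{1;i}$ from \eqref{A1idef} to shift the annular distance at $\mathbb{A}_{j(m)}$ to the parameter $\delta=K^{-\gamma\theta_{0}}$, but $\mathcal{A}_{1;i}$ only controls the coarse field on $(3\mathbb{E}_{i})^{\circ}$, i.e.\ near the resampled box, and says nothing about the far boxes $j\in\mathcal{D}(i)$ where your annuli live. The relevant event is $\mathcal{A}_{2;i}$ from \eqref{A2idef}, whose bound carries the extra additive term $S^{-1}\dist_{\Euc}(\mathbb{C}_{i},\mathbb{C}_{j}^{\circ})$; via \eqref{mostusefulscaling} and \eqref{scaledeltabymax} this produces an exponentially \emph{growing} factor $\exp\{\gamma\dist_{\Euc}(\mathbb{C}_{i},\mathbb{C}_{j}^{\circ})/S\}$ in front of $d_{(3\mathbb{E}_{j})^{*},\mathbb{A}_{j},K^{-\gamma\theta_{0}},S}(\mathrm{around})$, exactly as in \eqref{boundthesmallerpart}. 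This factor must then be beaten by the super-Gaussian decay of $\|h_{\mathbb{R}^{*}}-h_{\mathbb{R}^{*}}^{(i)}\|_{L^{\infty}(\mathbb{C}_{j}^{\circ})}$ in the same sum; it is, but your proposal does not account for it. Finally, the around-distance in the target \eqref{farawaystuff} is with respect to the \emph{original} field (the optimality argument compares $\pi$ to a circuit for the unresampled measure), so the superscript $(i)$ in your replacement step should be dropped.
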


\begin{proof}
Note that by \eqref{piiminusdef}, \eqref{triangleineq}, and \eqref{ddi}
of \propref{fieldonsmallerbox}, we have
\begin{align*}
d_{\mathbb{R},1,S}^{(i)}(\pi_{i}^{-})\mathbf{1}_{\mathcal{A}_{i}} & \le\sum_{j\in\mathcal{D}(i)}d_{\mathbb{R},1,S}^{(i)}(\pi_{i}^{-}\cap\mathbb{C}_{j})\mathbf{1}_{\mathcal{A}_{2;i}}\le\mathbf{1}_{\mathcal{A}_{2;i}}\sum_{j\in\mathcal{D}(i)}\exp\left\{ \gamma\|h_{\mathbb{R}^{*}}^{(i)}-h_{\mathbb{R}^{*}}\|_{L^{\infty}(\mathbb{C}_{j}^{\circ})}\right\} d_{\mathbb{R},1,S}(\pi\cap\mathbb{C}_{j}).
\end{align*}
Thus we obtain
\begin{align}
 & \sum_{i\in\mathcal{Q}_{*}}\left(\left(\frac{d_{\mathbb{R},1,S}^{(i)}(\pi_{i}^{-})}{d_{\mathbb{R},1,S}(\pi)}-1\right)^{+}\right)^{2}\mathbf{1}_{\mathcal{A}_{i}}\nonumber \\
 & \le d_{\mathbb{R},1,S}(\pi)^{-2}\sum_{i\in\mathcal{Q}_{*}}\left(\sum_{j\in\mathcal{D}(i)}\left(\exp\left\{ \gamma\|h_{\mathbb{R}^{*}}-h_{\mathbb{R}^{*}}^{(i)}\|_{L^{\infty}(\mathbb{C}_{j}^{\circ})}\right\} -1\right)d_{\mathbb{R},1,S}(\pi\cap\mathbb{C}_{j})\right)^{2}\mathbf{1}_{\mathcal{A}_{i}}\nonumber \\
 & \ =d_{\mathbb{R},1,S}(\pi)^{-2}\sum_{j_{1},j_{2}\in\mathcal{Q}_{\circ}}\left[d_{\mathbb{R},1,S}(\pi\cap\mathbb{C}_{j_{1}})d_{\mathbb{R},1,S}(\pi\cap\mathbb{C}_{j_{2}})\sum_{\substack{i\in\mathcal{Q}_{*}\\
\mathcal{D}(i)\ni j_{1},j_{2}
}
}\mathbf{1}_{\mathcal{A}_{i}}\prod_{j\in\{j_{1},j_{2}\}}\left(\exp\left\{ \gamma\|h_{\mathbb{R}^{*}}-h_{\mathbb{R}^{*}}^{(i)}\|_{L^{\infty}(\mathbb{C}_{j}^{\circ})}\right\} -1\right)\right]\nonumber \\
 & \ \le d_{\mathbb{R},1,S}(\pi)^{-2}\left(\sum_{j\in\mathcal{Q}_{\circ}}d_{\mathbb{R},1,S}(\pi\cap\mathbb{C}_{j})\right)\cdot\nonumber \\
 & \quad\qquad\cdot\max_{j_{2}\in\mathcal{Q}_{\circ}}\left[\sum_{j_{1}\in\mathcal{Q}_{\circ}}\left(d_{\mathbb{R},1,S}(\pi\cap\mathbb{C}_{j_{1}})\sum_{\substack{i\in\mathcal{Q}_{*}\\
\mathcal{D}(i)\ni j_{1},j_{2}
}
}\mathbf{1}_{\mathcal{A}_{i}}\prod_{j\in\{j_{1},j_{2}\}}\left(\exp\left\{ \gamma\|h_{\mathbb{R}^{*}}-h_{\mathbb{R}^{*}}^{(i)}\|_{L^{\infty}(\mathbb{C}_{j}^{\circ})}\right\} -1\right)\right)\right],\label{eq:expandthesquare}
\end{align}
where the equality comes from expanding out the square and switching
the order of summation, and the inequality is the $\ell^{1}$-$\ell^{\infty}$
Hölder inequality. Now we have that
\begin{equation}
\sum_{j\in\mathcal{Q}_{\circ}}d_{\mathbb{R},1,S}(\pi\cap\mathbb{C}_{j})\le\sum_{j\in\mathcal{Q}_{\circ}}\sum_{\substack{1\le m\le M\\
\pi_{m}\cap\mathbb{C}_{j}\ne\emptyset
}
}d_{\mathbb{R},1,S}(\pi_{m})\le\sum_{1\le m\le M}\#\{j\in\mathcal{Q}_{\circ}\;:\;\pi_{m}\cap\mathbb{C}_{j}\ne\emptyset\}\cdot d_{\mathbb{R},1,S}(\pi_{m})\le Cd_{\mathbb{R},1,S}(\pi),\label{eq:sumisthesum}
\end{equation}
in which the last inequality is by \eqref{jpiCj} and \eqref{splitisgood}.
Also, we have
\begin{equation}
\mathbf{1}_{\mathcal{A}_{i}}d_{\mathbb{R},1,S}(\pi\cap\mathbb{C}_{j_{1}})\le\mathbf{1}_{\mathcal{A}_{i}}d_{\mathbb{R}^{*},\mathbb{A}_{j_{1}},1,S}(\mathrm{around})\le\exp\left\{ \gamma\frac{\dist(\mathbb{C}_{i},\mathbb{C}_{j_{1}}^{\circ})}{S}\right\} d_{(3\mathbb{E}_{i})^{*},\mathbb{A}_{j_{1}},K^{-\gamma\theta_{0}},S}(\mathrm{around}).\label{eq:boundthesmallerpart}
\end{equation}
Here, the first inequality is because the part of $\pi$ intersecting
$\mathbb{C}_{j}$ could be replaced by a circuit around the annulus
$\mathbb{A}_{j}$ to produce a new crossing, so $\pi\cap\mathbb{C}_{j}$
must have a smaller LGD length than the annular circuit around $\mathbb{A}_{j}$
by the optimality of $\pi$. (A similar, but not quite identical,
argument was used to derive \eqref{replacepimbyanannulus}.) The second
inequality in \eqref{boundthesmallerpart} is by applying \eqref{mostusefulscaling}
and \eqref{scaledeltabymax} using the coarse field bound \eqref{A2idef}.

Plugging \eqref{sumisthesum} and \eqref{boundthesmallerpart} into
the right-hand side of \eqref{expandthesquare}, we obtain
\begin{multline}
\sum_{i\in\mathcal{Q}_{*}}\left(\left(\frac{d_{\mathbb{R},1,S}^{(i)}(\pi_{i}^{-})}{d_{\mathbb{R},1,S}(\pi)}-1\right)^{+}\right)^{2}\mathbf{1}_{\mathcal{A}_{i}}\le\left(\frac{\max\limits _{j\in\mathcal{Q}_{\circ}}d_{(3\mathbb{E}_{i})^{*},\mathbb{A}_{j_{1}},K^{-\gamma\theta_{0}},S}(\mathrm{around})}{d_{\mathbb{R},1,S}(\pi)}\right)\cdot\\
\cdot\max_{j_{2}\in\mathcal{Q}_{\circ}}\sum_{\substack{j_{1}\in\mathcal{Q}_{\circ}\\
i\in\mathcal{Q}_{*}\\
\mathcal{D}(i)\ni j_{1},j_{2}
}
}\exp\left\{ \gamma\frac{\dist(\mathbb{C}_{i},\mathbb{C}_{j_{1}}^{\circ})}{S}\right\} \prod_{j\in\{j_{1},j_{2}\}}\left(\exp\left\{ \gamma\|h_{\mathbb{R}^{*}}-h_{\mathbb{R}^{*}}^{(i)}\|_{L^{\infty}(\mathbb{C}_{j}^{\circ})}\right\} -1\right).\label{eq:plugthingsin}
\end{multline}
We can bound the sum on the right-hand side by
\begin{align}
\sum_{\substack{j_{1}\in\mathcal{Q}_{\circ}\\
i\in\mathcal{Q}_{*}\\
\mathcal{D}(i)\ni j_{1},j_{2}
}
} & \exp\left\{ \gamma\frac{\dist(\mathbb{C}_{i},\mathbb{C}_{j_{1}}^{\circ})}{S}\right\} \prod_{j\in\{j_{1},j_{2}\}}\left(\exp\left\{ \gamma\|h_{\mathbb{R}^{*}}-h_{\mathbb{R}^{*}}^{(i)}\|_{L^{\infty}(\mathbb{C}_{j}^{\circ})}\right\} -1\right)\nonumber \\
 & \le\sum_{\substack{j_{1}\in\mathcal{Q}_{\circ}\\
i\in\mathcal{Q}_{*}\\
\mathcal{D}(i)\ni j_{1},j_{2}
}
}\exp\left\{ \gamma\frac{\dist(\mathbb{C}_{i},\mathbb{C}_{j_{1}}^{\circ})}{S}\right\} \prod_{j\in\{j_{1},j_{2}\}}\left(\exp\left\{ \gamma G^{*}\exp\left\{ -\frac{\dist_{\Euc}(\mathbb{C}_{i},\mathbb{C}_{j}^{\circ})^{2}}{CS^{2}}\right\} \right\} -1\right)\le\e^{C\gamma G^{*}},\label{eq:boundthesum}
\end{align}
where $G^{*}$ is as in \eqref{Gstardef} and the last inequality
is a simple estimate on the sum. Plugging \eqref{boundthesum} into
\eqref{plugthingsin}, we and applying Hölder's inequality and \lemref{Gbounds},
we obtain \eqref{farawaystuff}.
\end{proof}

\subsubsection{The close fine field effect\label{subsec:closefinefield}}

Now we deal with the part of the path that is close to the part of
the white noise being resampled: the last term of \eqref{efronstein-readyforlemmas}.
Here, when we resample, we replace the part of the path that is close
to the resampled box by a circuit around an annulus surrounding the
box. The ``fine field'' has been totally changed in the resampling,
so there is nothing to be gained by keeping track of it. However,
the ``coarse field'' should only be changed by a small, smooth difference,
and we want to keep track of this so that the sum of all the changes
will be bounded by the total weight of the original path. Thus we
define the notation (recalling that $\theta_{0}$ was fixed in \eqref{etadef})
\begin{gather}
\begin{aligned}F_{i}^{\uparrow} & =\max\limits _{x\in\mathbb{E}_{i}^{\circ}}h_{\mathbb{R}^{*}:\mathbb{E}_{i}^{*}}^{(i)}(x), & F_{i}^{\downarrow} & =\min\limits _{x\in\mathbb{E}_{i}^{\circ}}h_{\mathbb{R}^{*}:\mathbb{E}_{i}^{*}}(x),\end{aligned}
\label{eq:Fidef}
\end{gather}

The difference between $F_{i}^{\uparrow}$ and $F_{i}^{\downarrow}$
represents the ``change in the coarse field,'' and it will be of
order $1$, as we show in the next two lemmas.
\begin{lem}
\label{lem:coarsechange}For every $\lambda>0$, we have a constant
$C<\infty$, depending on $\lambda$, so that
\begin{equation}
\mathbf{E}\exp\left\{ \lambda\sup_{x\in\mathbb{E}_{i}^{\circ}}|h_{\mathbb{R}^{*}:\mathbb{E}_{i}^{*}}^{(i)}(x)-h_{\mathbb{R}^{*}:\mathbb{E}_{i}^{*}}(x)|\right\} \le C.\label{eq:coarsechange}
\end{equation}
\end{lem}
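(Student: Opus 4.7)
My plan is to reduce the supremum over $\mathbb{E}_i^\circ$ to a supremum over $\partial\mathbb{E}_i^*$ via the maximum principle for harmonic functions, and then control the resulting Gaussian field on $\partial\mathbb{E}_i^*$ using exactly the same heat-kernel machinery developed in the proof of \lemref{Gbounds}.

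First I would introduce the Gaussian ``difference field''
\[
\varphi(z)=\sqrt{\pi}\int_{0}^{S^{2}}\int_{\mathbb{C}_{i}}p_{t/2}^{\mathbb{R}^{*}}(z,y)\bigl(\widetilde{W}(\dif y\,\dif t)-W(\dif y\,\dif t)\bigr),
\]
which is (distributionally) $h_{\mathbb{R}^{*}}^{(i)}-h_{\mathbb{R}^{*}}$ and which is a smooth function of $z\in\mathbb{R}^{*}\setminus\mathbb{C}_{i}$. The geometry of the construction gives $\dist_{\Euc}(\mathbb{C}_{i},\partial\mathbb{E}_{i}^{*})\ge cS$ for an absolute constant $c>0$ (since $\mathbb{C}_{i}\subset\mathbb{E}_{i}$ and $\mathbb{E}_{i}^{*}$ is the smallest box containing $\mathbb{E}_{i}^{(2\diam_{\Euc}\mathbb{E}_{i})}$), so $\varphi$ is in particular smooth on a neighborhood of $\partial\mathbb{E}_{i}^{*}$.

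Next I would observe that by the Gibbs--Markov property (\propref{markovfieldproperty}), both $h_{\mathbb{R}^{*}:\mathbb{E}_{i}^{*}}^{(i)}$ and $h_{\mathbb{R}^{*}:\mathbb{E}_{i}^{*}}$ are harmonic in the interior of $\mathbb{E}_{i}^{*}$, so their difference $g:=h_{\mathbb{R}^{*}:\mathbb{E}_{i}^{*}}^{(i)}-h_{\mathbb{R}^{*}:\mathbb{E}_{i}^{*}}$ is harmonic there as well. By linearity of the harmonic extension and the fact that each interpolation coincides with its underlying field off $\mathbb{E}_{i}^{*}$, the function $g$ is exactly the harmonic extension into $\mathbb{E}_{i}^{*}$ of the smooth boundary datum $\varphi|_{\partial\mathbb{E}_{i}^{*}}$, and is continuous up to the boundary. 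The maximum principle then yields
\[
\sup_{x\in\mathbb{E}_{i}^{\circ}}\bigl|g(x)\bigr|\le\sup_{x\in\mathbb{E}_{i}^{*}}\bigl|g(x)\bigr|=\sup_{z\in\partial\mathbb{E}_{i}^{*}}\bigl|\varphi(z)\bigr|,
\]
so it suffices to bound $\mathbf{E}\exp\{\lambda\sup_{z\in\partial\mathbb{E}_{i}^{*}}|\varphi(z)|\}$.

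Finally, I would estimate the Gaussian process $(\varphi(z))_{z\in\partial\mathbb{E}_{i}^{*}}$ using the same heat-kernel bounds appearing in the proof of \lemref{Gbounds}. Since $|z-y|\ge cS$ uniformly in $z\in\partial\mathbb{E}_{i}^{*}$ and $y\in\mathbb{C}_{i}$, the computation leading to \eqref{varbound} gives $\Var(\varphi(z))\le C$ uniformly in $z\in\partial\mathbb{E}_{i}^{*}$, and the gradient estimate leading to \eqref{vardiffbd} gives $\Var(\varphi(z)-\varphi(z'))\le C|z-z'|^{2}/S^{2}$. Fernique's inequality and the Borell--TIS inequality then produce a sub-Gaussian tail of the form $\mathbf{P}(\sup_{z\in\partial\mathbb{E}_{i}^{*}}|\varphi(z)|\ge C+u)\le e^{-u^{2}/C}$, from which \eqref{coarsechange} follows for every $\lambda>0$. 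The only conceptual point requiring any care is the identification of $g$ as the harmonic extension of the smooth restriction $\varphi|_{\partial\mathbb{E}_{i}^{*}}$; once this is in hand, the maximum principle collapses the problem to a Gaussian estimate of a type already carried out several times in this section.
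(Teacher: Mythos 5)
Your proposal is correct and follows essentially the same route as the paper: reduce the supremum of the difference of harmonic interpolations over $\mathbb{E}_i^\circ$ to a supremum of the raw difference field near $\partial\mathbb{E}_i^*$ via the maximum principle, then control that Gaussian field with the variance and increment estimates of \lemref{Gbounds} plus Fernique and Borell--TIS. The only (cosmetic) difference is that the paper simply cites \eqref{Gijmoment} for the exponential moment on $\mathbb{E}_i^*\setminus\mathbb{E}_i^\circ$ rather than rerunning the heat-kernel computation on $\partial\mathbb{E}_i^*$.
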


\begin{proof}
Inequality \eqref{Gijmoment} of \lemref{Gbounds} implies that for
any $\lambda>0$ there is a $C<\infty$, depending only on $\lambda$,
so that
\[
\mathbf{E}\exp\left\{ \lambda\sup_{x\in\mathbb{E}_{i}^{*}\setminus\mathbb{E}_{i}^{\circ}}|h_{\mathbb{R}^{*}}^{(i)}(x)-h_{\mathbb{R}^{*}}(x)|\right\} \le C.
\]
By the maximum principle for harmonic functions, we have that
\[
\sup_{x\in\mathbb{E}_{i}^{\circ}}|h_{\mathbb{R}^{*}:\mathbb{E}_{i}^{*}}^{(i)}(x)-h_{\mathbb{R}^{*}:\mathbb{E}_{i}^{*}}(x)|\le\sup_{x\in\mathbb{E}_{i}^{*}\setminus\mathbb{E}_{i}^{\circ}}|h_{\mathbb{R}^{*}}^{(i)}(x)-h_{\mathbb{R}^{*}}(x)|,
\]
by which we obtain \eqref{coarsechange}.
\end{proof}
\begin{lem}
\label{lem:coarsefieldsmooth}For any $B>0$ and $A<\infty$ we have
a $C>0$ so that
\begin{equation}
\mathbf{E}\e^{\gamma B(F_{i}^{\uparrow}-F_{i}^{\downarrow})}\le CK^{1/A}.\label{eq:ffexpmoment}
\end{equation}
\end{lem}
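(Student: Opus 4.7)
\medskip

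\noindent\textbf{Proof plan for Lemma~\ref{lem:coarsefieldsmooth}.}
The plan is to decompose the difference $F_{i}^{\uparrow}-F_{i}^{\downarrow}$ into two pieces: one that captures the change in the harmonic interpolation caused by resampling the white noise inside $(0,S^{2})\times\mathbb{C}_{i}$, and another that captures the oscillation of the unresampled coarse field over $\mathbb{E}_{i}^{\circ}$. The first piece is small because the resampling is local in time and space; the second is small because $h_{\mathbb{R}^{*}:\mathbb{E}_{i}^{*}}$ is harmonic on $\mathbb{E}_{i}^{\circ}$ and its oscillation is controlled by the diameter ratio.

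Concretely, let $x^{*}\in\mathbb{E}_{i}^{\circ}$ be a point at which $h_{\mathbb{R}^{*}:\mathbb{E}_{i}^{*}}^{(i)}$ attains its maximum. Inserting and subtracting $h_{\mathbb{R}^{*}:\mathbb{E}_{i}^{*}}(x^{*})$ and using that $\min_{y}h_{\mathbb{R}^{*}:\mathbb{E}_{i}^{*}}(y)\le h_{\mathbb{R}^{*}:\mathbb{E}_{i}^{*}}(x^{*})$, I obtain the deterministic estimate
\[
F_{i}^{\uparrow}-F_{i}^{\downarrow}\le\bigl\|h_{\mathbb{R}^{*}:\mathbb{E}_{i}^{*}}^{(i)}-h_{\mathbb{R}^{*}:\mathbb{E}_{i}^{*}}\bigr\|_{L^{\infty}(\mathbb{E}_{i}^{\circ})}+\max_{x,y\in\mathbb{E}_{i}^{\circ}}\bigl|h_{\mathbb{R}^{*}:\mathbb{E}_{i}^{*}}(x)-h_{\mathbb{R}^{*}:\mathbb{E}_{i}^{*}}(y)\bigr|.
\]
Call the two summands $X_{i}$ and $Y_{i}$. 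For $X_{i}$, \lemref{coarsechange} supplies $\mathbf{E}\e^{\lambda X_{i}}\le C(\lambda)$ for every $\lambda>0$, uniformly in $i$. For $Y_{i}$, since $\mathbb{E}_{i}$ has diameter of order $S$ while $\mathbb{R}^{*}$ has diameter of order $KS$, the aspect-ratio hypotheses of \lemref{fluctstailbound} are satisfied whenever $\mathbb{E}_{i}^{*}\subset\mathbb{R}^{*}$, and the lemma yields the Gaussian tail bound $\mathbf{P}(Y_{i}\ge\theta)\le C\e^{-\theta^{2}/C}$, and thus $\mathbf{E}\e^{\lambda Y_{i}}\le C(\lambda)$ for every $\lambda>0$. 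Combining $X_{i}$ and $Y_{i}$ via Cauchy--Schwarz gives
\[
\mathbf{E}\e^{\gamma B(F_{i}^{\uparrow}-F_{i}^{\downarrow})}\le\bigl(\mathbf{E}\e^{2\gamma BX_{i}}\bigr)^{1/2}\bigl(\mathbf{E}\e^{2\gamma BY_{i}}\bigr)^{1/2}\le C,
\]
with $C$ depending only on $B$ (and $\gamma$). This is strictly stronger than the claimed bound $CK^{1/A}$.

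The one issue is the containment $\mathbb{E}_{i}^{*}\subset\mathbb{R}^{*}$ and the aspect-ratio condition on $\mathbb{E}_{i}$, which can fail for the handful of indices $i$ whose box $\mathbb{C}_{i}$ lies within a constant number of side lengths of $\partial\mathbb{R}^{*}$. For these boundary indices I would replace $\mathbb{E}_{i}$ inside the oscillation estimate by a slightly enlarged box $\mathbb{E}_{i}'\supset\mathbb{E}_{i}$ of comparable diameter whose starred version fits inside $\mathbb{R}^{*}$ and has aspect ratio in $[1/3,3]$; the same decomposition argument then goes through with $\mathbb{E}_{i}^{*}$ replaced by $(\mathbb{E}_{i}')^{*}$, using that \propref{markovfieldproperty} lets one rewrite $h_{\mathbb{R}^{*}:\mathbb{E}_{i}^{*}}=h_{\mathbb{R}^{*}:(\mathbb{E}_{i}')^{*}}+h_{(\mathbb{E}_{i}')^{*}:\mathbb{E}_{i}^{*}}$ and control the second summand by the same maximum principle and Gaussian smoothness estimates. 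The main (minor) obstacle is therefore just the bookkeeping to cover these boundary cases; once they are dispatched, the remaining argument is a direct consequence of \lemref{coarsechange} and \lemref{fluctstailbound}, and yields the bound uniformly in $i\in\mathcal{Q}_{*}$ with no growth in $K$.
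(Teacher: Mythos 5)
Your proof is correct and uses the same decomposition as the paper: the pointwise bound $F_{i}^{\uparrow}-F_{i}^{\downarrow}\le\|h_{\mathbb{R}^{*}:\mathbb{E}_{i}^{*}}^{(i)}-h_{\mathbb{R}^{*}:\mathbb{E}_{i}^{*}}\|_{L^{\infty}(\mathbb{E}_{i}^{\circ})}+\max_{x,y\in\mathbb{E}_{i}^{\circ}}|h_{\mathbb{R}^{*}:\mathbb{E}_{i}^{*}}(x)-h_{\mathbb{R}^{*}:\mathbb{E}_{i}^{*}}(y)|$ is exactly \eqref{FplusFminusbreakup}, and the two terms are controlled by \lemref{coarsechange} and \lemref{fluctstailbound} just as in the paper (Cauchy--Schwarz versus Young to recombine is immaterial). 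The one genuine difference is that the paper, in \eqref{fluctsJensen}, bounds the exponential moment of the \emph{maximum over $i\in\mathcal{Q}_{*}$} of the oscillation term via Jensen and a union over the $O(K^{2})$ boxes, which is where the $K^{1/A}$ factor comes from; since the lemma as stated concerns a single fixed $i$, your $K$-independent bound is a legitimate strengthening rather than a gap. Your caution about boundary indices is reasonable but not really an obstacle: for $i$ near $\partial\mathbb{R}^{\circ}$ the truncated box $\mathbb{E}_{i}$ is still a rectangle of aspect ratio at most roughly $2$, and $\mathbb{E}_{i}^{*}\subset\mathbb{R}^{*}$ holds once $K$ is large, so \lemref{fluctstailbound} applies directly and the enlargement device is not needed (the paper silently makes the same assumption).
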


\begin{proof}
We have that
\begin{align}
F_{i}^{\uparrow}-F_{i}^{\downarrow} & =\max_{x\in\mathbb{E}_{i}^{\circ}}h_{\mathbb{R}^{*}:\mathbb{E}_{i}^{*}}^{(i)}(x)-\min_{x\in\mathbb{E}_{i}^{\circ}}h_{\mathbb{R}^{*}:\mathbb{E}_{i}^{*}}(x)=\max_{x,y\in\mathbb{E}_{i}^{\circ}}(h_{\mathbb{R}^{*}:\mathbb{E}_{i}^{\circ}}^{(i)}(x)-h_{\mathbb{R}^{*}:\mathbb{E}_{i}^{*}}(y))\nonumber \\
 & \le\max_{x,y\in\mathbb{E}_{i}^{\circ}}|h_{\mathbb{R}^{*}:\mathbb{E}_{i}^{*}}(x)-h_{\mathbb{R}^{*}:\mathbb{E}_{i}^{*}}(y)|+\max_{x\in\mathbb{E}_{i}^{\circ}}|h_{\mathbb{R}^{*}:\mathbb{E}_{i}^{*}}^{(i)}(x)-h_{\mathbb{R}^{*}:\mathbb{E}_{i}^{*}}(x)|.\label{eq:FplusFminusbreakup}
\end{align}
To bound the first term of \eqref{FplusFminusbreakup}, we note that
by \lemref{fluctstailbound}, we have for any $\lambda>0$ that there
is a $C$ depending on $\lambda$ so that
\[
\mathbf{E}\exp\left\{ \lambda\max_{x,y\in\mathbb{E}_{i}^{\circ}}|h_{\mathbb{R}^{*}:\mathbb{E}_{i}^{*}}(x)-h_{\mathbb{R}^{*}:\mathbb{E}_{i}^{*}}(y)|\right\} \le C.
\]
Therefore, by Jensen's inequality, we have for any $A>0$ that
\begin{align}
\mathbf{E}\exp & \left\{ \lambda\max_{i\in\mathcal{Q}_{*}}\max_{x,y\in\mathbb{E}_{i}^{\circ}}|h_{\mathbb{R}^{*}:\mathbb{E}_{i}^{*}}(x)-h_{\mathbb{R}^{*}:\mathbb{E}_{i}^{*}}(y)|\right\} \le\left(\mathbf{E}\exp\left\{ A\lambda\max_{i\in\mathcal{Q}_{*}}\max_{x,y\in\mathbb{E}_{i}^{\circ}}|h_{\mathbb{R}^{*}:\mathbb{E}_{i}^{*}}(x)-h_{\mathbb{R}^{*}:\mathbb{E}_{i}^{*}}(y)|\right\} \right)^{1/A}\nonumber \\
 & \le\left(\sum_{i\in\mathcal{Q}_{*}}\mathbf{E}\exp\left\{ A\lambda\max_{x,y\in\mathbb{E}_{i}^{\circ}}|h_{\mathbb{R}^{*}:\mathbb{E}_{i}^{\circ}}(x)-h_{\mathbb{R}^{*}:\mathbb{E}_{i}^{\circ}}(y)|\right\} \right)^{1/A}\le C|\mathcal{Q}_{*}|^{1/A}\le CK^{2/A},\label{eq:fluctsJensen}
\end{align}
for some $C$ depending on $A$ and $\lambda$. Considering the second
term of \eqref{FplusFminusbreakup}, we note that, by \lemref{coarsechange},
we have 
\begin{equation}
\mathbf{E}\exp\left\{ \lambda\max_{x\in\mathbb{E}_{i}^{\circ}}|h_{\mathbb{R}^{*}:\mathbb{E}_{i}^{*}}^{(i)}(x)-h_{\mathbb{R}^{*}:\mathbb{E}_{i}^{*}}(x)|\right\} \le C\label{eq:coarsechange-pply}
\end{equation}
for some $C$ depending on $\lambda$. Bounding the exponential moments
of the first and second terms of \eqref{FplusFminusbreakup} using
\eqref{fluctsJensen} and \eqref{coarsechange-pply} respectively,
and then using Young's inequality, yields \eqref{ffexpmoment}.
\end{proof}
Now we define
\begin{align*}
H_{i} & =\frac{d_{(3\mathbb{E}_{i})^{*},\mathbb{A}_{i},\e^{-\gamma F_{i}^{\uparrow}},S}^{(i)}(\mathrm{around})}{d_{\mathbb{E}_{i},\e^{-\gamma F_{i}^{\downarrow}},S}(\min;S/\diam_{\Euc}(\mathbb{E}_{i})}, & H & =\max_{i\in\mathcal{Q}_{\circ}}H_{i},
\end{align*}
representing the cost of replacing a geodesic across a box with respect
the original fine field with a circuit around the box with respect
to the resampled fine field. The choices of $\delta$ (as $\e^{-\gamma F_{i}^{\uparrow}}$
and $\e^{-\gamma F_{i}^{\downarrow}}$) in the definition of $H_{i}$
represent the worst-case contributions of the coarse field, which
we bounded in the previous lemma. We now bound the sum of all of the
``replacement'' annuli, divided by the total crossing length, in
terms of $H$.
\begin{lem}
\label{lem:boundintermsofH}There is a constant $C<\infty$ so that
\begin{equation}
\frac{\sum\limits _{\substack{i\in\mathcal{Q}_{*}\\
\mathcal{M}(i)\ne\emptyset
}
}d_{\mathbb{R}^{*},\mathbb{A}_{i},1,S}^{(i)}(\mathrm{around})}{d_{\mathbb{R},1,S}(\mathrm{L},\mathrm{R})}\le CH.\label{eq:sumestimate}
\end{equation}
\end{lem}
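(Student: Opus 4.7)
My plan is to bound, for each $i$ with $\mathcal{M}(i) \ne \emptyset$, the numerator $d^{(i)}_{\mathbb{R}^*, \mathbb{A}_i, 1, S}(\mathrm{around})$ by $CH \cdot d_{\mathbb{R},1,S}(\pi_{m(i)})$ for a suitable segment index $m(i) \in \mathcal{M}(i)$, and then to sum over $i$ using the bounded-overlap property \eqref{iMsize}. The first step strips off the coarse field surrounding $\mathbb{A}_i$. Since $\mathbb{A}_i \subset 3\mathbb{E}_i$, the right-hand inequality of \eqref{scaledeltabymax} yields
\[
d^{(i)}_{\mathbb{R}^*, \mathbb{A}_i, 1, S}(\mathrm{around}) \le C\cdot d^{(i)}_{(3\mathbb{E}_i)^*, \mathbb{A}_i, e^{-\gamma F_i^\uparrow}, S}(\mathrm{around}),
\]
and then by the definition $H = \max_i H_i$ the right-hand side is at most $CH\cdot d_{\mathbb{E}_i, e^{-\gamma F_i^\downarrow}, S}\bigl(\min; S/\diam_{\Euc}(\mathbb{E}_i)\bigr)$.

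The second step bounds this min-crossing distance by $d_{\mathbb{R},1,S}(\pi_m)$ for any $m \in \mathcal{M}(i)$ with $m < M$. For such $m$, $\diam_{\Euc}(\pi_m) \ge S$ by construction of the $\tau_m$, so there exist $y_1, y_2 \in \pi_m$ with $|y_1 - y_2| \ge S$. Because $\pi_m \subset \mathbb{E}_i$ (as in \eqref{pathstaysinEi}), Proposition~\ref{prop:crossingbigimpliescrossingsmall} gives $d_{\mathbb{R},1,S}(y_1, y_2) = d_{\mathbb{R}^*, \mathbb{E}_i, 1, S}(y_1, y_2)$, and the left-hand inequality of \eqref{scaledeltabymax} gives $d_{\mathbb{E}_i, e^{-\gamma F_i^\downarrow}, S}(y_1, y_2) \le d_{\mathbb{R}^*, \mathbb{E}_i, 1, S}(y_1, y_2)$. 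Chaining these with the definition of the min-crossing and the optimality of $\pi$ on the subpath between $y_1$ and $y_2$ gives
\[
d_{\mathbb{E}_i, e^{-\gamma F_i^\downarrow}, S}\bigl(\min; S/\diam_{\Euc}(\mathbb{E}_i)\bigr) \le d_{\mathbb{R},1,S}(\pi_m).
\]
Combining with Step~1, $d^{(i)}_{\mathbb{R}^*,\mathbb{A}_i,1,S}(\mathrm{around}) \le CH\cdot d_{\mathbb{R},1,S}(\pi_{m(i)})$. The isolated case $\mathcal{M}(i) = \{M\}$ (where the only close segment is the last one, which may have diameter $<S$) affects only $O(1)$ indices $i$, since $\pi_M$ has bounded extent and only boundedly many $\mathbb{C}_i$ lie near it, and is thus absorbed into $C$.

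Summing over $i$ and applying \eqref{iMsize} (each $m$ belongs to $O(1)$ sets $\mathcal{M}(i)$) yields $\sum_i d^{(i)}_{\mathbb{R}^*,\mathbb{A}_i,1,S}(\mathrm{around}) \le CH\sum_m d_{\mathbb{R},1,S}(\pi_m)$. Finally, because the cuts $\tau_m$ lie in $\mathcal{T}$ (i.e., at intersections of successive geodesic balls) and each geodesic ball has Euclidean radius $\le S$ while each $\pi_m$ (for $m<M$) has diameter $\ge S$, each geodesic ball intersects only a bounded number of consecutive segments $\pi_m$. Hence $\sum_m d_{\mathbb{R},1,S}(\pi_m) \le \sum_k \kappa_1(\mu(B_k))\cdot\#\{m:B_k\cap\pi_m\ne\emptyset\} \le C d_{\mathbb{R},1,S}(\pi) = C d_{\mathbb{R},1,S}(\mathrm{L},\mathrm{R})$, giving the claim.

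The main technical subtlety I anticipate is in Step~1: $F_i^\uparrow$ in \eqref{Fidef} is defined using $h^{(i)}_{\mathbb{R}^*:\mathbb{E}_i^*}$ on $\mathbb{E}_i^\circ$, whereas a direct application of \eqref{scaledeltabymax} with enclosing domain $(3\mathbb{E}_i)^*$ naturally produces the maximum of $h^{(i)}_{\mathbb{R}^*:(3\mathbb{E}_i)^*}$ over $\mathbb{A}_i^\circ$. Reconciling the two can be accomplished either by the Gibbs--Markov decomposition $h^{(i)}_{\mathbb{R}^*:(3\mathbb{E}_i)^*} = h^{(i)}_{\mathbb{R}^*:\mathbb{E}_i^*} + h^{(i)}_{\mathbb{E}_i^*:(3\mathbb{E}_i)^*}$ on $\mathbb{E}_i^*$ (the correction term being harmonic and smoothly controlled away from $\partial \mathbb{E}_i^*$) or by a two-step application of \eqref{scaledeltabymax} with an intermediate enclosing domain; in either case, the discrepancy is an $O(1)$ additive shift in the coarse field that becomes a bounded multiplicative constant, absorbed into $C$.
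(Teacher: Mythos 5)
Your argument is essentially the paper's own proof: strip the coarse field around $\mathbb{E}_i$ to reduce the annular crossing to $H_i$ times the min-crossing of $\mathbb{E}_i$, dominate that min-crossing by the weight of a segment $\pi_m$ with $m\in\mathcal{M}(i)$ via \propref{crossingbigimpliescrossingsmall} and \eqref{scaledeltabymax}, and then sum over $i$ using the bounded multiplicity \eqref{iMsize} together with \eqref{splitisgood}. If anything you are more careful than the paper, which uses the endpoints $x_{m-1},x_{m}$ (whose mutual Euclidean distance need not be $\ge S$ even though $\diam_{\Euc}(\pi_m)\ge S$) where you use interior points $y_1,y_2$ at distance $\ge S$, and which does not comment on the degenerate case $\mathcal{M}(i)=\{M\}$ or on the domain mismatch in the definition of $F_i^{\uparrow}$; your resolutions of those two points are sketched rather than proved (in particular the discrepancy between the two harmonic extensions is a random field with Gaussian tails, not a deterministic $O(1)$ constant, and the $\mathcal{M}(i)=\{M\}$ indices still require bounding the min-crossing of $\mathbb{E}_i$ by $d_{\mathbb{R},1,S}(\mathrm{L},\mathrm{R})$ via a crossing of $\mathbb{E}_i\setminus\mathbb{D}_i$), but these are bookkeeping issues of the same kind that the paper itself elides.
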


\begin{proof}
By \eqref{scaledeltabymax} and \eqref{ddi} of \propref{fieldonsmallerbox},
we have
\begin{align*}
d_{\mathbb{R}^{*},\mathbb{A}_{i},1,S}^{(i)}(\mathrm{around}) & \le d_{(3\mathbb{E}_{i})^{*},\mathbb{A}_{i},\e^{-\gamma F_{i}^{\uparrow}},S}^{(i)}(\mathrm{around}),\\
d_{\mathbb{R}^{*},\mathbb{E}_{i},1,S}(x_{m-1},x_{m}) & \ge d_{\mathbb{E}_{i},\e^{-\gamma F_{i}^{\downarrow}},S}(x_{m-1},x_{m}),
\end{align*}
where we recall that $x_{m-1},x_{m}$ are the endpoints of $\pi_{m}$
as defined in \subsecref{splitting}. Thus we have
\begin{align}
d_{\mathbb{R}^{*},\mathbb{A}_{i},1,S}^{(i)}(\mathrm{around})\le d_{(3\mathbb{E}_{i})^{*},\mathbb{A}_{i},\e^{-\gamma F_{i}^{\uparrow}},S}^{(i)}(\mathrm{around}) & \le H_{i}d_{\mathbb{E}_{i},\e^{-\gamma F_{i}^{\downarrow}},S}(x_{m-1},x_{m})\nonumber \\
 & \le H_{i}d_{\mathbb{R}^{*},\mathbb{E}_{i},1,S}(x_{m-1},x_{m}),\label{eq:diratiobound-1}
\end{align}
where the last inequality is by \eqref{scaledeltabymax} of \propref{fieldonsmallerbox}.
Also, we have a constant $C$ so that 
\begin{equation}
\sum_{\substack{i\in\mathcal{Q}_{*}\\
m\in\mathcal{M}(i)
}
}d_{\mathbb{R}^{*},\mathbb{E}_{i},1,S}(x_{m-1},x_{m})=\sum_{\substack{i\in\mathcal{Q}_{*}\\
m\in\mathcal{M}(i)
}
}d_{\mathbb{R},1,S}(x_{m-1},x_{m})\le C\sum_{m=1}^{M}d_{\mathbb{R},1,S}(x_{m-1},x_{m})=Cd_{\mathbb{R},1,S}(\mathrm{L},\mathrm{R}),\label{eq:addupswitches}
\end{equation}
where the first equality is by \eqref{pathstaysinEi}, the inequality
is by \eqref{iMsize}, and the second equality is by \eqref{splitisgood}.
Therefore, for each $i\in\mathcal{Q}$, if we set $m(i)=\min\mathcal{M}(i)$
(or just an arbitrary element of $\mathcal{M}(i)$ picked in some
other way), then we have
\begin{align*}
\sum_{\substack{i\in\mathcal{Q}_{*}\\
\mathcal{M}(i)\ne\emptyset
}
}\frac{d_{\mathbb{R}^{*},\mathbb{A}_{i},1,S}^{(i)}(\mathrm{around})}{d_{\mathbb{R},1,S}(\mathrm{L},\mathrm{R})}\le H\sum_{i\in\mathcal{Q}_{*}}\frac{d_{\mathbb{R}^{*},\mathbb{E}_{i},1,S}(x_{m(i)-1},x_{m(i)})}{d_{\mathbb{R},1,S}(\mathrm{L},\mathrm{R})} & \le H\frac{\sum\limits _{i\in\mathcal{Q}_{*}}\sum\limits _{m\in\mathcal{M}(i)}d_{\mathbb{R}^{*},\mathbb{E}_{i},1,S}(x_{m-1},x_{m})}{d_{\mathbb{R},1,S}(\mathrm{L},\mathrm{R})}\le CH,
\end{align*}
where the first inequality is by \eqref{diratiobound-1} and the third
is by \eqref{addupswitches}. This proves \eqref{sumestimate}.
\end{proof}
\begin{lem}
\label{lem:sumconclusion}For any $B<\infty$ and $A<\infty$ we have
a $C<\infty$ so that
\begin{equation}
\mathbf{E}\left[\left(\sum_{\substack{i\in\mathcal{Q}_{*}\\
\mathcal{M}(i)\ne\emptyset
}
}\frac{\mathbf{1}_{\mathcal{A}_{i}}d_{\mathbb{R}^{*},\mathbb{A}_{i},1,S}^{(i)}(\mathrm{around})}{d_{\mathbb{R},1,S}(\mathrm{L},\mathrm{R})}\right)^{B}\right]\le CK^{2}\overline{\chi}_{K^{\eta}S}^{B}.\label{eq:sumconclusion}
\end{equation}
\end{lem}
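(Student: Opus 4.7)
The plan is to first reduce the left-hand side to the maximum over $i$ of $\mathbf{1}_{\mathcal{A}_i}H_i$, using an indicator-preserving version of \lemref{boundintermsofH}; then the factor $CK^2$ will emerge from a crude $\ell^B$ union bound, so the task becomes to show that $\max_i \mathbf{E}[\mathbf{1}_{\mathcal{A}_i}H_i^B]\le C\overline{\chi}_{K^\eta S}^B$. Concretely, the key inequality $d^{(i)}_{\mathbb{R}^*,\mathbb{A}_i,1,S}(\mathrm{around})\le H_i\,d_{\mathbb{R}^*,\mathbb{E}_i,1,S}(x_{m(i)-1},x_{m(i)})$ from the proof of \lemref{boundintermsofH} holds pointwise, so multiplying by $\mathbf{1}_{\mathcal{A}_i}$, summing, and using \eqref{addupswitches} gives
\[
\sum_{i\in\mathcal{Q}_*,\mathcal{M}(i)\ne\emptyset}\mathbf{1}_{\mathcal{A}_i}\frac{d^{(i)}_{\mathbb{R}^*,\mathbb{A}_i,1,S}(\mathrm{around})}{d_{\mathbb{R},1,S}(\mathrm{L},\mathrm{R})}\le C\max_{i}\mathbf{1}_{\mathcal{A}_i}H_i.
\]
Taking $B$-th power and expectation, $\mathbf{E}[(\max_i \mathbf{1}_{\mathcal{A}_i}H_i)^B]\le \sum_i\mathbf{E}[\mathbf{1}_{\mathcal{A}_i}H_i^B]\le|\mathcal{Q}_*|\max_i\mathbf{E}[\mathbf{1}_{\mathcal{A}_i}H_i^B]$, which is where $CK^2$ appears.

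For each $i$, write $H_i\le e^{\gamma(F_i^\uparrow-F_i^\downarrow)}\tilde H_i$ using \eqref{scaledists}, where $\tilde H_i$ has the same coarse-field value $e^{-\gamma F_i^\downarrow}$ in both numerator and denominator. Cauchy--Schwarz separates the factor $e^{\gamma(F_i^\uparrow-F_i^\downarrow)}$, which by \lemref{coarsefieldsmooth} contributes at most $CK^{1/A}$ for any $A$ and hence is absorbable into the constant. Thus it suffices to control $\mathbf{E}[\mathbf{1}_{\mathcal{A}_i}\tilde H_i^{2B}]$. Conditioning on $F_i^\downarrow$, by Gibbs--Markov the field inside $\mathbb{E}_i^*$ is an independent GFF, so \propref{LQGscaling} with scaling factor $\beta=e^{-\eta_0 F_i^\downarrow}$ (where $\eta_0=2\gamma/(4+\gamma^2)$, so $\eta_0\theta_0=\eta$) rescales $\tilde H_i$ in law to the ratio of an annular crossing and a min-distance at $\delta=1$ on a box of diameter $\sim \beta S$. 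A second Cauchy--Schwarz and applications of \corref{hardmomentbound} to the numerator and \propref{minmomentbound} to the denominator (taking $Q$ equal to an absolute constant) give
\[
\mathbf{E}[\tilde H_i^{2B}\mid F_i^\downarrow]\le C\,\chi_{C\beta S}^{2B}\,\overline{\chi}_{C\beta S}^{2B}\le C\,\overline{\chi}_{C\beta S}^{4B}.
\]

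The main obstacle is the random scale $\beta$: on $\mathcal{A}_i$ we have $F_i^\uparrow\le\theta_0\log K$, so $F_i^\downarrow\le\theta_0\log K$ and $\beta\ge K^{-\eta}$, but $\mathcal{A}_i$ does not bound $F_i^\downarrow$ from below, so $\beta$ can be much larger than $K^\eta$. I will split on the event $\mathcal{G}_i=\{F_i^\downarrow\ge-\theta_0\log K\}$, on which $\beta\le K^\eta$ and hence $\overline{\chi}_{C\beta S}\le\overline{\chi}_{CK^\eta S}$, giving the desired bound on the first piece. For the complement, \eqref{maxcoarse-single} applied to $-h_{\mathbb{R}^*:\mathbb{E}_i^*}$ gives $\mathbf{P}(\mathcal{G}_i^c)\le CK^{-\theta_0^2/2}$; pairing this via Hölder's inequality with a high-moment version of the same calculation and invoking the \emph{a priori} bound \eqref{chiUaprioribound} $\overline{\chi}_T\le Z\e^{(\log T)^{0.95}}$ (together with the Gaussian tail of $F_i^\downarrow$, whose variance is $O(\log K)$ by \lemref{varislog}) bounds the exceptional contribution by $CK^{-\theta_0^2(1-1/q)/2}\e^{CB(\log S)^{0.95}}$, which is absorbable into $C\overline{\chi}_{K^\eta S}^B$ for $q$ sufficiently large in the inductive regime where $\overline{\chi}$ is close to a constant. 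Here the parameter $A$ in the statement plays no role in the final bound; it is available if one prefers to carry the $CK^{1/A}$ from \lemref{coarsefieldsmooth} explicitly.
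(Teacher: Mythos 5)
Your overall architecture is the same as the paper's: reduce via \lemref{boundintermsofH} and $|\mathcal{Q}_*|\le CK^2$ to bounding $\mathbf{E}[H_i^B\mathbf{1}_{\mathcal{A}_i}]$ for a single $i$, split off the coarse-field change $\e^{\gamma(F_i^\uparrow-F_i^\downarrow)}$ by H\"older and kill it with \lemref{coarsefieldsmooth}, bound the annulus numerator by \corref{hardmomentbound} and the min-distance denominator by \propref{minmomentbound} after rescaling to $\delta=1$ via \propref{LQGscaling}. The paper does exactly this with a three-factor H\"older split in \eqref{HiHolder}--\eqref{ratiobd}.

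However, there is a genuine error in your handling of the rescaling, and it propagates into a step that does not close. By \propref{LQGscaling}, $d_{\mathbb{E}_i,\e^{-\gamma F_i^\downarrow},S}$ has the (conditional) law of a distance at $\delta=1$ on the box $\alpha\mathbb{E}_i$ with $\alpha^{2+\gamma^2/2}=\e^{\gamma F_i^\downarrow}$, i.e.\ $\alpha=\e^{2\gamma F_i^\downarrow/(4+\gamma^2)}$ --- not $\e^{-2\gamma F_i^\downarrow/(4+\gamma^2)}$ as in your definition of $\beta$. A very negative $F_i^\downarrow$ means a very \emph{large} $\delta$, i.e.\ balls are cheap and the box has a very \emph{small} effective size. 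Consequently, on $\{F_i^\downarrow\le\theta_0\log K\}\supset\mathcal{A}_i$ one always has $\alpha S\le K^{\eta}S$, and since $\overline{\chi}_U=\sup_{V\le U}\chi_V$ is monotone in $U$, the bound $\overline{\chi}_{\alpha S}\le\overline{\chi}_{K^\eta S}$ holds with no lower restriction on $F_i^\downarrow$ whatsoever; this is precisely how \eqref{easymomentbd} concludes. Your ``main obstacle'' is therefore an artifact of the sign error, and the workaround you build for it does not work: the exceptional-event contribution you produce, of the form $K^{-\theta_0^2(1-1/q)/2}\e^{CB(\log S)^{0.95}}$, cannot be absorbed into $CK^2\overline{\chi}_{K^\eta S}^B$ with $C$ independent of $S$, because the lemma must hold for all $S$ with a uniform constant and there is no lower bound on $\overline{\chi}_{K^\eta S}$ beyond an absolute constant (indeed the whole induction is designed to show $\overline{\chi}$ is bounded). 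Appealing to ``the inductive regime where $\overline{\chi}$ is close to a constant'' makes the estimate circular. Correcting the sign removes the need for the split on $\mathcal{G}_i$ entirely and the rest of your argument goes through.
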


\begin{proof}
First we fix $i\in\mathcal{Q}_{*}$. We have, by Hölder's inequality,
\begin{multline}
(\mathbf{E}(H_{i}^{B}\mathbf{1}_{\mathcal{A}_{i}}))^{3}=\left(\mathbf{E}\left[\left(\frac{d_{(3\mathbb{E}_{i})^{*},\mathbb{A}_{i},\e^{-\gamma F_{i}^{\uparrow}},S}^{(i)}(\mathrm{around})}{d_{\mathbb{E}_{i},\e^{-\gamma F_{i}^{\downarrow}},S}(\min;S/\diam_{\Euc}(\mathbb{E}_{i}))}\mathbf{1}_{\mathcal{A}_{i}}\right)^{B}\right]\right)^{3}\le\mathbf{E}\left[\left(\frac{d_{(3\mathbb{E}_{i})^{*},\mathbb{A}_{i},\e^{-\gamma F_{i}^{\uparrow}},S}^{(i)}(\mathrm{around})}{\Theta_{\mathbb{B}(S),\e^{-\gamma F_{i}^{\uparrow}}}^{\mathrm{hard}}(p_{1})}\right)^{3B}\right]\cdot\\
\cdot\mathbf{E}\left[\left(\frac{d_{\mathbb{E}_{i},\e^{-\gamma F_{i}^{\downarrow}},S}^{(i)}(\min;S/\diam_{\Euc}(\mathbb{E}_{i}))}{\Theta_{\mathbb{B}(S),\e^{-\gamma F_{i}^{\downarrow}}}^{\mathrm{easy}}(p_{0})}\right)^{-3B}\mathbf{1}_{\mathcal{A}_{i}}\right]\mathbf{E}\left[\left(\frac{\Theta_{\mathbb{B}(S),\e^{-\gamma F_{i}^{\uparrow}}}^{\mathrm{hard}}(p_{1})}{\Theta_{\mathbb{B}(S),\e^{-\gamma F_{i}^{\downarrow}}}^{\mathrm{easy}}(p_{0})}\mathbf{1}_{\mathcal{A}_{i}}\right)^{3B}\right].\label{eq:HiHolder}
\end{multline}
We bound each of the factors in turn. First we compute
\begin{equation}
\mathbf{E}\left[\left(\frac{d_{(3\mathbb{E}_{i})^{*},\mathbb{A}_{i},\e^{-\gamma F_{i}^{\uparrow}},S}^{(i)}(\mathrm{around})}{\Theta_{\mathbb{B}(S),\e^{-\gamma F_{i}^{\uparrow}}}^{\mathrm{hard}}(p_{1})}\right)^{3B}\right]=\mathbf{E}\left[\mathbf{E}\left[\left(\frac{d_{(3\mathbb{E}_{i})^{*},\mathbb{A}_{i},\e^{-\gamma F_{i}^{\uparrow}},S}^{(i)}(\mathrm{around})}{\Theta_{\mathbb{B}(S),\e^{-\gamma F_{i}^{\uparrow}}}^{\mathrm{hard}}(p_{1})}\right)^{3B}\;\middle|\;F_{i}^{\uparrow}\right]\right]\le\mathbf{E}C\le C\label{eq:hardmomentbd}
\end{equation}
by \corref{hardmomentbound} and \eqref{RprimeR-multiplicative} of
\lemref{RprimeR}. For the second two terms, we first note that, on
the event $\{F_{i}^{\downarrow}\le\theta_{0}\log K\}\supset\{F_{i}^{\uparrow}\le\theta_{0}\log K\}\supset\mathcal{A}_{i}$,
we have
\begin{equation}
\frac{2\gamma F_{i}^{\downarrow}}{4+\gamma^{2}}\le\frac{2\gamma F_{i}^{\uparrow}}{4+\gamma^{2}}\le\frac{2\gamma\theta_{0}\log K}{4+\gamma^{2}}\le\eta\log K,\label{eq:Aicond}
\end{equation}
where the first inequality is by the definitions \eqref{Fidef} of
$F_{i}^{\downarrow}$ and $F_{i}^{\uparrow}$, the second inequality
is by the definition \eqref{scrAidef} of the event $\mathcal{A}_{i}$,
and the third inequality is by the definition \eqref{etadef} of $\eta$.
Thus we have
\begin{align}
\mathbf{E} & \left[\left(\frac{d_{\mathbb{E}_{i},\e^{-\gamma F_{i}^{\downarrow}},S}(\min;S/\diam_{\Euc}(\mathbb{E}_{i}))}{\Theta_{\mathbb{B}(S),\e^{-\gamma F_{i}^{\downarrow}}}^{\mathrm{easy}}(p_{0})}\right)^{-3B}\mathbf{1}_{\mathcal{A}_{i}}\right]\nonumber \\
 & \le\mathbf{E}\left[\mathbf{E}\left[\left(\frac{d_{\mathbb{E}_{i},\e^{-\gamma F_{i}^{\downarrow}},S}(\min;S/\diam_{\Euc}(\mathbb{E}_{i}))}{\Theta_{\mathbb{B}(S),\e^{-\gamma F_{i}^{\downarrow}}}^{\mathrm{easy}}(p_{0})}\right)^{-3B}\mathbf{1}_{\{F_{i}^{\downarrow}\le\theta_{0}\log K\}}\;\middle|\;F_{i}^{\downarrow}\right]\right]\nonumber \\
 & =\mathbf{E}\left[\mathbf{E}\left[\left(\frac{d_{\e^{\frac{2\gamma F_{i}^{\downarrow}}{\gamma^{2}+4}}\mathbb{E}_{i},1,\e^{\frac{\gamma}{\gamma^{2}/2+2}F_{i}^{\downarrow}}S}(\min;S/\diam_{\Euc}(\mathbb{E}_{i}))}{\Theta_{\e^{\frac{2\gamma}{\gamma^{2}+4}F_{i}^{\downarrow}}\mathbb{B}(S),1}^{\mathrm{easy}}(p_{0})}\right)^{-3B}\;\middle|\;F_{i}^{\downarrow}\right]\mathbf{1}_{\{F_{i}^{\downarrow}\le\theta_{0}\log K\}}\right]\nonumber \\
 & \le C\mathbf{E}\left[\overline{\chi}_{\e^{\frac{2\gamma}{\gamma^{2}+4}F_{i}^{\downarrow}}S}^{3B}\mathbf{1}_{\{F_{i}^{\downarrow}\le\theta_{0}\log K\}}\right]\le C\mathbf{E}\overline{\chi}_{K^{\eta}S}^{3B}=C\overline{\chi}_{K^{\eta}S}^{3B},\label{eq:easymomentbd}
\end{align}
where the second equality is by \propref{LQGscaling}, the first inequality
is by \propref{minmomentbound}, and the second inequality is by \eqref{Aicond}.

For the third term in \eqref{HiHolder}, put $K^{*}=\exp\left\{ \frac{2\gamma F_{i}^{\uparrow}}{4+\gamma^{2}}\right\} $.
On the event $\mathcal{A}_{i}$, by \eqref{Aicond} we have $K^{*}\le K^{\eta}$.
Then we can write, using \eqref{scaledists} twice, recalling \defref{chidefs},
and applying \eqref{mostusefulscaling}, that
\begin{align*}
\mathbf{1}_{\mathcal{A}_{i}}\Theta_{\mathbb{B}(S),\e^{-\gamma F_{i}^{\uparrow}}}^{\mathrm{hard}}(p_{1})=\mathbf{1}_{\mathcal{A}_{i}}\Theta_{\mathbb{B}(K^{*}S)}^{\mathrm{hard}}(p_{1})\le\mathbf{1}_{\mathcal{A}_{i}}\overline{\chi}_{K^{\eta}S}\Theta_{\mathbb{B}(K^{*}S)}^{\mathrm{easy}}(p_{0}) & =\mathbf{1}_{\mathcal{A}_{i}}\overline{\chi}_{K^{\eta}S}\Theta_{\mathbb{B}(S),\e^{-\gamma F_{i}^{\uparrow}}}^{\mathrm{easy}}(p_{0})\\
 & \le\mathbf{1}_{\mathcal{A}_{i}}\overline{\chi}_{K^{\eta}S}\e^{\gamma(F_{i}^{\uparrow}-F_{i}^{\downarrow})}\Theta_{\mathbb{B}(S),\e^{-\gamma F_{i}^{\downarrow}}}^{\mathrm{easy}}(p_{0}).
\end{align*}
Therefore, we have
\begin{equation}
\mathbf{E}\left[\left(\frac{\Theta_{\mathbb{B}(S),\e^{-\gamma F_{i}^{\uparrow}}}^{\mathrm{hard}}(p_{1})}{\Theta_{\mathbb{B}(S),\e^{-\gamma F_{i}^{\downarrow}}}^{\mathrm{easy}}(p_{0})}\mathbf{1}_{\mathcal{A}_{i}}\right)^{3B}\right]\le C\overline{\chi}_{K^{\eta}S}^{3B}\mathbf{E}\e^{3B\gamma(F_{i}^{\uparrow}-F_{i}^{\downarrow})}\le CK\overline{\chi}_{K^{\eta}S}^{3B},\label{eq:ratiobd}
\end{equation}
where the last inequality is by \lemref{coarsefieldsmooth}. Plugging
\eqref{hardmomentbd}, \eqref{easymomentbd}, and \eqref{ratiobd}
into \eqref{HiHolder} yields
\begin{equation}
\mathbf{E}(H_{i}^{B}\mathbf{1}_{\mathcal{A}_{i}})\le CK\overline{\chi}_{K^{\eta}S}^{B}.\label{eq:Hibd-everything}
\end{equation}
Then we can write
\[
\mathbf{E}\left[\left(\sum_{\substack{i\in\mathcal{Q}_{*}\\
\mathcal{M}(i)\ne\emptyset
}
}\frac{d_{\mathbb{R}^{*},\mathbb{A}_{i},1,S}^{(i)}(\mathrm{around})}{d_{\mathbb{R},1,S}(\mathrm{L},\mathrm{R})}\mathbf{1}_{\mathcal{A}_{i}}\right)^{B}\right]\le C\mathbf{E}(H^{B}\mathbf{1}_{\mathcal{A}_{i}})\le C\sum_{i\in\mathcal{Q}_{*}}\mathbf{E}(H_{i}^{B}\mathbf{1}_{\mathcal{A}_{i}})\le CK^{2}\overline{\chi}_{K^{\eta}S}^{B},
\]
where the first inequality is by \lemref{boundintermsofH} and the
Cauchy--Schwarz inequality and the second inequality uses \eqref{Hibd-everything}
and the fact that $|\mathcal{Q}_{*}|\le CK^{2}$. But the last display
is \eqref{sumconclusion}.
\end{proof}

\subsubsection{The maximum small box crossing weight\label{subsec:maxsmall}}

We now show that the maximum annular circuit distance is much smaller
(for large $K$) than the crossing quantile of the large rectangle
$\mathbb{R}$. This will be used for bounding the middle factor of
\eqref{farawaystuff} as well as the first factor in the last term
of \eqref{efronstein-readyforlemmas}. Here we crucially use that
$\gamma<2$, as this means that even after considering the coarse
field, subboxes ``look'' smaller than the large box.
\begin{lem}
\label{lem:maxofsmaller}Define notation as in the statement of \lemref{boundintermsofH}.
Suppose that $T_{0}\le K^{\eta}S$, where $T_{0}$ is as in \propref{pllb-inductive}.
For any $B\ge0$ there is a $c>0$ so that
\begin{align}
\mathbf{E}\left[\left(\max_{i\in\mathcal{Q}_{*}}d_{(3\mathbb{E}_{i})^{*},\mathbb{A}_{i},K^{-\gamma\theta_{0}}}^{(i)}(\mathrm{around})\right)^{B}\right] & ,\mathbf{E}\left[\left(\max_{i\in\mathcal{Q}_{*}}d_{(3\mathbb{E}_{i})^{*},\mathbb{A}_{i},K^{-\gamma\theta_{0}}}(\mathrm{around})\right)^{B}\right]\le C\overline{\chi}_{K^{\eta}S}^{B}K^{-c}\Theta_{\mathbb{B}(KS)}^{\mathrm{easy}}(p_{0})^{B},\label{eq:maxofsmaller}
\end{align}
\end{lem}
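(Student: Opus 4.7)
The plan is to use the scaling covariance of the LQG measure to convert each individual annular distance (with measure threshold $K^{-\gamma\theta_0}$ on a box of diameter $\sim S$) into an annular distance with $\delta=1$ on a box of diameter $\sim K^\eta S$. Corollary~\ref{cor:hardmomentbound} then controls its moments by the hard-crossing quantile at scale $K^\eta S$, which we relate back to the easy-crossing quantile at scale $KS$ via the definition of $\chi$ and the power-law growth of easy quantiles from Proposition~\ref{prop:pllb-inductive}. The crucial point will be that $\eta<1$ (guaranteed by $\gamma<2$ together with the choice of $\theta_0$ in \eqref{etadef}), so the passage from scale $K^\eta S$ to scale $KS$ produces a strictly positive power gain $K^{-c(1-\eta)}$.

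In detail, fix $i\in\mathcal{Q}_*$. Proposition~\ref{prop:LQGscaling} with $\alpha=K^\eta$, which by \eqref{etadef} satisfies $\alpha^{\gamma^2/2+2}=K^{\gamma\theta_0}$, gives
\[
d_{(3\mathbb{E}_i)^*,\mathbb{A}_i,K^{-\gamma\theta_0}}(\mathrm{around})\stackrel{\mathrm{law}}{=} d_{K^\eta(3\mathbb{E}_i)^*,K^\eta\mathbb{A}_i,1,K^\eta\diam_{\Euc}(\mathbb{A}_i)}(\mathrm{around}),
\]
and the right-hand side is an annular crossing at scale $\sim K^\eta S$, so Corollary~\ref{cor:hardmomentbound} yields $\mathbf{E}[d_{(3\mathbb{E}_i)^*,\mathbb{A}_i,K^{-\gamma\theta_0}}(\mathrm{around})^{B'}]\le C\,\Theta^{\mathrm{hard}}_{\mathbb{B}(K^\eta S)}(p_1)^{B'}$ for any $B'\ge 0$. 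Using $\Theta^{\mathrm{hard}}_{\mathbb{B}(K^\eta S)}(p_1)\le\overline{\chi}_{K^\eta S}\Theta^{\mathrm{easy}}_{\mathbb{B}(K^\eta S)}(p_0)$ from Definition~\ref{def:chidefs}, and applying Proposition~\ref{prop:pllb-inductive} to the scale $K^\eta S\ge T_0$ with dilation factor $K^{1-\eta}$, we obtain $\Theta^{\mathrm{easy}}_{\mathbb{B}(K^\eta S)}(p_0)\le C K^{-c_0}\Theta^{\mathrm{easy}}_{\mathbb{B}(KS)}(p_0)$, where $c_0=c(1-\eta)>0$ is strictly positive precisely because $\eta<1$. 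Combining these steps,
\[
\mathbf{E}[d_{(3\mathbb{E}_i)^*,\mathbb{A}_i,K^{-\gamma\theta_0}}(\mathrm{around})^{B'}] \le C\,\overline{\chi}_{K^\eta S}^{B'} K^{-B' c_0}\Theta^{\mathrm{easy}}_{\mathbb{B}(KS)}(p_0)^{B'}.
\]

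To pass from the single-$i$ bound to the bound on the maximum, take $B':=\max(B,3/c_0)$, union-bound over $\mathcal{Q}_*$ (which has $|\mathcal{Q}_*|\le CK^2$), and use Jensen's inequality (valid since $B/B'\le 1$) to descend from $B'$-th to $B$-th moment:
\[
\mathbf{E}\Bigl[\max_i d_{(3\mathbb{E}_i)^*,\mathbb{A}_i,K^{-\gamma\theta_0}}(\mathrm{around})^{B}\Bigr] \le (CK^2)^{B/B'}\overline{\chi}_{K^\eta S}^{B} K^{-B c_0}\Theta^{\mathrm{easy}}_{\mathbb{B}(KS)}(p_0)^{B}.
\]
The choice $B'\ge 3/c_0$ forces $2B/B'-Bc_0\le -Bc_0/3$, so the net $K$-exponent is $\le -Bc_0/3$, giving \eqref{maxofsmaller} with $c:=Bc_0/3$. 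The same argument, verbatim, handles the resampled version with $d^{(i)}$ in place of $d$, since each $d^{(i)}$ has the same marginal law as $d$ and we have only used a union bound and individual moments.

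The only delicate point is the insistence on $\eta<1$: if the coarse-field cutoff were so lax that $\eta\ge 1$, the scaling identity would blow small boxes up to at least the size of $\mathbb{B}(KS)$, and there would be no $K^{-c_0}$ gain to defeat the $K^2$ cost of the union bound. This is the precise spot where the subcritical assumption $\gamma<2$, packaged through the definition \eqref{etadef} of $\eta$, is essential.
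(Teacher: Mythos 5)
Your proof is correct and follows essentially the same route as the paper's: a union bound over $\mathcal{Q}_*$, \corref{hardmomentbound} combined with the scaling covariance of \propref{LQGscaling} to land at the hard-crossing quantile at scale $K^{\eta}S$, then \defref{chidefs} and \propref{pllb-inductive} to convert to $\overline{\chi}_{K^{\eta}S}K^{-c(1-\eta)}\Theta_{\mathbb{B}(KS)}^{\mathrm{easy}}(p_0)$, with $\eta<1$ supplying the power gain that beats the $K^{2}$ union-bound cost. Your extra step of proving the bound for $B'=\max(B,3/c_0)$ and descending to $B$ by Jensen is a small but genuine improvement in rigor, since the paper's proof only literally establishes the claim for $B$ large.
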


\begin{proof}
We have
\begin{equation}
\mathbf{E}\left[\left(\max_{i\in\mathcal{Q}_{*}}d_{(3\mathbb{E}_{i})^{*},\mathbb{A}_{i},K^{-\gamma\theta_{0}}}^{(i)}(\mathrm{around})\right)^{B}\right]\le\sum_{i\in\mathcal{Q}_{*}}\mathbf{E}d_{(3\mathbb{E}_{i})^{*},\mathbb{A}_{i},K^{-\gamma\theta_{0}}}^{(i)}(\mathrm{around})^{B}\le C|\mathcal{Q}_{*}|^{\frac{1}{2}}\Theta_{\mathbb{B}(K^{\eta}S)}^{\mathrm{hard}}(p_{1})^{B},\label{eq:maxmoment}
\end{equation}
where the second inequality is by \eqref{scaledists} and \corref{hardmomentbound},
and in the last expression the constant $C$ depends on $B$. (Recall
that $p_{1}$ is fixed as in \propref{hardconcentration}.) On the
other hand, we have that
\[
\Theta_{\mathbb{B}(K^{\eta}S)}^{\mathrm{hard}}(p_{1})\le\overline{\chi}_{K^{\eta}S}\Theta_{\mathbb{B}(K^{\eta}S)}^{\mathrm{easy}}(p_{0})\le C\overline{\chi}_{K^{\eta}S}K^{-c(1-\eta)}\Theta_{\mathbb{B}(KS)}^{\mathrm{easy}}(p_{0}),
\]
where the first inequality is by \defref{chidefs} and the second
is by \eqref{pllb-inductive}. Here we use the assumption that $K^{\eta}S\ge T_{0}$.
Plugging the last inequality into \eqref{maxmoment} yields
\begin{align*}
\mathbf{E}\left[\left(\max_{i\in\mathcal{Q}_{*}}d_{(3\mathbb{E}_{i})^{*},\mathbb{A}_{i},K^{-\gamma\theta_{0}}}^{(i)}(\mathrm{around})\right)^{B}\right] & \le C\overline{\chi}_{K^{\eta}S}^{B}|\mathcal{Q}_{*}|^{1/2}K^{-Bc(1-\eta)}\Theta_{\mathbb{B}(KS)}^{\mathrm{easy}}(p_{0})^{B}\\
 & \le C\overline{\chi}_{K^{\eta}S}^{B}K^{1-Bc(1-\eta)}\Theta_{\mathbb{B}(KS)}^{\mathrm{easy}}(p_{0})^{B}.
\end{align*}
Choosing $B$ large enough so that $-c'\coloneqq1-Bc(1-\eta)<0$,
 this becomes
\[
\mathbf{E}\left[\left(\max_{i\in\mathcal{Q}_{*}}d_{(3\mathbb{E}_{i})^{*},\mathbb{A}_{i},K^{-\gamma\theta_{0}}}^{(i)}(\mathrm{around})\right)^{B}\right]\le C\overline{\chi}_{K^{\eta}S}^{B}K^{-c'}\Theta_{\mathbb{B}(KS)}^{\mathrm{easy}}(p_{0})^{B},
\]
which is half of \eqref{maxofsmaller}. The other half follows in
the same way, noting that \eqref{maxmoment} uses nothing about the
correlations between the $d_{(3\mathbb{E}_{i})^{*},\mathbb{A}_{i},K^{-\gamma\theta_{0}}}^{(i)}(\mathrm{around})$s
for varying $i$.
\end{proof}

\subsubsection{The effect of requiring small balls}

Our Efron--Stein argument required restricting the balls used to
cover the path to be of size at most $S$. This was important for
our percolation argument, because otherwise a single ball could be
used to cover the path in potentially very many of the $\mathbb{C}_{i}$s.
However, the effect of this requirement should be negligible, because
at large scales, we do not expect large balls to be used: recall from
\eqref{negmoments} that the LQG measure has all negative moments.
The next lemma quantifies this intuition.
\begin{lem}
\label{lem:smallballseffect}Suppose $KS\ge T_{0}$ (defined as in
\propref{pllb-inductive}). There are constants $C<\infty$ and $c>0$
so that
\[
\mathbf{E}\left(\log\frac{d_{\mathbb{R},1,S}(\mathrm{L},\mathrm{R})}{d_{\mathbb{R}}(\mathrm{L},\mathrm{R})}\right)^{2}\le CK^{C}S^{-c}.
\]
\end{lem}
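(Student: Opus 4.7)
The plan is to convert the problem to bounding a negative moment of $d_{\mathbb{R}}(\mathrm{L},\mathrm{R})$ via \lemref{RprimeR}, and then estimate this negative moment using \propref{minmomentbound} together with the power-law lower bound on easy-crossing quantiles from \propref{pllb-inductive}.

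First I would apply \eqref{RprimeR} with $R = \diam_{\Euc}(\mathbb{R})$ (which is of order $KS$) and $R' = S$. Since $\Leb(\mathbb{R}^{(R)})$ is of order $(KS)^{2}$, this yields the additive bound
$$d_{\mathbb{R},1,S}(\mathrm{L},\mathrm{R}) \le d_{\mathbb{R}}(\mathrm{L},\mathrm{R}) + CK^{3}.$$
Because every covering ball contributes at least $\kappa_{1}(\cdot) \ge 1$ to the graph length, we have $d_{\mathbb{R}}(\mathrm{L},\mathrm{R}) \ge 1$. Using the elementary inequality $\log(1+x) \le x$ for $x \ge 0$, it follows that
$$\left(\log\frac{d_{\mathbb{R},1,S}(\mathrm{L},\mathrm{R})}{d_{\mathbb{R}}(\mathrm{L},\mathrm{R})}\right)^{2} \le \frac{CK^{6}}{d_{\mathbb{R}}(\mathrm{L},\mathrm{R})^{2}}.$$

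Next, since $\AR(\mathbb{R})\in[1/3,3]$, the sides $\mathrm{L}$ and $\mathrm{R}$ are separated by at least a fixed fraction of $\diam_{\Euc}(\mathbb{R})$, so $d_{\mathbb{R}}(\mathrm{L},\mathrm{R}) \ge d_{\mathbb{R}}(\min;a)$ for some fixed $a\in(0,1)$. Applying \propref{minmomentbound} with $A=2$ (and any fixed $Q$) then gives
$$\mathbf{E}[d_{\mathbb{R}}(\mathrm{L},\mathrm{R})^{-2}] \le C\,\overline{\chi}_{cKS}^{\,2}\,\Theta_{\mathbb{B}(\diam_{\Euc}(\mathbb{R}))}^{\mathrm{easy}}(p_{0})^{-2}.$$

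Finally I would bound the two factors separately. The a priori estimate \eqref{chiUaprioribound} (which holds unconditionally) gives $\overline{\chi}_{cKS}^{2} \le C\e^{2(\log(KS+1))^{0.95}}$, which is at most $(KS)^{\alpha}$ for any fixed $\alpha>0$ once $KS$ is sufficiently large. Iterating \propref{pllb-inductive} from the base scale $T_{0}$ shows that $\Theta_{\mathbb{B}(KS)}^{\mathrm{easy}}(p_{0}) \ge c(KS)^{c}$ whenever $KS\ge T_{0}$, which is part of the assumed hypothesis. Combining these two bounds (choosing $\alpha$ small compared to $c$) yields $\mathbf{E}[d_{\mathbb{R}}(\mathrm{L},\mathrm{R})^{-2}] \le C(KS)^{-c'}$ for some $c'>0$, and multiplying by the $CK^{6}$ factor absorbed into $K^{C}$ gives the desired bound $CK^{C}S^{-c}$. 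No step here is technically delicate; the conceptual point is that the cost of forbidding large covering balls is only polynomial in $K$ and independent of $S$, while the typical crossing distance grows as a positive power of $S$, so the multiplicative error becomes negligible as $S\to\infty$.
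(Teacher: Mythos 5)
Your proposal is correct and follows essentially the same route as the paper: apply \eqref{RprimeR} to get the additive $CK^{3}$ error, use $\log(1+x)\le x$ to reduce to $K^{6}\mathbf{E}[d_{\mathbb{R}}(\mathrm{L},\mathrm{R})^{-2}]$, and then combine \propref{minmomentbound}, the a priori bound \eqref{chiUaprioribound} on $\overline{\chi}$, and the power-law growth from \propref{pllb-inductive}. The only cosmetic difference is that you invoke the quantile growth via $\Theta_{\mathbb{B}(KS)}^{\mathrm{easy}}(p_{0})\ge c(KS)^{c}$ directly while the paper writes the same content as $\overline{\chi}_{KS}(KS/T_{0})^{-c}\Theta_{\mathbb{B}(T_{0})}^{\mathrm{easy}}(p)^{-2}$.
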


\begin{proof}
By \eqref{RprimeR}, we have
\[
\log\frac{d_{\mathbb{R},1,S}(\mathrm{L},\mathrm{R})}{d_{\mathbb{R}}(\mathrm{L},\mathrm{R})}\le\log\left(1+\frac{K^{3}}{d_{\mathbb{R}}(\mathrm{L},\mathrm{R})}\right)\le\frac{K^{3}}{d_{\mathbb{R}}(\mathrm{L},\mathrm{R})}
\]
almost surely. Therefore, we have, as long as $KS\ge T_{0}$, that
\[
\mathbf{E}\left[\left(\log\frac{d_{\mathbb{R},1,S}(\mathrm{L},\mathrm{R})}{d_{\mathbb{R}}(\mathrm{L},\mathrm{R})}\right)^{2}\right]\le K^{6}\mathbf{E}d_{\mathbb{R}}(\mathrm{L},\mathrm{R})^{-2}\le CK^{6}\overline{\chi}_{KS}\left(\frac{KS}{T_{0}}\right)^{-c}\Theta_{\mathbb{B}(T_{0})}^{\mathrm{easy}}(p)^{-2}\le CK^{C}S^{-c}.
\]
with the second inequality by \propref{minmomentbound}, the third
by \propref{pllb-inductive}, and the last by \eqref{chiUaprioribound}.
\end{proof}

\subsubsection{The large coarse field error term\label{subsec:largecoarsefield}}

Here we bound the term of \eqref{efronstein-readyforlemmas} corresponding
to the event that the coarse field is extraordinarily large.
\begin{lem}
\label{lem:lgcoarsefielderror}There are constants $C<\infty$ and
$c>0$ so that 
\[
\sum_{i\in\mathcal{Q}_{*}}\mathbf{E}\left[\left(\left(\log\frac{d_{\mathbb{R},1,S}^{(i)}(\mathrm{L},\mathrm{R})}{d_{\mathbb{R},1,S}(\mathrm{L},\mathrm{R})}\right)^{+}\right)^{2}\mathbf{1}_{\mathcal{A}_{i}^{c}}\right]\le CK^{-c}\overline{\chi}_{S}^{3}.
\]
\end{lem}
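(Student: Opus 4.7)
The plan is to exploit the fact that $\mathcal{A}_i^c$ is a rare event (by \lemref{Aicbound}, with probability $\le CK^{-\theta_0^2/2}$) via a Hölder-type decoupling of the indicator from the log-ratio. Since $\theta_0 > 2$ was chosen in \eqref{etadef} precisely so that $\theta_0^2/2 > 2$, this decay will beat the $|\mathcal{Q}_*| \le CK^2$ loss incurred by summing over $i$, leaving a polynomial-in-$K$ surplus that we can trade for moment bounds on the log-ratio.

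Concretely, I would apply Hölder's inequality with a large exponent $p \ge 1$ to be fixed at the end:
\[
\mathbf{E}\!\left[\left(\left(\log\frac{d^{(i)}_{\mathbb{R},1,S}(\mathrm{L},\mathrm{R})}{d_{\mathbb{R},1,S}(\mathrm{L},\mathrm{R})}\right)^+\right)^2 \mathbf{1}_{\mathcal{A}_i^c}\right] \le \left(\mathbf{E}\left|\log\frac{d^{(i)}_{\mathbb{R},1,S}(\mathrm{L},\mathrm{R})}{d_{\mathbb{R},1,S}(\mathrm{L},\mathrm{R})}\right|^{2p}\right)^{1/p}\!\mathbf{P}(\mathcal{A}_i^c)^{1/p'},
\]
where $1/p+1/p'=1$. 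Then $\mathbf{P}(\mathcal{A}_i^c)^{1/p'} \le CK^{-\theta_0^2/(2p')}$, and by choosing $p$ large, $\theta_0^2/(2p')$ can be made arbitrarily close to $\theta_0^2/2$.

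For the moment factor, since $d^{(i)}_{\mathbb{R},1,S} \overset{\mathrm{law}}{=} d_{\mathbb{R},1,S}$, the elementary bound $|\log(X/Y)|^{2p} \le C_p(|\log X|^{2p}+|\log Y|^{2p})$ reduces the task to bounding $\mathbf{E}|\log d_{\mathbb{R},1,S}(\mathrm{L},\mathrm{R})|^{2p}$. For the upper tail of $d_{\mathbb{R},1,S}$ I would combine \propref{concentration-boost-hard} with \propref{hardquantile} to write $d_{\mathbb{R},1,S}(\mathrm{L},\mathrm{R})\le K^C\Theta_{\mathbb{B}(S)}^{\mathrm{hard}}(p_1)$ with superpolynomial probability; \defref{chidefs} then gives $\Theta_{\mathbb{B}(S)}^{\mathrm{hard}}(p_1)\le\overline{\chi}_S\Theta_{\mathbb{B}(S)}^{\mathrm{easy}}(p_0)$, and the base-case bound \eqref{chiUaprioribound} controls $\overline{\chi}_S$. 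For the lower tail, \propref{minmomentbound} applied with a large exponent $A$ gives $\mathbf{E}[d_{\mathbb{R}}(\mathrm{L},\mathrm{R})^{-A}]\le C\overline{\chi}_S^A\Theta_{\mathbb{B}(S)}^{\mathrm{easy}}(p_0)^{-A}$, which translates (via Markov on $\e^{-A\log d}$) to exponential-type lower-tail control on $\log d$. Assembling the two tails gives a bound of the form $\mathbf{E}|\log d_{\mathbb{R},1,S}(\mathrm{L},\mathrm{R})|^{2p} \le C_p(\log\overline{\chi}_S + \log K + \log\Theta^{\mathrm{easy}}_{\mathbb{B}(S)}(p_0))^{2p}$, which is polynomial in $\log K$ and in $\log\overline{\chi}_S$; in particular at most $K^{1/p}$ for $p$ large (using $\overline{\chi}_S\le Ce^{(\log S)^{0.95}}$ so $\log\overline{\chi}_S$ is sublinear).

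Summing over $i\in\mathcal{Q}_*$ yields a prefactor $K^2 \cdot K^{1/p} \cdot K^{-\theta_0^2/(2p')}$, and choosing $p$ large enough makes the net exponent of $K$ strictly negative. The only remaining point is to produce a clean $\overline{\chi}_S^3$ (rather than $\overline{\chi}_S$ to some other power), which I would obtain by tracking the $\overline{\chi}_S^A$ that comes out of \propref{minmomentbound} at exponent $A=3$ and folding powers of $\log\overline{\chi}_S$ appearing in the moment bound into the polynomial $K$-loss. The main obstacle is the bookkeeping that verifies $\overline{\chi}_S^3$ is the correct power and that the $K^{-c}$ surplus survives after the various Hölder losses; conceptually, everything reduces to the inequality $\theta_0 > 2$, which is precisely what \eqref{etadef} guarantees.
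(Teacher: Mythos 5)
Your overall architecture matches the paper's: Hölder's inequality to decouple $\mathbf{1}_{\mathcal{A}_i^c}$ from the log-ratio, \lemref{Aicbound} for $\mathbf{P}(\mathcal{A}_i^c)\le CK^{-\theta_0^2/2}$, and the choice $\theta_0>2$ so that $K^2\cdot K^{-\theta_0^2/(2p')}$ is a negative power of $K$ once $p'$ is close to $1$ (the paper's $\alpha,\beta$ are your $p',p$). The gap is in the moment factor. Splitting $|\log(X/Y)|^{2p}\le C_p(|\log X|^{2p}+|\log Y|^{2p})$ destroys the cancellation between the two distances and forces you to bound $\mathbf{E}|\log d_{\mathbb{R},1,S}(\mathrm{L},\mathrm{R})|^{2p}$ in absolute terms. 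But $d_{\mathbb{R},1,S}(\mathrm{L},\mathrm{R})$ concentrates around a quantile growing polynomially in $S$ (\propref{pllb-inductive}), so this moment is at least of order $(\log S)^{2p}$; indeed your own displayed bound contains $\log\Theta^{\mathrm{easy}}_{\mathbb{B}(S)}(p_0)\asymp\log S$, which you silently drop when you declare the expression ``polynomial in $\log K$ and $\log\overline{\chi}_S$'' and ``at most $K^{1/p}$.'' Since $K$ is a fixed constant and $S$ is the induction parameter tending to infinity, the factor $(\log S)^{2}$ surviving the $1/p$-th root cannot be absorbed into $CK^{-c}\overline{\chi}_S^{3}$ (if it could, the boundedness of $\overline{\chi}$ asserted in \thmref{variancebound} would force $(\log S)^2\le C$). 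Fed into \lemref{finalvariancebound}, this extra factor makes the variance bound grow with $S$ and the induction does not close. A related soft spot: invoking \eqref{chiUaprioribound} to ``control $\overline{\chi}_S$'' inserts $\e^{(\log S)^{0.95}}$, which the induction tolerates only at the base scale.

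The repair is to keep the two distances coupled so that the large quantile normalizations cancel. The paper never takes a high moment of the logarithm: it uses $(\log t)^+\le C_b t^{b}$ for a tiny $b$ with $4\beta b\le 1$, reducing the $2\beta$-th moment of $(\log(X/Y))^+$ to a fractional moment of $X/Y$, which Cauchy--Schwarz and Jensen control by $(\mathbf{E}X)(\mathbf{E}Y^{-1})$ raised to the small power $4b$. With $\mathbf{E}X\le CK^{A}\Theta^{\mathrm{hard}}_{\mathbb{B}(S)}(p_1)$ (\propref{concentration-boost-hard}, \propref{hardquantile}) and $\mathbf{E}Y^{-1}\le C\overline{\chi}_S K^{A}\Theta^{\mathrm{easy}}_{\mathbb{B}(S)}(p_0)^{-1}$ (\propref{minmomentbound}, \propref{pllb-inductive}), the product of the two quantiles collapses to $\chi_S\le\overline{\chi}_S$, leaving only $K^{8Ab}\overline{\chi}_S^{8b}$ with no $\log S$; taking $b$ small relative to $c/A$ then yields the stated $CK^{-c}\overline{\chi}_S^{3}$. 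Alternatively, your route can be salvaged by centering both logarithms by a common quantile before splitting, but verifying that the lower tail then contributes only powers of $\overline{\chi}_S$ and $\log K$ amounts to redoing the paper's computation.
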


\begin{proof}
Whenever $\alpha,\beta\in(1,\infty)$ and $1/\alpha+1/\beta=1$, we
have
\begin{equation}
\mathbf{E}\left[\left(\left(\log\frac{d_{\mathbb{R},1,S}^{(i)}(\mathrm{L},\mathrm{R})}{d_{\mathbb{R},1,S}(\mathrm{L},\mathrm{R})}\right)^{+}\right)^{2}\mathbf{1}_{\mathcal{A}_{i}^{c}}\right]\le4\left[\mathbf{P}(\mathcal{A}_{i}^{c})\right]^{1/\alpha}\left[\mathbf{E}\left(\left(\log\frac{d_{\mathbb{R},1,S}^{(i)}(\mathrm{L},\mathrm{R})}{d_{\mathbb{R},1,S}(\mathrm{L},\mathrm{R})}\right)^{+}\right)^{2\beta}\right]^{1/\beta}\label{eq:secondterm}
\end{equation}
by Hölder's inequality. By \lemref{Aicbound}, $\mathbf{P}(\mathcal{A}_{i}^{c})\le K^{-\theta_{0}^{2}/2}$,
so
\begin{equation}
\sum_{i\in\mathcal{Q}_{*}}[\mathbf{P}(\mathcal{A}_{i}^{c})]^{1/\alpha}\le K^{2-\frac{\theta_{0}^{2}}{2\alpha}},\label{eq:Pbd}
\end{equation}
and the exponent $-2c\coloneqq2-\frac{\theta_{0}^{2}}{2\alpha}$ is
negative so long as $\alpha$ is chosen sufficiently close to $1$.
Also, we have constants $A,C>0$ so that by \propref{concentration-boost-hard}
and \propref{hardquantile},
\begin{equation}
\mathbf{E}d_{\mathbb{R},1,S}^{(i)}(\mathrm{L},\mathrm{R})\le CK^{A}\Theta_{\mathbb{B}(S)}^{\mathrm{hard}}(p_{1})\label{eq:expdibd}
\end{equation}
and by \propref{minmomentbound} and \propref{pllb-inductive},
\begin{equation}
\mathbf{E}\left[\left(d_{\mathbb{R},1,S}(\mathrm{L},\mathrm{R})\right)^{-1}\right]\le C\overline{\chi}_{S}K^{A}\Theta_{\mathbb{B}(S)}^{\mathrm{easy}}(p_{0})^{-1}.\label{eq:expdbd}
\end{equation}
For any $b>0$, we have a constant $C=C(b)$ so that $(\log x)^{+}\le Cx^{b}$
for all $x>0$. Using this fact, the Cauchy--Schwarz and Jensen inequalities,
and \eqref{expdibd} and \eqref{expdbd}, we obtain in particular
that as long as $4b\beta\le1$, for each $i\in\mathcal{Q}_{*}$,
\begin{align}
\left[\mathbf{E}\left(\left(\log\frac{d_{\mathbb{R},1,S}^{(i)}(\mathrm{L},\mathrm{R})}{d_{\mathbb{R},1,S}(\mathrm{L},\mathrm{R})}\right)^{+}\right)^{2\beta}\right]^{1/\beta} & \le C\left[\left(K^{A}\Theta_{\mathbb{B}(S)}^{\mathrm{hard}}(p_{1})\right)\left(\overline{\chi}_{S}K^{A}\Theta_{\mathbb{B}(S)}^{\mathrm{easy}}(p_{0})^{-1}\right)\right]^{4b}.\label{eq:Elogbd}
\end{align}
Taking $b$ so small that $4\beta b\le1$, $8Ab<c$ and $b\le1/4$,
we obtain from \eqref{secondterm}, \eqref{Pbd} and \eqref{Elogbd}
that
\begin{multline*}
\sum_{i\in\mathcal{Q}_{*}}\mathbf{E}\left[\left(\left(\log\frac{d_{\mathbb{R},1,S}^{(i)}(\mathrm{L},\mathrm{R})}{d_{\mathbb{R},1,S}(\mathrm{L},\mathrm{R})}\right)^{+}\right)^{2}\mathbf{1}_{\mathcal{A}_{i}^{c}}\right]\le\left(\sum_{i\in\mathcal{Q}_{*}}\mathbf{P}(\mathcal{A}_{i}^{c})^{1/\alpha}\right)\left(\max_{i\in\mathcal{Q}_{*}}\mathbf{E}\left(\left(\left(\log\frac{d_{\mathbb{R},1,S}^{(i)}(\mathrm{L},\mathrm{R})}{d_{\mathbb{R},1,S}(\mathrm{L},\mathrm{R})}\right)^{+}\right)^{2\beta}\right)\right)^{1/\beta}\\
\le CK^{-c}\overline{\chi}_{S}^{b}\frac{\Theta_{\mathbb{B}(S)}^{\mathrm{hard}}(p_{1})}{\Theta_{\mathbb{B}(S)}^{\mathrm{easy}}(p_{0})}\le CK^{-c}\overline{\chi}_{S}^{b+1}\le CK^{-c}\overline{\chi}_{S}^{3},
\end{multline*}
where in the penultimate inequality we recall \defref{chidefs}.
\end{proof}

\subsubsection{Putting it all together: the proof of Lemma~\ref{lem:finalvariancebound}\label{subsec:proofoffinalvarbound}}

We are finally ready to prove \lemref{finalvariancebound}, which
will imply \thmref{variancebound} as indicated at the end of \subsecref{efronstein}
(on page~\pageref{varianceboundproof}).
\begin{proof}[Proof of \lemref{finalvariancebound}.]
We have, using \lemref{coarsefieldeffect}, \lemref{farawaystuff},
and \lemref{smallballseffect} to bound the first three terms of \eqref{efronstein-readyforlemmas},
and \lemref{lgcoarsefielderror} to bound the last term, that there
are constants $C<\infty$ and $c>0$ so that as long as $KS\ge T_{0}$,
we have
\begin{align}
\Var & \left(\log d_{\mathbb{R}}(\mathrm{L},\mathrm{R})\right)\nonumber \\
 & \le C\log K+C\e^{C\sqrt{\log K}}\left(\mathbf{E}\max_{j\in\mathcal{Q}_{\circ}}d_{(3\mathbb{E}_{i})^{*},\mathbb{A}_{j},K^{-\gamma\theta_{0}},S}(\mathrm{around})^{3}\right)^{1/3}\left(\mathbf{E}d_{\mathbb{R},1,S}(\pi)^{-3}\right)^{1/3}+CK^{C}S^{-c}\nonumber \\
 & \quad+2\left(\mathbf{E}\left[\max_{i\in\mathcal{Q}_{*}}d_{(3\mathbb{E}_{i})^{*},\mathbb{A}_{i},K^{-\gamma\theta_{0}},S}^{(i)}(\mathrm{around})^{3}\right]\mathbf{E}[d_{\mathbb{R},1,S}(\mathrm{L},\mathrm{R})^{-3}]\mathbf{E}\left[\left(\sum_{i\in\mathcal{Q}_{*}}\frac{\mathbf{1}_{\mathcal{A}_{i}}d_{\mathbb{R}^{*},\mathbb{A}_{i},1,S}^{(i)}(\mathrm{around})}{d_{\mathbb{R},1,S}(\mathrm{L},\mathrm{R})}\right)^{3}\right]\right)^{1/3}\nonumber \\
 & \quad+CK^{-c}\overline{\chi}_{S}^{3}.\label{eq:varhalfway}
\end{align}
Now for any $B\ge3$ there is a constant $C$ so that
\begin{equation}
\left(\mathbf{E}\left[\left(\sum_{i\in\mathcal{Q}_{*}}\frac{\mathbf{1}_{\mathcal{A}_{i}}d_{\mathbb{R}^{*},\mathbb{A}_{i},1,S}^{(i)}(\mathrm{around})}{d_{\mathbb{R},1,S}(\mathrm{L},\mathrm{R})}\right)^{3}\right]\right)^{1/3}\le CK^{2/B}\overline{\chi}_{K^{\eta}S}\label{eq:dealwithL1}
\end{equation}
by Jensen's inequality and \lemref{sumconclusion}. Also, by \lemref{maxofsmaller},
there is a $c>0$ so that, as long as $K^{\eta}S\ge T_{0}$,
\begin{equation}
\left(\mathbf{E}\left[\left(\max_{i\in\mathcal{Q}_{*}}d_{(3\mathbb{E}_{i})^{*},\mathbb{A}_{i},K^{-\gamma\theta_{0}}}^{(i)}(\mathrm{around})\right)^{3}\right]\right)^{\frac{1}{3}},\left(\mathbf{E}\left[\left(\max_{i\in\mathcal{Q}_{*}}d_{(3\mathbb{E}_{i})^{*},\mathbb{A}_{i},K^{-\gamma\theta_{0}}}(\mathrm{around})\right)^{3}\right]\right)^{\frac{1}{3}}\le C\overline{\chi}_{K^{\eta}S}K^{-c}\Theta_{\mathbb{B}(KS)}^{\mathrm{easy}}(p_{0}).\label{eq:dealwithmax}
\end{equation}
Finally, we have
\begin{equation}
\left(\mathbf{E}[d_{\mathbb{R},1,S}(\mathrm{L},\mathrm{R})^{-3}]\right)^{\frac{1}{3}}=\left(\mathbf{E}[d_{\mathbb{R},1,S}(\pi)^{-3}]\right)^{\frac{1}{3}}\le C\overline{\chi}_{KS/100}\Theta_{\mathbb{B}(KS)}^{\mathrm{easy}}(p_{0})^{-1}\label{eq:dealwithmin}
\end{equation}
by \propref{minmomentbound} (applied with $Q=100$).

Plugging \eqref{dealwithL1}, \eqref{dealwithmax}, and \eqref{dealwithmin}
into the last term of \eqref{varhalfway}, and \eqref{dealwithmax}
and \eqref{dealwithmin} into the second term of \eqref{varhalfway},
we obtain
\begin{align*}
\Var\left(\log d_{\mathbb{R}}(\mathrm{L},\mathrm{R})\right) & \le C\log K+C\e^{C\sqrt{\log K}}\overline{\chi}_{K^{\eta}S}K^{-c}\overline{\chi}_{KS/100}+CK^{C}S^{-c}+CK^{2/B-c}\overline{\chi}_{KS/100}\overline{\chi}_{K^{\eta}S}^{2}+CK^{-c}\overline{\chi}_{S}^{3}\\
 & \le C\log K+CK^{-c/2}\overline{\chi}_{KS/100}^{3}+CK^{C}S^{-c},
\end{align*}
with the second equality valid so long as $B$ is chosen so large
that $2/B\le c/4$ and $K$ is large enough that $K\ge100$, $K^{\eta-1}\ge1/100$
and $\e^{C\sqrt{\log K}}\le K^{c/4}$. 
\end{proof}

\section{Subsequential scaling limits\label{sec:chaining}}

The variance bound proved in the previous section will allow us to
show that the renormalized metric is ``almost'' Hölder continuous.
(The ``almost'' is because the metric takes discrete values---this
point is not important.) We recall the definition of $\Theta^{*}(S)$
in \corref{Thetastardef}.
\begin{thm}
\label{thm:Holder}There is a constant $c>0$ and a function $f:\mathbf{R}_{>0}\to\mathbf{R}_{>0}$
so that $\lim\limits _{S\to\infty}f(S)=0$ and the following holds.
For every $\eps>0$, there is a constant $C=C(\eps)<\infty$ so that
for every $S\ge0$, with probability at least $1-\eps$, we have that
for every $x,y\in\mathbb{B}(S)$,
\[
\frac{d_{\mathbb{B}(S)}(x,y)}{\Theta^{*}(S)}\le C(\eps)\left(\left|\frac{x-y}{S}\right|^{c}\vee f(S)\right).
\]
\end{thm}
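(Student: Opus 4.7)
The plan is a multi-scale chaining argument. For each $k\ge 0$, I cover $\mathbb{B}(S)$ by an axis-aligned grid of $O(4^k)$ dyadic subboxes $\{\mathbb{B}_{k,j}\}$ of side $2^{-k}S$, and aim to establish, on a single event $\mathcal{E}$ of probability $\ge 1-\eps$, the uniform estimate
\[
\max_{x,y\in\mathbb{B}_{k,j}} d_{\mathbb{B}(S)}(x,y)\le C(\eps)\,\Theta^*\!\bigl(2^{-(1-\eta)k}S\bigr)
\]
for every such subbox, where $\eta<1$ is the constant fixed in \eqref{etadef}. The exponent $1-\eta$ appears because the coarse field $h_{\mathbb{B}(S)^*:\mathbb{B}_{k,j}^*}$ is bounded by $\theta_0 k\log 2$ uniformly in $j$ with high probability (\corref{maxcoarse}), and by \propref{LQGscaling}, multiplying the measure by $\e^{\gamma\theta_0 k\log 2}$ is equivalent in distribution to inflating the subbox from scale $2^{-k}S$ to scale $2^{-(1-\eta)k}S$. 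The diameter-to-hard-crossing comparison \propref{chaining} combined with the superpolynomial concentration \propref{concentration-boost-hard}---whose tail $\e^{-(\log\theta)^{8/7}/C}$ is comfortably summable against the $4^k$ subboxes at each scale and against $k$ itself---then gives the estimate above after a union bound.

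Given $\mathcal{E}$, any $x,y\in\mathbb{B}(S)$ with $r:=|x-y|$ lies in some common subbox at scale $2^{-k^*}S\asymp r$, so
\[
d_{\mathbb{B}(S)}(x,y)\le C(\eps)\,\Theta^*\!\bigl(2^{-(1-\eta)k^*}S\bigr)\le C(\eps)\,(r/S)^{(1-\eta)c_0}\,\Theta^*(S),
\]
where the last inequality is \propref{pllb-inductive} rephrased, via \corref{Thetastardef}, as $\Theta^*(U)\le C(U/S)^{c_0}\Theta^*(S)$ for $T_0\le U\le S$ and some $c_0>0$. This gives the Hölder exponent $c=(1-\eta)c_0>0$. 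When $2^{-(1-\eta)k^*}S<T_0$, one falls back on the trivial bound $d(x,y)\le C\Theta^*(S)$ (again from \propref{chaining} applied to $\mathbb{B}(S)$ itself), and the resulting quotient $C\Theta^*(T_0)/\Theta^*(S)$ is absorbed into $f(S)$, which tends to $0$ as $S\to\infty$ by \corref{Thetapower}.

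The main obstacle is jointly controlling the union bound over the $O(4^k)$ subboxes at each scale \emph{and} the coarse-field effect, while keeping the diameter estimate centred at the smaller scale $2^{-(1-\eta)k}S$ rather than at $2^{-k}S$. The first issue is handled by the superpolynomial tail in \propref{concentration-boost-hard}, which easily absorbs the $4^k$ factor at scale $k$ together with a polynomial-in-$k$ reserve for summability. The second is exactly where $\gamma<2$ (equivalently $\eta<1$) becomes decisive: only then is $1-\eta>0$, so that the subbox-scale bounds telescope into a genuine power-law decay $(r/S)^c$, which is the entire content of Hölder continuity. A secondary bookkeeping point is coupling the estimates at different scales on a \emph{single} event $\mathcal{E}$; this is routine once one notes that each scale contributes a summable failure probability and chooses $C(\eps)$ to dominate the $k$-dependent constants.
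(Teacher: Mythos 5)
Your outline follows the same route as the paper's: a dyadic chaining of the diameter into hard crossings at successive scales, the coarse-field bound $\theta_{0}k\log2$ converted via \propref{LQGscaling} into an effective scale $2^{-(1-\eta)k}S$, superpolynomial concentration of hard crossings to survive the union bound over $O(4^{k})$ subboxes, and the power-law growth of $\Theta^{*}$ from \propref{pllb-inductive} to telescope the scales into a Hölder exponent. The identification of the scaling factor $\alpha=2^{-\eta k}$ and the exponent $c=(1-\eta)c_{0}$ (up to the subexponential loss $2^{Ck^{7/8}}$ forced by the $\e^{-(\log(1+\theta))^{8/7}/C}$ tail) are correct, as is the absorption of scales below $T_{0}$ into $f(S)$.

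The genuine gap is at the bottom of the chain. The diameter of a scale-$k$ subbox is not controlled by a single hard crossing plus a union bound; it is controlled by the \emph{sum} of hard-crossing budgets over all scales $s\ge k$, and your budget-versus-union-bound tradeoff only closes while $2^{-(1-\eta)s}S\ge T_{0}$. Indeed, to beat the $4^{s}$ union bound with the tail of \propref{concentration-boost-hard} you must inflate the budget at scale $s$ by a factor of order $2^{Cs^{7/8}}$, and this inflation is absorbed only by the decay $\Theta^{*}(2^{-(1-\eta)s}S)\le C2^{-c_{0}(1-\eta)(s-k)}\Theta^{*}(2^{-(1-\eta)k}S)$, which \propref{pllb-inductive} supplies only down to scale $T_{0}$. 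Below that scale the budgets keep growing while the quantiles no longer decay, so the chain must be truncated at a terminal scale $t_{*}\asymp\log S$, and the diameters of the $4^{t_{*}}$ terminal boxes must be bounded by a separate argument. The paper does this by covering each terminal box with a bounded number of balls and bounding $\max_{j}\mu_{\mathbb{R}^{*}}(\mathbb{C}_{j})$ via a positive moment of exponent $\nu>1$ (\lemref{basecase}); the inequality $\frac{1}{2}\gamma^{2}\nu^{2}-(2+\frac{1}{2}\gamma^{2})\nu+2<0$ for $\nu$ slightly above $1$ is a second, independent place where $\gamma<2$ enters, and your proposal omits this step entirely. Two smaller points: two points at distance $r$ need not lie in a common dyadic box, so one chains through a boundary point of adjacent boxes at the cost of a factor $2$; and the coarse-field fluctuations above $\theta_{0}k\log2$ require their own scale-dependent budget (the paper's factor $2^{Bt^{2/3}}$), not just the fixed cutoff you quote from \corref{maxcoarse}.
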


\begin{cor}
\label{cor:finalcor}The family
\[
\left\{ \frac{d_{\mathbb{B}(1),\delta}(\cdot,\cdot)}{\Theta^{*}(\delta^{-\frac{2}{\gamma^{2}+4}})}\right\} _{\delta>0}
\]
is tight in the uniform topology on $[0,1]^{2}$.
\end{cor}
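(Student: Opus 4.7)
The plan is to deduce the corollary from Theorem~\ref{thm:Holder} via the exact scaling covariance of the Liouville quantum gravity (Proposition~\ref{prop:LQGscaling}), which lets us replace the small-$\delta$ problem on the fixed box $\mathbb{B}(1)$ with the $\delta=1$ problem on a large box $\mathbb{B}(\alpha)$, where $\alpha=\delta^{-2/(\gamma^{2}+4)}$. Indeed, with this choice $\alpha^{\gamma^{2}/2+2}\delta=1$, so Proposition~\ref{prop:LQGscaling} gives
\[
\left\{\frac{d_{\mathbb{B}(1),\delta}(x,y)}{\Theta^{*}(\alpha)}\right\}_{x,y\in\mathbb{B}(1)}\overset{\mathrm{law}}{=}\left\{\frac{d_{\mathbb{B}(\alpha)}(\alpha x,\alpha y)}{\Theta^{*}(\alpha)}\right\}_{x,y\in\mathbb{B}(1)},
\]
and as $\delta\downarrow 0$ we have $\alpha\to\infty$.

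First I would invoke Theorem~\ref{thm:Holder} with this value of $\alpha$: for each $\eps>0$ there exist $c>0$, $C=C(\eps)<\infty$ and a function $f$ with $f(\alpha)\to 0$ such that with probability at least $1-\eps$, uniformly in $u,v\in\mathbb{B}(\alpha)$,
\[
\frac{d_{\mathbb{B}(\alpha)}(u,v)}{\Theta^{*}(\alpha)}\le C(\eps)\left(\left|\frac{u-v}{\alpha}\right|^{c}\vee f(\alpha)\right).
\]
Substituting $u=\alpha x$, $v=\alpha y$ and transferring back through the distributional identity above, we conclude that with probability at least $1-\eps$,
\[
\frac{d_{\mathbb{B}(1),\delta}(x,y)}{\Theta^{*}(\alpha)}\le C(\eps)\left(|x-y|^{c}\vee f(\alpha)\right) \qquad \text{for all } x,y\in\mathbb{B}(1).
\]
This is exactly the joint modulus-of-continuity and boundedness estimate one needs: the right-hand side is bounded by $C(\eps)(\diam_{\Euc}(\mathbb{B}(1))^{c}\vee f(\alpha))$ (hence a uniform bound on the sup-norm for all $\delta$ small) and it tends to $0$ as $|x-y|\to 0$ uniformly in $\delta$ small enough that $f(\alpha)$ is below any prescribed threshold.

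To finish I would apply the Arzelà–Ascoli theorem: the family is uniformly equicontinuous on its domain, except for an additive $f(\alpha)$-error which is uniform and vanishes as $\delta\downarrow 0$, so for any $\eta>0$ we can find a compact set $K_{\eta}$ in the uniform topology on continuous functions on the product domain such that every member of the family lies in $K_{\eta}$ with probability at least $1-\eta$; this is precisely tightness. For the finitely many ``large'' values of $\delta$ (say $\delta$ bounded below) for which $f(\alpha)$ is not yet small, tightness follows automatically since each individual law is tight, so one only needs to combine these two regimes. The main (minor) technical wrinkle is that $d_{\mathbb{B}(1),\delta}$ is integer-valued and hence not literally continuous; this is harmless because the Hölder-type bound is an inequality on values, so Arzelà–Ascoli still yields precompactness of the laws in the uniform topology after at worst composing with a cutoff, and no further idea beyond Theorem~\ref{thm:Holder} and the scaling relation is required.
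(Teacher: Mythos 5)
Your argument is correct and is essentially identical to the paper's proof, which likewise combines \propref{LQGscaling} (with $\alpha=\delta^{-2/(\gamma^{2}+4)}$ so that $\alpha^{\gamma^{2}/2+2}\delta=1$) with \thmref{Holder} and an Arzelà--Ascoli step. The two ``wrinkles'' you flag at the end --- the additive $f(\alpha)$ error in the modulus of continuity and the fact that the distances are integer-valued rather than continuous --- are exactly what the paper's \lemref{AA} (the mollification variant of Arzelà--Ascoli proved in the appendix) is designed to absorb, so no further work is needed.
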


\begin{proof}
This follows immediately from \thmref{Holder}, \propref{LQGscaling},
and \lemref{AA} (the last being a slight generalization of the Arzelà--Ascoli
theorem).
\end{proof}
We note that \corref{finalcor} immediately implies almost all of
\thmref{maintheorem} by \propref{ddunderline} and Prokhorov's theorem,
except that the limiting objects may be pseudometrics rather than
metrics. In this section we prove \thmref{Holder}, and then in the
next section we will prove that the limiting objects are in fact metrics.
We will need a preliminary lemma, which allows us to deal with the
``base case'' of our diameter chaining argument by showing that
it is unlikely that very small boxes will have large diameter.
\begin{lem}
\label{lem:basecase}There is a $C<\infty$ and a $\nu_{0}>1$ so
that if $\nu\in(1,\nu_{0})$ then there is an $\alpha>0$ so that
the following holds. Fix $S,K>0$ and let $\mathbb{R}=\mathbb{B}(S)$.
Divide $\mathbb{R}^{\circ}$ into $J$-many $K^{-1}S\times K^{-1}S$
boxes labeled $\mathbb{C}_{1},\ldots,\mathbb{C}_{J}$, with $J\le CK^{2}$.
Then for all $u>0$,
\begin{equation}
\mathbf{P}\left(\max_{1\le j\le J}\mu_{\mathbb{R}^{*}}(\mathbb{C}_{j})\ge u\right)\le Cu^{-\nu}S^{(2+\gamma^{2}/2)\nu}K^{-\alpha}.\label{eq:basecase}
\end{equation}
\end{lem}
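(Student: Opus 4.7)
The plan is to proceed by the natural union-bound plus Markov strategy, where the main content is to verify that taking $\nu$ slightly above $1$ yields a negative power of $K$, an inequality that relies precisely on the hypothesis $\gamma<2$.

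First I would apply a union bound and Markov's inequality to reduce the estimate to the control of a single positive moment:
\[
\mathbf{P}\Bigl(\max_{1\le j\le J}\mu_{\mathbb{R}^*}(\mathbb{C}_j)\ge u\Bigr)\le \sum_{j=1}^{J}u^{-\nu}\mathbf{E}\bigl[\mu_{\mathbb{R}^*}(\mathbb{C}_j)^\nu\bigr],
\]
so it suffices to bound $\mathbf{E}[\mu_{\mathbb{R}^*}(\mathbb{C}_j)^\nu]$ by $C\,K^{-\alpha-2}\,S^{(2+\gamma^2/2)\nu}$ (the $-2$ absorbing the $J\le CK^2$ factor).

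Next I would split the field via the Gibbs--Markov property (\propref{markovfieldproperty}): write $h_{\mathbb{R}^*}=h_{\mathbb{C}_j^*}+h_{\mathbb{R}^*:\mathbb{C}_j^*}$ on $\mathbb{C}_j^*$, with the two summands independent. By \eqref{equatemeasures} and \eqref{muineq},
\[
\mu_{\mathbb{R}^*}(\mathbb{C}_j)\le \exp\Bigl\{\gamma\max_{x\in\mathbb{C}_j}h_{\mathbb{R}^*:\mathbb{C}_j^*}(x)\Bigr\}\,\mu_{\mathbb{C}_j^*}(\mathbb{C}_j),
\]
and independence factorizes the $\nu$-th moment into a coarse-field piece and a fine-field piece. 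For the coarse piece, \lemref{varislog} gives $\sup_{x\in\mathbb{C}_j^\circ}\Var h_{\mathbb{R}^*:\mathbb{C}_j^*}(x)\le \log K+C$ and Fernique plus the Borell--TIS MGF bound (as in the proof of \propref{maxcoarse}) give
\[
\mathbf{E}\exp\Bigl\{\gamma\nu\max_{x\in\mathbb{C}_j^\circ}h_{\mathbb{R}^*:\mathbb{C}_j^*}(x)\Bigr\}\le C\,K^{\gamma^2\nu^2/2}.
\]
For the fine piece, the scaling relation \propref{conformalcovariance} identifies $\mu_{\mathbb{C}_j^*}(\mathbb{C}_j)$ in law with $(K^{-1}S)^{2+\gamma^2/2}$ times the mass on a bounded reference box, and \propref{momentsbounded} gives a finite $\nu$-th moment for $\nu<\nu_0$; thus
\[
\mathbf{E}[\mu_{\mathbb{C}_j^*}(\mathbb{C}_j)^\nu]\le C\,(K^{-1}S)^{(2+\gamma^2/2)\nu}.
\]

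Combining and folding $J\le CK^2$ into the union bound yields the exponent of $K$ as
\[
2+\tfrac{\gamma^2}{2}\nu^2-\Bigl(2+\tfrac{\gamma^2}{2}\Bigr)\nu\;=\;-\alpha(\nu).
\]
The main obstacle, and the heart of the argument, is verifying $\alpha(\nu)>0$ for some $\nu\in(1,\nu_0)$. At $\nu=1$ one has $\alpha(1)=0$ exactly, and
\[
\alpha'(1)=\Bigl(2+\tfrac{\gamma^2}{2}\Bigr)-\gamma^2=2-\tfrac{\gamma^2}{2}>0
\]
precisely because $\gamma<2$. Hence $\alpha(\nu)>0$ for all $\nu$ in a right-neighborhood of $1$, and choosing $\nu_0$ smaller than both $\min\{\nu_0^{\text{Kahane}},1+\eps\}$ for an appropriate $\eps>0$ lets us set $\alpha=\alpha(\nu)>0$, delivering \eqref{basecase}.
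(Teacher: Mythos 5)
Your proposal is correct and follows essentially the same route as the paper: the Gibbs--Markov split $\mu_{\mathbb{R}^{*}}(\mathbb{C}_{j})\le\e^{\gamma F_{j}}\mu_{\mathbb{C}_{j}^{*}}(\mathbb{C}_{j})$ with independent factors, the bound $\mathbf{E}\e^{\gamma\nu F_{j}}\le CK^{\gamma^{2}\nu^{2}/2}$, the scaling identity for the small-box mass, and the observation that the resulting exponent of $K$ vanishes at $\nu=1$ with strictly favorable derivative exactly because $\gamma<2$. No gaps.
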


\begin{proof}
Define $\tilde{\nu}_{0}$ to be the $\nu_{0}$ from \propref{momentsbounded}.
Fix $\nu\in(1,\tilde{\nu}_{0})$. For $1\le j\le J$, define
\[
F_{j}=\max_{x\in\mathbb{C}_{j}^{\circ}}h_{\mathbb{R}^{*}:\mathbb{C}_{j}^{*}}(x).
\]
Then we note that $F_{j}$ and $\mu_{\mathbb{C}_{j}^{*}}(\mathbb{C}_{j})$
are independent. Also, by \eqref{mostusefulscaling}, we have
\begin{equation}
\mu_{\mathbb{R}^{*}}(\mathbb{C}_{j})\le\e^{\gamma F_{j}}\mu_{\mathbb{C}_{j}^{*}}(\mathbb{C}_{j}).\label{eq:3RCj}
\end{equation}
We recall from \eqref[s]{mostusefulscaling}~and~\ref{lem:fluctstailbound}
that $F_{j}$ has expectation of order $1$ and tails like those of
a Gaussian with variance $\log K$. Therefore, we have
\begin{equation}
\mathbf{E}\e^{\gamma\nu F_{j}}\le CK^{\frac{1}{2}\gamma^{2}\nu^{2}}.\label{eq:expFj}
\end{equation}
On the other hand, we have that $\mu_{\mathbb{C}_{j}^{*}}(\mathbb{C}_{j})\overset{\mathrm{law}}{=}(S/K)^{2+\gamma^{2}/2}\mu_{\mathbb{B}^{*}}(\mathbb{B})$,
where $\mathbb{B}=[0,1]^{2}$, so
\begin{equation}
\mathbf{E}\mu_{\mathbb{C}_{j}^{*}}(\mathbb{C}_{j})^{\nu}=(S/K)^{(2+\gamma^{2}/2)\nu}\mathbf{E}\left[\mu_{\mathbb{B}^{*}}(\mathbb{B})^{\nu}\right].\label{eq:expmmu}
\end{equation}
Therefore, by \eqref{3RCj} and the independence of the two factors
on its right-hand side, along with \eqref{expFj} and \eqref{expmmu},
we have
\[
\mathbf{E}\left[\mu_{\mathbb{R}^{*}}(\mathbb{C}_{j})^{\nu}\right]\le CS^{(2+\gamma^{2}/2)\nu}K^{\frac{1}{2}\gamma^{2}\nu^{2}-(2+\gamma^{2}/2)\nu}\mathbf{E}\left[\mu_{\mathbb{B}^{*}}(\mathbb{B})^{\nu}\right]\le CS^{(2+\gamma^{2}/2)\nu}K^{\frac{1}{2}\gamma^{2}\nu^{2}-(2+\gamma^{2}/2)\nu}
\]
in which we note that $\mathbf{E}\mu_{\mathbb{B}^{*}}(\mathbb{B})^{\nu}$
is bounded above by a fixed absolute constant according to \eqref{posmoments}.
Then we have
\[
\mathbf{E}\left[\max_{1\le j\le J}\mu_{\mathbb{R}^{*}}(\mathbb{C}_{j})^{\nu}\right]\le J\mathbf{E}\left[\mu_{\mathbb{R}^{*}}(\mathbb{C}_{j})^{\nu}\right]\le CS^{(2+\gamma^{2}/2)\nu}K^{\frac{1}{2}\gamma^{2}\nu^{2}-(2+\gamma^{2}/2)\nu+2}.
\]
If we define $f(\nu)=\frac{1}{2}\gamma^{2}\nu^{2}-(2+\frac{1}{2}\gamma^{2})\nu+2$,
then  $f(1)=0$ and $f'(1)=\frac{1}{2}\gamma^{2}-2<0$, since $\gamma<2$.
This implies that there is a $\nu_{0}>1$ such that so that $f(\nu)<0$
for $\nu\in(1,\nu_{0})$, which means that if we further take $\nu_{0}<\tilde{\nu}_{0}$,
then for each $\nu\in(1,\nu_{0})$ we have an $\alpha>0$ so that
\[
\mathbf{E}\left[\max_{1\le j\le J}[\mu_{\mathbb{R}^{*}}(\mathbb{C}_{j})]^{\nu}\right]\le CS^{(2+\gamma^{2}/2)\nu}K^{-\alpha}.
\]
Inequality~\eqref{basecase} then comes from Markov's inequality.
\end{proof}
\begin{prop}
\label{prop:chaining}There is a constant $c>0$ so that for every
$\eps>0$, there is a constant $C=C(\eps)<\infty$ so that for every
$S>0$ the following holds. Let $\mathbb{R}=\mathbb{B}(S)$. For integers
$t\in\mathbf{Z}_{\ge0}$ and $0\le i,j\le2^{t}-1$, define
\[
\mathbb{B}_{t;i,j}=\begin{cases}
(i2^{-t}S,j2^{-t+1}S)+[0,2^{-t}S]\times[0,2^{-t+1}S] & \text{if \ensuremath{t} is even;}\\
(i2^{-t+1}S,j2^{-t}S)+[0,2^{-t+1}S]\times[0,2^{-t}S] & \text{if \ensuremath{t} is odd.}
\end{cases}
\]
Then
\begin{equation}
\mathbf{P}\left(\bigcup_{t_{0}=0}^{\lfloor c\log_{2}S\rfloor}\left\{ \max_{i,j=0}^{2^{t_{0}}-1}\sup_{x,y\in\mathbb{B}_{t_{0};ij}}\frac{d_{\mathbb{R}}(x,y)}{\Theta^{*}(S)}\ge C(\eps)2^{-ct_{0}}\right\} \right)<\eps.\label{eq:probto0}
\end{equation}
\end{prop}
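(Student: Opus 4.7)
The plan is to prove \propref{chaining} by a dyadic chaining argument: bound the $d_{\mathbb{R}}$-diameter of each scale-$t_0$ sub-box by a sum of maximum around-crossing distances of annuli at finer scales $t \ge t_0$, down to a base scale $T := \lfloor c \log_2 S \rfloor$. For any $x, y \in \mathbb{B}_{t_0; i, j}$, a path connecting them can be assembled from circuits around the annuli $\mathbb{A}_{t; i', j'} := (3\mathbb{B}_{t; i', j'} \setminus \mathbb{B}_{t; i', j'}) \cap \mathbb{R}$, at each scale $t \in \{t_0, \ldots, T\}$, enclosing the scale-$t$ sub-box containing $x$ (and similarly for $y$). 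Setting $M_t := \max_{i', j'} d_{\mathbb{R}^*, \mathbb{A}_{t; i', j'}, 1, S}(\mathrm{around})$, where the max runs over all $O(4^t)$ scale-$t$ sub-boxes in $\mathbb{R}$, this chaining yields the deterministic inequality
\[
\max_{i, j = 0}^{2^{t_0}-1} \sup_{x, y \in \mathbb{B}_{t_0; i, j}} d_{\mathbb{R}}(x, y) \le C \sum_{t = t_0}^{T} M_t + \Xi_T,
\]
where $\Xi_T$ is a base-case diameter term at scale $T$ independent of $t_0$.

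To control $M_t$ probabilistically, I absorb the ambient coarse field into an effective box size via the LQG scaling covariance. Applying \propref{fieldonsmallerbox} to move from $h_{\mathbb{R}^*}$ to the local field $h_{(3\mathbb{B}_{t;i',j'})^*}$ replaces $\delta = 1$ by $\delta = e^{-\gamma F}$, where $F$ is the local coarse field; \corref{maxcoarse} bounds, for any $\theta_0 > 2$, the maximum of $F$ over all $O(4^t)$ scale-$t$ sub-boxes by $\theta_0 t \log 2$ with probability at least $1 - C \cdot 2^{(2 - \theta_0^2/2) t}$, and by \propref{LQGscaling} this converts the local crossing to an annular around-crossing of a box of ``effective side'' $2^{-(1-\eta)t} S$, where $\eta := 2\gamma\theta_0/(\gamma^2 + 4) < 1$ by \eqref{etadef} (exactly because $\gamma < 2$). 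Combining \corref{hardmomentbound} applied in the effective frame with Markov's inequality at a suitably large moment $B$ (chosen so that $3/B < c_1 (1-\eta)$, where $c_1$ is the exponent in \propref{pllb-inductive}) and union-bounding over the $O(4^t)$ sub-boxes and over $t \in \{0, \ldots, T\}$, one obtains that with probability $\ge 1 - \eps/2$, $M_t \le \lambda_t \Theta^*(2^{-(1-\eta)t} S)$ simultaneously for all $t$, with $\lambda_t$ growing polynomially in $2^t$ with arbitrarily small exponent. On this event the chaining sum is bounded by a convergent geometric series times $\Theta^*(2^{-(1-\eta)t_0} S) \le C \cdot 2^{-c_1 (1-\eta) t_0} \Theta^*(S)$ (\propref{pllb-inductive} again).

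For the base case $\Xi_T$, I apply \lemref{basecase} with $K = 2^T$, $u = 1$, and some $\nu \in (1, \nu_0)$: taking $c$ so that $c \alpha > (2 + \gamma^2/2)\nu$ makes the maximum $\mu_{\mathbb{R}^*}$-mass of a scale-$T$ sub-box less than $1$ with probability $\ge 1 - \eps/2$, in which case each such sub-box is covered by a single Euclidean ball of mass $\le 1$ and $\Xi_T \le 2$; \corref{Thetapower} then guarantees $2 \le C(\eps) \cdot 2^{-cT} \Theta^*(S)$ for $c$ chosen below its lower power-law exponent and $S$ large enough. Combining the chaining estimate with the base case gives, on an event of probability $\ge 1 - \eps$, the bound $\max_{i,j} \sup_{x, y \in \mathbb{B}_{t_0; i, j}} d_{\mathbb{R}}(x, y) \le C(\eps) \cdot 2^{-ct_0} \Theta^*(S)$ uniformly in $t_0 \in \{0, \ldots, T\}$, provided $c$ is taken smaller than all of the relevant power-law exponents. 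The main technical obstacle is the treatment of the coarse field when $\gamma$ is close to $2$: a naive bound $d_{\mathbb{R}^*, \cdot} \le e^{\gamma F^+} d_{\mathrm{local}, \cdot}$ followed by a worst-case $e^{\gamma \theta_0 t \log 2} = 2^{\gamma \theta_0 t}$ would outpace the power-law decay of $\Theta^*(2^{-t} S)$ for $\gamma$ near $2$; the resolution is to absorb the coarse field into the effective box size via \propref{LQGscaling}, which keeps the effective scale $2^{-(1-\eta) t} S$ strictly decreasing in $t$ precisely because $\eta < 1$.
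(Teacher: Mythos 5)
Your overall architecture matches the paper's: a multiscale chaining bound, absorption of the coarse field into an effective box size $2^{-(1-\eta)t}S$ via \propref{LQGscaling} (using $\eta<1$), tail bounds plus union bounds at each scale, and a base case via \lemref{basecase}. However, the chaining step as you state it has a genuine geometric gap. You assemble the connecting path from circuits around the annuli $(3\mathbb{B}_{t;i',j'}\setminus\mathbb{B}_{t;i',j'})\cap\mathbb{R}$ at consecutive scales, but the scale-$t$ circuit and the scale-$(t+1)$ circuit are two loops that both encircle the same region (the scale-$(t+1)$ box containing $x$), and two such loops need not intersect: the scale-$t$ circuit lies outside $\mathbb{B}_{t;i,j}$ while the scale-$(t+1)$ circuit can hug $\partial\mathbb{B}_{t+1;i',j'}$ and stay essentially inside it, so they can be disjoint nested loops and the concatenation is not a connected path. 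You would need radial connectors between consecutive scales (whose cost you have not budgeted) or a different choice of crossing events. This is precisely why the paper chains with \emph{hard crossings} of the $1\times2$ dyadic rectangles with alternating orientation (\figref{pointtopoint}): a hard crossing of a scale-$t$ rectangle traverses each of the scale-$(t+1)$ rectangles tiling it in their easy direction and therefore must intersect their hard crossings, so the chain connects automatically and only the finest scale requires a point-to-path connector.

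A secondary, fixable issue is the parameter bookkeeping in the base case. With $u=1$ and base depth $T=\lfloor c\log_2 S\rfloor$, \lemref{basecase} requires $c\alpha>(2+\gamma^2/2)\nu$, forcing $c$ \emph{large}, while the decay rate $2^{-ct_0}$ you can achieve is limited by the exponent coming from \propref{pllb-inductive}, forcing $c$ \emph{small}; these two roles of $c$ must be decoupled. The paper does this by running the chaining down to a depth $t_*$ that is a large constant multiple of $\log_2 S$ (chosen so that $S^{(2+\gamma^2/2)\nu}2^{-t_*\alpha}\le1$) while restricting the conclusion to $t_0\le\lfloor c\log_2 S\rfloor$ for a separate small $c$, and by letting the base-case threshold diverge rather than fixing $u=1$. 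On the positive side, your moment-plus-Markov treatment of the annular crossing tails against the $4^t$ entropy is a legitimate and arguably simpler alternative to the paper's superpolynomial concentration bound, and your handling of the coarse field is correct; also mind that \propref{pllb-inductive} only applies above the scale $T_0$, which is why the paper truncates at $t\wedge t_1$.
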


\begin{proof}
Because the proof is somewhat technical and involves many parameters,
we divide it into steps. The first step is to use a chaining argument
to express the maximum in \eqref{probto0} in terms of a sum of hard
crossing distances of smaller boxes at many scales. Then we define
a budget for the hard crossing distance at each scale, and show that
this budget is, with high probability, not exceeded at any scale.

\emph{Step 1: the chaining argument. }By a chaining argument illustrated
in \figref{pointtopoint},
\begin{figure}
\begin{centering}
\tiny
\begin{tikzpicture}[x=0.35in,y=0.35in]
\draw[step=1,thin] (0,0) grid (8,4);
\draw[fill=black,opacity=0.1] (0,0) rectangle (8,4);
\draw[fill=black,opacity=0.2] (0,0) rectangle (2,4);
\draw[fill=black,opacity=0.2] (4,0) rectangle (6,4);
\draw[fill=black,opacity=0.3] (0,0) rectangle (2,1);
\draw[fill=black,opacity=0.3] (4,2) rectangle (6,3);
\draw[ultra thick, yellow,style={decorate,decoration={snake,amplitude=2,segment length=50}}] (0,2.1) -- (8,0.2);
\draw[very thick, yellow,style={decorate,decoration={snake,amplitude=2,segment length=20}}] (0.2,0) -- (0.9,4);
\draw[very thick, yellow,style={decorate,decoration={snake,amplitude=2,segment length=20}}] (5.4,0) -- (4.9,4);
\draw[thick, yellow,style={decorate,decoration={snake,amplitude=1}}] (0,0.4) -- (2,0.5);
\draw[thick, yellow,style={decorate,decoration={snake,amplitude=1}}] (4,2.5) -- (6,2.7);
\draw[yellow,style={decorate,decoration={snake,amplitude=1}}] (5.5,2.3) -- (5.5,2.62);
\draw[yellow,style={decorate,decoration={snake,amplitude=1}}] (1.5,0.7) -- (1.5,0.45);
\filldraw [white] (5.5,2.3) circle (2.5pt);
\filldraw [white] (1.5,0.7) circle (2.5pt);
\end{tikzpicture}
\par\end{centering}
\caption{\label{fig:pointtopoint}Connecting any two points with two hard crossings
at each scale, plus two point-to-point paths within $\mathbb{B}_{t_{*};i,j}$s.
Any two points $x,y\in\mathbb{R}$ can be connected by a combination
of a hard crossing in each of the $\mathbb{B}_{t;i,j}$s containing
either $x$ or $y$, a connection from $x$ to a point on the hard
crossing of the $\mathbb{B}_{t_{*};i,j}$ containing $x$, and a connection
from $y$ to a point on the hard crossing of the $\mathbb{B}_{t_{*};i,j}$
containing $y$. This is because the hard crossing at each scale is
guaranteed to intersect the hard crossings at the next smaller scale,
and so all of the crossings can be joined into a crossing joining
the two points.}
\end{figure}
 we have, for any $t,t_{*}\in\mathbf{Z}_{\ge0}$, that
\begin{align}
\max_{i,j=0}^{2^{t}-1}\sup_{x,y\in\mathbb{B}_{t;i,j}}d_{\mathbb{R}}(x,y) & \le2\max_{i,j=0}^{2^{t_{*}}-1}\sup_{x,y\in\mathbb{B}_{t_{*};i,j}}d_{\mathbb{R}^{*},\mathbb{B}_{t_{*};i,j}}(x,y)+2\sum_{s=t}^{t_{*}}\max_{i,j=0}^{2^{s}-1}d_{\mathbb{R}^{*},\mathbb{B}_{s;i,j}}(\mathrm{hard})\nonumber \\
 & \le C\max_{i,j=0}^{2^{t_{*}}-1}\mu_{\mathbb{R}^{*}}(\mathbb{B}_{t_{*};i,j})+2\sum_{s=t}^{t_{*}}\max_{i,j=0}^{2^{s}-1}d_{\mathbb{R}^{*},\mathbb{B}_{s;i,j}}(\mathrm{hard}).\label{eq:xydistbound}
\end{align}
(See \cite[Proposition 6.7]{DD16} for a similar construction described
in more detail.) The second inequality in \eqref{xydistbound} is
by the fact that all points in $\mathbb{B}_{t_{*};i,j}$ can be connected
by some fixed number of balls inside $\mathbb{B}_{t_{*};i,j}^{\circ}$
with centers in $\mathbb{B}_{t_{*};i,j}$.

We need to understand the coarse field on each of the $\mathbb{B}_{t;i,j}$s.
To this end, we define
\begin{align*}
F_{t;i,j} & =\max_{x\in\mathbb{B}_{t;i,j}}h_{\mathbb{R}^{*}:\mathbb{B}_{t;i,j}^{*}}(x), & F_{t} & =\max_{i,j=0}^{2^{t}-1}F_{t;i,j}.
\end{align*}
Also define $\mathbb{S}_{s}=\mathbb{B}(2^{-(1-\eta)s})$. By \eqref{scaledeltabymax}
of \propref{fieldonsmallerbox}, \eqref{scaledists}, and \propref{LQGscaling},
we have
\begin{equation}
d_{\mathbb{R}^{*},\mathbb{B}_{s;i,j}}(\mathrm{hard})\le d_{\mathbb{B}_{s;i,j},\e^{-\gamma F_{s}}}(\mathrm{hard})\le\e^{(\gamma F_{s}-\gamma\theta_{0}s\log2)^{+}}d_{\mathbb{B}_{s;i,j},2^{-\gamma\theta_{0}s}}(\mathrm{hard})\overset{\mathrm{law}}{=}\e^{(\gamma F_{s}-\gamma\theta_{0}s\log2)^{+}}d_{\mathbb{S}_{s}}(\mathrm{hard}).\label{eq:diambound}
\end{equation}

By \eqref{xydistbound}, \eqref{diambound}, and a union bound, we
have, for any choices of $B$ and $q_{t}$ ($0\le t\le t_{*})$, 
\begin{align}
\mathbf{P} & \left(\bigcup_{t_{0}=0}^{t_{*}}\left\{ \max_{i,j=0}^{2^{t_{0}}-1}\sup_{x,y\in\mathbb{B}_{t_{0};i,j}}d_{\mathbb{R}}(x,y)\ge2q_{t_{*}}+2\sum_{t=t_{0}}^{t_{*}-1}2^{Bt^{2/3}}q_{t}\right\} \right)\nonumber \\
 & \le\sum_{t=1}^{t_{*}-1}\mathbf{P}\left[\e^{\gamma F_{t}-\gamma\theta_{0}t\log2}\ge2^{Bt^{2/3}}\right]+\sum_{t=0}^{t_{*}-1}4^{t}\mathbf{P}(d_{\mathbb{S}_{t}}(\mathrm{hard})\ge q_{t})+\mathbf{P}\left(\max_{i,j=0}^{2^{t_{*}}-1}\mu_{\mathbb{R}^{*}}(\mathbb{B}_{t_{*};i,j})\ge q_{t_{*}}\right).\label{eq:masterformula}
\end{align}
Here, the $q_{t}$s represent a ``distance budget'' that is available
at each scale, so that the distance at each scale stays within the
budget except on events whose probabilities we hope will be summable.

\emph{Step 2: defining the distance budget. }To use \eqref{masterformula},
we have to define the budgets $q_{t}$, and then estimate the terms
on the right-hand side. First, we define 
\begin{equation}
t_{1}=\lfloor(1-\eta)^{-1}\log_{2}(S/T_{0})\rfloor,\label{eq:t1def}
\end{equation}
so 
\begin{equation}
2^{-(1-\eta)t_{1}}S\ge T_{0},\label{eq:t1ST0}
\end{equation}
where $T_{0}$ is defined as in \propref{pllb-inductive}. Then, for
$0\le t<t_{*}$, put
\[
q_{t}=(1+2^{A(t\wedge t_{1})+D})\Theta^{*}(2^{-(1-\eta)t}S),
\]
with constants $A$ and $D$ to be chosen later. By \propref{concentration-boost-hard}
and \corref{Thetastardef}, we have
\begin{equation}
\mathbf{P}(d_{\mathbb{S}_{t}}(\mathrm{hard})\ge q_{t})\le C\e^{-A^{8/7}(t\wedge t_{1})^{8/7}/C-D^{8/7}/C}.\label{eq:dSt}
\end{equation}
On the other hand, by \corref{Thetastardef}, \propref{thetamonotone},
and \propref{pllb-inductive} (recalling \eqref{t1ST0}) that there
are constants $c>0$ and $C_{1}<\infty$ so that
\[
\Theta^{*}(2^{-(1-\eta)t}S)\le\Theta^{*}(2^{-(1-\eta)(t\wedge t_{1})}S)\le C_{1}2^{-c(1-\eta)(t\wedge t_{1})}\Theta^{*}(S).
\]
Therefore, we have
\begin{equation}
q_{t}\le C_{1}2^{(A-c(1-\eta))(t\wedge t_{1})+D}\Theta^{*}(S)\label{eq:qtbd}
\end{equation}
for all $0\le t<t_{*}$.

Also, by \eqref{coarsebound-universal} of \corref{maxcoarse}, we
have
\begin{equation}
\mathbf{P}\left[\e^{\gamma F_{t}-\gamma\theta_{0}t\log2}\ge2^{Bt^{2/3}}\right]\le C\exp\left\{ -\frac{B^{2}t^{1/3}}{2\gamma^{2}(\log2)^{3}}\right\} .\label{eq:coarsebound-chaining}
\end{equation}

Substituting \eqref{dSt}, \eqref{coarsebound-chaining}, \eqref{qtbd},
and \eqref{basecase} from \lemref{basecase} into \eqref{masterformula}
yields, for any $\nu\in(1,\nu_{0})$ (defining $\nu_{0}$ and also
$\alpha$ as in \lemref{basecase}), a constant $C$ so that
\begin{align}
\mathbf{P} & \left(\bigcup_{t_{0}=0}^{t_{*}-1}\left\{ \max_{i,j=0}^{2^{t_{0}}-1}\sup_{x,y\in\mathbb{B}_{t_{0};i,j}}d_{\mathbb{R}}(x,y)\ge q_{t_{*}}+C_{1}\Theta^{*}(S)2^{D}\sum_{t=t_{0}}^{t_{*}-1}2^{Bt^{2/3}+(A-c(1-\eta))(t\wedge t_{1})}\right\} \right)\nonumber \\
 & \le C\sum_{t=0}^{t_{*}-1}\e^{-\frac{B^{2}t^{1/3}}{2\gamma^{2}(\log2)^{3}}}+C\e^{-D^{8/7}}\sum_{t=0}^{t_{*}-1}\e^{t\log4-A^{8/7}(t\wedge t_{1})^{8/7}/C}+Cq_{t_{*}}^{-\nu}S^{(2+\gamma^{2}/2)\nu}2^{-t_{*}\alpha}.\label{eq:pluggedintomaster}
\end{align}

\emph{Step 3: analyzing the right-hand side of \eqref{pluggedintomaster}.
}Now we fix $A=c(1-\eta)/2$ and analyze the right-hand side of \eqref{pluggedintomaster}.
To bound the third term, we take
\begin{equation}
t_{*}=\left\lceil \frac{\nu}{\alpha}(2+\gamma^{2}/2)\log_{2}S\right\rceil ,\label{eq:tstardef}
\end{equation}
so $S^{(2+\gamma^{2}/2)\nu}2^{-t_{*}\alpha}\le1$. We note that the
first term of \eqref{pluggedintomaster} can be bounded above by 
\[
C\sum_{t=0}^{\infty}\e^{-\frac{B^{2}t^{1/3}}{2\gamma^{2}(\log2)^{3}}},
\]
which goes to $0$ as $B\to\infty$. The second term of \eqref{pluggedintomaster}
can be bounded by writing 
\begin{align}
\sum_{t=0}^{t_{*}-1}\e^{t\log4-A^{8/7}(t\wedge t_{1})^{8/7}/C} & \le\sum_{t=0}^{\infty}\e^{t\log4-A^{8/7}t^{8/7}/C}+\e^{-A^{8/7}t_{1}^{8/7}/C}\sum_{t=t_{1}}^{t_{*}-1}4^{t}\nonumber \\
 & \le C\left(1+\exp\left\{ -A^{8/7}\left((1-\eta)^{-1}\log_{2}\frac{S}{T_{0}}-1\right)^{8/7}/C+\left(\frac{\nu}{\alpha}(2+\gamma^{2}/2)\log_{2}S\right)\log4\right\} \right),\label{eq:oursum}
\end{align}
using the values \eqref{t1def} and \eqref{tstardef} of $t_{1}$
and $t_{*}$ fixed above. The right-hand side of \eqref{oursum} is
also bounded as $S\to\infty$. Therefore, we have
\[
\lim_{\substack{B\to\infty\\
q_{t_{*}}\to\infty\\
\theta\to\infty
}
}\mathbf{P}\left(\bigcup_{t_{0}=0}^{t_{*}-1}\left\{ \max_{i,j=0}^{2^{t_{0}}-1}\sup_{x,y\in\mathbb{B}_{t_{0};i,j}}d_{\mathbb{R}}(x,y)\ge q_{t_{*}}+\theta\Theta^{*}(S)\sum_{t=t_{0}}^{t_{*}-1}2^{Bt^{2/3}-\frac{1}{2}c(1-\eta)(t\wedge t_{1})}\right\} \right)=0,
\]
uniformly in $S$. In particular, we can take $\theta=q_{t_{*}}=B$,
and use \lemref{limtoinfty} to fold these parameters into the sum,
so the limit is simplified to%
\begin{equation}
\lim_{B\to\infty}\mathbf{P}\left(\bigcup_{t_{0}=0}^{t_{*}-1}\left\{ \max_{i,j=0}^{2^{t_{0}}-1}\sup_{x,y\in\mathbb{B}_{t_{0};i,j}}d_{\mathbb{R}}(x,y)\ge\Theta^{*}(S)\sum_{t=t_{0}}^{t_{*}-1}2^{Bt^{2/3}-\frac{1}{2}c(1-\eta)(t\wedge t_{1})}\right\} \right)=0.\label{eq:maxprob}
\end{equation}

\emph{Step 4: analyzing the sum on the left-hand side of \eqref{maxprob}.}
The limit \eqref{maxprob} yields a bound on the crossing distances,
but we need to analyze the sum on the left-hand side to understand
exactly what kind of bound. We have, for a constant $C(B)$ depending
on $B$ but independent of $S$ and $t_{0}$, and a constant $c$
independent of $B,S,t_{0}$, that
\begin{align}
\sum_{t=t_{0}}^{t_{*}-1}2^{Bt^{2/3}-\frac{1}{2}c(1-\eta)(t\wedge t_{1})} & \le C(B)\e^{-c_{1}t_{0}}+C2^{-\frac{1}{2}c(1-\eta)t_{1}+(B+1)t_{*}^{2/3}}\nonumber \\
 & \le C(B)\e^{-c_{1}t_{0}}+C2^{-\frac{1}{2}c(1-\eta)((1-\eta)^{-1}\log_{2}(S/T_{0})-1)+(B+1)(\frac{\nu}{\alpha}(2+\gamma^{2}/2)\log_{2}S+1)^{2/3}},\label{eq:insidesum1}
\end{align}
where the second inequality is by plugging in the definitions \eqref{t1def}
and \eqref{tstardef}. Now we note that we can take $C(B)$ so that
\[
2^{-\frac{1}{2}c(1-\eta)((1-\eta)^{-1}\log_{2}(S/T_{0})-1)+(B+1)(\frac{\nu}{\alpha}(2+\gamma^{2}/2)\log_{2}S+1)^{2/3}}\le C(B)S^{-c_{2}},
\]
where $c_{2}>0$ is a constant which does not depend on $B$. Also,
as long as $t_{0}\le\frac{c_{2}}{c_{1}}\log S$, we have $S^{-c_{2}}\le\e^{-c_{1}t_{0}}$,
so in this case by \eqref{insidesum1} we have
\[
\sum_{t=t_{0}}^{t_{*}-1}2^{Bt^{2/3}-\frac{1}{2}c(1-\eta)(t\wedge t_{1})}\le C(B)\e^{-c_{1}t_{0}}.
\]
Plugging this into \eqref{maxprob}, we have
\[
\lim_{B\to\infty}\mathbf{P}\left(\bigcup_{t_{0}=0}^{\lfloor\frac{c_{2}}{t_{1}}\log S\rfloor\wedge(t_{*}-1)}\left\{ \max_{i,j=0}^{2^{t_{0}}-1}\sup_{x,y\in\mathbb{B}_{t_{0};i,j}}d_{\mathbb{R}}(x,y)\ge\Theta^{*}(S)C(B)\e^{-c_{1}t_{0}}\right\} \right)=0,
\]
which is equivalent to \eqref{probto0}.
\end{proof}
Finally, we are ready to prove \thmref{Holder}.
\begin{proof}[Proof of \thmref{Holder}.]
By \propref{chaining}, for each $\eps>0$ we have a constant $C=C(\eps)$
so that, with probability $1-\eps$, for each $0\le t_{0}\le c\log_{2}S$,
\[
\sup_{\substack{x,y\in\mathbb{B}(S)\\
|x-y|\le2^{-t_{0}}S
}
}\frac{d_{\mathbb{B}(S)}(x,y)}{\Theta^{*}(S)}\le2C(\eps)2^{-ct_{0}}.
\]
Therefore,
\[
\sup_{x,y\in\mathbb{B}(S)}\frac{d_{\mathbb{R}}(x,y)/\Theta^{*}(S)}{|\frac{x-y}{S}|\vee S^{-c}}\le2C(\eps)
\]
with probability $1-\eps$, so the theorem is proved.
\end{proof}

\section{Bi-Hölder continuity of the limiting metrics\label{sec:invholder}}

Up to now, we have established the existence of a subsequential limiting
function for our sequence of approximating metrics. It is of course
clear that any limiting function must satisfy positivity, symmetry,
and the triangle inequality, qualifying it for the title of ``pseudo-metric.''
However, it is not \emph{a priori} clear that a limiting pseudometric
must be a true metric---that is, that it is positive definite. That
is what we will prove in this section. In fact, we will prove the
stronger statement that the Euclidean metric must be Hölder continuous
with respect to any limiting LGD metric. The following proposition
gives a uniform upper bound on the moments of the Hölder coefficient,
from which the Hölder continuity of the Euclidean metric with respect
to any subsequential limit of the LGD metric follows by Fatou's lemma.
\begin{prop}
For every $B\in(1,\infty)$, we have an $\alpha_{0}>0$ so that for
any $\alpha\in(0,\alpha_{0})$, there is a constant $C<\infty$ so
that for any $\delta>0$, if we define $\mathbb{B}=\mathbb{B}(1)$,
then
\begin{equation}
\mathbf{E}\left[\left(\max_{x,y\in\mathbb{B}}\frac{|x-y|}{\left[d_{\mathbb{B},\delta}(x,y)/\Theta^{*}(\delta^{-\frac{2}{\gamma^{2}+4}})\right]^{\alpha}}\right)^{B}\right]\le C.\label{eq:invholdermomentbound}
\end{equation}
\end{prop}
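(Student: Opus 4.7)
The plan is to reduce by scaling to a uniform estimate on $\mathbb{B}(S)$ and then combine a dyadic decomposition with the negative moment bounds we have already established.

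First, \propref{LQGscaling} gives, with $S = \delta^{-2/(\gamma^{2}+4)}$, the distributional identity $d_{\mathbb{B}(1),\delta}(x,y) \overset{\mathrm{law}}{=} d_{\mathbb{B}(S)}(Sx,Sy)$, while the normalizing constant is unchanged: $\Theta^*(\delta^{-2/(\gamma^{2}+4)}) = \Theta^*(S)$. Substituting $x' = Sx$, $y' = Sy$, the random variable inside the expectation in \eqref{eq:invholdermomentbound} has the same law as
\[
Z := \sup_{\substack{x,y \in \mathbb{B}(S) \\ x \neq y}} \frac{|x-y|/S}{(d_{\mathbb{B}(S)}(x,y)/\Theta^*(S))^{\alpha}},
\]
so it suffices to show $\mathbf{E}[Z^B] \le C$ uniformly in $S$.

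Second, I partition dyadically by the Euclidean scale of $(x,y)$: for $k \ge 0$, let $Z_k$ denote the supremum restricted to pairs with $|x-y|/S \in [2^{-k-1}, 2^{-k}]$, so $Z = \sup_{k\ge 0} Z_k$. Since $\diam_{\Euc}(\mathbb{B}(S)) = \sqrt{5}\,S$, the definition \eqref{eq:dminadef} gives the uniform bound
\[
Z_k \le \frac{2^{-k}}{\bigl(d_{\mathbb{B}(S)}(\min;\, 2^{-k-1}/\sqrt{5})/\Theta^*(S)\bigr)^{\alpha}}.
\]
Now I invoke \propref{minmomentbound}, combined with \thmref{variancebound} (which bounds $\overline{\chi}_U$ uniformly by an absolute constant) and \corref{Thetastardef} (which shows $\Theta^{\mathrm{easy}}_{\mathbb{B}(S)}(p_0)$ is comparable to $\Theta^*(S)$). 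This yields, for any $A > 0$, constants $C_A$ and $c_A$ such that
\[
\mathbf{E}\bigl[d_{\mathbb{B}(S)}(\min;a)^{-A}\bigr] \le C_A\, a^{-c_A}\, \Theta^*(S)^{-A}.
\]
Markov's inequality then gives
\[
\mathbf{P}(Z_k \ge t) \le C_A\, t^{-A/\alpha}\, 2^{k(c_A - A/\alpha)},
\]
and provided we arrange $A/\alpha > c_A$, summing this geometric series over $k \ge 0$ produces $\mathbf{P}(Z \ge t) \le C'_A\, t^{-A/\alpha}$. Integrating $Bt^{B-1}$ against this tail (and using the trivial bound $\mathbf{P}(\cdot) \le 1$ for $t \le 1$) shows $\mathbf{E}[Z^B] < \infty$ whenever additionally $A/\alpha > B$.

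The main obstacle lies in the dependence of $c_A$ on $A$: tracing through the proof of \propref{minmomentbound}, the exponent $c_A$ grows polynomially with $A$ (specifically, one can take $\theta \sim \sqrt{A}$, yielding $c_A = O(A^{3/2})$). Thus the conditions $A/\alpha > c_A$ and $A/\alpha > B$ must be simultaneously satisfied, forcing $\alpha_0$ to shrink as $B$ grows. Since the statement allows $\alpha_0$ to depend on $B$, this is not a real obstacle: given $B$, one picks $A$ sufficiently large depending on $B$ and then sets $\alpha_0 = A/(c_A \vee (B+1))$, which is strictly positive.
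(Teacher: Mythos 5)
Your proof is correct and follows essentially the same route as the paper's: a dyadic decomposition over the Euclidean scale of $|x-y|$, reduction to the negative moments of $d_{\mathbb{B}(S)}(\min;a)$ via \propref{minmomentbound} together with the uniform bound on $\overline{\chi}$ from \thmref{variancebound}, and summation over scales. The only (immaterial) differences are that you make the \propref{LQGscaling} reduction explicit at the outset and conclude by integrating a tail bound rather than by summing $A$-th moments and applying Jensen's inequality.
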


\begin{proof}
Note that
\begin{equation}
\begin{split}\max_{x,y\in\mathbb{B}}\frac{|x-y|}{\left[d_{\mathbb{B},\delta}(x,y)/\Theta^{*}(\delta^{-\frac{2}{\gamma^{2}+4}})\right]^{\alpha}} & =\max_{n\in\mathbf{N}}\max_{\substack{x,y\in\mathbb{B}\\
|x-y|\ge2^{-n}
}
}\frac{|x-y|}{\left[d_{\mathbb{B},\delta}(x,y)/\Theta^{*}(\delta^{-\frac{2}{\gamma^{2}+4}})\right]^{\alpha}}\\
 & \le2\max_{n\in\mathbf{N}}\left(2^{-n}\left[\frac{d_{\mathbb{B},\delta}(\min;2^{-n})}{\Theta^{*}(\delta^{-\frac{2}{\gamma^{2}+4}})}\right]^{-\alpha}\right).
\end{split}
\label{eq:divide}
\end{equation}
By \propref{minmomentbound} and \thmref{variancebound}, we have
for every $A<\infty$ a constant $C<\infty$ so that, for each $a\in(0,1)$,
we have
\[
\mathbf{E}[d_{\mathbb{B}}(\min;a)^{-A}]\le C(1+a^{-C})[\Theta^{*}(\delta^{-\frac{2}{\gamma^{2}+4}})]^{-A}.
\]
This implies that
\begin{equation}
\mathbf{E}\left[\max_{n\in\mathbf{N}}\left(\frac{d_{\mathbb{B}}(\min;2^{-n})/\Theta^{*}(\delta^{-\frac{2}{\gamma^{2}+4}})}{2^{-n/\alpha}}\right)^{-A}\right]\le\sum_{n=0}^{\infty}\mathbf{E}\left[\left(\frac{d_{\mathbb{B}}(\min;2^{-n})/\Theta^{*}(\delta^{-\frac{2}{\gamma^{2}+4}})}{2^{-n/\alpha}}\right)^{-A}\right]\le C\sum_{n=0}^{\infty}\frac{(1+2^{Cn})}{2^{An/\alpha}},\label{eq:summable}
\end{equation}
where the constant $C$ still depends on $A$. As long as $\alpha$
is chosen small enough so that $A/\alpha\ge C$, the last sum is finite.
In particular, choose $\alpha$ so small that $A/\alpha\ge B$. Then
we have
\begin{align*}
\mathbf{E}\left[\max_{n\in\mathbf{N}}\left(\frac{d_{\mathbb{B}}(\min;2^{-n})/\Theta^{*}(\delta^{-\frac{2}{\gamma^{2}+4}})}{\left[d_{\mathbb{B},\delta}(x,y)/\Theta^{*}(\delta^{-\frac{2}{\gamma^{2}+4}})\right]^{\alpha}}\right)^{A/\alpha}\right] & \le\mathbf{E}\left[\left(2\max_{n\in\mathbf{N}}2^{-n}\left[\frac{d_{\mathbb{B},\delta}(\min;2^{-n})}{\Theta^{*}(\delta^{-\frac{2}{\gamma^{2}+4}})}\right]^{-\alpha}\right)^{A/\alpha}\right]\\
 & =2^{A/\alpha}\mathbf{E}\left[\max_{n\in\mathbf{N}}\left(\frac{d_{\mathbb{B},\delta}(\min;2^{-n})/\Theta^{*}(\delta^{-\frac{2}{\gamma^{2}+4}})}{2^{-n/\alpha}}\right)^{-A}\right]\le C,
\end{align*}
where the first inequality is by \eqref{divide} and the second is
by \eqref{summable}. Then \eqref{invholdermomentbound} follows by
Jensen's inequality.
\end{proof}

\appendix

\section{Technical lemmas\label{sec:appendix-techlemmas}}
\begin{lem}
\label{lem:relatequantiles}Let $X$ be a positive random variable
and let $\Theta_{X}$ be its quantile function. For any $p,q\in(0,1)$,
we have
\[
\Theta_{X}(q)\le\exp\left\{ \sqrt{\Var(\log X)}\left((1-q)^{-1/2}+p^{-1/2})\right)\right\} \Theta_{X}(p).
\]
\end{lem}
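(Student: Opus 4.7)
The plan is to reduce to a statement about $Y = \log X$ and then apply Chebyshev's inequality on each side of the mean. Let $Y = \log X$ and write $\mu = \mathbf{E} Y$, $\sigma^2 = \Var(Y)$. Since $\log$ is increasing, the quantile function of $X$ satisfies $\Theta_X(r) = \exp(\Theta_Y(r))$ for every $r \in (0,1)$, so exponentiating the bound
\[
\Theta_Y(q) - \Theta_Y(p) \le \sigma\bigl((1-q)^{-1/2} + p^{-1/2}\bigr)
\]
will give exactly the desired inequality. So it suffices to bound $\Theta_Y(q)$ from above and $\Theta_Y(p)$ from below, each in terms of $\mu$ and $\sigma$.

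For the upper bound on $\Theta_Y(q)$, apply Chebyshev's inequality: for any $t > 0$,
\[
\mathbf{P}(Y \ge \mu + t) \le \mathbf{P}(|Y - \mu| \ge t) \le \frac{\sigma^2}{t^2}.
\]
Choosing $t = \sigma/\sqrt{1-q}$ makes this right-hand side equal to $1-q$, hence $\mathbf{P}(Y \le \mu + t) \ge q$, which by definition of the quantile function forces $\Theta_Y(q) \le \mu + \sigma/\sqrt{1-q}$. Symmetrically, for the lower bound on $\Theta_Y(p)$, for any $s > 0$ Chebyshev gives $\mathbf{P}(Y \le \mu - s) \le \sigma^2/s^2$; choosing $s = \sigma/\sqrt{p}$ makes this $\le p$, so $\mathbf{P}(Y < \mu - s) < p$ (handling the boundary value with a short limiting argument if equality holds), and hence $\Theta_Y(p) \ge \mu - \sigma/\sqrt{p}$.

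Subtracting the two bounds gives
\[
\Theta_Y(q) - \Theta_Y(p) \le \frac{\sigma}{\sqrt{1-q}} + \frac{\sigma}{\sqrt{p}} = \sigma\bigl((1-q)^{-1/2} + p^{-1/2}\bigr),
\]
and exponentiating yields the lemma. The only mildly delicate point is the boundary case in the definition of the quantile function (whether one uses $\ge p$ or $> p$, and whether $\mathbf{P}(Y \le \mu - s)$ equals $p$ exactly at the chosen $s$); this is handled by noting that the quantile inequality holds with weak inequality throughout, or by replacing $s$ by $s + \epsilon$ and letting $\epsilon \downarrow 0$. No nontrivial obstacle is expected.
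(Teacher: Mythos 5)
Your proof is correct and is essentially the same as the paper's: both apply Chebyshev's inequality to $\log X$ to get a one-sided upper bound on the $q$-quantile and a one-sided lower bound on the $p$-quantile in terms of $\mathbf{E}\log X$ and $\sqrt{\Var(\log X)}$, then combine and exponentiate. The boundary-case care you mention at the end is the right way to handle the quantile convention, and the paper glosses over the same point.
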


\begin{proof}
Define $\mu=\mathbf{E}\log X$. We have, for $a\ge\e^{\mu}$,
\begin{align*}
1-F_{X}(a)=\mathbf{P}(X\ge a) & =\mathbf{P}(\log X\ge\log a)=\mathbf{P}(\log X-\mu\ge\log a-\mu)\le\frac{\Var(\log X)}{\left(\log a-\mu)\right)^{2}}.
\end{align*}
This means that %
{} $\Theta_{X}\left(1-\frac{\Var(\log X)}{\left(\log a-\mu\right)^{2}}\right)\le a.$
Now if $a=\exp\left\{ \sqrt{\frac{\Var(\log X)}{1-p}}+\mu\right\} $,
then $a\ge\exp\{\mathbf{E}\log X\}$ and $1-\frac{\Var(\log X)}{\left(\log a-\mu\right)^{2}}=p,$
so 
\[
\Theta_{X}(p)\le\exp\left\{ \sqrt{\frac{\Var(\log X)}{1-p}}+\mu\right\} .
\]

Similarly, we have, if $a\le\exp\{\mathbf{E}\log X\}$,
\begin{align*}
F_{X}(a)=\mathbf{P}(X\le a) & =\mathbf{P}(\log X\le\log a)\le\mathbf{P}(\log X-\mu\le\log a-\mu)\le\frac{\Var(\log X)}{\left(\log a-\mu\right)^{2}}.
\end{align*}
Therefore, $a\le\Theta_{X}\left(\frac{\Var(\log X)}{\left(\log a-\mu\right)^{2}}\right).$
Now if $a=\exp\left\{ -\sqrt{\frac{\Var(\log X)}{p}}+\mu\right\} $,
then $a\le\exp\{\mathbf{E}\log X\}$ and $\frac{\Var(\log X)}{\left(\log a-\mu\right)^{2}}=p,$
so
\[
\Theta_{X}(p)\ge\exp\left\{ -\sqrt{\frac{\Var(\log X)}{p}}+\mu\right\} .
\]
Therefore, for any $p,q\in(0,1)$, we have
\begin{align*}
\Theta_{X}(q) & \le\exp\left\{ \sqrt{\frac{\Var(\log X)}{1-q}}+\mu\right\} \le\exp\left\{ \sqrt{\Var(\log X)}\left((1-q)^{-1/2}+p^{-1/2})\right)\right\} \exp\left\{ -\sqrt{\frac{\Var(\log X)}{p}}+\mu\right\} \\
 & \le\exp\left\{ \sqrt{\Var(\log X)}\left((1-q)^{-1/2}+p^{-1/2})\right)\right\} \Theta_{X}(p).\qedhere
\end{align*}
\end{proof}
\begin{lem}
\label{lem:hugedeviations}Suppose that $p<\frac{1}{2}$ and $X_{1},\ldots,X_{N}$
are iid $\Bernoulli(p)$ random variables. Let $S_{N}=\sum_{i=1}^{N}X_{i}$.
Then 
\[
\mathbf{P}\left[S_{N}/N\ge1/2\right]\le(8p)^{N/2}.
\]
\end{lem}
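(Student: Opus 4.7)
The approach I would take is a straightforward union bound over the possible ``success patterns'' of the $X_i$'s. The observation is that on the event $\{S_N \ge N/2\}$, there must exist some subset $A \subseteq \{1,\ldots,N\}$ with $|A| \ge \lceil N/2 \rceil$ such that $X_i = 1$ for every $i \in A$. By the independence of the $X_i$'s, the probability that any particular such set $A$ is a set of successes is exactly $p^{|A|}$, so the union bound gives
\[
\mathbf{P}[S_N \ge N/2] \le \sum_{A \subseteq \{1,\ldots,N\}\,:\,|A| \ge N/2} p^{|A|}.
\]

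To finish, I would use two elementary estimates. First, since $p < 1/2 < 1$, the function $k \mapsto p^k$ is decreasing, so $p^{|A|} \le p^{\lceil N/2 \rceil} \le p^{N/2}$ for every $A$ in the sum. Second, the number of subsets of $\{1,\ldots,N\}$ is $2^N$, which trivially bounds the number of subsets appearing in the sum. Combining,
\[
\mathbf{P}[S_N \ge N/2] \le 2^N \cdot p^{N/2} = (4p)^{N/2} \le (8p)^{N/2},
\]
which is in fact somewhat stronger than what is claimed.

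There is really no serious obstacle here; the only micro-points to be careful about are the ceiling in $\lceil N/2 \rceil$ (which is handled automatically since $p < 1$ forces $p^{\lceil N/2 \rceil} \le p^{N/2}$), and noting that even though the events $\{X_i = 1 \text{ for all } i \in A\}$ are heavily overlapping, the union bound is still valid as an inequality and the slack is already absorbed into the $(4p)^{N/2}$ vs.\ $(8p)^{N/2}$ cushion.
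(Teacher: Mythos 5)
Your proof is correct, but it takes a different route from the paper's. The paper proves the lemma with a Chernoff bound: it applies Markov's inequality to $\e^{\lambda S_{N}}$, computes $\mathbf{E}\e^{\lambda X_{i}}=p\e^{\lambda}+1-p$, and optimizes at $\lambda=\log\frac{1-p}{p}$, which yields $\left(4p(1-p)\right)^{N/2}\le(8p)^{N/2}$. You instead observe that on $\{S_{N}\ge N/2\}$ the set of successful indices is itself a subset of size at least $N/2$ on which all $X_{i}$ equal $1$, and then union-bound over all such subsets, using $p^{|A|}\le p^{N/2}$ and the crude count $2^{N}$ to get $(4p)^{N/2}$. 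Both arguments are valid and both comfortably beat the stated $(8p)^{N/2}$; your combinatorial version is slightly more elementary (no exponential moments or optimization over $\lambda$), while the Chernoff computation is marginally sharper (it retains the factor $(1-p)^{N/2}$) and is the version that generalizes to thresholds other than $1/2$ without redoing the counting. For the way the lemma is used in the paper --- only the exponential decay in $N$ for small $p$ matters --- the two are interchangeable.
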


\begin{proof}
We have 
\[
\mathbf{P}\left[S_{N}/N\ge1/2\right]=\mathbf{P}\left[\e^{\lambda S_{N}}\ge\e^{\lambda N/2}\right]\le\frac{\left(\mathbf{E}\e^{\lambda X_{i}}\right)^{N}}{\e^{\lambda N/2}}=\left(\frac{p\e^{\lambda}+1-p}{\e^{\lambda/2}}\right)^{N}.
\]
Putting $\lambda=\log\frac{1-p}{p}$, we obtain
\[
\mathbf{P}\left[S_{N}/N\ge1/2\right]\le\left(2(1-p)\left(\frac{1-p}{p}\right)^{-\frac{1}{2}}\right)^{N}\le(8p)^{N/2}.\qedhere
\]
\end{proof}

\begin{lem}
\label{lem:gradientbound}Suppose that $\mathbb{B}$ is a rectangle
and $\dist_{\Euc}(x,\partial\mathbb{B})\ge c\diam_{\Euc}(\mathbb{B})$
for some constant $c$. Then there is a constant $c$, depending on
$C$, so that
\[
|\nabla_{y}p_{t}^{\mathbb{B}}(x,y)|\le C\frac{|x-y|}{t^{2}}\e^{-\frac{|x-y|^{2}}{2t}}.
\]
\end{lem}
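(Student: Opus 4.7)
The strategy is to decompose the Dirichlet heat kernel via the strong Markov property, isolating the free‐kernel contribution (which has the desired gradient bound exactly) from a correction term that is controlled by the fact that $x$ sits deep inside $\mathbb{B}$. Applying the strong Markov property to a Brownian motion starting at $x$ at the first exit time $\tau$ from $\mathbb{B}$ gives
\[
p_t^{\mathbb{B}}(x,y) \;=\; p_t(x,y) \;-\; \mathbf{E}_x\bigl[\mathbf{1}_{\tau<t}\,p_{t-\tau}(B_\tau,y)\bigr],
\]
where $p_t(x,y) = (2\pi t)^{-1}\e^{-|x-y|^2/(2t)}$ is the free heat kernel. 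Since $p_s(z,\cdot)$ is smooth in its second argument for fixed $z$ and $s>0$, I can differentiate in $y$ and pull the gradient through the expectation (justified by dominated convergence using the pointwise bound $|\nabla_y p_s(z,y)|\le Cs^{-3/2}$ away from $s=0$), obtaining
\[
\nabla_y p_t^{\mathbb{B}}(x,y) \;=\; \nabla_y p_t(x,y) \;-\; \mathbf{E}_x\bigl[\mathbf{1}_{\tau<t}\,\nabla_y p_{t-\tau}(B_\tau,y)\bigr].
\]

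The first term is explicit: $|\nabla_y p_t(x,y)| = \frac{|x-y|}{2\pi t^2}\e^{-|x-y|^2/(2t)}$, which is exactly the stated bound up to an absolute constant. For the correction, I would use the two structural inputs: (i) on $\{\tau<t\}$ we have $B_\tau\in\partial\mathbb{B}$, so $|B_\tau-x|\ge D:=c\diam_{\Euc}(\mathbb{B})$; (ii) $|x-y|\le\diam_{\Euc}(\mathbb{B})\le c^{-1}D$. Combined with the elementary identity $|\nabla_y p_s(z,y)| = \frac{|z-y|}{2\pi s^2}\e^{-|z-y|^2/(2s)}$ and the triangle inequality $|B_\tau-y|\ge D-|x-y|$, the correction inherits a Gaussian factor involving $D$ rather than $|x-y|$, which is qualitatively much smaller.

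To conclude, I would show that the correction is bounded by a constant multiple of the main term, splitting into the finitely many regimes determined by whether $|x-y|/\sqrt{t}$ and $D/\sqrt{t}$ are large or small. In the easy regimes ($|x-y|\gtrsim D$, or $t\gtrsim D^2$), the target bound is already of order $t^{-1}$ or larger, and the correction is controlled directly; the substantive case is when $|x-y|\ll D$ and $t\ll D^2$, where the free-kernel bound $\frac{|x-y|}{t^2}\e^{-|x-y|^2/(2t)}$ is essentially $\frac{|x-y|}{t^2}$ while the correction carries the Gaussian suppression $\e^{-cD^2/t}$ coming either from the small probability of $\{\tau<t\}$ (the Brownian path must travel $\ge D$ from $x$) or directly from $|B_\tau-y|\ge cD$.

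\textbf{Main obstacle.} The delicate case is when $y$ lies close to $\partial\mathbb{B}$, so that $|B_\tau-y|$ may be small even though $|B_\tau-x|\ge D$; then $\frac{|B_\tau-y|}{(t-\tau)^2}\e^{-|B_\tau-y|^2/(2(t-\tau))}$ is not immediately small from the exponential, and one must rely on the prefactor $|B_\tau-y|$ together with an exit-time tail bound for a Brownian motion from $x$ (which must travel distance $\ge D$ before exiting) to obtain the suppressing factor $\e^{-cD^2/t}$ when $t\ll D^2$. Once this case is handled by a straightforward cases analysis of the ratios $|x-y|/\sqrt t$ and $D/\sqrt t$, the claim follows.
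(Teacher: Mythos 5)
Your decomposition is genuinely different from the paper's. The paper factors the Dirichlet kernel as $p_{t}^{\mathbb{B}}(x,y)=\frac{\e^{-|x-y|^{2}/(2t)}}{2\pi t}q_{t}^{\mathbb{B}}(x,y)$, where $q_{t}^{\mathbb{B}}$ is the non-exit probability of the Brownian bridge from $x$ to $y$, applies the product rule, and bounds $|\nabla_{y}q_{t}^{\mathbb{B}}|\le C\diam_{\Euc}(\mathbb{B})t^{-1}\e^{-\diam_{\Euc}(\mathbb{B})^{2}/(Ct)}$ by an explicit reflection-principle computation for the bridge crossing a single edge. The point of that factorization is that the exact Gaussian $\e^{-|x-y|^{2}/(2t)}$ is carried along for free in both terms of the product rule, so only a crude bound on $\nabla_{y}q_{t}^{\mathbb{B}}$ is needed. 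Your first-exit (strong Markov) decomposition is a legitimate alternative and your free term is handled correctly, but your treatment of the correction term has a genuine gap.

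The problem is the regime $|x-y|\asymp\diam_{\Euc}(\mathbb{B})$ with $t\ll\diam_{\Euc}(\mathbb{B})^{2}$, which you classify as easy; it is not, since there the target $\frac{|x-y|}{t^{2}}\e^{-|x-y|^{2}/(2t)}$ is exponentially small rather than ``of order $t^{-1}$ or larger.'' Because $D=c\,\diam_{\Euc}(\mathbb{B})$ with $c<1$ while $|x-y|$ can be as large as $\diam_{\Euc}(\mathbb{B})$, any suppression of the form $\e^{-c_{0}D^{2}/t}$ obtained from \emph{either} the exit-time tail \emph{or} from a lower bound on $|B_{\tau}-y|$ alone (note $D-|x-y|<0$ here, so your triangle-inequality bound is vacuous) is exponentially \emph{larger} than the target $\e^{-|x-y|^{2}/(2t)}$ as $t\to0$. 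The missing idea is that the two Gaussian factors must be combined multiplicatively: write the correction as an integral against the joint exit law, dominate $\mathbf{P}_{x}(\tau\in\dif s,\,B_{\tau}\in\dif z)$ by the explicit half-plane hitting densities $\frac{C\diam_{\Euc}(\mathbb{B})}{s^{2}}\e^{-|x-z|^{2}/(2s)}$, and use $\frac{|x-z|^{2}}{2s}+\frac{|z-y|^{2}}{2(t-s)}\ge\frac{(|x-z|+|z-y|)^{2}}{2t}\ge\frac{|x-y|^{2}}{2t}$, keeping the leftover quadratic term to kill the $s^{-2}(t-s)^{-2}$ prefactors. Relatedly, your dominating function $C(t-\tau)^{-3/2}$ is not integrable against the law of $\tau$ near $\tau=t$, so both the interchange of $\nabla_{y}$ with $\mathbf{E}_{x}$ and the final estimate require this finer integration rather than an $L^{\infty}$-times-$\mathbf{P}(\tau<t)$ bound. (Separately, both your argument and the paper's leave an additive remainder of size $\diam_{\Euc}(\mathbb{B})\,t^{-2}\e^{-c'\diam_{\Euc}(\mathbb{B})^{2}/t}$ that is not literally dominated by the right-hand side as $y\to x$; this is harmless in the application, where $|x-y|$ is bounded below.)
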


\begin{proof}
This proof essentially appeared before in \cite[Appendix 1]{RV14}
and was refined in \cite{DZZ18}, but as we need a somewhat different
statement, we present the proof in full. We have
\[
p_{t}^{\mathbb{B}}(x,y)=\frac{\e^{-\frac{|x-y|^{2}}{2t}}}{2\pi t}q_{t}^{\mathbb{B}}(x,y),
\]
where
\[
q_{t}^{\mathbb{B}}(x,y)=\mathbf{P}\left(B_{s}-\frac{s}{t}B_{t}+x+\frac{s}{t}(y-x)\in\mathbb{B}\text{ for all }s\le t\right).
\]
Therefore, we have
\begin{equation}
\nabla_{y}p_{t}^{\mathbb{B}}(x,y)=-(x-y)\frac{\e^{-\frac{|x-y|^{2}}{2t}}}{4\pi t^{2}}q_{t}^{\mathbb{B}}(x,y)+\frac{\e^{-\frac{|x-y|^{2}}{2t}}}{2\pi t}\nabla_{y}q_{t}^{\mathbb{B}}(x,y).\label{eq:productrule}
\end{equation}
Fix $y_{1},y_{2}\in\mathbb{R}$. Let $z^{(1)}$ be the $x$-coordinate
of the right edge of $\mathbb{B}$. Let $E_{i}$ be the event 
\[
\left\{ \max_{s\le t}(B_{s}-\frac{s}{t}B_{t}+x+\frac{s}{t}(y_{i}-x))^{(1)}>z^{(1)}\right\} ,
\]
where the superscript $(1)$ means to consider the $x$-coordinate.
We note that

\[
\left|(B_{s}-\frac{s}{t}B_{t}+x+\frac{s}{t}(y_{1}-x))^{(1)}-(B_{s}-\frac{s}{t}B_{t}+x+\frac{s}{t}(y_{2}-x))^{(1)}\right|\le|y_{1}-y_{2}|.
\]
Therefore, if $E_{1}$ occurs but $E_{2}$ does not, then we must
have that 
\[
\max_{s\le t}(B_{s}-\frac{s}{t}B_{t}+x+\frac{s}{t}(y_{1}-x))^{(1)}\in[z^{(1)},z^{(1)}+|y_{1}-y_{2}|].
\]
We have that 
\begin{align*}
\mathbf{P} & \left(\max_{s\le t}(B_{s}-\frac{s}{t}B_{t}+x+\frac{s}{t}(y_{1}-x))^{(1)}\in[z^{(1)},z^{(1)}+(y_{1}-y_{2})^{(1)}]\right)\\
 & =\mathbf{P}\left(\max_{s\le t}(B_{s}-\frac{s}{t}B_{t}+x+\frac{s}{t}(y_{1}-x))^{(1)}\ge z^{(1)}\right)\\
 & \qquad-\mathbf{P}\left(\max_{s\le t}(B_{s}-\frac{s}{t}B_{t}+x+\frac{s}{t}(y_{1}-x))^{(1)}\ge z^{(1)}+(y_{1}-y_{2})^{(1)}\right).
\end{align*}
By the reflection principle, we have that
\begin{align*}
\mathbf{P}\left(\max_{s\le t}(B_{s}-\frac{s}{t}B_{t}+x+\frac{s}{t}(y_{1}-x))^{(1)}\ge z^{(1)}\right) & =\frac{1}{2}\e^{-\frac{(z^{(1)}+y_{1}^{(1)}-x)^{2}+(y_{1}^{(1)}-x)^{2}}{2t}}\\
\mathbf{P}\left(\max_{s\le t}(B_{s}-\frac{s}{t}B_{t}+x+\frac{s}{t}(y_{1}-x))^{(1)}\ge z^{(1)}+(y_{1}-y_{2})^{(1)}\right) & =\frac{1}{2}\e^{-\frac{(z^{(1)}+2y_{1}^{(1)}-y^{(2)}-x)^{2}+(y_{1}^{(1)}-x)^{2}}{2t}},
\end{align*}
so
\begin{align*}
\mathbf{P} & \left(E_{1}\setminus E_{2}\right)\le\frac{1}{2}\left(\e^{-\frac{(z^{(1)}+y_{1}^{(1)}-x)^{2}+(y_{1}^{(1)}-x)^{2}}{2t}}-\e^{-\frac{(z^{(1)}+2y_{1}^{(1)}-y^{(2)}-x)^{2}+(y_{1}^{(1)}-x)^{2}}{2t}}\right).
\end{align*}
Now we have
\begin{align*}
\lim_{y_{2}^{(1)}\to y_{1}^{(1)}} & \frac{1}{2(y_{2}^{(1)}-y_{1}^{(1)})}\left(\e^{-\frac{(z^{(1)}+y_{1}^{(1)}-x)^{2}+(y_{1}^{(1)}-x)^{2}}{2t}}-\e^{-\frac{(z^{(1)}+2y_{1}^{(1)}-y^{(2)}-x)^{2}+(y_{1}^{(1)}-x)^{2}}{2t}}\right)\\
 & =\frac{\left(z^{(1)}+y_{1}^{(1)}-x\right)}{2t}\e^{-\frac{(z^{(1)}+y_{1}^{(1)}-x)^{2}+(y_{1}^{(1)}-x)^{2}}{2t}}.
\end{align*}
This implies that $\left|\nabla_{y}q_{t}^{\mathbb{B}}(x,y_{1})\right|\le C\frac{\diam_{\Euc}(\mathbb{R})}{2t}\exp\left\{ -\frac{\diam_{\Euc}(\mathbb{R})^{2}}{Ct}\right\} $,
and plugging into \eqref{productrule} we obtain%
\begin{align*}
\left|\nabla_{y}p_{t}^{\mathbb{B}}(x,y)\right| & \le\frac{|x-y|}{4\pi t^{2}}\e^{-\frac{|x-y|^{2}}{2t}}+C\frac{\e^{-\frac{|x-y|^{2}}{2t}}}{4\pi t^{2}}\diam_{\Euc}(\mathbb{R})\exp\left\{ -\frac{\diam_{\Euc}(\mathbb{R})^{2}}{Ct}\right\} \le C\frac{|x-y|}{t^{2}}\e^{-\frac{|x-y|^{2}}{2t}}.\qedhere
\end{align*}
\end{proof}
\begin{lem}
\label{lem:AA}Let $\mathbb{X}$ be a rectangular subset of $\mathbf{R}^{d}$
and suppose that we have a family of functions $\{f_{n}:\mathbb{X}\to\mathbf{R}\}$
so that, for some constant $C$, we have
\begin{equation}
|f_{n}(x)|\le C\label{eq:bdd}
\end{equation}
and
\begin{equation}
|f_{n}(x)-f_{n}(y)|\le C\left(|x-y|^{\alpha}\vee\beta_{n}\right)\label{eq:almostholder}
\end{equation}
for some $\alpha>0$ and some sequence $\beta_{n}\to0$. Then the
family $\{f_{n}:\mathbb{X}\to\mathbf{R}\}$ is precompact in the uniform
topology.
\end{lem}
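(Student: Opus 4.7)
The plan is to mimic the standard proof of the Arzelà--Ascoli theorem, noting that although the functions $f_n$ are not uniformly equicontinuous as stated (the failure is governed by the $\beta_n$ term), they become arbitrarily close to being $\alpha$-Hölder as $n \to \infty$, and this suffices for the diagonal/Cauchy argument.

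First I would extract a subsequence converging pointwise on a countable dense set. Since $\mathbb{X}$ is a compact subset of $\mathbf{R}^d$, choose a countable dense subset $D = \{z_1, z_2, \ldots\} \subset \mathbb{X}$. By \eqref{bdd}, for each fixed $z_i$ the sequence $(f_n(z_i))_n$ is bounded, so a standard diagonal argument produces a subsequence $(f_{n_k})$ such that $f_{n_k}(z_i)$ converges as $k \to \infty$ for every $i$.

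Next I would upgrade pointwise convergence on $D$ to uniform convergence on all of $\mathbb{X}$ by showing that $(f_{n_k})$ is uniformly Cauchy. Given $\varepsilon > 0$, first pick $\delta > 0$ so small that $C\delta^\alpha < \varepsilon/4$, and then pick $N$ so large that $C\beta_n < \varepsilon/4$ for all $n \ge N$; by \eqref{almostholder}, this gives
\[
|f_n(x) - f_n(y)| < \varepsilon/2 \qquad \text{whenever } |x-y| < \delta \text{ and } n \ge N.
\]
Since $\overline{\mathbb{X}}$ is compact, it can be covered by finitely many Euclidean balls $B(z_{i_1}, \delta), \ldots, B(z_{i_M}, \delta)$ with centers drawn from $D$. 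Since $f_{n_k}(z_{i_j})$ converges for each of the finitely many $j$, there is $K$ so that $|f_{n_k}(z_{i_j}) - f_{n_{k'}}(z_{i_j})| < \varepsilon/4$ for all $k, k' \ge K$ and all $j \in \{1, \ldots, M\}$, and so that $n_k \ge N$ for $k \ge K$. Then, for any $x \in \mathbb{X}$, choose $j$ with $|x - z_{i_j}| < \delta$; the triangle inequality gives
\[
|f_{n_k}(x) - f_{n_{k'}}(x)| \le |f_{n_k}(x) - f_{n_k}(z_{i_j})| + |f_{n_k}(z_{i_j}) - f_{n_{k'}}(z_{i_j})| + |f_{n_{k'}}(z_{i_j}) - f_{n_{k'}}(x)| < \varepsilon
\]
for all $k, k' \ge K$. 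Hence $(f_{n_k})$ is uniformly Cauchy on $\mathbb{X}$, and so converges uniformly to some continuous limit, proving precompactness.

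There is no real obstacle here; the only subtlety is making sure that the $\beta_n$ error is absorbed by first fixing $\delta$ (which determines the required size of $\beta_n$) and then passing to tails of the sequence where $\beta_n < \delta^\alpha$. The rest is a verbatim execution of the standard Arzelà--Ascoli argument.
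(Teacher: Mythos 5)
Your proof is correct. You re-run the Arzel\`a--Ascoli argument from scratch: a diagonal extraction on a countable dense set, followed by the three-term Cauchy estimate, with the only modification being that the modulus-of-continuity bound $C(|x-y|^{\alpha}\vee\beta_{n})$ is only useful on the tail $n\ge N$ where $C\beta_{n}<\eps/4$; since the Cauchy criterion only ever concerns tails, this is harmless, and your ordering of quantifiers (fix $\eps$, then $\delta$, then $N$, then the finite $\delta$-net, then $K$) is the right one. The paper takes a different route: it mollifies $f_{n}$ at scale $\eps=\beta_{n}$, shows that the mollified family $\{\rho_{\beta_{n}}*f_{n}\}$ is genuinely equicontinuous (Lipschitz at scales below $\beta_{n}^{1/\alpha}$, H\"older above), applies the classical Arzel\`a--Ascoli theorem as a black box to that family, and then notes that $\|\rho_{\beta_{n}}*f_{n}-f_{n}\|_{\infty}\le C(\beta_{n}^{\alpha}\vee\beta_{n})\to0$, so the original sequence inherits the uniform limit. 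Your argument is the more elementary of the two: it avoids the extension of $f_{n}$ by reflection across $\partial\mathbb{X}$ (needed in the paper so the convolution is defined near the boundary) and the $\mathcal{C}^{1}$ estimate on the mollifier, at the cost of redoing the compactness proof by hand rather than quoting it. Both are complete; the only cosmetic point worth adding to yours is the one-line remark that an arbitrary sequence drawn from the family either has a constant subsequence or a subsequence with indices tending to infinity, so that proving subsequential uniform convergence of $(f_{n})_{n}$ itself does give precompactness of the family.
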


\begin{proof}
Extend $f_{n}$ to an open subset $\mathbb{X}'$ of $\mathbf{R}^{d}$
containing $\mathbb{X}$ by reflecting it across the boundaries; note
that this can be done in such a way that \eqref{almostholder} still
holds, perhaps with a larger constant $C$. Let $\rho:\mathbf{R}^{d}\to\mathbf{R}$
be a smooth positive function such that $\supp\rho\subset B(0,1)$
and define $\rho_{\eps}(x)=\eps^{-d}\rho(\eps^{-1}x).$ Let $*$ denote
convolution. We have that
\begin{equation}
\begin{aligned}|\rho_{\eps}*f_{n}(x)-f_{n}(x)| & =\left|\int\rho_{\eps}(y)f_{n}(x-y)\,\dif y-f_{n}(x)\right|\le\int\rho_{\eps}(y)|f_{n}(x-y)-f_{n}(x)|\,\dif y\\
 & \le C\int\rho_{\eps}(y)(|y|^{\alpha}\vee\beta_{n})\,\dif x\le C(\eps^{\alpha}\vee\beta_{n}).
\end{aligned}
\label{eq:rhoandf}
\end{equation}
Moreover, if $|x-z|^{\alpha}\ge\beta_{n}$, then we have
\begin{align}
\left|\rho_{\eps}*f_{n}(x)-\rho_{\eps}*f_{n}(z)\right| & =\left|\int[f_{n}(x-y)-f_{n}(z-y)]\rho_{\eps}(y)\,\dif y\right|\le C|x-z|^{\alpha},\label{eq:ctsfar}
\end{align}
while if $|x-z|^{\alpha}\le\beta_{n}$, then we have, if $\eps=\beta_{n}$,
that
\begin{equation}
\begin{aligned}\left|\rho_{\eps}*f_{n}(x)-\rho_{\eps}*f_{n}(z)\right| & =\int f_{n}(y)[\rho_{\eps}(x-y)-\rho_{\eps}(z-y)]\,\dif y=\int[f_{n}(y)-f_{n}(x)]\cdot[\rho_{\eps}(x-y)-\rho_{\eps}(z-y)]\,\dif y\\
 & \le C\beta_{n}\int\left|\rho_{\eps}(x-y)-\rho_{\eps}(z-y)\right|\,\dif y\le C\|\rho\|_{\mathcal{C}^{1}}\beta_{n}\eps^{-1}|x-z|=C|x-z|.
\end{aligned}
\label{eq:ctclose}
\end{equation}
Together, \eqref{ctsfar} and \eqref{ctclose} imply the family $\{\rho_{\beta_{n}}*f_{n}\}$
is equicontinuous; since the family is evidently bounded by \eqref{bdd},
the Arzelà--Ascoli theorem implies that there is a continuous function
$f$ and a subsequence $(n_{k})$ so that
\begin{equation}
\lim_{k\to\infty}\rho_{\beta_{n_{k}}}*f_{n_{k}}=f\label{eq:mollsconv}
\end{equation}
uniformly. On the other hand, \eqref{rhoandf} implies that
\begin{equation}
\lim_{k\to\infty}|\rho_{\beta_{n}}*f_{n}-f_{n}|=0\label{eq:mollsclose}
\end{equation}
uniformly. Combining \eqref{mollsconv} and \eqref{mollsclose} implies
that $\lim\limits _{k\to\infty}f_{n}=f$, which completes the proof.
\end{proof}
\bibliographystyle{habbrv}
\phantomsection\addcontentsline{toc}{section}{\refname}\bibliography{tightness-gg}

\begin{thebibliography}{10}
\expandafter\ifx\csname url\endcsname\relax
  \def\url#1{\texttt{#1}}\fi
\expandafter\ifx\csname doi\endcsname\relax
  \def\doi#1{\burlalt{doi:#1}{http://dx.doi.org/#1}}\fi
\expandafter\ifx\csname urlprefix\endcsname\relax\def\urlprefix{URL }\fi
\expandafter\ifx\csname href\endcsname\relax
  \def\href#1#2{#2}\fi
\expandafter\ifx\csname burlalt\endcsname\relax
  \def\burlalt#1#2{\href{#2}{#1}}\fi

\bibitem{A90}
R.~J. Adler.
\newblock {\em {An introduction to continuity, extrema, and related topics for
  general {G}aussian processes}}, volume~12 of {\em {Institute of Mathematical
  Statistics Lecture Notes—Monograph Series}}.
\newblock Institute of Mathematical Statistics, Hayward, CA, 1990.

\bibitem{ARV13}
R.~Allez, R.~Rhodes, and V.~Vargas.
\newblock {Lognormal {$\star$}-scale invariant random measures}.
\newblock {\em Probab. Theory Related Fields}, 155(3-4):751–788, 2013.

\bibitem{Angel03}
O.~Angel.
\newblock {Growth and percolation on the uniform infinite planar
  triangulation}.
\newblock {\em Geom. Funct. Anal.}, 13(5):935–974, 2003.

\bibitem{APS18}
J.~Aru, E.~Powell, and A.~Sepúlveda.
\newblock {Critical {L}iouville measure as a limit of subcritical measures}.
\newblock {\em Electron. Commun. Probab.}, 24:Paper No. 18, 16, 2019.

\bibitem{B15}
N.~Berestycki.
\newblock {Introduction to the Gaussian free field and Liouville quantum
  gravity}.
\newblock Lecture notes available on the webpage of the author, July 2016.
\newblock
  \urlprefix\url{http://www.statslab.cam.ac.uk/~beresty/Articles/oxford4.pdf}.

\bibitem{Berestycki17}
N.~Berestycki.
\newblock {An elementary approach to {G}aussian multiplicative chaos}.
\newblock {\em Electron. Commun. Probab.}, 22:Paper No. 27, 12, 2017.

\bibitem{biskup}
M.~Biskup.
\newblock {Extrema of the two-dimensional discrete {G}aussian free field},
  arXiv: \burlalt{1712.09972v4}{http://arxiv.org/abs/1712.09972v4}.

\bibitem{DD16}
J.~Ding and A.~Dunlap.
\newblock {Liouville first-passage percolation: subsequential scaling limits at
  high temperature}.
\newblock {\em Ann. Probab.}, 47(2):690–742, 2019.

\bibitem{DG16}
J.~Ding and S.~Goswami.
\newblock {Upper bounds on {L}iouville first passage percolation and
  {W}atabiki's prediction}.
\newblock {\em Comm. Pure Appl. Math.}, to appear, arXiv:
  \burlalt{1610.09998v4}{http://arxiv.org/abs/1610.09998v4}.

\bibitem{DG18}
J.~Ding and E.~Gwynne.
\newblock {The fractal dimension of Liouville quantum gravity: universality,
  monotonicity, and bounds}.
\newblock {\em Comm. Math. Phys.}, to appear, arXiv:
  \burlalt{1807.01072v3}{http://arxiv.org/abs/1807.01072v3}.

\bibitem{DZZ17}
J.~Ding, O.~Zeitouni, and F.~Zhang.
\newblock {On the {L}iouville heat kernel for {$k$}-coarse {MBRW}}.
\newblock {\em Electron. J. Probab.}, 23:Paper No. 62, 20, 2018.
\newblock \urlprefix\url{https://doi.org/10.1214/18-EJP189}.

\bibitem{DZZ18}
J.~Ding, O.~Zeitouni, and F.~Zhang.
\newblock {Heat kernel for Liouville Brownian motion and Liouville graph
  distance}.
\newblock {\em Comm. Math. Phys.}, to appear, arXiv:
  \burlalt{1807.00422v1}{http://arxiv.org/abs/1807.00422v1}.

\bibitem{DZ15}
J.~Ding and F.~Zhang.
\newblock {Non-universality for first passage percolation on the exponential of
  log-correlated {G}aussian fields}.
\newblock {\em Probab. Theory Related Fields}, 171(3-4):1157–1188, 2018.

\bibitem{DZ16}
J.~Ding and F.~Zhang.
\newblock {Liouville first passage percolation: geodesic length exponent is
  strictly larger than 1 at high temperatures}.
\newblock {\em Probab. Theory Related Fields}, 174(1-2):335–367, 2019.

\bibitem{DF18}
J.~Dubédat and H.~Falconet.
\newblock {Liouville metric of star-scale invariant fields: tails and Weyl
  scaling}.
\newblock {\em Probab. Theory Related Fields}, to appear, arXiv:
  \burlalt{1809.02607v1}{http://arxiv.org/abs/1809.02607v1}.

\bibitem{DMS14}
B.~Duplantier, J.~Miller, and S.~Sheffield.
\newblock {Liouville quantum gravity as a mating of trees}, arXiv:
  \burlalt{1409.7055v3}{http://arxiv.org/abs/1409.7055v3}.

\bibitem{DS11}
B.~Duplantier and S.~Sheffield.
\newblock {Liouville quantum gravity and {KPZ}}.
\newblock {\em Invent. Math.}, 185(2):333–393, 2011.

\bibitem{Fer75}
X.~Fernique.
\newblock {Regularité des trajectoires des fonctions aléatoires gaussiennes}.
\newblock In {\em {École d'Été de {P}robabilités de {S}aint-{F}lour,
  {IV}-1974}}, volume 480 of {\em {Lecture Notes in Math.}}, page 1–96.
  Springer-Verlag Berlin Heidelberg, 1975.

\bibitem{GHS17}
E.~Gwynne, N.~Holden, and X.~Sun.
\newblock {A mating-of-trees approach for graph distances in random planar
  maps}, arXiv: \burlalt{1711.00723v2}{http://arxiv.org/abs/1711.00723v2}.

\bibitem{GHS16}
E.~Gwynne, N.~Holden, and X.~Sun.
\newblock {A distance exponent for {L}iouville quantum gravity}.
\newblock {\em Probab. Theory Related Fields}, 173(3-4):931–997, 2019.

\bibitem{Kahane85}
J.-P. Kahane.
\newblock {Sur le chaos multiplicatif}.
\newblock {\em Ann. Sci. Math. Québec}, 9(2):105–150, 1985.

\bibitem{LeGall10}
J.-F. {Le Gall}.
\newblock {Geodesics in large planar maps and in the {B}rownian map}.
\newblock {\em Acta Math.}, 205(2):287–360, 2010.

\bibitem{LeGall13}
J.-F. {Le Gall}.
\newblock {Uniqueness and universality of the {B}rownian map}.
\newblock {\em Ann. Probab.}, 41(4):2880–2960, 2013.

\bibitem{L01}
M.~Ledoux.
\newblock {\em {The concentration of measure phenomenon}}, volume~89 of {\em
  {Math. Surveys Monogr.}}
\newblock American Mathematical Society, Providence, RI, 2001.

\bibitem{Madaule15}
T.~Madaule.
\newblock {Maximum of a log-correlated {G}aussian field}.
\newblock {\em Ann. Inst. Henri Poincaré Probab. Stat.}, 51(4):1369–1431,
  2015.

\bibitem{MRVZ14}
P.~Maillard, R.~Rhodes, V.~Vargas, and O.~Zeitouni.
\newblock {Liouville heat kernel: regularity and bounds}.
\newblock {\em Ann. Inst. Henri Poincaré Probab. Stat.}, 52(3):1281–1320,
  2016.

\bibitem{Miermont13}
G.~Miermont.
\newblock {The {B}rownian map is the scaling limit of uniform random plane
  quadrangulations}.
\newblock {\em Acta Math.}, 210(2):319–401, 2013.

\bibitem{MS16a}
J.~Miller and S.~Sheffield.
\newblock {Liouville quantum gravity and the Brownian map II: geodesics and
  continuity of the embedding}, arXiv:
  \burlalt{1605.03563v2}{http://arxiv.org/abs/1605.03563v2}.

\bibitem{MS16b}
J.~Miller and S.~Sheffield.
\newblock {Liouville quantum gravity and the Brownian map III: the conformal
  structure is determined}, arXiv:
  \burlalt{1608.05391v2}{http://arxiv.org/abs/1608.05391v2}.

\bibitem{MS16}
J.~Miller and S.~Sheffield.
\newblock {Quantum {L}oewner evolution}.
\newblock {\em Duke Math. J.}, 165(17):3241–3378, 2016.

\bibitem{MS15b}
J.~Miller and S.~Sheffield.
\newblock {Liouville quantum gravity and the Brownian map I: The QLE(8/3,0)
  metric}.
\newblock {\em Invent. Math.}, to appear, arXiv:
  \burlalt{1507.00719v3}{http://arxiv.org/abs/1507.00719v3}.

\bibitem{Mol96}
G.~M. Molchan.
\newblock {Scaling exponents and multifractal dimensions for independent random
  cascades}.
\newblock {\em Comm. Math. Phys.}, 179(3):681–702, 1996.

\bibitem{RV11}
R.~Rhodes and V.~Vargas.
\newblock {K{PZ} formula for log-infinitely divisible multifractal random
  measures}.
\newblock {\em ESAIM. Probability and Statistics}, 15:358–371, 2011.

\bibitem{RV14}
R.~Rhodes and V.~Vargas.
\newblock {Gaussian multiplicative chaos and applications: a review}.
\newblock {\em Probab. Surv.}, 11:315–392, 2014.

\bibitem{RV17}
R.~Rhodes and V.~Vargas.
\newblock The tail expansion of {G}aussian multiplicative chaos and the
  {L}iouville reflection coefficient.
\newblock {\em Ann. Probab.}, to appear, arXiv:
  \burlalt{1710.02096v3}{http://arxiv.org/abs/1710.02096v3}.

\bibitem{Russo78}
L.~Russo.
\newblock {A note on percolation}.
\newblock {\em Z. Wahrscheinlichkeitstheorie und Verw. Gebiete}, 43(1):39–48,
  1978.

\bibitem{Russo81}
L.~Russo.
\newblock {On the critical percolation probabilities}.
\newblock {\em Z. Wahrscheinlichkeitstheorie und Verw. Gebiete},
  56(2):229–237, 1981.

\bibitem{SW}
P.~D. Seymour and D.~J.~A. Welsh.
\newblock {Percolation probabilities on the square lattice}.
\newblock {\em Ann. Discrete Math.}, 3:227–245, 1978.
\newblock Advances in graph theory (Cambridge Combinatorial Conf., Trinity
  College, Cambridge, 1977).

\bibitem{Shamov16}
A.~Shamov.
\newblock {On {G}aussian multiplicative chaos}.
\newblock {\em J. Funct. Anal.}, 270(9):3224–3261, 2016.

\bibitem{She07}
S.~Sheffield.
\newblock {Gaussian free fields for mathematicians}.
\newblock {\em Probab. Theory Related Fields}, 139(3-4):521–541, 2007.

\bibitem{tassion}
V.~Tassion.
\newblock {Crossing probabilities for {V}oronoi percolation}.
\newblock {\em Ann. Probab.}, 44(5):3385–3398, 2016.

\end{thebibliography}

\end{document}